\newcommand*{\addFileDependency}[1]{% argument=file name and extension
  \typeout{(#1)}
  \@addtofilelist{#1}
  \IfFileExists{#1}{}{\typeout{No file #1.}}
}
\newtheorem{eg}{Example}
\newtheorem{thm}{Theorem}
\newtheorem{prop}[thm]{Proposition}
\newtheorem{lemma}[thm]{Lemma}
\newtheorem{cor}[thm]{Corollary}
\DeclareMathOperator*{\argmin}{argmin}
\DeclareMathOperator*{\sargmin}{sargmin}
\newcommand{\vertiii}[1]{{\left\vert\kern-0.25ex\left\vert\kern-0.25ex\left\vert #1 
    \right\vert\kern-0.25ex\right\vert\kern-0.25ex\right\vert}}
\def\hat{\widehat}
\begin{document}

\begin{frontmatter}

\title{Efficient functional estimation and the super-oracle phenomenon}
\runtitle{Efficient functional estimation}

\begin{aug}
%%%%%%%%%%%%%%%%%%%%%%%%%%%%%%%%%%%%%%%%%%%%%%%
%% Only one address is permitted per author. %%
%% Only division, organization and e-mail is %%
%% included in the address.                  %%
%% Additional information can be included in %%
%% the Acknowledgments section if necessary. %%
%% ORCID can be inserted by command:         %%
%% \orcid{0000-0000-0000-0000}               %%
%%%%%%%%%%%%%%%%%%%%%%%%%%%%%%%%%%%%%%%%%%%%%%%
\author[A]{\fnms{Thomas B.}~\snm{Berrett}\ead[label=e1]{tom.berrett@warwick.ac.uk}}
\and
\author[B]{\fnms{Richard J.}~\snm{Samworth}\ead[label=e2]{r.samworth@statslab.cam.ac.uk}}
%%%%%%%%%%%%%%%%%%%%%%%%%%%%%%%%%%%%%%%%%%%%%%
%% Addresses                                %%
%%%%%%%%%%%%%%%%%%%%%%%%%%%%%%%%%%%%%%%%%%%%%%
\address[A]{Department of Statistics, University of Warwick\printead[presep={,\ }]{e1}}

\address[B]{Statistical Laboratory, University of Cambridge\printead[presep={,\ }]{e2}}
\end{aug}

\begin{abstract}
We consider the estimation of two-sample integral functionals, of the type that occur naturally, for example, when the object of interest is a divergence between unknown probability densities.  Our first main result is that, in wide generality, a weighted nearest neighbour estimator is efficient, in the sense of achieving the local asymptotic minimax lower bound.  Moreover, we also prove a corresponding central limit theorem, which facilitates the construction of asymptotically valid confidence intervals for the functional, having asymptotically minimal width.  One interesting consequence of our results is the discovery that, for certain functionals, the worst-case performance of our estimator may improve on that of the natural `oracle' estimator, which itself can be optimal in the related problem where the data consist of the values of the unknown densities at the observations.  
\end{abstract}

\begin{keyword}[class=AMS]
\kwd{62G05, 62G20}
\end{keyword}

\begin{keyword}
\kwd{efficiency}
\kwd{functional estimation}
\kwd{weighted nearest neighbours}
\end{keyword}

\end{frontmatter}

\section{Introduction}

This paper concerns the estimation of two-sample density functionals of the form
\begin{equation}
  \label{Eq:Functional}
  T = T(f,g) := \int_{\mathcal{X}} f(x)\phi\bigl(f(x),g(x)\bigr) \, dx,
\end{equation}
where $\mathcal{X} := \{x \in \mathbb{R}^d:f(x) > 0,g(x) > 0\}$, based on independent $d$-dimensional random vectors $X_1,\ldots,X_m,Y_1,\ldots,Y_n$, where $X_1,\ldots,X_m$ have density $f$ and $Y_1,\ldots,Y_n$ have density $g$.  The interest in the estimation of such functionals arises from many applications: for instance, many divergences such as the Kullback--Leibler divergence, total variation and Hellinger distances (or more generally, all $\varphi$-divergences) are of this form.  The estimation of such divergences is important for two-sample testing \citep{WornowizkiFried2016}, registration problems in image analysis \citep{HMMG2002} and generative adversarial networks \citep{NCT2016}, to name just a few examples.  Of course, we can regard the problem of estimation of one-sample density functionals
\begin{equation}
  \label{Eq:OneSampleFunctional}
  H(f) := \int_{\{x:f(x) > 0\}} f(x)\psi\bigl(f(x)\bigr) \, dx,
\end{equation}
which include Shannon and R\'enyi entropies, as a special case.%, where the function $\phi$ in~\eqref{Eq:Functional} does not depend on its third argument.

Motivated by these applications, the estimation of the two-sample functional~\eqref{Eq:Functional} (or closely related quantities) has received considerable attention in the literature recently \citep[e.g.][]{KKPW2014,Kandasamy15,SinghPoczos2016,SSP2018,MSGH2018}.  Naturally, the one-sample version of the problem, and special cases of it, have been highly-studied subjects over several decades \citep[e.g.][]{KozachenkoLeonenko1987,BickelRitov1988,BirgeMassart1995,Laurent1996,Beirlant:97,LPS2008,leonenko2010statistical,BiauDevroye2015,HJWW2018,BSY2018}.  It turns out that many functionals of interest involve functions $\phi$ in~\eqref{Eq:Functional} that are non-smooth as their arguments approach zero, or functions $\psi$ in~\eqref{Eq:OneSampleFunctional} that are non-smooth as their argument vanishes.  For instance, for the Shannon entropy, $\psi(y) = -\log y$, while the R\'enyi entropy of order $\kappa$ is essentially equivalent to $\psi(y) = y^{\kappa-1}$, which is non-smooth as $y \rightarrow 0$ when $\kappa \in (0,1)$.  To avoid problems caused by this lack of smoothness, many of the aforementioned authors assume that the density $f$ is bounded away from zero on its (compact) support.  In that case, \emph{efficient} estimators can sometimes be obtained; to give just one example, when $f$ is also $s$-H\"older smooth on $\{x:f(x) > 0\}$ with $s > d/4$, \citet{Laurent1996} obtained a Shannon entropy estimator $\hat{H}_m$ satisfying
\begin{equation}
\label{Eq:Laurent}
  m\mathbb{E}\bigl[\{\hat{H}_m - H(f)\}^2\bigr] \rightarrow \int_{\{x:f(x) > 0\}} f \log^2 f - H(f)^2.
\end{equation}
The limit in~\eqref{Eq:Laurent} is the asymptotic rescaled mean squared error of the oracle estimator $H_m^* := -m^{-1}\sum_{i=1}^m \log f(X_i)$, and is optimal in a local asymptotic minimax sense \citep{IbragimovKhasminskii1991,Laurent1996}.

However, the assumption that the density $f$ is bounded away from zero on its support is made purely for mathematical convenience; it assumes away the essential difficulty of the problem caused by the non-smoothness and rules out many standard densities of common interest.  In the related problem of density estimation, it is known that, depending on the loss function and the smoothness of the densities considered, optimal rates of convergence can be very different when densities with unbounded support are allowed \citep{DJKP1996,JuditskyLambertLacroix2004,GoldenshlugerLepski2014}. 

It is therefore of great interest to understand the ways in which low density regions interact with the potential non-smoothness of the functional to determine the behaviour of estimators.  Previous works in this direction have tended to focus on specific functionals and on rates of convergence \citep[e.g.][]{Tsybakov:96,HJWW2018}.  By contrast, in this work our aim is to provide a class of  estimators that are efficient for a wide spectrum of functionals.  Our estimators will be deterministically weighted versions of preliminary estimators based on nearest neighbour distances.  To set the scene, for integers $k_X \in \{1,\ldots,m-1\}$ and $k_Y \in \{1,\ldots,n\}$, write $\rho_{(k_X),i,X}$ for the (Euclidean) distance between $X_i$ and its $k_X$th nearest neighbour in the sample $X_1,\ldots,X_{i-1},X_{i+1},\ldots,X_m$, and write $\rho_{(k_Y),i,Y}$ for the distance between $X_i$ and its $k_Y$th nearest neighbour in the sample $Y_1,\ldots,Y_n$.  The starting point for the construction of our estimators is the approximation
\[
%\label{Eq:BasicApprox}
  f(X_i)V_d\rho_{(k_X),i,X}^d \approx k_X/m,
\]
where $V_d := \pi^{d/2}/\Gamma(1+d/2)$ denotes the $d$-dimensional Lebesgue measure of the unit Euclidean ball in $\mathbb{R}^d$; this arises by comparing the proportion of points in a ball of radius $\rho_{(k_X),i,X}$ about $X_i$ with a local constant approximation to the probability content of the same ball.  This motivates the initial estimator
\begin{equation}
  \label{Eq:Unweighted}
  \widetilde{T}_{m,n} = \widetilde{T}_{m,n,k_X,k_Y} := \frac{1}{m}\sum_{i=1}^m \phi\biggl(\frac{k_X}{mV_d\rho_{(k_X),i,X}^d},\frac{k_Y}{nV_d\rho_{(k_Y),i,Y}^d}\biggr).
\end{equation}
Restricting attention for simplicity of exposition to the one-sample analogue $\widetilde{T}_m = \widetilde{T}_{m,k}$ of~\eqref{Eq:Unweighted} that simply replaces $\phi(\cdot,\cdot\cdot)$ with $\psi(\cdot)$ and $k_X$ with $k$, it has long been known in the special case of the Shannon entropy functional that one should debias $\widetilde{T}_m$ by replacing~$k$ with $e^{\Psi(k)}$, where $\Psi(\cdot)$ denotes the digamma function \citep{KozachenkoLeonenko1987}.  This amounts to adding $\log k - \Psi(k)$ to the original estimator.  \citet{Ganguly18} argued that for general two-sample functionals, the estimator~\eqref{Eq:Unweighted} can be debiased to leading order via an implicit inverse Laplace transform, and showed that this has an explicit expression in certain examples.  It turns out, however, that even the remaining bias is large enough to preclude efficient estimation when $d \geq 4$, and this motivates us to consider weighted linear combinations of estimators of the form~\eqref{Eq:Unweighted} over different choices of $k_X$ and $k_Y$, where the weights are chosen to cancel sufficient terms in the bias expansion.  A subtle question concerns the issue of whether to apply our weights to the original estimators~\eqref{Eq:Unweighted} or their debiased versions.  We address this by using fractional calculus techniques to provide an explicit expression for the leading order remaining bias of the debiased estimators.  We conclude that, in general, the gain from the fact that fewer non-zero weights are required to obtain an efficient estimator when applying these weights to the debiased estimator is outweighed by the added complication of the resulting estimator.  However, in special cases such as the Kullback--Leibler and R\'enyi divergences, where the correct explicit debiasing terms are available, the weighting scheme simplifies and we advocate applying the weights to the debiased estimator.

Returning to the general case, our final estimators $\hat{T}_{m,n}$ are based on weighted averages of estimators of the form $\widetilde{T}_{m,n,k_X,k_Y}$ for different choices of $k_X$ and $k_Y$; such estimators are attractive because they generalise easily to multivariate cases (unlike, for example, estimators based on sample spacings), and because they are straightforward to compute.  Our first main result (Theorem~\ref{Thm:Main} in Section~\ref{Sec:Main}), reveals that the dominant asymptotic contribution to the squared error risk of $\hat{T}_{m,n}$ is of the form $v_1/m + v_2/n$ as $m,n \rightarrow \infty$, uniformly over appropriate classes of densities $f,g$, functions $\phi$ and choices of weights, for certain variance functionals $v_1 = v_1(f,g)$ and $v_2 = v_2(f,g)$ given in~\eqref{Eq:V1V2} below.  Theorem~\ref{Thm:LAM} in Section~\ref{Sec:LAM} complements this by establishing that $v_1$ and $v_2$ are optimal in a local asymptotic minimax sense.  We therefore conclude that, under the conditions of these results, the estimators $\hat{T}_{m,n}$ are efficient.

In addition to studying the efficiency of our estimators $\hat{T}_{m,n}$, it is also highly desirable to be able to derive their asymptotic distributions; such a result could be used, for instance, to obtain an asymptotically valid confidence interval for $T$.  Despite the fact that the summands in our estimator are dependent, for the special case of the one-sample Shannon entropy functional, it is straightforward to derive the asymptotic normality of the weighted nearest neighbour estimator, as it is well approximated by the efficient, `oracle' estimator $-m^{-1}\sum_{i=1}^m \log f(X_i)$.  However, for general functionals, the natural oracle estimator may not be efficient, as explained in the next paragraph; this means that deriving the asymptotic distribution of $\hat{T}_{m,n}$ in such cases remains a significant challenge.  In our second main result (Theorem~\ref{Thm:Normality} in Section~\ref{Sec:Main}), we show how the problem can be reexpressed in a form where we can apply the central limit theorem of \citet{Baldi1989} for dependent random variables for which the degrees of the nodes in the pairwise dependency graph are controlled.  Thus, the estimators $\hat{T}_{m,n}$ are indeed asymptotically normal under appropriate conditions.
%, by transforming the problem into a form where we can apply the central limit theorem of \citet{Baldi1989} for dependent random variables where the degrees of the nodes in the pairwise dependency graph are controlled.
%Moreover, we also prove an asymptotic normality result for the estimators $\hat{T}_{m,n}$, which could be used, for instance, to obtain an asymptotically valid confidence interval for $T$.  An interesting feature of our results is that we impose no conditions on the relative rates at which $m,n \rightarrow \infty$.  

As a byproduct of our efficiency analysis, we uncover a curious phenomenon that can occur for certain functionals; for ease of exposition here, we focus on the R\'enyi-type functional
\[
  H_\kappa := \int_{\mathbb{R}^d} f(x)^{\kappa} \, dx,
\]
with $\kappa \in (1/2,1)$.  Given access to $f(X_1),\ldots,f(X_m)$, the natural oracle estimator in this setting is
\[
  H_m^* := \frac{1}{m}\sum_{i=1}^m f(X_i)^{\kappa-1}.
\]
Indeed, Proposition~\ref{Thm:SuperOracleLAM} reveals that this oracle estimator can be optimal in a local asymptotic minimax sense for the oracle problem where the practitioner seeks to estimate a one-sample functional such as $H_\kappa$ based on $f(X_1),\ldots,f(X_m)$.  Nevertheless, surprisingly, we find that there exists an estimator $\hat{H}_m$ and general classes $\mathcal{F}$ of densities for which
\begin{equation}
  \label{Eq:SuperOracle}
  \lim_{m \rightarrow \infty} \sup_{f \in \mathcal{F}} \frac{\mathbb{E}_f\bigl\{(\hat{H}_m - H_\kappa)^2\}}{\mathbb{E}_f\bigl\{(H_m^* - H_\kappa)^2\}} = \kappa^2 < 1.
  \end{equation}
We refer to this as the \emph{super-oracle phenomenon}.  It is important to note that this is very different from the phenomenon of superefficiency, as occurs with, e.g., the Hodges estimator \citep[][Example~6.2.5]{LehmannCasella1998}.  There, in the case of scalar parameter estimation, asymptotic improvement in mean squared error risk is possible at a set of fixed parameter values, which form a Lebesgue null set \citep{LeCam1953,vanderVaart1997}.  Moreover, and more importantly from our perspective, the superefficient asymptotic behaviour is necessarily accompanied by worse finite-sample performance in a neighbourhood of points of superefficiency, so that any apparent improvement is really an artefact of the pointwise asymptotic regime considered.  By contrast, in~\eqref{Eq:SuperOracle}, the supremum is taken inside the limit, so that the super-oracle improvement for large $m$ can be considered as genuine.

The remainder of the paper is organised as follows: in Section~\ref{Sec:Main}, we present our main results on the asymptotic squared error risk and asymptotic normality of our general two-sample functional estimators.  Section~\ref{Sec:Bias} is devoted to understanding the bias of these estimators and a discussion of the potential benefits of debiasing them before computing our weighted averages, while Section~\ref{Sec:Variance} considers their variance properties.  In Section~\ref{Sec:SuperOracle}, we describe the super-oracle phenomenon in greater detail, and in Section~\ref{Sec:LAM} we present a local asymptotic minimax lower bound that illustrates the asymptotic optimality of our estimators and justifies referring to them as efficient.  Our main theoretical arguments are given in the supplementary material, as well as various auxiliary results and bounds on remainder terms. 

We end this section by introducing some notation used throughout the paper.  For $m \in \mathbb{N}_0$, we write $[m] := \{0,1,\ldots,m\}$.  If $A$ is a vector, matrix or array, we write $\|A\|$ for its Euclidean vectorised norm.  For $x \in \mathbb{R}^d$ and $r \geq 0$, let $B_x(r) := \{y \in \mathbb{R}^d: \|y-x\| \leq r\}$ denote the closed Euclidean ball or radius $r$ about $x$. %, and let $B_x^{\circ}(r) := B_x(r) \setminus \{x\}$ denote the corresponding punctured ball.
For vectors $a$ and $b$ of the same dimension, we write $a \circ b$ for their Hadamard product.  If $Z$ is a random variable, we write $\mathcal{L}(Z)$ for its law.  We write $\mathcal{Z} := (0,\infty)^2$.  For a smooth function $\phi:\mathcal{Z} \rightarrow \mathbb{R}$, $\mathbf{z} = (u,v) \in \mathcal{Z}$ and $j,l \in \mathbb{N}$, we write $\phi_{jl}(\mathbf{z}) := \frac{\partial^{j+l} \phi}{\partial u^j \partial v^l}$.  We also use multi-index notation for derivatives, so that, for a sufficiently smooth density $f^*$ on $\mathbb{R}^d$, $x = (x_1,\ldots,x_d)^T \in \mathbb{R}^d$, $t \in \mathbb{N}$ and a multi-index $\boldsymbol{\alpha} = (\alpha_1,\ldots,\alpha_d) \in \mathbb{N}_0^d$ with $|\boldsymbol{\alpha}| := \sum_{j=1}^d \alpha_j = t$, we write $\partial^{\boldsymbol{\alpha}} f^* := \frac{\partial^t f^*}{\partial x_1^{\alpha_1}\ldots \partial x_d^{\alpha_d}}$.  For $\alpha > 0$ and a density $f^*$ on $\mathbb{R}^d$, we write $\mu_\alpha(f^*) := \int_{\mathbb{R}^d} \|x\|^\alpha f^*(x) \, dx$ and $\|f^*\|_\infty := \sup_{x \in \mathbb{R}^d} f^*(x)$.  For $r \in [0,\infty)$ and $x \in \mathbb{R}^d$, we also define $h_{x,f^*}(r) := \int_{B_x(r)} f^*(y) \, dy$ and, for $s \in [0,1)$, let $h_{x,f^*}^{-1}(s) := \inf\{r \geq 0:h_{x,f^*}(r) \geq s\}$.  Recall that, for $a, b > 0$, the beta function is defined by $\mathrm{B}_{a,b} := \int_0^1 t^{a-1}(1-t)^{b-1} \, dt$ and define also the corresponding density $\mathrm{B}_{a,b}(s) := s^{a-1}(1-s)^{b-1}/\mathrm{B}_{a,b}$ for $s \in (0,1)$.  %Finally, let $V_d := \pi^{d/2}/\Gamma(1+d/2)$ denote the $d$-dimensional Lebesgue measure of $B_0(1)$.

\section{Main results}
\label{Sec:Main}

Let $X_1,\ldots,X_m,$ and $Y_1,\ldots,Y_n$ be independent $d$-dimensional random vectors, with $X_1,\ldots,X_m$ having density $f$ and with $Y_1,\ldots,Y_n$ having density $g$, both with respect to Lebesgue measure on~$\mathbb{R}^d$.  We consider the estimation of the functional $T(f,g)$ in~\eqref{Eq:Functional}.

Before we can state our main theorems on the asymptotic risk and normality of our functional estimators, we need some preparatory work.  This will consist of definitions of the classes of functionals and densities over which our results will hold, the definitions of our weighted nearest neighbour estimators and the corresponding classes of allowable weights, as well as various parameters that will play a role in the statements of our results.

Starting with our classes of functionals, we impose a condition on the function $\phi$ in~\eqref{Eq:Functional}.  It will be convenient to introduce the shorthand $x_\wedge := x \wedge 1$ and $x_\vee := x \vee 1$ for $x \geq 0$.  Let $\Xi := \mathbb{R}^2 \times (\mathbb{N} \setminus \{1\}) \times (1,\infty)$, and for $\xi = (\kappa_1,\kappa_2,\beta^*,L) \in \Xi$, let $\Phi \equiv \Phi(\xi)$ denote the class of functions $\phi: \mathcal{Z} \rightarrow \mathbb{R}$ for which the partial derivatives $\phi_{\ell_1\ell_2}$ exist for all $\ell_1,\ell_2 \in \mathbb{N}_0$ with $\ell_1+\ell_2 \leq \beta^*$ and satisfy
%\begin{enumerate}[(i)]
%\item writing $\underline{\beta}_1^* := \lceil \beta_1^* \rceil -1$ and $\underline{\beta}_2^* := \lceil \beta_2^* \rceil -1$, the partial derivative $\phi_{\underline{\beta}_1^* \underline{\beta}_2^*}$ exists; moreover, for all $(\epsilon_1, \epsilon_2) \in (-L^{-1},L^{-1})^2$ and $(u,v) \in \mathcal{Z}$, we have
%\begin{align*}
%  \biggl| \phi(u\!+\!\epsilon_1 u,v\!+\!\epsilon_2 v) - \! \sum_{\ell_1=0}^{\underline{\beta}_1^*} \sum_{\ell_2=0}^{\underline{\beta}_2^*} & \frac{(u\epsilon_1)^{\ell_1} (v\epsilon_2)^{\ell_2}}{\ell_1! \ell_2!}  \phi_{\ell_1\ell_2}(u,v) \biggr|  \leq L u_\wedge^{\kappa_1} u_\vee^L v _\wedge^{\kappa_2}v_\vee^L(|\epsilon_1|^{\beta_1^*} \!+\! |\epsilon_2|^{\beta_2^*});
%\end{align*}
%where $p_{r,s}(\mathbf{z};\mathbf{\epsilon} \circ \mathbf{z}) := \sum_{\ell_1=0}^{r} \sum_{\ell_2=0}^{s} \frac{(u\epsilon_2)^{\ell_1} (v\epsilon_3)^{\ell_2}}{\ell_1! \ell_2!}  \phi_{\ell_1\ell_2}(\mathbf{z})$;
%\item, we have
\[
	\bigl| u^{\ell_1} v^{\ell_2} \phi_{\ell_1\ell_2}(u,v) \bigr| \leq Lu_\wedge^{\kappa_1} u_\vee^L v_\wedge^{\kappa_2} v_\vee^L. %\left\{ \begin{array}{cc} L u^{\kappa_1} v^{\kappa_2} & \text{if } u,v \leq L \\ L\max(u^L,u^{\kappa_1})\max(v^L, v^{\kappa_2}) & \text{otherwise} \end{array} \right..
      \]
%\end{enumerate}
for all $(u,v) \in \mathcal{Z}$.  This is a growth condition on $\phi$ and its partial derivatives of order up to~$\beta^*$.  The pre-multiplier $u^{\ell_1} v^{\ell_2}$ allows us to control discrepancies of $\phi$ under relative, as opposed to absolute, changes in its arguments.  Moreover, the right-hand side of the bound affords additional flexibility regarding the level of regularity required for both small and large values of these first and second arguments, controlled by the parameters $\kappa_1, \kappa_2$ and~$L$.  This latter aspect will allow us to include functionals such as the Kullback--Leibler and R\'enyi divergences, for which the corresponding $\phi$ is non-smooth as the densities approach zero; see Examples~\ref{Ex:KL} and~\ref{Ex:Renyi} below.
%Condition~(i) controls the H\"older regularity of $\phi$ of order $\beta_1^*$ in its first argument and $\beta_2^*$ in its second argument, with additional flexibility regarding the level of regularity required for both small and large values of these first and second arguments, controlled by the parameters $\kappa_1, \kappa_2$ and $L$.  This latter aspect will allow us to include functionals such as the Kullback--Leibler and R\'enyi divergences, for which the corresponding $\phi$ is non-smooth as the densities approach zero.  
%Indeed, examples of classes of functionals satisfying this condition include $\phi(u,v) = \log(u/v)$ and $\phi(u,v) = (u/v)^{\kappa - 1}$, for $\kappa \in \mathbb{R}$.  
More generally, for the $\varphi$-divergence functional with $\phi(u,v) = \varphi(v/u)$, it is straightforward to express this condition in terms of a condition on $\varphi$.  %As a final example, the functional $\int_{\mathcal{X}} fg$ corresponds to the choice $\phi(u,v) = v$, from which we see that $\phi \in \Phi(0,0,\beta,1+1/\beta)$, for every $\beta > 0$.

For our classes of densities, fix $\beta > 0$, a density $f$ on $\mathbb{R}^d$, and $x \in \mathbb{R}^d$ with $f(x) > 0$ such that $f$ is $\underline{\beta} :=\lceil \beta \rceil -1$-times differentiable at $x$.  Write $f^{(t)}(x) \in (\mathbb{R}^d)^{\otimes t}$ for the $t^{\mathrm{th}}$ derivative array of $f$ at $x$ for $t \in [\underline{\beta}]$, so that $f^{(t)}_{j_1\ldots j_t}(x) := \frac{\partial^t f}{\partial x_{j_1}\ldots \partial x_{j_t}}(x)$ for $(j_1,\ldots,j_t) \in \{1,\ldots,d\}^t$.  Now define 
%for $\beta>0$ and an $m:=\lceil \beta \rceil -1$-times differentiable density $f$ on $\mathbb{R}^d$ with $\mathcal{X} := \{x:f(x) > 0\}$, define the function $ M_{f,\beta} :\mathcal{X} \rightarrow [0,\infty)$ by
\begin{align*}
  M_{f,\beta}(x):= \inf\Biggl\{ M \geq 1  :& \max_{t \in [\underline{\beta}]} \biggl( \frac{\|f^{(t)}(x)\|}{f(x)} \biggr)^{1/t} \\
  &\bigvee \sup_{\substack{y,z \in B_x(1/M), \\ y \neq z}} \biggl(\frac{\|f^{(\underline{\beta})}(z)-f^{(\underline{\beta})}(y)\|}{f(x)\|z-y\|^{\beta-\underline{\beta}}} \biggr)^{1/\beta} \leq M \Biggr\};
\end{align*}
otherwise, we set $M_{f,\beta}(x) := \infty$.  The quantity $M_{f,\beta}(x)$ measures the smoothness of derivatives of $f$ in neighbourhoods of $x$, relative to $f(x)$ itself, but does not require $f$ to be smooth everywhere.  For instance, if $f$ is the uniform density on the unit ball $B_0(1)$, then $M_{f,\beta}(x) = 1/(1-\|x\|)$ for $\|x\| < 1$.  Now, for $\theta=(\alpha,\beta,\lambda,C) \in (0,\infty)^4$, and writing $\mathcal{F}_d$ for the class of densities on $\mathbb{R}^d$, let
\[
	\mathcal{G}_{d,\theta} := \biggl\{ f \in \mathcal{F}_d : \mu_\alpha(f) \leq C, \|f\|_\infty \leq C, \int_{\{x:f(x) > 0\}} f(x) \Bigl\{\frac{M_{f, \beta}(x)^d}{f(x)}\Bigr\}^\lambda \,dx  \leq C \biggr\}.
      \]
%Thus, in our uniform example above, $f \in \mathcal{G}_{d,\theta}$ for every $\alpha, \beta > 0$, 
Thus, in addition to requiring a moment assumption and a bounded density, the classes $\mathcal{G}_{d,\theta}$ also impose an integrability condition on our local measure of smoothness; to understand this condition, we note that in constructing a nearest-neighbour based estimate of $f(x)$, the crucial quantity that controls the bias is the function
\[
s \mapsto \inf\biggl\{r \geq 0:\int_{B_x(r)} f(y) \, dy \geq s\biggr\} =: h_{x,f}^{-1}(s)
\]
on $(0,1)$.  If $f$ is constant in a neighbourhood of $x$ with $f(x) > 0$, then $h_{x,f}^{-1}(s)^d = \frac{s}{V_df(x)}$ for small $s > 0$.  More generally, the error of the approximation of $h_{x,f}^{-1}(s)^d$ by this linear function of $s$ (together with higher-order Taylor expansion terms) is controlled by $\frac{M_{f,\beta}(\cdot)^d}{f(\cdot)}$; see Lemma~\ref{Lemma:hxinvbounds} for a formal statement.  This explains why we ask for a condition on an appropriate norm of $\frac{M_{f,\beta}(\cdot)^d}{f(\cdot)}$ in our classes.  It is an attractive feature that the assumption comes in an integral form, as opposed to requiring a boundedness condition on $M_{f, \beta}(x)$, for instance.  This integrability condition is our primary tool for avoiding the assumption that the density is bounded away from zero on its support (see the discussion in the Introduction).  While \citet{Tsybakov:96} and \citet{BSY2018} made first steps in this direction in the context of Shannon entropy estimation, the former of these works, which focused on the case $d=1$, required a strictly positive density on the whole real line; the latter relaxed this condition a little, but made extremely stringent requirements on the behaviour of the density $f$ in neighbourhoods of points $x_0 \in \mathbb{R}^d$ with $f(x_0) = 0$.  In particular, no $\mathrm{Beta}(a,a)$ density was allowed, for any $a > 0$, and the only densities having points $x_0$ with $f(x_0) = 0$ that were shown to belong to their classes involved all derivatives also vanishing at $x_0$.  By contrast, Proposition~\ref{Prop:Beta} below shows that a multivariate spherically symmetric generalisation of a $\mathrm{Beta}(a,b)$ density belongs to $\mathcal{G}_{d,\theta}$ for suitable $\theta \in (0,\infty)^4$, provided only that $a,b \geq 1$ (though in fact the requirements of our Theorem~\ref{Thm:Main} on efficiency would actually also need $b > d-1$ for this family).  
\begin{prop}
\label{Prop:Beta}
Fix $a,b \in [1,\infty)$, and let $f$ denote the density on $\mathbb{R}^d$ given by
\[
  f(x) = C_{d,a,b}\|x\|^{a-1}(1-\|x\|)^{b-1}\mathbbm{1}_{\{\|x\| \leq 1\}},
\]
where $C_{d,a,b} := \frac{\Gamma(a+b+d-1)}{dV_d \Gamma(a+d-1) \Gamma(b)}$.  Then for any $\alpha, \beta>0$ and any $\lambda \in \bigl(0,b/(b+d-1)\bigr)$, there exists $C_0 > 0$, depending only on $\alpha, \beta$ and $\lambda$, such that $f \in \mathcal{G}_{d,(\alpha,\beta,\lambda,C)}$ for any $C \geq C_0$.
\end{prop}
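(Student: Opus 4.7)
The plan is to verify the three defining conditions of $\mathcal{G}_{d,\theta}$ in turn: the moment bound $\mu_\alpha(f) \leq C$, the sup-norm bound $\|f\|_\infty \leq C$, and the smoothness integrability condition. The first two are immediate: since $f$ is supported in $B_0(1)$, every moment $\mu_\alpha(f)$ is at most $1$; and since $a, b \geq 1$, the continuous function $r \mapsto r^{a-1}(1-r)^{b-1}$ is bounded on $[0,1]$ by a constant depending only on $a$ and $b$. All the work lies in bounding
\[
I := \int_{\mathcal{X}} f(x)^{1-\lambda} M_{f,\beta}(x)^{d\lambda} \, dx.
\]

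My first step would be to exploit spherical symmetry by writing $f(x) = g(\|x\|)$ with $g(r) = C_{d,a,b}\,r^{a-1}(1-r)^{b-1}$ on $[0,1]$, and then establish the pointwise bound
\[
M_{f,\beta}(x) \leq C_1\bigl(\|x\|^{-1} + (1-\|x\|)^{-1}\bigr) \qquad (0 < \|x\| < 1),
\]
for some $C_1 = C_1(a,b,d,\beta)$. This splits into two ingredients. First, by iterated differentiation of $r \mapsto r^{a-1}$ and $r \mapsto (1-r)^{b-1}$, one obtains $|g^{(j)}(r)|/g(r) \leq A_j\{\max(r^{-1}, (1-r)^{-1})\}^j$ for each $j \leq \underline{\beta}$, together with an analogous $(\beta-\underline{\beta})$-Hölder bound on $g^{(\underline{\beta})}$ valid on any sub-interval staying a fixed fraction away from $\{0,1\}$. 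Second, Faà di Bruno applied to $f = g \circ \|\cdot\|$ expresses $\partial^{\boldsymbol{\alpha}} f(x)$ as a finite sum of terms $g^{(j)}(\|x\|)\,P_{\boldsymbol{\alpha},j}(x)$, where each $P_{\boldsymbol{\alpha},j}$ is smooth on $\mathbb{R}^d \setminus \{0\}$ and homogeneous of degree $j-|\boldsymbol{\alpha}|$, hence bounded by a constant times $\|x\|^{j-|\boldsymbol{\alpha}|}$. Combining these yields the desired bound on $\|f^{(t)}(x)\|/f(x)$ for $t \leq \underline{\beta}$ and the corresponding Hölder bound, once $M$ is taken to be a large enough fixed multiple of $\{\min(\|x\|, 1-\|x\|)\}^{-1}$, so that the ball $B_x\bigl((2d^{1/2}M)^{-1}\bigr)$ remains inside the smooth region $\{0 < \|y\| < 1\}$.

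Having bounded $M_{f,\beta}$, substituting into $I$ and switching to polar coordinates reduces the problem to checking that
\[
\int_0^1 r^{d-1}\, r^{(a-1)(1-\lambda)}(1-r)^{(b-1)(1-\lambda)}\bigl(r^{-1}+(1-r)^{-1}\bigr)^{d\lambda}\,dr < \infty.
\]
Expanding the last factor and treating each endpoint separately, the integrand is integrable near $r = 0$ because its exponent there equals $(a+d-1)(1-\lambda) - 1 > -1$ whenever $a \geq 1$ and $\lambda < 1$, while integrability near $r = 1$ requires $(b-1)(1-\lambda) - d\lambda > -1$, which rearranges to precisely $\lambda < b/(b+d-1)$ — exactly the hypothesis. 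Taking $C_0$ to be the maximum of the three upper bounds thus obtained completes the argument.

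The main obstacle is the multivariate bookkeeping needed to turn the radial derivative estimates on $g$ into the pointwise bound on $M_{f,\beta}(x)$; in particular, the Hölder-type supremum at level $\underline{\beta}$ must be controlled uniformly for pairs $y,z$ in a ball whose radius itself depends on $M$. However, once one recognises that all non-smooth behaviour is radial and concentrated on $\{0\} \cup \partial B_0(1)$, the choice $M \asymp \{\min(\|x\|, 1-\|x\|)\}^{-1}$ keeps the relevant ball safely away from this locus, so that the same scaling simultaneously drives the pointwise derivative control and the Hölder term; the matching constraint on $\lambda$ then emerges transparently from the boundary integrability at $r=1$.
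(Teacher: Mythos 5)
Your proposal is correct and follows essentially the same route as the paper: the moment and sup-norm conditions are dispatched trivially, the key step is the pointwise bound $M_{f,\beta}(x)\lesssim \|x\|^{-1}+(1-\|x\|)^{-1}=\{\|x\|(1-\|x\|)\}^{-1}$ obtained from radial derivative estimates on $g$ with the ball $B_x\bigl(\{2d^{1/2}M\}^{-1}\bigr)$ kept inside the smooth region, and the conclusion reduces to the same beta-type integral whose convergence at $r=1$ is exactly $\lambda<b/(b+d-1)$. The only (immaterial) difference is that for the H\"older term you combine Fa\`a di Bruno with a H\"older bound on $g^{(\underline{\beta})}$, whereas the paper applies the mean value theorem to $f^{(\underline{\beta}+1)}$ along the line segment, which slightly streamlines the bookkeeping you flag as the main obstacle.
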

From Proposition~\ref{Prop:Beta} we also see that discontinuous densities may also belong to $\mathcal{G}_{d,\theta}$ for suitable $\theta \in (0,\infty)^4$; in particular, the $U[-1,1]$ density belongs to $\mathcal{G}_{1,(\alpha,\beta,\lambda,C)}$ for any $\alpha,\beta > 0$, $\lambda \in (0,1)$ and $C \geq 1/(1-\lambda)$.  We also remark that, similar to \citet{BSY2018}, all Gaussian densities belong to $\mathcal{G}_{d,\theta}$ for any $\alpha,\beta>0, \lambda \in (0,1)$ and sufficiently large $C > 0$, and multivariate-$t$ densities with $\nu$ degrees of freedom belong to $\mathcal{G}_{d,\theta}$ for any $\alpha \in (0,\nu)$, any $\beta>0$, $\lambda \in \bigl(0,\nu/(\nu+d)\bigr)$ and $C > 0$ sufficiently large.  %However, the class $\mathcal{G}_{d,\theta}$ imposes much weaker assumptions compared with that work on the behaviour of a density $f$ in neighbourhoods of points $x_0 \in \mathbb{R}^d$ with $f(x_0) = 0$.  %To illustrate this, we show in Proposition~\ref{Prop:Beta} that a $\mathrm{Beta}(a,a)$ density belongs to $\mathcal{G}_{1,\theta}$ for suitable $\theta \in \Theta$, provided only that $a > 2$.
%To simplify notation we may sometimes write $a(\delta)=\nu \delta^{-\rho}$. \orange{Not sure this helps.  Might make results look a little more different too...}

To define our main class of densities, then, for $\Theta = (0,\infty)^5$ and $\vartheta = (\alpha,\beta,\lambda_1,\lambda_2,C) \in \Theta$, let $M_\beta(x) \equiv M_{f,g,\beta}(x) := M_{f,\beta}(x) \vee M_{g,\beta}(x)$ and set
\begin{align*}
  \mathcal{F}_{d,\vartheta} &:= \biggl\{(f,g) \in \mathcal{G}_{d,(\alpha,\beta,\lambda_1,C)} \times \mathcal{F}_d : \int_{\mathcal{X}} f(x) \biggl[\Bigl\{ \frac{M_{\beta}(x)^d}{f(x)} \Bigr\}^{\lambda_1} + \Bigl\{ \frac{M_{\beta}(x)^d}{g(x)} \Bigr\}^{\lambda_2}\biggr] \,dx \leq C, \\
  &\hspace{60pt}\mu_{1/C}(g) \leq C, \|g\|_\infty \leq C,  \int_{\mathcal{X}} f(x)^{2+2 \kappa_1-1/C} g(x)^{2 \kappa_2 -1-1/C}   \,dx \leq C \biggr\}.
\end{align*}
Note that $\mathcal{F}_{d,\vartheta}$ also depends on $\xi$ through $\kappa_1$ and $\kappa_2$, i.e.~on the functional we wish to estimate, though we suppress this in our notation.  To understand the final integrability condition in $\mathcal{F}_{d,\vartheta}$, we first note that the efficient variance $v_2$, defined in~\eqref{Eq:V1V2} below, can be bounded above as follows:
\[
	v_2 = \mathrm{Var} \bigl( f(Y_1)\phi_{01}(f(Y_1),g(Y_1))\bigr) \leq L^2 C^{4L+2(|\kappa_1|+|\kappa_2|)} \int_\mathcal{X} f(x)^{2+2 \kappa_1} g(x)^{2 \kappa_2 -1} \, dx,
\]
for $C \geq 1$.  Thus, for large values of $C$, our condition is only slightly stronger than assuming that $v_2$ is bounded. This slight strengthening of that assumption is made so that the integral over $\mathcal{X}$ in $v_2$ can be approximated by integrals over large subsets of $\mathcal{X}$, uniformly over $(f,g) \in \mathcal{F}_{d, \vartheta}$.

We now introduce the class of weights that we consider for our estimators.  To this end, for $k, I \in \mathbb{N}$ and $c \in (0,1)$, define
\begin{align}
  \label{Eq:Weights}
	\mathcal{W}_{I,c}^{(k)}:= \biggl\{ w =& (w_1,\ldots,w_k) \in \mathbb{R}^{k} : \sum_{j=1}^{k} w_{j} =1 \text{ and }  w_{j}=0 \text{ for } j<ck, \|w\|_1 \leq 1/c,\nonumber \\
	&\sum_{j=1}^k j^{\frac{2\ell}{d}-i} w_j = 0 \text{ for } (\ell,i) \in \bigl( \bigl[\lceil d/2 \rceil -1 \bigr] \times [I] \bigr)  \setminus \{(0,0)\} \biggr\}.
\end{align}
Fixing $\xi = (\kappa_1,\kappa_2,\beta^*,L) \in \Xi$ and $c \in (0,1)$, and for $w_X \in \mathcal{W}_{\lceil (\beta^*-1)/2 \rceil,c}^{(k_X)}$ and $w_Y \in \mathcal{W}_{\lceil (\beta^*-1)/2 \rceil,c}^{(k_Y)}$, we can now define our weighted functional estimators as 
\begin{equation}
\label{Eq:WeightedEstimator}
	\hat{T}_{m,n} \equiv \hat{T}_{m,n}^{w_X,w_Y} := \sum_{j_X=1}^{k_X} \sum_{j_Y=1}^{k_Y} w_{X,j_X} w_{Y,j_Y} \widetilde{T}_{m,n,j_X,j_Y}.
\end{equation}
Note that the constraint on the support of $w_X$ ensures that all component indices with non-zero weight are of the same order as $k_X$, with the corresponding property also holding for $w_Y$.  Once this is satisfied, and given appropriate choices of $k_X,k_Y$, the remaining  constraints in~\eqref{Eq:Weights} will ensure that the bias of $\hat{T}_{m,n}$ is asymptotically negligible.%is reduced, provided $w_X \in \mathcal{W}_{\lceil \beta_1^*/2 \rceil,c}^{(k_X)}$ and $w_Y \in \mathcal{W}_{\lceil \beta_2^*/2 \rceil,c}^{(k_Y)}$ for some $c \in (0,1)$.

It is convenient to use the shorthand $\phi_x:=\phi\bigl(f(x),g(x)\bigr)$, as well as $(f \phi_{10})_x := f(x) \phi_{10}\bigl(f(x),g(x)\bigr)$ and $(f \phi_{01})_x := f(x) \phi_{01}\bigl(f(x),g(x)\bigr)$ for $x \in \mathbb{R}^d$. Our result on the asymptotic risk of $\hat{T}_{m,n}$ will be expressed in terms of
\begin{equation}
\label{Eq:V1V2}
	v_1 = v_1(f,g) := \mathrm{Var}\bigl( \phi_{X_1} + (f \phi_{10})_{X_1} \bigr)  \! \quad \text{and} \! \quad v_2 = v_2(f,g) := \mathrm{Var}\bigl((f\phi_{01})_{Y_1}\bigr).
\end{equation}
%\begin{align}
%  \label{Eq:V1}
%  v_1 &= v_1(f,g) := \mathrm{Var}\bigl( \phi_{X_1} + (f \phi_{10})_{X_1} \bigr) \\
%  \label{Eq:V2}
%  v_2 &= v_2(f,g) := \mathrm{Var}\bigl((f\phi_{01})_{Y_1}\bigr).
%\end{align}
Fixing $d \in \mathbb{N}$, $\vartheta = (\alpha,\beta,\lambda_1,\lambda_2,C) \in \Theta $ and $\xi = (\kappa_1,\kappa_2,\beta^*,L) \in \Xi$, we will moreover impose requirements on various derived parameters.  In particular, writing $\kappa_i^-:=\max(-\kappa_i,0)$ for $i=1,2$, it will also be convenient to define
\begin{align}
\label{Eq:Zeta}
	\zeta& := \frac{\kappa_1^-}{\lambda_1} + \frac{\kappa_2^-}{\lambda_2} + \frac{d(\kappa_1^-+ \kappa_2^-)}{\alpha} \\ 
	\tau_i&:=1 -  \max \biggl( \frac{d}{2\beta} , \frac{d}{2(2 \wedge \beta) + d} ,  \frac{d}{2(2\wedge\beta) \beta^*} ,\frac{1}{2 (\lambda_i \wedge 1)(1-\zeta)} \biggr), \quad i=1,2. \nonumber 
\end{align}
Finally, then, we are in a position to state our first main result, on the asymptotic squared error risk of $\hat{T}_{m,n}$:
\begin{thm}
  \label{Thm:Main}
  Fix $d \in \mathbb{N}$, fix $\vartheta = (\alpha,\beta,\lambda_1,\lambda_2,C) \in \Theta$ and fix $\xi = (\kappa_1,\kappa_2,\beta^*,L) \in \Xi$.  Assume that $\zeta<1/2$ and that $\min(\tau_1,\tau_2)>1/\beta^*$.  Let $(k_{X}^{\mathrm{L}})$, $(k_{Y}^{\mathrm{L}})$, $(k_{X}^{\mathrm{U}})$ and $(k_{Y}^{\mathrm{U}})$ be deterministic sequences of positive integers that satisfying $\min(k_{X}^{\mathrm{L}}m^{-1/\beta^*}, k_{Y}^{\mathrm{L}} n^{-1/\beta^*} ) \rightarrow \infty$ and $\max(k_{X}^{\mathrm{U}} m^{-(\tau_1-\epsilon)}, k_{Y}^{\mathrm{U}} n^{-(\tau_2 - \epsilon)}) \rightarrow 0$ for some $\epsilon>0$.  Then for each $c \in (0,1)$, each $w_X = w_{X}^{(k_X)} \in \mathcal{W}_{\lceil (\beta^*-1)/2 \rceil,c}^{(k_X)}$ and each $w_Y = w_{Y}^{(k_Y)} \in \mathcal{W}_{\lceil (\beta^*-1)/2 \rceil,c}^{(k_Y)}$, we have
      \[
        \sup_{\phi \in \Phi(\xi)}  \sup_{(f,g) \in \mathcal{F}_{d,\vartheta}}  \max_{\substack{k_X \in \{k_X^{\mathrm{L}},\ldots, k_X^{\mathrm{U}}\} \\ k_Y \in \{k_Y^{\mathrm{L}},\ldots, k_Y^{\mathrm{U}}\}}} \biggl|\mathbb{E}_{f,g}\bigl\{(\hat{T}_{m,n} - T)^2\bigr\} - \frac{v_1}{m} - \frac{v_2}{n}\biggr| = o\biggl(\frac{1}{m} + \frac{1}{n}\biggr)
      \]
as $m,n \rightarrow \infty$.
\end{thm}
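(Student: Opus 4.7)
The plan is to proceed through the standard bias-variance decomposition
\[
\mathbb{E}_{f,g}\bigl\{(\hat{T}_{m,n} - T)^2\bigr\} = \bigl\{\mathbb{E}_{f,g}(\hat{T}_{m,n}) - T\bigr\}^2 + \mathrm{Var}_{f,g}(\hat{T}_{m,n}),
\]
and to show that the squared bias is $o(1/m + 1/n)$ uniformly, while $\mathrm{Var}_{f,g}(\hat{T}_{m,n}) = v_1/m + v_2/n + o(1/m + 1/n)$ uniformly.  Both tasks rest on Taylor-expanding $\phi$ in its first two arguments about $(f(X_i), g(X_i), X_i)$, with the remainder controlled by hypothesis~(i).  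The conditional distributions given $X_i$ of the probability contents $h_{X_i,f}(\rho_{(k_X),i,X})$ and $h_{X_i,g}(\rho_{(k_Y),i,Y})$ are exactly $\mathrm{B}_{k_X, m-k_X}$ and $\mathrm{B}_{k_Y, n-k_Y+1}$, so that after inverting $h_{X_i,f}$ and $h_{X_i,g}$ one obtains explicit series representations for $\rho_{(k_X),i,X}^d$ and $\rho_{(k_Y),i,Y}^d$ with errors controlled by the local smoothness $M_{f,g,\beta}(X_i)$.

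For the bias, I would condition on $X_i$ and integrate out the nearest-neighbour distances, substituting the Taylor expansions of $f$ and $g$ on balls about $X_i$.  This yields an asymptotic expansion of the conditional mean of each $(k_X,k_Y)$-summand as a polynomial in the quantities $(k_X/m)^{2l/d - i}$ and the analogous $(k_Y/n)$-terms, with coefficients governed by partial derivatives of $f,g$ at $X_i$ and of $\phi$ at $(f(X_i), g(X_i), X_i)$.  The moment constraints in~\eqref{Eq:Weights} are designed precisely so that the weighted average~\eqref{Eq:WeightedEstimator} annihilates these polynomial terms up to the required order; the lower bounds $k_X \gg m^{1/\beta_1^*}$ and $k_Y \gg n^{1/\beta_2^*}$ render the H\"older remainders in the expansion of $\phi$ negligible, while the upper bounds involving $\tau_1, \tau_2$ render the uncancelled higher-order polynomial terms negligible.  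Passing from pointwise estimates to uniform bounds uses the integrability hypotheses defining $\mathcal{F}_{d,\vartheta}$ (particularly those involving $M_\beta$) together with condition~(ii) to tame the growth of $\phi$ and its derivatives near $\{f=0\} \cup \{g=0\}$.

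For the variance, I would perform a Hoeffding-type projection, writing $\hat{T}_{m,n} - \mathbb{E}_{f,g}\hat{T}_{m,n} = P_X + P_Y + R$, where $P_X := \frac{1}{m}\sum_i h_X(X_i)$ with $h_X(x) := m\bigl\{\mathbb{E}[\hat{T}_{m,n} \mid X_1 = x] - \mathbb{E}\hat{T}_{m,n}\bigr\}$ (and analogously $P_Y$), and $R$ is orthogonal to $P_X + P_Y$.  The linearisation above identifies the projection functions explicitly.  For the $X$-sample, the direct appearance of $X_i$ inside $\phi_{X_i}$ combines with the indirect contribution coming from the beta-distributed fluctuation of $\rho_{(k_X),i,X}$ to produce, at leading order, $h_X(X_1) \to \phi_{X_1} + (f\phi_{10})_{X_1} - T$, giving $\mathrm{Var}(P_X) = v_1/m + o(1/m)$.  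For the $Y$-sample, conditioning on $(X_1,\ldots,X_m)$ and expanding with respect to the fluctuation of $\rho_{(k_Y),i,Y}$ around $\{g(X_i)V_d\}^{-1}(k_Y/n)$ yields $h_Y(Y_1) \to (f\phi_{01})_{Y_1}$, giving $\mathrm{Var}(P_Y) = v_2/n + o(1/n)$; finiteness of $v_2$ is exactly where $\gamma > \gamma_*$ is required.

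The main obstacle is to show that $\mathrm{Var}(R) = o(1/m + 1/n)$ uniformly.  This demands a second-order analysis of both within-sample dependence (distinct summands $i \neq i'$ may share nearest neighbours in the $X$-sample) and between-sample dependence (a single $Y_j$ can be the $k_Y$-th nearest neighbour of many distinct $X_i$, so its influence propagates multiplicatively).  Classical vertex-degree bounds for $k$-nearest-neighbour graphs, combined with the Taylor-remainder bound from~(i) and the growth bounds from~(ii), should produce covariance estimates whose summability is controlled by $\zeta, \tau_1, \tau_2$.  Uniformity over $\mathcal{F}_{d,\vartheta}$ and $\Phi(\xi)$ forces every remainder bound to be expressible through the integrals defining $\mathcal{F}_{d,\vartheta}$; in particular, the condition $\zeta < 1/2$ is what ensures that even the worst-case growth of $\phi$ and its partial derivatives near $\{f=0\} \cup \{g=0\}$ can be absorbed by the integrability of $M_\beta^{d\lambda_1}/f^{\lambda_1}$ and $M_\beta^{d\lambda_2}/g^{\lambda_2}$ against $f$, while the absolute-continuity-like bound $\int f^2 g^{-\gamma} < \infty$ handles the $Y$-sample contribution.
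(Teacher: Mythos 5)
Your overall architecture (bias--variance decomposition; conditioning on $X_i$ so that the probability contents are exactly $\mathrm{Beta}(k_X,m-k_X)$ and $\mathrm{Beta}(k_Y,n+1-k_Y)$; Taylor expansion of $\phi$ with conditions~(i)--(ii) and the integrability conditions in $\mathcal{F}_{d,\vartheta}$ controlling the remainders; the weight constraints in~\eqref{Eq:Weights} annihilating the polynomial bias terms) matches the paper's treatment of the bias, and that half of your plan is essentially Proposition~\ref{Thm:SimpleBias} and Corollary~\ref{Cor:WeightedBias}.

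The gap is in the variance step. Writing $\hat{T}_{m,n}-\mathbb{E}\hat{T}_{m,n}=P_X+P_Y+R$ with $R$ orthogonal to the projections is fine, but then $\mathrm{Var}(R)=\mathrm{Var}(\hat{T}_{m,n})-\mathrm{Var}(P_X)-\mathrm{Var}(P_Y)$, so proving $\mathrm{Var}(R)=o(m^{-1}+n^{-1})$ \emph{is} the problem of computing $\mathrm{Var}(\hat{T}_{m,n})$ to within $o(m^{-1}+n^{-1})$, and your proposal supplies no mechanism for this beyond ``classical vertex-degree bounds for $k$-nearest-neighbour graphs''. Those bounds cannot deliver the required precision: the number of pairs $(i,i')$ sharing nearest neighbours is of order $mk_X$, and bounding each such covariance by Cauchy--Schwarz gives a contribution of order $k_X/m$, which is \emph{not} $o(1/m)$ since $k_X\rightarrow\infty$; similarly the between-sample dependence contributes at exact order $1/n$ (it is precisely where $v_2/n$ comes from), so it cannot simply be ``absorbed''. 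What is missing is the analysis of the \emph{joint} conditional law of the nearest-neighbour statistics at two points $X_1,X_2$: for well-separated points a Dirichlet representation plus a total-variation comparison with the product of two beta laws (cost $O(k_X/m)$, Lemma~\ref{Lemma:Pinsker}), and for nearby points a multinomial representation with a quantitative bivariate normal approximation (G\"otze's Berry--Esseen, Lemma~\ref{Lemma:NormalApproximation}) together with the geometric overlap function $\alpha_z$ and the refinement $\hat{f}_{(k_X),1}/f(X_1)=1+O_p(k_X^{-1/2})$, which reduces each sharing-pair covariance to $O(1/k_X)$. Only this two-point computation produces the leading-order covariance $\frac{2}{m}\mathrm{Cov}_f(\phi_{X_1},(f\phi_{10})_{X_1})+\frac{1}{m}\mathrm{Var}_f((f\phi_{10})_{X_1})+\frac{v_2}{n}$ --- i.e.\ certifies that your projection terms capture everything up to $o(m^{-1}+n^{-1})$ --- and it is also where $\gamma>\gamma_*$ and $\zeta<1/2$ enter quantitatively. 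A further, smaller omission is the weighted estimator: the cross-terms between different $(j_X,j_Y)$ and $(\ell_X,\ell_Y)$ require joint Dirichlet laws of order statistics at distinct indices, as in~\eqref{Eq:WeightedVarDecomp}.
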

In Proposition~\ref{Prop:ImprovedBias} in Section~\ref{Sec:ImprovedBiasSec}, we will improve Theorem~\ref{Thm:Main} by showing that when $\beta \in (0,1]$, the same conclusion holds when we replace the term $d/(2\beta)$ in the definitions of $\tau_1, \tau_2$ in~\eqref{Eq:Zeta} with $d/(4\beta)$.  This allows us to weaken the smoothness requirement on our densities for the estimators $\hat{T}_{m,n}$ to be efficient.  In particular, we only need $\beta > d/4$ instead of $\beta > d/2$, when $d \in \{1,2,3\}$ and when $\beta^*$ may be taken to be arbitrarily large, which is the case in several examples of interest, as illustrated below.

Theorem~\ref{Thm:Main} follows immediately from combining Proposition~\ref{Thm:SimpleBias} in Section~\ref{Sec:Bias} with Proposition~\ref{Prop:Variance} in Section~\ref{Sec:Variance}, which elucidate the asymptotic bias and variance of $\hat{T}_{m,n}$ respectively.  We therefore defer a description of the main ideas of our proofs until after the statements of these results, and first illustrate Theorem~\ref{Thm:Main} via several examples.

\begin{eg}
\label{Ex:KL}
Consider the Kullback--Leibler divergence, for which we may take $\phi(u,v) = \log(u/v)$. For any $\epsilon \in (0,1/2)$, any $\beta^* \geq 2$, and any $L> (\beta^*-1) !$, we have that $\phi \in \Phi(-\epsilon,-\epsilon,\beta^*,L)$. Thus, for any $d \in \mathbb{N}$ and $\vartheta=(\alpha,\beta,\lambda_1,\lambda_2,C) \in \Theta$ such that $\beta > d/2$ and $\min(\lambda_1,\lambda_2) > 1/2$, Theorem~\ref{Thm:Main} tells us that we can find sequences $(k_X),(k_Y),(w_X),(w_Y)$ such that
\[
	\sup_{(f,g) \in \mathcal{F}_{d,\vartheta} } \biggl| \mathbb{E}_{f,g} \{ ( \hat{T}_{m,n} -T)^2 \} - \frac{1}{m}\mathrm{Var}_f\log\biggl(\frac{f(X_1)}{g(X_1)}\biggr) - \frac{1}{n}\mathrm{Var}_g \biggl(\frac{f(Y_1)}{g(Y_1)}\biggr) \biggr| = o\biggl( \frac{1}{m} + \frac{1}{n} \biggr).
\]
If $f$ and $g$ are spherically symmetric beta densities as in Proposition~\ref{Prop:Beta} with parameters $(a_f,b_f)$ and $(a_g,b_g)$ respectively, then we see from the proof of Proposition~\ref{Prop:Beta} that we have $M_\beta(x) \leq A/\{\|x\|(1-\|x\|)\}$, where $A > 0$ depends only on $d, a_f, b_f, a_g$ and $b_g$.  Thus $(f,g) \in \mathcal{F}_{d, \vartheta}$ for sufficiently large $C > 0$ whenever
\begin{align*}
\lambda_1 &\in \biggl(0,\frac{b_f}{b_f+d-1}\biggr), \quad \lambda_2 \in \biggl(0, \min\biggl\{\frac{a_f+d-1}{a_g+d-1},\frac{b_f}{b_g+d-1}\biggr\}\biggr), \\ 
2a_f&-a_g + d-1>0 \quad and \quad 2b_f-b_g>0.
\end{align*}
It follows from simplifying the condition $\min(\lambda_1,\lambda_2) >1/2$ that we have efficiency whenever $\beta>d/2$ and 
\[
    \min \biggl( \frac{b_f}{b_f+d-1}, \frac{a_f+d-1}{a_g+d-1}, \frac{b_f}{b_g+d-1} \biggr) > \frac{1}{2}.
\]
As mentioned above, in Section~\ref{Sec:ImprovedBiasSec} we will see that here, as in Examples~\ref{Ex:Renyi} and~\ref{Ex:L2} below, we can weaken the first of these conditions to $\beta > d/4$ whenever $d \in \{1,2,3\}$.
\end{eg}

\begin{eg}
\label{Ex:Renyi}
For $ \kappa \in (1/2,3/2)$, consider the $\kappa$-R\'enyi divergence, for which we may take $\phi(u,v) = (u/v)^{\kappa-1}$. For any $\beta^* \geq 2$ and $L \geq (\beta^*)!$ we have $\phi \in \Phi(\kappa-1,1-\kappa,\beta^*,L)$. Let $d \in \mathbb{N}$ and $\vartheta = (\alpha,\beta,\lambda_1,\lambda_2,C) \in \Theta$ be such that $\beta > d/2$, such that
\[
	\zeta = \frac{(\kappa-1)_-}{\lambda_1} + \frac{(1-\kappa)_-}{\lambda_2} + \frac{d|1-\kappa|}{\alpha} < \frac{1}{2},
\]
and such that $\min(\lambda_1,\lambda_2) > 1/\{2(1-\zeta)\}$. Then, by Theorem~\ref{Thm:Main}, we can find sequences $(k_X),(k_Y),(w_X),(w_Y)$ such that
\begin{align*}
  \sup_{(f,g) \in \mathcal{F}_{d,\vartheta} } \biggl| \mathbb{E}_{f,g} \{ ( \hat{T}_{m,n} -T)^2 \} - \frac{\kappa^2}{m}\mathrm{Var}_f\biggl(\frac{f(X_1)^{\kappa-1}}{g(X_1)^{\kappa-1}}\biggr) - \frac{(\kappa-1)^2}{n}\mathrm{Var}_g&\biggl(\frac{f(Y_1)^{\kappa}}{g(Y_1)^{\kappa}}\biggr) \biggr| \\
  &= o\biggl( \frac{1}{m} + \frac{1}{n} \biggr).
\end{align*}
As in Example~\ref{Ex:KL}, we simplify these conditions for spherically symmetric beta distributions, but here we restrict attention to $d=1$ and $\beta > 1/4$ for simplicity.  When $\kappa \in (1,3/2)$ we have efficiency when $\min(a_f/a_g,b_f/b_g) > \kappa-1/2$, and when $\kappa \in (1/2,1)$ we have efficiency when $\min(a_f/a_g,b_f/b_g) > 1/(2\kappa)$.
\end{eg}

\begin{eg}
\label{Ex:L2}
Suppose we would like to estimate $\int_{\mathbb{R}^d} \{ f(x) - g(x) \}^2 \,dx = \int_{\mathbb{R}^d} f(x)^2 \,dx + \int_{\mathbb{R}^d} g(x)^2 \,dx - 2 \int_{\mathbb{R}^d} f(x) g(x) \,dx$. We may estimate each of these terms separately using one- or two-sample estimators as appropriate. Then, by Theorem~\ref{Thm:Main} and a corresponding one-sample version, we can achieve a mean squared error of $O(1/m+1/n)$ uniformly over classes of densities $(f,g)$ such that $\|f\|_\infty,\|g\|_\infty \leq C$, such that $\mu_{1/C}(f), \mu_{1/C}(g) \leq C$, such that
\[
	\int_{\mathbb{R}^d} f(x)^{1-\lambda_1} M_\beta(x)^{d \lambda_1} \,dx \leq C , \quad \int_{\mathbb{R}^d} g(x)^{1-\lambda_2} M_\beta(x)^{d \lambda_2} \,dx \leq C,
\]
and such that
\begin{equation}
\label{Eq:OneWayRound}
	 \int_{\{x:g(x) > 0\}} f(x) \biggl\{ \frac{M_\beta(x)^d}{g(x)} \biggr\}^{\lambda_3} \,dx \leq C,
\end{equation}
for any $C>0$, for any $\beta>d/2$ and for any $\lambda_1,\lambda_2,\lambda_3 > 1/2$.  It may be the case that $f$ has heavier tails than $g$, so that~\eqref{Eq:OneWayRound} holds with the roles of $f$ and $g$ reversed.  In that case, we can obtain the same order of mean squared error by reversing the roles of the two samples in our estimator.  
\end{eg}

%The derived parameters $\tau_1$ and $\tau_2$ can be thought of heuristically as upper bounds on $\frac{\log k_X^{\mathrm{U}}}{\log m}$ and $\frac{\log k_Y^{\mathrm{U}}}{\log n}$, respectively, that we require so that all of the error terms in our proofs can be shown to be sufficiently small. Our bounds on the bias include terms of order $k_X^{-\beta_1^*/2}$ and $k_Y^{-\beta_2^*/2}$, so for the bias to be $o(m^{-1/2}+n^{-1/2})$ we require $\tau_1 > 1/\beta_1^*$ and $\tau_2 > 1/\beta_2^*$. The parameters $\zeta$ controls the tail behaviour of our estimator $\hat{T}_{m,n}$ relative to the class of functionals that we are considering.

To study the asymptotic normality of $\hat{T}_{m,n}$, we impose a stronger condition on the pair $(f,g)$: for $\vartheta = (\alpha,\beta,\lambda_1,\lambda_2,C) \in \Theta$, let
\begin{align}
  \label{Eq:Fdtilde}
  \widetilde{\mathcal{F}}_{d,\vartheta}:= \biggl\{&(f,g) \in \mathcal{F}_{d,\vartheta} : \min(v_1,v_2) \geq 1/C, \nonumber \\
&\max_{p=3,4} \max \biggl( \int_\mathcal{X} f(x)^{1+p\kappa_1} g(x)^{p\kappa_2} \,dx, \int_\mathcal{X} g(y)^{1+p(\kappa_2-1)} f(y)^{p + p\kappa_1} \,dy \biggr) \leq C\biggr\}.
\end{align}
%The exponent of $g(x)$ in the first integral is chosen so that, by H\"older's inequality, $\widetilde{\mathcal{F}}_{d,\vartheta} \subseteq\mathcal{F}_{d,\vartheta'}$ for all $\theta' = (\alpha,\beta,\lambda_1,\lambda_2,\gamma',C')$ with $\gamma' < \gamma$ and $C' > 0$ sufficiently large.  This additional condition provides control over the fourth moment of $\hat{T}_{m,n}$, which allows us to apply the crucial central limit theorem of \citet{Baldi1989}.
%To explain why we bound the integrals in the statement of Theorem~\ref{Thm:Normality}, consider the influence functions given by $\mathrm{IF}_1(x) := \phi_x + (f \phi_{10})_x$ and $\mathrm{IF}_2(y) := (f \phi_{01})_y$. Our conditions on $\phi$ imply that $|\mathrm{IF}_1(x)| \leq 2L f(x)^{\kappa_1} g(x)^{\kappa_2}$ and $|\mathrm{IF}_2(y)| \leq L f(y)^{\kappa_1+1} g(y)^{\kappa_2-1}$. Under our assumptions we can therefore obtain bounds on $\mathbb{E}\{ |\mathrm{IF}_1(X_1)|^p\}$ and $\mathbb{E}\{ |\mathrm{IF}_2(Y_1)|^p\}$ for $p=3,4$.  This is helpful for the application of the central limit theorem of \citet{Baldi1989}.

%for dependent random variables where the degrees of the nodes in the pairwise dependency graph are controlled.

%The proof of Theorem~\ref{Thm:Normality} requires control over the fourth moments of $\hat{T}_{m,n}$, which is why we require this extra control.
To explain the lower bounds on $v_1$ and $v_2$ in~\eqref{Eq:Fdtilde}, consider the setting in which $\phi(u,v)=\varphi(v/u)$, as is the case with $\varphi$-divergences.  Then, writing $W:=g(X_1)/f(X_1)$ and $Z:=g(Y_1)/f(Y_1)$, we have that
\[
	v_1 = \mathrm{Var} \bigl( \varphi(W) - W \varphi'(W) \bigr) \quad \text{ and } \quad v_2= \mathrm{Var} \bigl( \varphi'(Z) \bigr).
\]
Now, if $f=g$ then we have $v_1=v_2=0$, and it is possible that estimators will converge to~$T$ at a faster rate than $m^{-1/2}+n^{-1/2}$ (with a potentially non-normal limiting distribution).  Thus, in order to state uniform results on the asymptotic normality of $\hat{T}_{m,n}$, we work over a class of densities for which $v_1$ and $v_2$ are bounded below.

The bounds on the integrals in~\eqref{Eq:Fdtilde} arise from considering the influence functions given by $\mathrm{IF}_1(x) := \phi_x + (f \phi_{10})_x$ and $\mathrm{IF}_2(y) := (f \phi_{01})_y$. Our conditions on $\phi$ imply that $|\mathrm{IF}_1(x)| \leq 2LC^{2L+|\kappa_1|+|\kappa_2|} f(x)^{\kappa_1} g(x)^{\kappa_2}$ and $|\mathrm{IF}_2(y)| \leq L C^{2L+|\kappa_1|+|\kappa_2|} f(y)^{\kappa_1+1} g(y)^{\kappa_2-1}$. Under our assumptions we can therefore obtain bounds on $\mathbb{E}\{ |\mathrm{IF}_1(X_1)|^p\}$ and $\mathbb{E}\{ |\mathrm{IF}_2(Y_1)|^p\}$ for $p=3,4$.  This is helpful for the application of the central limit theorem of \citet{Baldi1989}.

For two random variables $X$ and $Y$ with distribution functions $F$ and $G$ (where for later convenience we allow $X$ and $Y$ to take values in the extended real line), let
\[
	d_{\mathrm{K}}\bigl(\mathcal{L}(X),\mathcal{L}(Y)\bigr):= \sup_{t \in \mathbb{R}} | F(t) - G(t) |
\]
denote the Kolmogorov distance between the distributions of $X$ and $Y$.
\begin{thm}
\label{Thm:Normality}
Suppose that the conditions of Theorem~\ref{Thm:Main} hold. %\blue{Writing
%\begin{align*}
%	\gamma_1^*&:= \frac{6 \kappa_2(2 \alpha+d)}{3\alpha+d -6 \kappa_1(\alpha+d)}, \quad \tau_1^*:= 1 - \frac{1}{1 +( \frac{ \alpha+d/3}{ \alpha+d}-2 \kappa_1)(1-\gamma_1^*/\gamma)} \\
%	\gamma_2^*&:= \frac{3(1+2\kappa_2)(2 \alpha+d)}{2\{4 \alpha +3d - (1+3 \kappa_1)(\alpha+d)\}}, \quad \tau_2^*:= 1- \frac{1}{1+(1+2\kappa_2)(\gamma/\gamma_2^*-1)},
%\end{align*}
%suppose also that $\min(\gamma_1^*,\gamma_2^*)>0$, that $\gamma > \max(\gamma_1^*,\gamma_2^*) $, and that, for some $\epsilon > 0$, we have $\max(k_X^\mathrm{U} m^{-(\tau_1^*-\epsilon)}, k_Y^\mathrm{U} n^{-(\tau_2^*-\epsilon)}) \rightarrow 0$.}
%Let $\widetilde{\mathcal{F}}_{d,\vartheta}$ be the set of all $(f,g) \in \mathcal{F}_{d,\vartheta}$ that additionally satisfy $v_1,v_2 \geq C^{-1}$ and
%\[
%	\max_{p=3,4} \max \biggl\{ \int_\mathcal{X} f(x)^{1+p\kappa_1} g(x)^{p\kappa_2} \,dx, \int_\mathcal{X} g(y)^{1+p(\kappa_2-1)} f(y)^{p + p\kappa_2} \,dy \biggr\} \leq C.
%\]
%\[
%	\gamma > \max \biggl( \frac{(2 \alpha+d)(2+4 \kappa_2)}{4 \alpha + 3d - 4 \kappa_1(\alpha+d)}, \frac{4 \kappa_2 (2 \alpha+d)}{2 \alpha+d - 4\kappa_1(\alpha+d)} \biggr).
%\]
If $(k_X^\mathrm{U})^4 \log^8 m = o(m)$ and $(k_Y^\mathrm{U})^4 \log^8 n = o(n)$, then
\[
	 \sup_{\phi \in \Phi(\xi)} \sup_{(f,g) \in \widetilde{\mathcal{F}}_{d,\vartheta}} \max_{\substack{k_X \in \{k_X^{\mathrm{L}},\ldots, k_X^{\mathrm{U}}\} \\ k_Y \in \{k_Y^{\mathrm{L}},\ldots, k_Y^{\mathrm{U}}\}}} d_\mathrm{K} \biggl( \mathcal{L}\biggl(\frac{\hat{T}_{m,n} - T}{\{v_1/m + v_2/n \}^{1/2}}\biggr), N(0,1) \biggr) \rightarrow 0
        \]
        as $m,n \rightarrow \infty$.
      \end{thm}
 %The proof is aided by the interesting observation (which can be derived from the calculations in the proof of our variance expansion in Proposition~\ref{Prop:Variance}) that under our conditions,
%      \[
 %       \widetilde{T}_{m,n} - \mathbb{E}(\widetilde{T}_{m,n}) = \widetilde{T}_m^{(1)} - \mathbb{E}(\widetilde{T}_m^{(1)}) + \widetilde{T}_n^{(2)} - \mathbb{E}(\widetilde{T}_n^{(2)}) + o_p(m^{-1/2} + n^{-1/2}),
%      \]
%where
%      \begin{align*}
%        \widetilde{T}_m^{(1)}:= \frac{1}{m}\sum_{i=1}^m \phi\bigl( \hat{f}_{(k_X),i} ,g(X_i)\bigr), \quad \widetilde{T}_n^{(2)}:= \mathbb{E}\Bigl\{\phi\bigl(f(X_1), \hat{g}_{(k_Y),1}\bigr)\Bigm|Y_1,\ldots,Y_n\Bigr\}.
%      \end{align*}
%      Here, the crucial point is that $\widetilde{T}_m^{(1)}$ depends only on $X_1,\ldots,X_m$ and $\widetilde{T}_n^{(2)}$ depends only on $Y_1,\ldots,Y_n$.

The proof of Theorem~\ref{Thm:Normality} relies on a Poissonisation argument.  By this, we mean that we initially consider the related problem where instead of observing samples $X_1,\ldots,X_m$ and $Y_1,\ldots,Y_n$ of fixed size, we first sample $M \sim \mathrm{Poi}(m)$ and $N \sim \mathrm{Poi}(n)$, and, conditional on~$M$ and $N$, observe two independent samples $X_1,\ldots,X_M \stackrel{\mathrm{iid}}{\sim} f$ and $Y_1,\ldots,Y_N \stackrel{\mathrm{iid}}{\sim} g$.  The main reason for doing this is because in this model, appropriately truncated nearest neighbour distances of $X_i$ and $X_j$ are independent provided that $X_i$ and $X_j$ are sufficiently far apart.  One of the key ideas of the proof is the observation that, after Poissonisation and nearest neighbour distance truncation, we can construct a careful partition of $\mathbb{R}^d$ into Voronoi cells, such that the probability content of each cell is roughly the same and decays with the sample size, and yet each cell has only a small number of other cells that are close to it (Proposition~\ref{Prop:Partition}).  By decomposing our estimator into contributions from each cell of the partition, we therefore obtain a sum of terms with a sparse dependency graph, which enables us to apply the central limit theorem of \citet{Baldi1989}.

%In the proof of Theorem~\ref{Thm:Normality}, it will be convenient to approximate our original estimators by those obtained from the above Poissonised sampling model.
%      . The first of these (Proposition~\ref{Prop:Poisson}) shows that we can approximate our estimators by random variables that are calculated from data generated by a Poisson sampling scheme. The second result (Proposition~\ref{Prop:Partition}), provides a convenient partition of (minor modifications of) $\mathcal{X}_{m,f}$ and $\mathcal{X}_{n,g}$ so that, under the Poisson sampling scheme, the $k$-nearest neighbour distances of points in distant pieces are roughly independent.  
%      As mentioned above, in Proposition~\ref{Prop:Poisson} and Theorem~\ref{Thm:Normality} below, it will be convenient to approximate our original estimators by those obtained from a Poissonised sampling model.  By this, we mean that instead of observing samples $X_1,\ldots,X_m$ and $Y_1,\ldots,Y_n$ of fixed size, we first sample $M \sim \mathrm{Poi}(m)$ and $N \sim \mathrm{Poi}(n)$, and, conditional on $M$ and $N$, observe $X_1,\ldots,X_M \stackrel{\mathrm{iid}}{\sim} f$ and $Y_1,\ldots,Y_N \stackrel{\mathrm{iid}}{\sim} g$.  The main reason for doing this is because in this model, appropriately truncated nearest neighbour distances of $X_i$ and $X_j$ are independent provided $X_i$ and $X_j$ are sufficiently far apart.
Another key aspect of the proof of Theorem~\ref{Thm:Normality} is an approximation of our unweighted nearest neighbour functional estimators by a sum of two terms, each of which only depends on one of the samples.  To describe this decomposition, we write $\rho_{(k),i,\ell}$ for the $k$th nearest neighbour distance of $X_i$ among the sample $X_1,\ldots,X_\ell$ whenever $\ell \geq \max(k+1,i)$. We will also write $\rho_{(k),\ell}(x)$ for the $k$th nearest neighbour distance of $x$ among the sample $Y_1,\ldots,Y_\ell$ whenever $\ell \geq k$.  Now define the random variables
\begin{align}
\label{Eq:SemiOracle}
	T_m^{(1)} &:= \frac{1}{m} \sum_{i=1}^m \phi \biggl( \frac{k_X}{mV_d \rho_{(k_X),i,m}^d}, g(X_i) \biggr) \nonumber \\
	T_n^{(2)} &:= \int_{\mathcal{X}} f(x) \phi\biggl( f(x), \frac{k_Y}{n V_d \rho_{(k_Y),n}(x)^d} \biggr) \,dx
\end{align}
We can think of $T_m^{(1)}$ and $T_n^{(2)}$ as semi-oracle estimators, where in the first case the sample size $n$ from density $g$ is infinite, and in the second case, the sample size $m$ from density $f$ is infinite.  In particular, the crucial point is that $T_m^{(1)}$ depends only on $X_1,\ldots,X_m$ and $T_n^{(2)}$ depends only on $Y_1,\ldots,Y_n$.  In fact, our proof reveals the interesting observation that under our conditions,
      \[
        \widetilde{T}_{m,n} - \mathbb{E}(\widetilde{T}_{m,n}) = T_m^{(1)} - \mathbb{E}(T_m^{(1)}) + T_n^{(2)} - \mathbb{E}(T_n^{(2)}) + o_p(m^{-1/2} + n^{-1/2}).
      \]
%where
%      \begin{align*}
%        T_m^{(1)}:= \frac{1}{m}\sum_{i=1}^m \phi\bigl( \hat{f}_{(k_X),i} ,g(X_i)\bigr), \quad T_n^{(2)}:= \mathbb{E}\Bigl\{\phi\bigl(f(X_1), \hat{g}_{(k_Y),1}\bigr)\Bigm|Y_1,\ldots,Y_n\Bigr\}.
%      \end{align*}
%      Here, the crucial point is that $T_m^{(1)}$ depends only on $X_1,\ldots,X_m$ and $T_n^{(2)}$ depends only on $Y_1,\ldots,Y_n$.
The main advantage of this decomposition is that it allows us to establish the asymptotic normality of $\widetilde{T}_{m,n}$ by considering $T_m^{(1)}$ and $T_n^{(2)}$ separately.  A further benefit is that it facilitates control of the Poissonisation error more easily than would otherwise be the case, as we now explain.  Let $M \sim \mathrm{Poi}(m)$ and $N \sim \mathrm{Poi}(n)$ be independent (and independent of the data), and, when $M \geq (k_X+1) \log (em)$ and $N \geq k_Y \log(en)$, define
\begin{align*}
	T_m^{(1),\mathrm{p}} &:= \frac{1}{m} \sum_{i=1}^M \phi \biggl( \frac{k_X}{m V_d \rho_{(k_X),i,M}^d}, g(X_i) \biggr) - \Bigl( \frac{M}{m} -1 \Bigr) \int_\mathcal{X} f(x)\{ \phi_x +(f \phi_{10})_x \} \, dx \\
	T_n^{(2),\mathrm{p}} &:=\int_\mathcal{X} f(x) \phi \biggl( f(x), \frac{k_Y}{n V_d \rho_{(k_Y),N}(x)^d} \biggr) \, dx - \Bigl( \frac{N}{n} -1 \Bigr) \int_\mathcal{X} f(x) (g \phi_{01})_x \,dx.
\end{align*}
If $M <(k_X+1) \log (em)$, say $T_m^{(1),\mathrm{p}}:=0$, and similarly if $N <k_Y \log (en)$, say $T_n^{(2),\mathrm{p}}:=0$. The following result bounds the mean squared difference of these approximations.
\begin{prop}
\label{Prop:Poisson}
Assume that the conditions of Theorem~\ref{Thm:Main} hold and additionally assume that $k_X^\mathrm{U}=o(m^{1/4})$ and $k_Y^\mathrm{U}=o(n^{1/6})$. Then
\[
\sup_{\phi \in \Phi(\xi)}  \sup_{(f,g) \in \mathcal{F}_{d,\vartheta}}  \max_{ k_X \in \{k_X^{\mathrm{L}},\ldots, k_X^{\mathrm{U}}\}} \mathbb{E} \bigl\{ (T_m^{(1)} - T_m^{(1),\mathrm{p}})^2 \bigr\} = o(1/m)
\]
and
\[
\sup_{\phi \in \Phi(\xi)}  \sup_{(f,g) \in \mathcal{F}_{d,\vartheta}}  \max_{ k_Y \in \{k_Y^{\mathrm{L}},\ldots, k_Y^{\mathrm{U}}\}} \mathbb{E} \bigl\{ (T_n^{(2)} - T_n^{(2),\mathrm{p}})^2 \bigr\} = o(1/n).
\]
\end{prop}      

Theorem~\ref{Thm:Normality} also facilitates the construction of asymptotically valid confidence intervals of asymptotically minimal width, provided we can find consistent estimators of $v_1$ and $v_2$.  To describe our methodology here, it is convenient to introduce the shorthand 
\begin{equation}
  \label{Eq:fhatghat}
	\hat{f}_{(k_X),i}:=\frac{k_X}{mV_d \rho_{(k_X),i,X}^d} \quad \text{and} \quad \hat{g}_{(k_Y),i}:=\frac{k_Y}{nV_d \rho_{(k_Y),i,Y}^d}
\end{equation}
for $i \in \{1,\ldots,m\}, k_X \in \{1,\ldots,m-1\}$ and $k_Y \in \{1,\ldots,n\}$.  Further, define
\begin{align*}
	\hat{V}_{m,n}^{(1),1} &:= \frac{1}{m} \sum_{i=1}^m \min \Bigl[ \bigl\{ \phi \bigl( \hat{f}_{(k_X),i} ,\hat{g}_{(k_Y),i} \bigr) +\hat{f}_{(k_X),i} \phi_{10} \bigl(\hat{f}_{(k_X),i},\hat{g}_{(k_Y),i} \bigr) \bigr\}^2, \log m , \log n \Bigr] \\
	\hat{V}_{m,n}^{(1),2} &:= \widetilde{T}_{m,n} + \frac{1}{m} \sum_{i=1}^m \hat{f}_{(k_X),i} \phi_{10} \bigl(\hat{f}_{(k_X),i},\hat{g}_{(k_Y),i} \bigr) \\
	\hat{V}_{m,n}^{(2),1} &:= \frac{1}{m} \sum_{i=1}^m \min \Bigl\{ \hat{f}_{(k_X),i}\hat{g}_{(k_Y),i} \phi_{01} \bigl( \hat{f}_{(k_X),i},\hat{g}_{(k_Y),i} \bigr)^2, \log m, \log n \Bigr\} \\
	\hat{V}_{m,n}^{(2),2} &:= \frac{1}{m} \sum_{i=1}^m \hat{g}_{(k_Y),i} \phi_{01} \bigl( \hat{f}_{(k_X),i},\hat{g}_{(k_Y),i} \bigr),
\end{align*}
as well as $\hat{V}_{m,n}^{(1)}:= \max\{\hat{V}_{m,n}^{(1),1} - ( \hat{V}_{m,n}^{(1),2} )^2,0\}$ and $\hat{V}_{m,n}^{(2)}:= \max\{\hat{V}_{m,n}^{(2),1} - ( \hat{V}_{m,n}^{(2),2} )^2,0\}$.  It turns out that $\hat{V}_{m,n}^{(1)}$ and $\hat{V}_{m,n}^{(2)}$ satisfy the consistency property that we seek, so, writing $z_q$ for the $(1-q)$th quantile of the standard normal distribution, $\hat{v}_{m,n}:= \hat{V}_{m,n}^{(1)}/m + \hat{V}_{m,n}^{(2)}/n$ and
\[
	I_{m,n,q}:= \bigl[ \hat{T}_{m,n} - z_{q/2} \hat{v}_{m,n}^{1/2} \ , \ \hat{T}_{m,n} + z_{q/2} \hat{v}_{m,n}^{1/2} \bigr],
\]
we have the following result.
\begin{thm}
\label{Thm:ConfidenceIntervals}
Suppose that the conditions of Theorem~\ref{Thm:Normality} hold. Then
\[
	\sup_{\phi \in \Phi(\xi)} \sup_{(f,g) \in \widetilde{\mathcal{F}}_{d,\vartheta}}  \max_{\substack{k_X \in \{k_X^{\mathrm{L}},\ldots, k_X^{\mathrm{U}}\} \\ k_Y \in \{k_Y^{\mathrm{L}},\ldots, k_Y^{\mathrm{U}}\}}} d_\mathrm{K} \biggl( \mathcal{L}\biggl(\frac{\hat{T}_{m,n} - T}{\hat{v}_{m,n}^{1/2}}\biggr), N(0,1) \biggr) \rightarrow 0
        \]
        as $m,n \rightarrow \infty$. In particular, 
\[
	\sup_{q \in (0,1)}  \sup_{\phi \in \Phi(\xi)} \sup_{(f,g) \in \widetilde{\mathcal{F}}_{d,\vartheta}}  \max_{\substack{k_X \in \{k_X^{\mathrm{L}},\ldots, k_X^{\mathrm{U}}\} \\ k_Y \in \{k_Y^{\mathrm{L}},\ldots, k_Y^{\mathrm{U}}\}}} \Bigl| \mathbb{P} \bigl( I_{m,n,q} \ni T(f,g) \bigr) - (1-q) \biggr| \rightarrow 0
\]
as $m,n \rightarrow \infty$.
\end{thm}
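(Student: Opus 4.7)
The plan is to derive this result as a uniform Slutsky-type strengthening of Theorem~\ref{Thm:Normality}: we will establish uniform consistency of the variance estimators $\hat{V}_{m,n}^{(1)}$ and $\hat{V}_{m,n}^{(2)}$ for $v_1$ and $v_2$, and then combine with Theorem~\ref{Thm:Normality} to obtain the studentised limit. Given the lower bounds $v_1, v_2 \geq C^{-1}$ built into $\widetilde{\mathcal{F}}_{d,\vartheta}$, uniform consistency implies that the ratio $(m^{-1}\hat{V}_{m,n}^{(1)}+n^{-1}\hat{V}_{m,n}^{(2)})^{1/2}/(m^{-1}v_1+n^{-1}v_2)^{1/2}$ tends to $1$ uniformly in probability; a standard conditioning argument on the high-probability event where this ratio lies in $(1-\eta, 1+\eta)$, combined with the uniform continuity of the Gaussian distribution function, then converts the Kolmogorov bound of Theorem~\ref{Thm:Normality} into the desired studentised statement. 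The coverage claim is an immediate corollary, since $\mathbb{P}(I_{m,n,q} \ni T) = \mathbb{P}(|\hat{T}_{m,n}-T|/\{m^{-1}\hat{V}_{m,n}^{(1)}+n^{-1}\hat{V}_{m,n}^{(2)}\}^{1/2} \leq z_{q/2})$ differs from $1-q$ by at most twice the Kolmogorov distance just established, uniformly in $q \in (0,1)$.

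The crux of the argument is therefore the uniform consistency of $\hat{V}_{m,n}^{(j)} = \hat{V}_{m,n}^{(j),1} - (\hat{V}_{m,n}^{(j),2})^2$, which I will treat component by component. For the first-moment pieces, consider $\hat{V}_{m,n}^{(1),2} = \widetilde{T}_{m,n} + m^{-1}\sum_{i=1}^m \hat{f}_{(k_X),i}\phi_{10}(\hat{f}_{(k_X),i},\hat{g}_{(k_Y),i},X_i)$. Its expectation can be expanded, via the local beta-kernel calculations used to analyse $\widetilde{T}_{m,n}$ in Proposition~\ref{Thm:SimpleBias}, and shown to converge uniformly to $\int_{\mathcal{X}} f(x)\{\phi_x + (f\phi_{10})_x\}\,dx = \mathbb{E}\{\phi_{X_1}+(f\phi_{10})_{X_1}\}$; its variance is $O(m^{-1})$ by the dependency-graph bounds of Proposition~\ref{Prop:Variance} applied to the new summands, and $\hat{V}_{m,n}^{(2),2}$ is handled analogously. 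For the squared-summand averages $\hat{V}_{m,n}^{(j),1}$ the truncation at $\log m \wedge \log n$ is crucial: the variance of the sample mean is bounded by $(\log(m\wedge n))^2/m$ times a constant dependency factor and so still vanishes, while the truncation bias satisfies
\[
  \mathbb{E}\bigl[A^2 - A^2 \wedge \log(m \wedge n)\bigr] \leq \bigl(\log(m \wedge n)\bigr)^{-\delta}\, \mathbb{E}|A|^{2+2\delta}
\]
for $A := \phi_{X_1}+(f\phi_{10})_{X_1}$ (and analogously for the quantity relevant to $v_2$). The required uniform bound on $\mathbb{E}|A|^{2+2\delta}$ follows, for sufficiently small $\delta > 0$, from the strengthened integrability condition $\int f^4 g^{-(4\alpha+3d)\gamma/(2\alpha+d)}\,dx \leq C$ in $\widetilde{\mathcal{F}}_{d,\vartheta}$, together with the polynomial bounds $|\phi|,|u\phi_{10}| \leq L(u^{-\kappa_1}\vee u^L)(v^{-\kappa_2}\vee v^L)$ afforded by conditions (i) and (ii).

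The remaining ingredient, and what I expect to be the main obstacle, is the replacement of $(f(X_i), g(X_i))$ by $(\hat{f}_{(k_X),i}, \hat{g}_{(k_Y),i})$ inside $\phi$, $\phi_{10}$ and $\phi_{01}$ while preserving uniformity: this is needed to pass from the idealised expectations of the previous paragraph to the actual expectations of $\hat{V}_{m,n}^{(j),1}$ and $\hat{V}_{m,n}^{(j),2}$. Here condition~(iii), giving local uniform continuity of $\phi$, $\phi_{10}$, $\phi_{01}$ around the true value, is combined with the concentration of $\hat{f}_{(k_X),i}/f(X_i)$ and $\hat{g}_{(k_Y),i}/g(X_i)$ obtained en route to Proposition~\ref{Thm:SimpleBias}: on a high-probability event these ratios lie in a small neighbourhood of $1$, producing a negligible contribution, while on its complement the contribution is controlled either by the truncation level $\log(m\wedge n)$ or by the tail moment bounds just established. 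Once these uniform bias and variance bounds are in place for each of $\hat{V}_{m,n}^{(1),1}, \hat{V}_{m,n}^{(1),2}, \hat{V}_{m,n}^{(2),1}, \hat{V}_{m,n}^{(2),2}$, Chebyshev's inequality yields $\hat{V}_{m,n}^{(j)} - v_j \to 0$ uniformly in probability, and the Slutsky reduction of the first paragraph completes the proof.
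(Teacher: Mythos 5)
Your proposal follows essentially the same route as the paper: prove uniform consistency of $\hat{V}_{m,n}^{(1)}$ and $\hat{V}_{m,n}^{(2)}$ (bias plus variance, with the $\log m \wedge \log n$ truncation handled by a uniform $(2+2\delta)$-moment bound), then use the lower bounds $v_1,v_2 \geq C^{-1}$ and a Slutsky/conditioning argument to studentise the limit of Theorem~\ref{Thm:Normality}, with the coverage statement as an immediate corollary; this is exactly the structure of the paper's proof. Two attributions should be repaired, though neither changes the skeleton. First, condition~(iii) only concerns perturbations of the spatial argument $x$ (the allowed $\boldsymbol{\epsilon}$ lie in $\{0\}^2 \times B_0(r)$), so it cannot control $\phi(\hat{f}_{(k_X),i},\hat{g}_{(k_Y),i},X_i) - \phi_{X_i}$; the correct tools are conditions~(i)--(ii) together with Lemma~\ref{Prop:FunctionalClasses}, combined with the concentration of $\hat{f}_{(k_X),i}/f(X_i)$ and $\hat{g}_{(k_Y),i}/g(X_i)$ on the events $A_i^X \cap A_i^Y$, which is what the paper uses. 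Second, the uniform bound on $\mathbb{E}|A|^{2+2\delta}$ for $A = \phi_{X_1}+(f\phi_{10})_{X_1}$ does not come from the fourth-moment condition in $\widetilde{\mathcal{F}}_{d,\vartheta}$ (that condition is there for the Baldi--Rinott CLT, and indeed $\mathbb{E}(A^4)$ need not be uniformly finite, which is precisely why the truncation and the small-$\delta$ trick are needed); the paper obtains it from $\zeta<1/2$ via Lemma~\ref{Lemma:GeneralisedHolder} with $\delta=(1/2-\zeta)/(2\zeta)$, and for the $v_2$-related quantity, whose integrand involves $g^{-(1+2\kappa_2)(1+\delta)}$, from $\gamma>\gamma_*$ together with the condition $\int_{\mathcal{X}} f^2 g^{-\gamma} \leq C$ and H\"older's inequality. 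With these substitutions your argument matches the paper's; also note that for $\hat{V}_{m,n}^{(1),2}$ the paper only needs (and only proves, via Cauchy--Schwarz against $\mathrm{Var}(\widetilde{T}_{m,n})$) a variance of order $o(1)$ rather than $O(m^{-1})$, which suffices.
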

%Finally in this section, we remark that Theorem~\ref{Thm:Normality} allows us to extend our results on the worst-case risk optimality of our estimators $\widetilde{T}_{m,n}$ to more general loss functions $\ell$.  We omit formal statements for brevity.

\section{Bias}
\label{Sec:Bias}

\subsection{Bias of the naive estimator}
Here we state a result on the bias of the estimator~\eqref{Eq:Unweighted}.  It is in fact an immediate consequence of a more general statement, given as Proposition~\ref{Thm:GeneralBias}, which considers a wider range of choices of $k_X$ and $k_Y$.
\begin{prop}
  \label{Thm:SimpleBias}
  \sloppy{Fix $d \in \mathbb{N}$, $\vartheta = (\alpha,\beta,\lambda_1,\lambda_2,C) \in \Theta$ and $\xi = (\kappa_1,\kappa_2,\beta^*,L) \in \Xi$.  Assume that $\zeta<1/2$ and that $\min(\tau_1,\tau_2)>1/\beta^*$. Suppose further that $\min(k_X^{\mathrm{L}} m^{-1/\beta^*}, k_Y^{\mathrm{L}} n^{-1/\beta^*} ) \rightarrow \infty$ and that there exists $\epsilon>0$ with $\max(k_X^{\mathrm{U}} m^{-\tau_1+\epsilon}, k_Y^{\mathrm{U}} n^{-\tau_2 + \epsilon}) \rightarrow 0$.  Then for each $i_1,i_2 \in \bigl[\lceil d/2 \rceil -1 \bigr]$ and $j_1,j_2 \in \mathbb{N}_0$ such that $j_1+j_2 \leq \lceil (\beta^*-1)/2 \rceil$, we can find coefficients $\lambda_{i_1i_2j_1j_2} \equiv \lambda_{i_1i_2j_1j_2}(d,f,g,\phi)$, with the properties that $\lambda_{0,0,0,0}=T(f,g)$, that}
\[
   \sup_{\phi \in \Phi(\xi)} \sup_{(f,g) \in \mathcal{F}_{d,\vartheta}}  |\lambda_{i_1i_2j_1j_2}| < \infty,
\]
and that
\begin{align*}
  \Biggl| \mathbb{E}_{f,g}(\widetilde{T}_{m,n}) - \sum_{i_1,i_2=0}^{\lceil d/2 \rceil-1} \sum_{j_1,j_2=0}^{\infty} \mathbbm{1}_{\{j_1+j_2 \leq \lceil (\beta^*-1)/2 \rceil\}} \frac{\lambda_{i_1i_2j_1j_2}}{k_X^{j_1} k_Y^{j_2}} \Bigl( \frac{k_X}{m} \Bigr)^{\frac{2i_1}{d}} &\Bigl( \frac{k_Y}{n} \Bigr)^{\frac{2i_2}{d}}  \Biggr| \\
  &= o(m^{-1/2}+n^{-1/2})
\end{align*}
as $m,n \rightarrow \infty$, uniformly for $\phi \in \Phi(\xi), (f,g) \in \mathcal{F}_{d,\vartheta}, k_X \in \{k_X^{\mathrm{L}},\ldots, k_X^{\mathrm{U}} \}$ and $k_Y \in \{k_Y^{\mathrm{L}}, \ldots, k_Y^{\mathrm{U}}\}$.
\end{prop}
Proposition~\ref{Thm:SimpleBias} provides conditions on the classes of densities and functionals under which we can give a uniform asymptotic expansion of the bias of $\widetilde{T}_{m,n}$, up to terms of negligible order.  This expansion also holds uniformly over a range of values of $k_X$ and $k_Y$, which can be chosen adaptively (i.e.~without knowledge of the parameters of the underlying densities) to satisfy the conditions of the theorem, e.g.~by setting $k_X = m^{1/\beta^*} \log m$ and $k_Y = n^{1/\beta^*} \log n$.

As revealed by Corollary~\ref{Cor:WeightedBias} below, Proposition~\ref{Thm:SimpleBias} allows us to form weighted versions of the estimators $\widetilde{T}_{m,n,k_X,k_Y}$, for different choices of $k_X$ and $k_Y$, so as to cancel the dominant terms in the expression for the bias of the naive estimator.  Indeed, it was this result that motivated our choice of the class of weights that we consider in Theorem~\ref{Thm:Main}.
\begin{cor}
\label{Cor:WeightedBias}
Suppose that the conditions of Proposition~\ref{Thm:SimpleBias} hold.  Then for each $c \in (0,1)$, each $w_X = w_{X}^{(k_X)} \in \mathcal{W}_{\lceil (\beta^*-1)/2 \rceil,c}^{(k_X)}$ and each $w_Y = w_{Y}^{(k_Y)} \in \mathcal{W}_{\lceil (\beta^*-1)/2 \rceil,c}^{(k_Y)}$, we have
\[
	\sup_{\phi \in \Phi(\xi)} \sup_{(f,g) \in \mathcal{F}_{d,\vartheta}}   \sup_{\substack{k_X \in \{k_X^{\mathrm{L}},\ldots, k_X^{\mathrm{U}}\}\\k_Y \in \{k_Y^{\mathrm{L}}, \ldots, k_Y^{\mathrm{U}}\}}}\Bigl| \mathbb{E}_{f,g}(\hat{T}_{m,n}^{w_X,w_Y}) - T(f,g) \Bigr| = o(m^{-1/2}+n^{-1/2})
      \]
      as $m,n \rightarrow \infty$.
\end{cor}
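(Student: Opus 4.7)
The plan is to reduce the corollary to Proposition~\ref{Thm:SimpleBias} applied term by term in the double sum defining $\hat{T}_{m,n}^{w_X,w_Y}$, and then exploit the vanishing moment conditions built into $\mathcal{W}_{I,c}^{(k)}$ to cancel every non-constant contribution in the bias expansion. Concretely, by linearity of expectation,
\[
  \mathbb{E}_{f,g}\bigl(\hat{T}_{m,n}^{w_X,w_Y}\bigr) = \sum_{j_X=1}^{k_X}\sum_{j_Y=1}^{k_Y} w_{X,j_X}\, w_{Y,j_Y}\, \mathbb{E}_{f,g}\bigl(\widetilde{T}_{m,n,j_X,j_Y}\bigr).
\]
First, I would observe that since $w_{X,j_X}=0$ for $j_X<ck_X$ and $w_{Y,j_Y}=0$ for $j_Y<ck_Y$, the effective range of summation is $j_X\in\{\lceil ck_X^{\mathrm{L}}\rceil,\ldots,k_X^{\mathrm{U}}\}$ and $j_Y\in\{\lceil ck_Y^{\mathrm{L}}\rceil,\ldots,k_Y^{\mathrm{U}}\}$. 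The lower endpoints still satisfy $ck_X^{\mathrm{L}} m^{-1/\beta_1^*}\to\infty$ and $ck_Y^{\mathrm{L}} n^{-1/\beta_2^*}\to\infty$, while the upper endpoints are unchanged, so Proposition~\ref{Thm:SimpleBias} applies with lower sequences $\lceil ck_X^{\mathrm{L}}\rceil$ and $\lceil ck_Y^{\mathrm{L}}\rceil$ throughout.

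Next, I would substitute the bias expansion from Proposition~\ref{Thm:SimpleBias} for each $(j_X,j_Y)$, obtaining
\[
  \mathbb{E}_{f,g}\bigl(\widetilde{T}_{m,n,j_X,j_Y}\bigr) = \sum_{i_1,i_2=0}^{\lceil d/2\rceil-1}\sum_{l_1=0}^{\lceil\underline{\beta}_1^*/2\rceil}\sum_{l_2=0}^{\lceil\underline{\beta}_2^*/2\rceil} \lambda_{i_1,i_2,l_1,l_2}\, m^{-2i_1/d}\, n^{-2i_2/d}\, j_X^{2i_1/d - l_1}\, j_Y^{2i_2/d - l_2} + R_{m,n}(j_X,j_Y),
\]
where, crucially, $\sup_{\phi,f,g}\sup_{j_X,j_Y} |R_{m,n}(j_X,j_Y)| = o(m^{-1/2}+n^{-1/2})$ uniformly. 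Interchanging the sums and grouping by the indices $(i_1,i_2,l_1,l_2)$, the contribution of each tuple factorises as
\[
  \lambda_{i_1,i_2,l_1,l_2}\, m^{-2i_1/d}\, n^{-2i_2/d}\, \Bigl(\sum_{j_X} w_{X,j_X}\, j_X^{2i_1/d-l_1}\Bigr)\Bigl(\sum_{j_Y} w_{Y,j_Y}\, j_Y^{2i_2/d-l_2}\Bigr).
\]
By the definition of $\mathcal{W}_{\lceil\underline{\beta}_1^*/2\rceil,c}^{(k_X)}$, the first bracketed sum equals $1$ if $(i_1,l_1)=(0,0)$ and vanishes for every other $(i_1,l_1)\in[\lceil d/2\rceil-1]\times[\lceil\underline{\beta}_1^*/2\rceil]$; an identical statement holds for the $Y$ factor. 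Since $\lambda_{0,0,0,0}=T(f,g)$, only the trivial tuple survives and contributes exactly $T(f,g)$.

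The remainder term is controlled by
\[
  \Bigl|\sum_{j_X,j_Y} w_{X,j_X} w_{Y,j_Y}\, R_{m,n}(j_X,j_Y)\Bigr| \leq \|w_X\|_1\|w_Y\|_1 \cdot \sup_{j_X,j_Y}|R_{m,n}(j_X,j_Y)| \leq c^{-2}\cdot o(m^{-1/2}+n^{-1/2}),
\]
where the last bound is uniform in $\phi\in\Phi(\xi)$, $(f,g)\in\mathcal{F}_{d,\vartheta}$ and $(k_X,k_Y)$ in the prescribed ranges. This gives the stated conclusion. I do not anticipate a substantive obstacle: the only non-routine checks are (a) confirming that shrinking the lower end from $k_X^{\mathrm{L}}$ to $\lceil ck_X^{\mathrm{L}}\rceil$ preserves the hypothesis $k_X^{\mathrm{L}} m^{-1/\beta_1^*}\to\infty$ of Proposition~\ref{Thm:SimpleBias} (which it does, as $c$ is a fixed positive constant), and (b) matching the index sets in the weight definition to those in the bias expansion, which is precisely how $\mathcal{W}_{I,c}^{(k)}$ was tailored.
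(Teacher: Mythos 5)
Your argument is correct and is exactly the (implicit) route the paper intends: expand each $\mathbb{E}_{f,g}(\widetilde{T}_{m,n,j_X,j_Y})$ via Proposition~\ref{Thm:SimpleBias} (applied with the harmless lower sequences $\lceil ck_X^{\mathrm{L}}\rceil$, $\lceil ck_Y^{\mathrm{L}}\rceil$), factor the weighted sums, use the moment constraints in $\mathcal{W}_{I,c}^{(k)}$ to kill every tuple except $(0,0,0,0)$, and absorb the uniform remainder through $\|w_X\|_1\|w_Y\|_1 \leq c^{-2}$. Your reading of the definition~\eqref{Eq:WeightedEstimator} as involving $\widetilde{T}_{m,n,j_X,j_Y}$ is the intended one, and the index-matching between the weight constraints and the exponents $(k_X/m)^{2i_1/d}k_X^{-j_1}$ is exactly how the class of weights was designed.
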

In order to gain intuition about the level of smoothness of the functional required in Corollary~\ref{Cor:WeightedBias}, it is helpful to consider the following (favourable) case: if our assumptions hold for all $\alpha,\beta,\lambda_2 > 0$ and all $\lambda_1 < 1$, then it suffices that $\kappa_1>-1/2$ and that $\beta^* > \max\bigl\{2,1+d/4, \frac{2(1-\kappa_1^-)}{1-2\kappa_1^-} \bigr\}$.

The key idea of our bias proofs is a truncation argument that partitions $\mathcal{X}$ as $\mathcal{X}_{m,n} \cup (\mathcal{X} \setminus \mathcal{X}_{m,n})$, where
\[
%\label{Eq:Xn}
\mathcal{X}_{m,n} := \biggl\{x \in \mathcal{X}: \frac{f(x)}{M_\beta(x)^d} \geq \frac{k_X \log m}{m},\frac{g(x)}{M_\beta(x)^d} \geq \frac{k_Y \log n}{n}\biggr\}.
\]
Further, by Lemma~\ref{Lemma:15over7}, we have that $f$ and $g$ are uniformly well-approximated in a relative sense, over balls of an appropriate radius, by their values at the centres of these balls; more precisely, for every $\vartheta \in \Theta$, and writing $A := (16d)^{1/(\beta -\underline{\beta})}$ and $r_0(x) := 1/\bigl\{AM_\beta(x)\bigr\}$,
\[
\sup_{(f,g) \in \mathcal{F}_{d,\vartheta}} \sup_{y \in B_x(r_0(x))} \biggl|\frac{f(y)}{f(x)} - 1\biggr| \bigvee \biggl|\frac{g(y)}{g(x)} - 1\biggr| \leq \frac{1}{2}.
\]
In particular, this means that
\begin{equation}
  \label{Eq:hxfhxg}
\inf_{x \in \mathcal{X}_{m,n}} h_{x,f}\bigl(r_0(x)\bigr) \geq \frac{V_dk_X\log m}{2A^dm} \quad \text{and} \quad \inf_{x \in \mathcal{X}_{m,n}} h_{x,g}\bigl(r_0(x)\bigr) \geq \frac{V_dk_Y\log n}{2A^dn}
\end{equation}
whenever $(f,g) \in \mathcal{F}_{d,\vartheta}$.  Thus for each $x \in \mathcal{X}_{m,n}$, it is the case that with high probability, the $k_X$ nearest neighbours of $x$ among $X_1,\ldots,X_m$, as well as the $k_Y$ nearest neighbours of $x$ among $Y_1,\ldots,Y_n$, lie in $B_x\bigl(r_0(x)\bigr)$.  Moreover, the functions $h_{x,f}(\cdot)$ and $h_{x,g}(\cdot)$ can be approximated by Taylor expansions on $[0,r_0(x)]$, which yield corresponding expansions for their respective inverses.  Since $h_{X_i,f}(\rho_{(k),i,X})|X_i \sim \mathrm{Beta}(k,m-k)$ and $h_{X_i,g}(\rho_{(k),i,Y})|X_i \sim \mathrm{Beta}(k,n+1-k)$, these facts, in combination with~\eqref{Eq:hxfhxg}, allow us to deduce a stochastic expansion for $\rho_{(k),i,X}$ and $\rho_{(k),i,Y}$ in terms of powers of the relevant beta random variables.  The contribution to the bias from the region $\mathcal{X}_{m,n}$ can then be computed by a Taylor expansion of $\phi$ and using exact formulae for moments of beta random variables.  For $x \in \mathcal{X} \setminus \mathcal{X}_{m,n}$, we have no guarantees about the proximity of the $k_X$ nearest neighbours of $x$ among $X_1,\ldots,X_m$, nor the $k_Y$ nearest neighbours of $x$ among $Y_1,\ldots,Y_n$; however,
\[
\mathbb{P}(X_1 \in \mathcal{X} \setminus \mathcal{X}_{m,n}) \leq C\biggl\{\Bigl(\frac{k_X \log m}{m}\Bigr)^{\lambda_1} \bigvee \Bigl(\frac{k_Y \log n}{n}\Bigr)^{\lambda_2}\biggr\}, 
\]
so the integrability conditions in our classes $\mathcal{F}_{d,\vartheta}$ allow us to control the contribution to the bias from this region.

\subsection{Tighter control of the bias when \texorpdfstring{$\beta \leq 1$}{beta <= 1}}
\label{Sec:ImprovedBiasSec}

Our general bias result in Proposition~\ref{Thm:GeneralBias} has remainder terms of the order $(k_X/m)^{\beta/d}$ and $(k_Y/n)^{\beta/d}$ in the expansion, and leads naturally to the condition $\beta>d/2$ for efficiency.  A requirement of this level of smoothness for a parametric rate of convergence (albeit with smoothness measured in different ways) also appears in several other related works on functional estimation, including \citet{leonenko2010statistical}, \citet{Kandasamy15} and \citet{SinghPoczos2016}. However, other results show that $d/4$ smoothness (often in the case $d=1$ or while also requiring this smoothness to be at most $1$) may suffice for certain functionals without singularities \citep{BickelRitov1988,BirgeMassart1995,Laurent1996,gine2008simple,leonenko2010statistical}.  The purpose of Proposition~\ref{Prop:ImprovedBias} below, then, is to demonstrate that when $\beta \in (0,1]$, it is possible to tighten our bias bounds to have terms of the order $(k_X/m)^{2\beta/d}$ and $(k_Y/n)^{2\beta/d}$, so that we only require $\beta > d/4$ for efficiency.
\begin{prop}
\label{Prop:ImprovedBias}
Fix $d \in \mathbb{N}$, $\vartheta = (\alpha,\beta,\lambda_1,\lambda_2,C) \in \Theta$ with $\beta \in (0,1]$ and $\xi = (\kappa_1,\kappa_2,\beta_1^*,\beta_2^*,L) \in \Xi$. Let $k_X^{\mathrm{L}} \leq k_X^{\mathrm{U}},k_Y^{\mathrm{L}} \leq k_Y^{\mathrm{U}}$ be deterministic sequences of positive integers such that $k_X^{\mathrm{L}} / \log m \rightarrow \infty$, $k_Y^{\mathrm{L}}/ \log n \rightarrow \infty$, $k_X^{\mathrm{U}} = O(m^{1-\epsilon})$ and $k_Y^{\mathrm{U}} = O(n^{1-\epsilon})$ for some $\epsilon>0$. Suppose that $\zeta<1$. Then for each $j_1 \in \bigl[\lceil (\beta^*-1)/2 \rceil\bigr]$ and $j_2 \in \bigl[\lceil (\beta^*-1)/2 \rceil\bigr]$, we can find $\lambda_{j_1j_2} \equiv \lambda_{j_1j_2}(d,f,g,\phi)$, with the properties that $\lambda_{0,0}=T(f,g)$,
\[
   \sup_{\phi \in \Phi(\xi)} \sup_{(f,g) \in \mathcal{F}_{d,\vartheta}}  |\lambda_{j_1j_2}| < \infty,
\]
and that, for every $\epsilon > 0$,
\begin{align}
\label{Eq:ImprovedBiasError}
	\sup_{\phi \in \Phi(\xi)}  \sup_{(f,g) \in \mathcal{F}_{d,\vartheta}}  & \Biggl| \mathbb{E}_{f,g}(\widetilde{T}_{m,n}) - \sum_{j_1,j_2=0}^{\infty} \mathbbm{1}_{\{j_1+j_2 \leq \lceil (\beta^*-1)/2 \rceil\}} \frac{\lambda_{j_1j_2}}{k_X^{j_1} k_Y^{j_2}} \Biggr| \nonumber \\
	&\hspace{20pt} = O \biggl( \max \biggl\{ k_X^{-\beta^*/2 }, \Bigl( \frac{k_X}{m} \Bigr)^{2\beta/d}, \Bigl( \frac{k_X}{m} \Bigr)^{ \lambda_1(1-\zeta) -\epsilon}, k_Y^{-\beta^*/2 },  \nonumber \\
  &\hspace{90pt} \Bigl( \frac{k_Y}{n} \Bigr)^{2\beta/d}, \Bigl( \frac{k_Y}{n} \Bigr)^{ \lambda_2(1-\zeta) -\epsilon}, 1/m, 1/n \biggr\} \biggr),
\end{align}
as $m,n \rightarrow \infty$, uniformly for $k_X \in \{k_X^{\mathrm{L}},\ldots, k_X^{\mathrm{U}}\}$ and $k_Y \in \{k_Y^{\mathrm{L}}, \ldots, k_Y^{\mathrm{U}}\}$.
\end{prop}

The proof of Proposition~\ref{Prop:ImprovedBias} is given in Section~\ref{Sec:ImprovedBias}.  The interest in the result arises because it reveals that the bias of nearest-neighbour functional estimators is of smaller order than that of the corresponding density estimators, at least when $\beta \leq 1$ and when the function~$\phi$ is smooth away from its singularities.  This reduced bias is due to the fact that the nearest-neighbour density estimate biases at different values of $x \in \mathcal{X}$ cancel to leading order when we integrate over $\mathcal{X}$.  While similar phenomena have been observed for kernel-based density estimates in the context of the estimation of quadratic functionals \citep{gine2008simple,leonenko2010statistical}, we are not aware of corresponding results for nearest-neighbour methods or non-smooth functionals.

An immediate corollary of Proposition~\ref{Prop:ImprovedBias} is that the conclusions of Theorems~\ref{Thm:Main} and~\ref{Thm:Normality} hold with the $d/(2\beta)$ term in the definitions of $\tau_1$ and $\tau_2$ in~\eqref{Eq:Zeta} replaced with $d/(4\beta)$, provided $\beta \leq 1$.  In particular, in this case it suffices to have $\beta > d/4$ in Examples~\ref{Ex:KL},~\ref{Ex:Renyi} and~\ref{Ex:L2}.

%conclude that for each $x \in \mathcal{X}_{m,n}$, with high probability the points $f\bigl(X_{(1)}(x)\bigr),\ldots,f\bigl(X_{(k_X)}(x)\bigr)$ are sufficiently close to $f(x)$, and similarly $g\bigl(Y_{(1)}(x)\bigr),\ldots,g\bigl(Y_{(k_Y)}(x)\bigr)$ are sufficiently close to $g(x)$, to allow us to approximate $h_{x,f}^{-1}(\cdot)$ and $h_{x,g}^{-1}(\cdot)$ by a Taylor expansion on $\bigl[0,\frac{V_dk_X\log m}{2A^dm}\bigr]$ and $\bigl[0,\frac{V_dk_Y\log n}{2A^dn}\bigr]$ respectively.  \textbf{Finish and say something about $\mathcal{X}_{m,n}^c$}

\subsection{Bias of an alternative debiased estimator}

As mentioned in the introduction, building on the original debiasing idea of \citet{KozachenkoLeonenko1987}, \citet{Ganguly18} proposed a debiasing technique for the naive estimator $\widetilde{T}_{m,n}$ of a general two-sample functional.  The initial goal of this subsection is to use fractional calculus techniques to give an informal study of the remaining bias of these resulting estimators, with a view to addressing the question of whether to apply our weighting scheme to the naive estimator~\eqref{Eq:Unweighted} or that of \citet{Ganguly18}.

For simplicity we will focus on the one-sample setting in~\eqref{Eq:OneSampleFunctional}, though all of the calculations have analogues in the two-sample setting.  Suppose that there exists a sequence of differentiable functions $(\psi_k)$ for which
\begin{equation}
\label{Eq:OneSampleLaplaceTransform}
	\psi(u) = \int_0^\infty e^{-s} \frac{s^{k-1}}{\Gamma(k)} \psi_k \Bigl( \frac{ku}{s} \Bigr) \,ds
\end{equation}
for all $u \in (0,\infty)$; examples in the cases of Shannon and R\'enyi entropies will be given below. We will consider the debiased estimator of $H(f)$ given by
\[
	\widetilde{H}_m := \frac{1}{m} \sum_{i=1}^m \psi_k \bigl( \hat{f}_{(k),i} \bigr).
\]
Write $\mathcal{X}_f := \{x:f(x) > 0\}$.  Then, under regularity conditions on $f$ and $\psi_k$, since $m\mathrm{Beta}(k,m-k)$ can be approximated by a $\Gamma(k,1)$ random variable, we have that
\begin{align}
  \label{Eq:DominantBias}
	&\mathbb{E} \widetilde{H}_m = \int_{\mathcal{X}_f} f(x) \int_0^1 \psi_k \Bigl( \frac{k}{mV_d h_{x,f}^{-1}(s)^d} \Bigr) \mathrm{B}_{k,m-k}(s) \,ds \,dx \nonumber \\
	& \! \approx \! \int_{\mathcal{X}_f} \! \! \! f(x) \! \int_0^1 \! \biggl\{ \! \psi_k \Bigl( \frac{kf(x)}{ms} \Bigr) \! - \! \frac{V_df(x) h_{x,f}^{-1}(s)^d -s }{ms^2 /\{kf(x)\}} \psi_k' \Bigl( \frac{kf(x)}{ms} \Bigr) \biggr\} \mathrm{B}_{k,m-k}(s) \,ds \,dx \nonumber \\
	& \! \approx \! \int_{\mathcal{X}_f} \! \! \! f(x) \! \int_0^\infty \! \biggl\{ \! \psi_k \Bigl( \frac{kf(x)}{t} \Bigr) \! + \! \frac{k t^{\frac{2}{d}-1} \Delta f(x) }{2(d+2)\{V_d n f(x)\}^{\frac{2}{d}}} \psi_k' \Bigl( \frac{kf(x)}{t} \Bigr) \biggr\} \frac{e^{-t}t^{k-1}}{\Gamma(k)} \,dt \,dx \nonumber \\
	& \!= H(f) + \frac{1}{2(d+2)(V_dn)^{\frac{2}{d}}} \int_{\mathcal{X}_f} \frac{\Delta f(x)}{f(x)^{\frac{2}{d}-1}} \int_0^\infty \frac{e^{-t} t^{k+2/d-2}}{\Gamma(k-1)} \psi_k' \Bigl( \frac{kf(x)}{t} \Bigr)  \,dt \,dx.
\end{align}
In order to understand the behaviour of the dominant bias term on the right-hand side of~\eqref{Eq:DominantBias}, for $\alpha \in [0,1)$ define the operator $D^\alpha$ by
\[
	(D^\alpha g)(u) := -\frac{1}{\Gamma(1-\alpha)} \int_u^\infty \frac{g'(s)}{(s-u)^\alpha} \,ds.
\]
This is closely related to the Caputo fractional derivative \citep[][Section~2.4]{KST2006}. Then, with $g(s)=e^{-\lambda s}$ for some $\lambda \in (0,\infty)$, we have that
\[
	(D^\alpha g)(u) = \frac{1}{\Gamma(1-\alpha)} \int_u ^\infty \frac{\lambda e^{-\lambda s}}{(s-u)^{\alpha}} \,ds = \lambda^\alpha e^{-\lambda u} = \lambda^\alpha g(u).
\]
From~\eqref{Eq:OneSampleLaplaceTransform} we can see that
\begin{equation}
  \label{Eq:FracCalc}
	\frac{\Gamma(k-1)}{u^{k-1}} \psi'(u) = u^{-(k-1)} \int_0^\infty e^{-t} t^{k-2} \psi_k' \Bigl(\frac{ku}{t} \Bigr) \,dt = \int_0^\infty e^{-su} s^{k-2} \psi_k' \Bigl( \frac{k}{s} \Bigr) \,ds.
\end{equation}
When $d \geq 3$, we can apply the operator $D^{2/d}$ to both sides of~\eqref{Eq:FracCalc} to simplify the inner integral in our expression for the dominant bias term in~\eqref{Eq:DominantBias} as follows: %We may now suitably apply the operator $D^{2/d}$ to this equation to simplify our expression for the bias of $\widetilde{H}_m$. Indeed, we may write
\begin{align}
  \label{Eq:FracBias}
	&\frac{1}{\Gamma(k-1)} \int_0^\infty  e^{-t} t^{k+\frac{2}{d}-2} \psi_k' \Bigl( \frac{ku}{t} \Bigr)  \,dt = \frac{u^{k+2/d-1}}{\Gamma(k-1)} \int_0^\infty e^{-su} s^{k+\frac{2}{d}-2} \psi_k' \Bigl( \frac{k}{s} \Bigr) \,ds \nonumber \\
	& \hspace{8pt}= - \frac{u^{k+2/d-1}}{\Gamma(1-2/d)} \int_u^\infty \frac{\frac{d}{ds} (\psi'(s)/s^{k-1} )}{(s-u)^{2/d}} \,ds \nonumber \\
	& \hspace{8pt}= \frac{u^{k+2/d-1}}{\Gamma(1-2/d)} \int_u^\infty \frac{(k-1) s^{-k} \psi'(s) - s^{1-k} \psi''(s)}{(s-u)^{2/d}} \,ds \nonumber \\
	& \hspace{8pt}= \frac{\Gamma(k+2/d-1)}{\Gamma(k-1)} \int_0^1 \mathrm{B}_{1-2/d,k+2/d-1}(s) \Bigl\{ \psi' \Bigl( \frac{u}{1-s} \Bigr)- \frac{u}{(k-1)(1-s)} \psi'' \Bigl( \frac{u}{1-s} \Bigr)\Bigr\} \,ds.
\end{align}
For Shannon and R\'enyi entropies, both $\psi'$ and $\psi''$ are constant multiples of functions $g$ with the property that $g(xy)=g(x)g(y)$ for any $x,y \in (0,\infty)$. In these cases, the leading order bias separates into a coefficient depending only on $d$, $n$ and $f$ and a factor that is a function of $k$. Using weights, this leading order bias may be cancelled out, and it can be seen that, when $f$ is sufficiently regular, the next term is of order $k^{4/d}/n^{4/d}$. However, the only continuous functions $g$ with this property are $g(x)=x^a$ for some $a \in \mathbb{R}$ \citep[e.g.][(4.3.7), p.~86]{Dieudonne69}. If the term in braces in~\eqref{Eq:FracBias} is separable for all values of $k$ then both $u \mapsto \psi'(u)$ and $u \mapsto u\psi''(u)$ must be separable individually, and so $\psi'(u) \propto u^a$ for some $a \in \mathbb{R}$.  Thus the Shannon and R\'enyi entropies are the only functionals with this property. In general, all that can be said is that this term in the bias can be expanded as a series of the form $\frac{k^{2/d}}{n^{2/d}}(c_0+c_1/k+c_2/k^{2} +\ldots)$. For larger values of $d$, to cancel out sufficient bias that the resulting estimator is efficient, the weighting scheme is then only marginally simpler than the weighting scheme for the naive estimator, and the analysis is significantly more complicated.

Despite the general conclusion of our discussion in the previous paragraph, returning to the two-sample functional setting, we now show that in the special case of the Kullback--Leibler and R\'enyi divergence functionals, the debiasing scheme described above significantly simplifies the weighting scheme, while facilitating the same conclusions regarding efficiency.  To this end, for the Kullback--Leibler divergence, we define the following class of weight vectors:
\begin{align*}
\mathcal{W}_{c}^{(k), \mathrm{KL}}:= \biggl\{ w = (w_1,\ldots&,w_k) \in \mathbb{R}^{k} : \sum_{j=1}^{k} w_{j} =1 \text{ and }  w_{j}=0 \text{ for } j<ck, \|w\|_1 \leq 1/c,\nonumber \\
                                                                                       &\sum_{j=1}^k \frac{\Gamma(j+2 \ell/d)}{\Gamma(j)} w_j = 0 \text{ for } \ell \in \bigl[\lceil d/2 \rceil -1 \bigr] \setminus \{0\}  \biggr\}.
\end{align*}                                                                                        
The analogue of the Kozachenko--Leonenko debiased estimator is
\begin{align*}
	\widetilde{D}_{m,n} &:= \frac{1}{m} \sum_{i=1}^m \log \biggl( \frac{e^{\Psi(k_X)}}{m\rho_{(k_X),i,X}^d} \frac{n \rho_{(k_Y),i,Y}^d}{e^{\Psi(k_Y)}} \biggr) = \widetilde{T}_{m,n} + \Psi(k_X)- \log k_X -\Psi(k_Y) + \log k_Y
\end{align*}
\citep{Ganguly18}.  If the weighted estimator $\hat{D}_{m,n}^{w_X,w_Y}$ is then formed as in~\eqref{Eq:WeightedEstimator} then the following theorem elucidates its asymptotic bias.  Since this result uses very similar (in fact, somewhat simpler) arguments to those in~Proposition~S1 in the online supplement \citep{BerrettSamworth2018}, its proof, together with that of Proposition~\ref{Thm:RenyiBias} below, is omitted for brevity.
\begin{prop}
\label{Thm:KLBias}
Fix $d \in \mathbb{N}$, let $\vartheta = (\alpha,\beta,\lambda_1,\lambda_2,C) \in \Theta$ and let $\phi(u,v)=\log(u/v)$.  Assume that 
\[
	\tau_1 = 1- \max \biggl( \frac{d}{2 \beta}, \frac{1}{2 \lambda_1}  \biggr) > 0 \quad \text{and} \quad \tau_2 = 1- \max \biggl( \frac{d}{2 \beta}, \frac{1}{2 \lambda_2} \biggr) > 0,
\]
and that there exists $\epsilon>0$ such that $\max(k_{X}^{\mathrm{U}} m^{-\tau_1+\epsilon}, k_{Y}^{\mathrm{U}} n^{-\tau_2 + \epsilon}) \rightarrow 0$.  Then for each $c \in (0,1)$, each $w_X = w_{X}^{(k_X)} \in \mathcal{W}_{c}^{(k_X),\mathrm{KL}}$ and each $w_Y = w_{Y}^{(k_Y)} \in \mathcal{W}_{c}^{(k_Y),\mathrm{KL}}$, we have
\[
	\sup_{(f,g) \in \mathcal{F}_{d,\vartheta}} \sup_{\substack{k_X \in \{1,\ldots, k_X^{\mathrm{U}}\}\\k_Y \in \{1, \ldots, k_Y^{\mathrm{U}}\}}}\Bigl| \mathbb{E}_{f,g}(\hat{D}_{m,n}^{w_X,w_Y}) - T(f,g) \Bigr| = o(m^{-1/2}+n^{-1/2})
      \]
      as $m,n \rightarrow \infty$.
\end{prop}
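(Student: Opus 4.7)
The plan is to parallel the proof of Proposition~\ref{Thm:SimpleBias} and its supplementary companion, exploiting two features special to the Kullback--Leibler functional $\phi(u,v,x)=\log(u/v)$: additive separability in $\hat f_{(j_X),i}$ and $\hat g_{(j_Y),i}$, and the slow growth of $\log$ (so that one effectively has $\kappa_1=\kappa_2=0$ and $\beta_1^*,\beta_2^*=\infty$ away from zero). Separability gives the decomposition
\[
\widetilde D_{m,n,j_X,j_Y}=A_{m,j_X}-\widetilde A_{n,j_Y},\qquad A_{m,j_X}:=\frac{1}{m}\sum_{i=1}^m\bigl\{\log\hat f_{(j_X),i}+\Psi(j_X)-\log j_X\bigr\},
\]
with $\widetilde A_{n,j_Y}$ defined analogously in terms of $\log\hat g_{(j_Y),i}-\Psi(j_Y)+\log j_Y$. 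Since $\hat D_{m,n}^{w_X,w_Y}$ therefore splits as a weighted average of $A$-terms minus a weighted average of $\widetilde A$-terms, it suffices to derive a uniform bias expansion for each piece and weight out the leading contributions separately.

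For the $X$-piece I would use the truncation argument sketched after Corollary~\ref{Cor:WeightedBias}: split $\mathcal{X}=\mathcal{X}_{m,n}\cup(\mathcal{X}\setminus\mathcal{X}_{m,n})$, so that on $\mathcal{X}_{m,n}$ the $j_X$-nearest neighbours of $X_i$ lie in $B_{X_i}(r_0(X_i))$ with overwhelming probability and, conditional on $X_i$, $U:=h_{X_i,f}(\rho_{(j_X),i,X})\sim\mathrm{Beta}(j_X,m-j_X)$. Taylor expanding $r\mapsto h_{x,f}(r)$ (odd orders in $r$ vanish by symmetry of the ball) and inverting yields
\[
V_d\rho_{(j_X),i,X}^d f(X_i)=U\Bigl(1+\sum_{l\geq1}\alpha_l(X_i)U^{2l/d}+\cdots\Bigr),
\]
with $\alpha_l$ polynomial in derivatives of $f$ up to order $2l$. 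Taking logarithms and conditional expectations, combining $\mathbb{E}[-\log U\mid X_i]=\Psi(m)-\Psi(j_X)=\log m-\Psi(j_X)+O(1/m)$ with the exact identity $\mathbb{E}[U^{2l/d}\mid X_i]=\Gamma(j_X+2l/d)\Gamma(m)/\{\Gamma(j_X)\Gamma(m+2l/d)\}$, the constant bias $\log m-\Psi(m)$ cancels exactly against the debiasing $\Psi(j_X)-\log j_X$, leaving
\[
\mathbb{E}_f[A_{m,j_X}]-\int_{\mathcal{X}} f\log f=\sum_{l=1}^{\lceil d/2\rceil-1}\mu_l(f)\,\frac{\Gamma(j_X+2l/d)}{\Gamma(j_X)}\,m^{-2l/d}+R_{m,j_X}(f),
\]
with $\sup_f|\mu_l(f)|<\infty$ on $\mathcal{G}_{d,(\alpha,\beta,\lambda_1,C)}$ and $\sup_{j_X\leq k_X^{\mathrm{U}}}|R_{m,j_X}(f)|=o(m^{-1/2})$ uniformly. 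The remainder has two sources: the Taylor error on $\mathcal{X}_{m,n}$ controlled by $\beta$-smoothness through $M_\beta$, and the contribution from $\mathcal{X}\setminus\mathcal{X}_{m,n}$ bounded via $\mathbb{P}(X_1\notin\mathcal{X}_{m,n})\leq C(k_X\log m/m)^{\lambda_1}$ together with the $\lambda_1$-integrability of $(M_\beta^d/f)^{\lambda_1}$ against $f$; the hypothesis $\tau_1=1-\max\{d/(2\beta),1/(2\lambda_1)\}>0$ is exactly what makes both sources $o(m^{-1/2})$ for every admissible $k_X$.

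Applying the weights completes the argument: by construction any $w_X\in\mathcal{W}_c^{(k_X),\mathrm{KL}}$ satisfies $\sum_{j_X=1}^{k_X}w_{X,j_X}\Gamma(j_X+2l/d)/\Gamma(j_X)=0$ for every $l\in\{1,\ldots,\lceil d/2\rceil-1\}$, so the displayed leading bias collapses identically, and $\|w_X\|_1\leq 1/c$ ensures that the weighted remainder is still $o(m^{-1/2})$. The analogous analysis of the $Y$-piece gives an $o(n^{-1/2})$ error, and combining the two yields Proposition~\ref{Thm:KLBias}. The main obstacle is the uniform control of $R_{m,j_X}(f)$ as $j_X$ ranges up to $k_X^{\mathrm{U}}\approx m^{\tau_1-\epsilon}$: one must manage the interplay between the local smoothness measured by $M_\beta$, the $\lambda_1$-integrability condition, the polynomial growth of $\Gamma(j_X+2l/d)/\Gamma(j_X)$, and the slow blow-up of $\log$ near zero, so that higher-order Taylor errors and the low-density contribution from $\mathcal{X}\setminus\mathcal{X}_{m,n}$ integrate against $f$ to a quantity $o(m^{-1/2})$. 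This step is genuinely simpler than the corresponding step in Proposition~\ref{Thm:GeneralBias} because the absence of a cross-partial $\phi_{11}$ removes all mixed $(j_X,j_Y)$ bias contributions, which is precisely why the KL weight class requires only a single-index constraint rather than the full double-index one of $\mathcal{W}_{I,c}^{(k)}$.
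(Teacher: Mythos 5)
Your route is the one the paper itself intends: the proof of Proposition~\ref{Thm:KLBias} is omitted precisely because it follows from the machinery of Proposition~\ref{Thm:GeneralBias} in simplified form, and you have correctly isolated the two simplifications that make it work — additive separability of $\log(u/v)$, which removes all mixed $(j_X,j_Y)$ bias terms, and the exactness of the Beta identities $\mathbb{E}[-\log U\mid X_i]=\Psi(m)-\Psi(j_X)$ and $\mathbb{E}[U^{2l/d}\mid X_i]=\Gamma(j_X+2l/d)\Gamma(m)/\{\Gamma(j_X)\Gamma(m+2l/d)\}$, which is why the digamma debiasing kills the $k^{-j}$ terms identically (note only the $\Psi(j_X)$ and $\log j_X$ pieces cancel exactly; the residual $\Psi(m)-\log m=O(1/m)$ remains, harmlessly) and why the leading bias takes the separable form $\mu_l(f)\,\Gamma(j_X+2l/d)/\Gamma(j_X)\,m^{-2l/d}$ annihilated by $\mathcal{W}_c^{(k_X),\mathrm{KL}}$. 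One small bookkeeping point: expanding the logarithm of $1+\sum_l\alpha_l(X_i)U^{2l/d}+\cdots$ also produces products of the $\alpha_l$'s, but these again contribute terms of the same $\Gamma$-ratio form for exponents $2l'/d$ with $l'\leq\lceil d/2\rceil-1$ (absorbed into your $\mu_{l'}$) plus a remainder controlled exactly as in the bound on $R_3$, so this is fine.

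The one genuine gap is uniformity over the full range $k_X\in\{1,\ldots,k_X^{\mathrm{U}}\}$, $k_Y\in\{1,\ldots,k_Y^{\mathrm{U}}\}$ claimed in the statement. You import the truncation argument wholesale, but the intervals $\mathcal{I}_{m,X}=[a_{m,X}^-,a_{m,X}^+]$ of width $\pm 3k_X^{1/2}\log^{1/2}m/m$, and the consequent $o(m^{-4})$ tail estimates via Lemma~\ref{Lemma:BetaTailBounds} and Lemma~\ref{Lemma:Acomplement}, only give superpolynomial control when $k_X\gtrsim\log m$; for $k_X=O(1)$ a $\mathrm{Beta}(k_X,m-k_X)$ variable escapes such a window with probability of order $e^{-c\log^{1/2}m}$, which is not even $o(m^{-1/2})$, so the step discarding the complement of $\mathcal{I}_{m,X}$ fails as written. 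This is exactly why Proposition~\ref{Thm:SimpleBias} imposes lower sequences $k_X^{\mathrm{L}},k_Y^{\mathrm{L}}\rightarrow\infty$, whereas Proposition~\ref{Thm:KLBias} does not — and the extra strength for small $k$ is available only because the debiasing is exact and $\log$ grows slowly. To close the gap you should treat small $k_X$ (say $k_X\leq\log m$) with a modified truncation, e.g.\ $\mathcal{I}_{m,X}=[0,(k_X+c\log m)/m]$ whose upper Beta tail is $O(m^{-c'})$ with $c'$ as large as desired, using the exact moment identities on all of $[0,1]$ for the leading terms and bounding the tail contribution of $\log\bigl(s/\{V_df(x)h_{x,f}^{-1}(s)^d\}\bigr)$ via Lemma~\ref{Lemma:hxinvbounds2} together with Cauchy--Schwarz and Lemma~\ref{Lemma:GeneralisedHolder}; with that adjustment your argument goes through uniformly, and the $Y$-piece is handled identically.
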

Since $\widetilde{D}_{m,n}$ is simply a deterministic translation of $\widetilde{T}_{m,n}$, our variance results in Section~\ref{Sec:Variance} continue to hold, so the corresponding efficiency result for $\hat{D}_{m,n}^{w_X,w_Y}$ is immediate.

When estimating the R\'enyi integral $\int_{\mathcal{X}} f^\kappa g^{-(\kappa-1)}$, for $b \in \mathbb{R}$ and $c > 0$, we define
\begin{align*}
  \mathcal{W}_{b,c}^{(k),\mathrm{R}}:= \biggl\{ w = (w_1,&\ldots,w_k) \in \mathbb{R}^{k} : \sum_{j=1}^{k} w_{j} =1 \text{ and }  w_{j}=0 \text{ for } j<ck, \|w\|_1 \leq 1/c,\nonumber \\
	&\sum_{j=1}^k \frac{\Gamma(j-b+2\ell/d)}{\Gamma(j-b)} w_j = 0 \text{ for } \ell \in \bigl[\lceil d/2 \rceil -1 \bigr] \setminus \{0\} \biggr\}.
\end{align*}
The corresponding debiased estimator is
\begin{align*}
  \check{D}_{m,n} &:= \frac{1}{m} \sum_{i=1}^m \frac{\Gamma(k_X) \Gamma(k_Y)}{\Gamma(k_X - \kappa+1) \Gamma(k_Y + \kappa-1)} \biggl( \frac{n \rho_{(k_Y),i,Y}^d}{m \rho_{(k_X),i,X}^d} \biggr)^{\kappa-1} \\
  &\phantom{:}= \frac{k_X^{1-\kappa} \Gamma(k_X) k_Y^{\kappa-1} \Gamma(k_Y)}{\Gamma(k_X - \kappa+1) \Gamma(k_Y + \kappa-1)} \widetilde{T}_{m,n}
\end{align*}
\citep{Ganguly18}.  If the weighted estimator $\hat{D}_{m,n}^{w_X,w_Y}$ is again formed as in~\eqref{Eq:WeightedEstimator} then the following result provides the corresponding bias guarantee.
\begin{prop}
\label{Thm:RenyiBias}
Fix $d \in \mathbb{N}$, let $\vartheta = (\alpha,\beta,\lambda_1,\lambda_2,C) \in \Theta$ and let $\phi(u,v)=(u/v)^{\kappa-1}$ for some $\kappa \in (1/2,\infty)$.  With $\zeta$ as defined as in~\eqref{Eq:Zeta}, $ \kappa_1=- \kappa_2 =\kappa-1 $,
\[
	\tau_1 = 1- \max \biggl( \frac{d}{2 \beta}, \frac{1}{2\lambda_1(1- \zeta)} \biggr) \quad  \! \text{and} \! \quad \tau_2 = 1- \max \biggl( \frac{d}{2 \beta}, \frac{1}{2 \lambda_2(1-\zeta)} \biggr),
\]
assume that $\zeta<1/2$ and $\min(\tau_1,\tau_2)>0$. Suppose further that there exists $\epsilon>0$ such that $\max(k_{X}^{\mathrm{U}} m^{-\tau_1+\epsilon}, k_{Y}^{\mathrm{U}} n^{-\tau_2 + \epsilon}) \rightarrow 0$.  Then for each $c \in (0,1)$, each $w_X = w_{X}^{(k_X)} \in \mathcal{W}_{\kappa-1,c}^{(k_X),\mathrm{R}}$ and each $w_Y = w_{Y}^{(k_Y)} \in \mathcal{W}_{1-\kappa,c}^{(k_Y),\mathrm{R}}$, we have
\[
	\sup_{(f,g) \in \mathcal{F}_{d,\vartheta}} \sup_{\substack{k_X \in \{1,\ldots, k_X^{\mathrm{U}}\}\\k_Y \in \{1, \ldots, k_Y^{\mathrm{U}}\}}}\Bigl| \mathbb{E}_{f,g}(\hat{D}_{m,n}^{w_X,w_Y}) - T(f,g) \Bigr| = o(m^{-1/2}+n^{-1/2})
      \]
      as $m,n \rightarrow \infty$.
\end{prop}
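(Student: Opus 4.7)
The plan is to parallel the proof of Proposition~\ref{Thm:SimpleBias}, taking advantage of two structural simplifications specific to the R\'enyi case: the multiplicative separability $\phi(u,v,x) = (u/v)^{\kappa-1} = u^{\kappa-1}\cdot v^{-(\kappa-1)}$, and the availability of closed-form moments for beta random variables raised to the corresponding powers. First I would exploit the separability to write
\[
\mathbb{E}_{f,g}\bigl[\phi(\hat f_{(k_X),i}, \hat g_{(k_Y),i}, X_i) \bigm| X_i\bigr] = \Bigl(\tfrac{k_X}{mV_d}\Bigr)^{\kappa-1}\mathbb{E}\bigl[\rho_{(k_X),i,X}^{-d(\kappa-1)}\bigm| X_i\bigr] \cdot \Bigl(\tfrac{k_Y}{nV_d}\Bigr)^{-(\kappa-1)}\mathbb{E}\bigl[\rho_{(k_Y),i,Y}^{d(\kappa-1)}\bigm| X_i\bigr],
\]
using the conditional independence of the $X$- and $Y$-samples, so that the analysis of the $X$-contribution and the $Y$-contribution decouples.

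Next I would deploy the truncation argument outlined after Corollary~\ref{Cor:WeightedBias}, splitting $\mathcal{X} = \mathcal{X}_{m,n} \cup (\mathcal{X}\setminus\mathcal{X}_{m,n})$. On the complement, the integrability conditions in $\mathcal{F}_{d,\vartheta}$, combined with the choice $\kappa_1 = (\kappa-1)_-$, $\kappa_2 = (\kappa-1)_+$ (which governs the singularity of $\phi$ at zero arguments) and the hypothesis $\zeta < 1/2$, yield a contribution of order $o(m^{-1/2}+n^{-1/2})$ exactly as before. On $\mathcal{X}_{m,n}$, the key distributional input is that, given $X_i = x$, one has $h_{x,f}(\rho_{(k_X),i,X}) \sim \mathrm{Beta}(k_X,m-k_X)$ and $h_{x,g}(\rho_{(k_Y),i,Y}) \sim \mathrm{Beta}(k_Y,n+1-k_Y)$. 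Combining this with a Taylor expansion of $h_{x,f}^{-1}(s)^d$ and $h_{x,g}^{-1}(s)^d$ in powers of $s^{2/d}$ (valid on $[0,r_0(x)]$ by Lemma~\ref{Lemma:15over7}) reduces the computation to exact moments $\mathbb{E}[B^a] = \Gamma(k+a)\Gamma(m)/\{\Gamma(k)\Gamma(m+a)\}$ for $B\sim\mathrm{Beta}(k,m-k)$.

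The central calculation is then that the deterministic debiasing constant $k_X^{1-\kappa}\Gamma(k_X)/\Gamma(k_X - \kappa+1)$ is engineered precisely to cancel the zeroth-order factor $\mathbb{E}[B^{-(\kappa-1)}]$ on the $X$-side (and similarly for $Y$), leaving an expansion of the form
\[
\mathbb{E}_{f,g}(\check{D}_{m,n}) = T(f,g) + \sum_{l\geq 1} c_l^X(f,g)\,\frac{\Gamma(k_X-(\kappa-1)+2l/d)}{\Gamma(k_X-(\kappa-1))}\,\Bigl(\frac{k_X}{m}\Bigr)^{2l/d} + \sum_{l\geq 1} c_l^Y(f,g)\,\frac{\Gamma(k_Y-(1-\kappa)+2l/d)}{\Gamma(k_Y-(1-\kappa))}\,\Bigl(\frac{k_Y}{n}\Bigr)^{2l/d} + \text{(cross + remainder)},
\]
uniformly over $\mathcal{F}_{d,\vartheta}$. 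Averaging against $w_X \in \mathcal{W}_{\kappa-1,c}^{(k_X),\mathrm{R}}$ and $w_Y \in \mathcal{W}_{1-\kappa,c}^{(k_Y),\mathrm{R}}$ kills every term with $l \in [\lceil d/2\rceil-1]\setminus\{0\}$ by the very definition of these weight classes, and the hypotheses $k_X^{\mathrm{U}} m^{-\tau_1+\epsilon}\to 0$, $k_Y^{\mathrm{U}} n^{-\tau_2+\epsilon}\to 0$ drive the surviving higher-order terms (together with the H\"older remainder from the Taylor expansion of order $\beta$) to $o(m^{-1/2}+n^{-1/2})$. The main obstacle, and the only place where real work is needed, is establishing uniformity of all remainder bounds over $(f,g)\in\mathcal{F}_{d,\vartheta}$ simultaneously in $k_X,k_Y$; however this reuses verbatim the technical machinery behind Proposition~\ref{Thm:SimpleBias}, and is in fact strictly easier here because the multiplicative factorisation of $\phi$ eliminates the need for any joint two-variable Taylor expansion in $(u,v)$ and decouples the weight constraints across samples.
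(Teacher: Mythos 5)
Your proposal follows essentially the route the paper intends: the paper omits this proof, saying only that it reuses the (somewhat simpler) machinery of Proposition~\ref{Thm:GeneralBias}, and your plan --- truncation over $\mathcal{X}_{m,n}$, exact Beta moments made available by the multiplicative form of $\phi$ (so no Taylor expansion in the first two arguments of $\phi$ and hence no $k^{-j}$ terms, which is why no lower bound $k^{\mathrm{L}}$ is needed), cancellation of the zeroth-order Gamma factor by the debiasing constants, and annihilation of the $l \geq 1$ terms by $\mathcal{W}_{\kappa-1,c}^{(k_X),\mathrm{R}}$ and $\mathcal{W}_{1-\kappa,c}^{(k_Y),\mathrm{R}}$ --- is exactly that. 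Two small slips worth fixing: the expansion of $h_{x,f}^{-1}(s)^d$ in powers of $s^{2/d}$ is Lemma~\ref{Lemma:hxinvbounds} (Lemma~\ref{Lemma:15over7} only supplies the local relative approximation behind it), and in your displayed expansion the $l$th bias term should depend on $k_X$ only through the ratio $\Gamma(k_X-\kappa+1+2l/d)/\Gamma(k_X-\kappa+1)$ multiplied by $m^{-2l/d}$ (this ratio is of order $k_X^{2l/d}$, so the term is of order $(k_X/m)^{2l/d}$); writing an extra factor $(k_X/m)^{2l/d}$ on top of the Gamma ratio double-counts the $k_X$ growth, and the exact cancellation by the weights requires the $k_X$-dependence to sit entirely in that Gamma ratio.
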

In this case, with $k_X^{\mathrm{L}}$ and $k_Y^{\mathrm{L}}$ defined as in Theorem~\ref{Thm:Main}, we have
\[
\frac{\check{D}_{m,n}}{\widetilde{T}_{m,n}} - 1=  \frac{  k_X^{1-\kappa} \Gamma(k_X) k_Y^{\kappa-1} \Gamma(k_Y) }{\Gamma(k_X - \kappa+1) \Gamma(k_Y + \kappa-1)} -1  \rightarrow 0
\]
uniformly for $k_X \geq k_X^\mathrm{L}$ and $k_Y \geq k_Y^\mathrm{L}$, so we can again deduce an efficiency result for $\hat{D}_{m,n}^{w_X,w_Y}$.

\section{Variance}
\label{Sec:Variance}

The following result provides the main asymptotic variance expansion for our weighted estimators. Write $\tau_i'=1-\max\{ \frac{d}{d+2(2\wedge\beta)}, \frac{1}{2(\lambda_i \wedge 1)(1-\zeta)} \}$ for $i=1,2$.
\begin{prop}
\label{Prop:Variance}
Fix $d \in \mathbb{N}$, $\vartheta = (\alpha,\beta,\lambda_1,\lambda_2,C) \in \Theta$ and $\xi = (\kappa_1,\kappa_2,\beta^*,L) \in \Xi$ such that $ \zeta<1/2, \tau_1'>0, \tau_2' >0$.  Let $(k_{X}^{\mathrm{L}})$, $(k_{Y}^{\mathrm{L}})$, $(k_{X}^{\mathrm{U}})$ and $(k_{Y}^{\mathrm{U}})$ be deterministic sequences of positive integers satisfying $\min(k_{X}^{\mathrm{L}}/ \log^5 m, k_{Y}^{\mathrm{L}}/ \log^5 n) \rightarrow \infty$ and $\max(k_{X}^{\mathrm{U}} m^{-(\tau_1'-\epsilon)}, k_{Y}^{\mathrm{U}} n^{-(\tau_2' - \epsilon)}) \rightarrow 0$ for some $\epsilon>0$.  Then for each $c \in (0,1)$, each $w_X = w_{X}^{(k_X)} \in \mathcal{W}_{\lceil (\beta^*-1)/2 \rceil,c}^{(k_X)}$ and each $w_Y = w_{Y}^{(k_Y)} \in \mathcal{W}_{\lceil (\beta^*-1)/2 \rceil,c}^{(k_Y)}$, we have
      \[
      \sup_{\phi \in \Phi(\xi)}  \sup_{(f,g) \in \mathcal{F}_{d,\vartheta}}  \max_{\substack{k_X \in \{k_X^{\mathrm{L}},\ldots, k_X^{\mathrm{U}}\} \\ k_Y \in \{k_Y^{\mathrm{L}},\ldots, k_Y^{\mathrm{U}}\}}}\biggl| \mathrm{Var}_{f,g}(\hat{T}_{m,n}^{w_X,w_Y}) - \frac{v_1}{m} - \frac{v_2}{n}\biggr| = o\biggl(\frac{1}{m} + \frac{1}{n}\biggr)
      \]
as $m,n \rightarrow \infty$.
\end{prop}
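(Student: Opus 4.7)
The plan follows the decomposition flagged in the paper's remark after Theorem~\ref{Thm:Normality}. Define
\begin{align*}
\hat{T}_m^{(1)} &:= \sum_{j_X=1}^{k_X} w_{X,j_X}\,\frac{1}{m}\sum_{i=1}^m \phi\bigl(\hat{f}_{(j_X),i}, g(X_i), X_i\bigr), \\
\hat{T}_n^{(2)} &:= \sum_{j_Y=1}^{k_Y} w_{Y,j_Y}\,\mathbb{E}\bigl[\phi\bigl(f(X_1), \hat{g}_{(j_Y),1}, X_1\bigr) \bigm| Y_1, \ldots, Y_n\bigr],
\end{align*}
and let $R_{m,n} := \hat{T}_{m,n} - \hat{T}_m^{(1)} - \hat{T}_n^{(2)} - c_{m,n}$, where $c_{m,n}$ is a deterministic centering constant chosen so that $\mathbb{E}R_{m,n}=0$. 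Since $\hat{T}_m^{(1)}$ depends only on $X_1,\ldots,X_m$ and $\hat{T}_n^{(2)}$ only on $Y_1,\ldots,Y_n$, independence of the two samples yields
\[
\mathrm{Var}(\hat{T}_{m,n}) = \mathrm{Var}(\hat{T}_m^{(1)}) + \mathrm{Var}(\hat{T}_n^{(2)}) + \mathrm{Var}(R_{m,n}) + 2\mathrm{Cov}\bigl(\hat{T}_m^{(1)}+\hat{T}_n^{(2)}, R_{m,n}\bigr).
\]
The proof reduces to three claims: (a)~$\mathrm{Var}(\hat{T}_m^{(1)}) = v_1/m + o(1/m)$, (b)~$\mathrm{Var}(\hat{T}_n^{(2)}) = v_2/n + o(1/n)$, and (c)~the remainder contributions are $o(1/m+1/n)$.

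Claim (c) exploits the fact that the mixed increment
\[
\phi(\hat f,\hat g,x) - \phi(\hat f,g(x),x) - \phi(f(x),\hat g,x) + \phi(f(x),g(x),x) \approx (\hat f-f(x))(\hat g-g(x))\phi_{11}(f(x),g(x),x)
\]
is genuinely second-order. On the good set $\mathcal{X}_{m,n}$ from Section~\ref{Sec:Bias}, both factors are small in mean-square via the Beta representations $h_{X_i,f}(\rho_{(j_X),i,X})|X_i \sim \mathrm{Beta}(j_X,m-j_X)$ and $h_{X_i,g}(\rho_{(j_Y),i,Y})|X_i \sim \mathrm{Beta}(j_Y,n+1-j_Y)$; on its complement, the integrability conditions defining $\mathcal{F}_{d,\vartheta}$ together with the growth control~(ii) on $\phi$ supply the necessary tail control. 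A parallel splitting argument handles the cross-covariance term.

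The heart of the proof is claim (a). Writing $\hat{T}_m^{(1)} = m^{-1}\sum_i \tilde Z_i$ with $\tilde Z_i := \sum_{j_X} w_{X,j_X}\phi(\hat f_{(j_X),i}, g(X_i), X_i)$, exchangeability gives $\mathrm{Var}(\hat{T}_m^{(1)}) = m^{-1}\mathrm{Var}(\tilde Z_1) + (1-m^{-1})\mathrm{Cov}(\tilde Z_1,\tilde Z_2)$. The diagonal follows from the law of total variance combined with the one-sample bias expansion underlying Proposition~\ref{Thm:SimpleBias}: $\mathbb{E}[\tilde Z_1|X_1] \to \phi_{X_1}$, so $\mathrm{Var}(\mathbb{E}[\tilde Z_1|X_1]) \to \mathrm{Var}(\phi_{X_1})$, while $\mathbb{E}[\mathrm{Var}(\tilde Z_1|X_1)] = O(1/k_X) = o(1)$ via Beta-moment estimates. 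Crucially, this alone yields $\mathrm{Var}(\phi_{X_1})/m$, not $v_1/m$, so the residual contribution $\mathrm{Var}((f\phi_{10})_{X_1}) + 2\mathrm{Cov}(\phi_{X_1},(f\phi_{10})_{X_1})$ must come entirely from $(m-1)\mathrm{Cov}(\tilde Z_1,\tilde Z_2)$. I would evaluate this by Taylor-expanding $\tilde Z_i \approx \phi_{X_i} + (\phi_{10})_{X_i}\sum_{j_X}w_{X,j_X}\bigl(\hat f_{(j_X),i} - f(X_i)\bigr)$ and then conditioning on $(X_1,X_2)$: the conditional joint law of $(\rho_{(j_X),1,X}, \rho_{(j_X),2,X})$ deviates from a product of its Beta-driven marginals only on the NN-coupling event that $X_2$ lies among $X_1$'s $j_X$ nearest neighbours (or vice versa), a region of probability $\asymp j_X/m$; computing its contribution and integrating against the weights gives the required asymptotic for $(m-1)\mathrm{Cov}(\tilde Z_1,\tilde Z_2)$.

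Claim (b) is structurally cleaner because $\hat T_n^{(2)}$ is already a conditional expectation over the $X$-distribution. Linearising $\phi$ in its second argument and using the Beta representation for $h_{X_1,g}(\rho_{(j_Y),1,Y})|X_1$, an Efron--Stein / Hoeffding projection onto the individual $Y_j$'s yields $\mathrm{Var}(\hat{T}_n^{(2)}) = \mathrm{Var}((f\phi_{01})_{Y_1})/n + o(1/n) = v_2/n + o(1/n)$. The main obstacle throughout is the covariance computation in (a): the $(f\phi_{10})_{X_1}$ contribution to $v_1$ emerges only from NN-induced coupling at exact order $1/m$, and extracting it uniformly over $(f,g)\in\mathcal{F}_{d,\vartheta}$ and $\phi\in\Phi(\xi)$ --- without requiring $f$ or $g$ to be bounded below on their supports --- forces one to combine the truncation onto $\mathcal{X}_{m,n}$ with delicate Beta-moment bookkeeping.
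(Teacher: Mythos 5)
Your overall organisation (split $\hat{T}_{m,n}$ into an $X$-only statistic, a $Y$-only statistic and a second-order remainder, then add variances by independence) is a legitimate reordering of the paper's argument, but the key step in your claim (a) is based on a false description of the dependence structure, and this changes the leading constant. You assert that, conditionally on $(X_1,X_2)$, the joint law of $\bigl(\rho_{(j_X),1,X},\rho_{(j_X),2,X}\bigr)$ deviates from the product of its Beta-driven marginals \emph{only} on the event that one point lies among the other's nearest neighbours (probability $\asymp j_X/m$). That is not so: even when $X_1$ and $X_2$ are far apart and share no neighbours, the pair $\bigl(h_{X_1,f}(\rho_{(j_X),1,X}),h_{X_2,f}(\rho_{(j_X),2,X})\bigr)$ has a Dirichlet-type joint law (the multinomial allocation of $X_3,\ldots,X_m$ induces global negative dependence), which differs from the product of $\mathrm{Beta}$ marginals by $O(k_X/m)$ in total variation \emph{everywhere} (this is the $H_m^{(1)}$ term in the paper, controlled via the Pinsker-type bound of Lemma~\ref{Lemma:Pinsker}). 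The shared-neighbour region that you do account for produces the uncentred contribution $m^{-1}\bigl[\mathbb{E}\{(f\phi_{10})_{X_1}^2\}+2\mathbb{E}\{(f\phi_{10})_{X_1}\phi_{X_1}\}\bigr]$; the global Dirichlet-versus-product discrepancy produces exactly the centring terms $-m^{-1}\bigl[\{\mathbb{E}(f\phi_{10})_{X_1}\}^2+2\,\mathbb{E}\{(f\phi_{10})_{X_1}\}\mathbb{E}(\phi_{X_1})\bigr]$, and only the sum of the two, added to $\mathrm{Var}(\phi_{X_1})/m$, gives $v_1/m=\mathrm{Var}\bigl(\phi_{X_1}+(f\phi_{10})_{X_1}\bigr)/m$. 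Following your plan literally would therefore return an uncentred second moment in place of $v_1$ whenever $\mathbb{E}(f\phi_{10})_{X_1}\bigl\{\mathbb{E}(f\phi_{10})_{X_1}+2\mathbb{E}\phi_{X_1}\bigr\}\neq 0$. (Your claim (b) escapes this issue only if you genuinely carry out the H\'ajek projection, where centring is automatic; the analogous global effect is what supplies $-\{\mathbb{E}(f\phi_{01})_{Y_1}\}^2/n$ in the paper's computation.)

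A second, related gap is in claim (c). Mean-square smallness of the mixed increment $\phi^*$ (which vanishes along with $\phi^*_{10},\phi^*_{01}$ at the true densities) controls only the diagonal term $m^{-1}\mathrm{Var}\bigl(\phi^*(\hat f_{(k_X),1},\hat g_{(k_Y),1},X_1)\bigr)=o(1/m)$. The off-diagonal term $(1-1/m)\,\mathrm{Cov}\bigl(\phi^*(\cdot,X_1),\phi^*(\cdot,X_2)\bigr)$ cannot be dispatched by Cauchy--Schwarz (that only gives $o(1)$, not $o(1/m+1/n)$); you still need the full pairwise analysis — truncation to $\mathcal{X}_{m,n}$, the shared-neighbour versus Dirichlet/product decomposition, and the normal approximation to the multinomial counts — applied to $\phi^*$, with the saving that all leading constants vanish. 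In other words, your route does not avoid the covariance machinery of the paper; it relocates it into (a), (b) and (c), and in (a) the mechanism you propose omits one of the two $O(1/m)$ sources of dependence that the correct answer requires.
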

%Comparing the conditions of Proposition~\ref{Prop:Variance} with those of Corollary~\ref{Cor:WeightedBias}, we see that there is an additional lower bound on $\gamma$ here, which ensures that $v_2$ is finite.

The proof of Proposition~\ref{Prop:Variance} is significantly more complicated that those of the bias proofs in Section~\ref{Sec:Bias}, primarily owing to the need to consider the joint distribution of nearest neighbour distances around two different points, $X_1$ and $X_2$, say.  These have an intricate dependence structure because, for instance, $X_1$ may be one of the five nearest neighbours of $X_2$, but not vice-versa.  To describe our main strategy for approximating $\mathrm{Var}_{f,g}(\hat{T}_{m,n}^{w_X,w_Y})$, we write $\hat{T}_{m,n}^{w_X,w_Y} =: m^{-1}\sum_{i=1}^m \hat{T}_{m,n}^{(i)}$ as shorthand, so that 
\begin{equation}
  \label{Eq:VarianceDecomp}
  \mathrm{Var}_{f,g}(\hat{T}_{m,n}^{w_X,w_Y}) = \frac{1}{m}\mathrm{Var}_{f,g}(\hat{T}_{m,n}^{(1)}) + \frac{m-1}{m} \mathrm{Cov}_{f,g}(\hat{T}_{m,n}^{(1)},\hat{T}_{m,n}^{(2)}).
\end{equation}
Using similar techniques to those employed in Section~\ref{Sec:Bias}, it can be shown that
\[
  \mathrm{Var}_{f,g}(\hat{T}_{m,n}^{(1)}) \rightarrow \mathrm{Var}_f \, \phi_{X_1}.
\]
%The handling of the covariance term requires a further partition of the set $\mathcal{X}_{m,n}$ in~\eqref{Eq:Xn} as $\mathcal{X}_
For the covariance term in~\eqref{Eq:VarianceDecomp}, we first condition on $X_1$ and $X_2$.  It turns out that this term can be further decomposed into a sum of two terms, representing the contributions from the events on which $X_1$ and $X_2$ either share or do not share nearest neighbours.  Observe that if
\begin{align*}
  \|X_1 - X_2\| > \biggl\{\frac{k_X}{mV_d}\biggl(1 + \frac{\log^{1/2} m}{k_X^{1/2}}\biggr)\biggr\}^{1/d} \bigl\{f(X_1)^{-1/d} + f(X_2)^{-1/d}\bigr\} =: R(X_1,X_2),
\end{align*}
say, then, with high probability, $X_1$ and $X_2$ do not share any of their $k_X$ nearest neighbours among $X_3,\ldots,X_m$.  This means that the random vector $\bigl(h_{X_1}(\rho_{(k_X),1,X}),h_{X_2}(\rho_{(k_X),2,X}),1 - h_{X_1}(\rho_{(k_X),1,X}) - h_{X_2}(\rho_{(k_X),2,X})\bigr)$ has approximately the same distribution as $(Z_1,Z_2,Z_3)$, say, where $(Z_1,Z_2,Z_3) \sim \mathrm{Dirichlet}(k_X,k_X,m-2k_X-1)$.  Writing $\|\cdot\|_{\mathrm{TV}}$ for the total variation norm on signed measures, we can then exploit the facts that
\[
  \bigl\|\mathcal{L}(Z_1,Z_2) - \mathrm{Beta}(k_X,m-k_X) \otimes \mathrm{Beta}(k_X,m-k_X)\bigr\|_{\mathrm{TV}} = O(k_X/m)
\]
and
\begin{equation}
  \label{Eq:kXratio}
  \frac{\hat{f}_{(k_X),1}}{f(X_1)} = 1 + O_p(k_X^{-1/2})
  \end{equation}
  to show that the contribution to the covariance from this region is $O(1/m)$ (where in fact we also determine the leading constant).  On the other hand,
  \[
    \mathbb{P}\bigl[\{\|X_1 - X_2\| \leq R(X_1,X_2)\} \cap \{X_1 \in \mathcal{X}_{m,n}\}\bigr] = O(k_X/m),
  \]
  and this, together with~\eqref{Eq:kXratio} again, allows us to demonstrate that the contribution to the covariance from this region due to the nearest neighbour distances among $X_3,\ldots,X_m$ is also $O(1/m)$ (with a different leading constant).  The terms arising from the nearest neighbour distances of $Y_1,\ldots,Y_n$ from $X_1$ and $X_2$ can be handled similarly, and their contributions can be shown to be $O(1/n)$.  Combining these dominant terms results in the expansion
  \begin{align*}
    \mathrm{Cov}_{f,g}(\hat{T}_{m,n}^{(1)},\hat{T}_{m,n}^{(2)}) = \frac{2}{m} \mathrm{Cov}_{f}\bigl(\phi_{X_1},(f\phi_{10})_{X_1}\bigr) + \frac{1}{m}\mathrm{Var}_{f}\bigl((f\phi_{10})_{X_1}\bigr) + \frac{v_2}{n} + o\biggl(\frac{1}{m}\!+\!\frac{1}{n}\biggr),
  \end{align*}
  and the conclusion follows.
  
  \section{The super-oracle phenomenon}
\label{Sec:SuperOracle}
  
In this section, we consider an alternative estimation problem, where we are still interested in the functional $T(f,g)$ in~\eqref{Eq:Functional}, but where instead of observing data $X_1,\ldots,X_m,Y_1,\ldots,Y_n$ as before, we instead observe $f(X_1),\ldots,f(X_m),g(X_1),\ldots,g(X_m)$.  Although this latter framework should be considered as an `oracle' version of the problem, because typically $f(X_1),\ldots,f(X_m)$ and $g(X_1),\ldots,g(X_m)$ are unknown, it is nevertheless instructive to compare the performance of our efficient estimator $\hat{T}_{m,n}$ with that of the estimator
  \[
    T_{m}^* := \frac{1}{m}\sum_{i=1}^m \phi\bigl(f(X_i),g(X_i) \bigr)
  \]
in the new problem.  The estimator $T_m^*$ is unbiased, and moreover, $m^{1/2}(T_{m}^* - T) \stackrel{d}{\rightarrow} N(0,\sigma^2)$, where $\sigma^2 = \sigma^2(f,g) := \mathrm{Var}_f \, \phi\bigl(f(X_1),g(X_1)\bigr)$.  In fact, as we now show, $T_m^*$ can be the optimal estimator, in a local asymptotic minimax sense, of $T$ in our oracle problem.  Our aim here is not to seek maximal generality, but instead to give a simple class of examples for which $T_m^*$ has this optimality property.  

For simplicity of exposition, we will focus on the one-sample functional~\eqref{Eq:OneSampleFunctional} with $\psi(u)=u^{-(1-\kappa)}$ for some $\kappa \in (1/2,1)$.  Thus, we consider estimation of the R\'enyi functional
\[
	H(f) = \int_0^\infty f(x) \psi \bigl( f(x) \bigr) \,dx = \int_0^\infty f(x)^\kappa \,dx,
\]
based on the observations $f(X_1),\ldots,f(X_m)$. Moreover, we take $\mathcal{X} = [0,\infty)$, and assume that $f(x) = e^{-P(x)}$ for some convex, strictly increasing polynomial $P:[0,\infty) \rightarrow \mathbb{R}$.  Define the function $h: [0,\infty) \rightarrow \mathbb{R}$ by
\begin{equation}
\label{Eq:h}
	h(x) := \frac{f'(x)}{f(x)} \int_0^x \bigl\{ \psi \bigl( f(y) \bigr) - H(f) \bigr\} \,dy = -P'(x) \int_0^x \{ f(y)^{-(1-\kappa)} - H(f) \} \,dy.
\end{equation}
Now, for $t \in [0,\infty)$, define $f_t : [0,\infty) \rightarrow \mathbb{R}$ by
\[
	f_t(x) := \{1-th(x) \} f(x);
\]
in the proof of Proposition~\ref{Thm:SuperOracleLAM} below, we will see that $f_t$ is a bounded probability density for sufficiently small $t \geq 0$.  Moreover $f_0 = f$, and we will see that $\{f_t:t \in [0,\infty)\}$ constitutes a least favourable sub-model in this problem.  

Recall that $(H_m)$ is called an \emph{estimator sequence} if $H_m: \mathbb{R}^{m\times d} \rightarrow \mathbb{R}$ is a measurable function for each $m \in \mathbb{N}$.  We are now in a position to state a local asymptotic minimax lower bound that reveals the optimality of the one-sample version of $T_m^*$ in this context. 
\begin{prop}
\label{Thm:SuperOracleLAM}
Writing $\mathcal{I}$ for the set of all finite subsets of $[0,\infty)$, for any estimator sequence $(\tilde{H}_m)$ we have that
\[
	\sup_{I \in \mathcal{I}} \liminf_{m \rightarrow \infty} \max_{t \in I} m\mathbb{E}_{f_{t/m^{1/2}}} \bigl[ \bigl\{H_m - H(f_{t/m^{1/2}}) \bigr\}^2 \bigr] \geq \mathrm{Var}_f \, \psi \bigl( f(X_1) \bigr). %= \int_0^\infty f(x)^{2 \kappa -1} \,dx - H(f)^2.
\]
Moreover, fixing $\alpha, \beta > 0$ and $\lambda \in (0,1)$, there exist $t_0 > 0$, depending only on $\kappa \in (1/2,1)$ and $f$ as defined above, and $C = C(\alpha,\beta,\lambda,\kappa,f) > 0$ such that $f_t \in \mathcal{G}_{1,\theta}$ for $t \in [0,t_0]$, where $\theta = (\alpha,\beta,\lambda,C)$.
\end{prop}
Specialising the estimator $T_m^*$ to this one-sample problem, we see that $T_m^*$ is efficient in the sense of \citet[][Chapter~25]{vanderVaart1997}, and hence optimal in this local asymptotic minimax sense.

  The following result, which is an immediate consequence of Theorem~\ref{Thm:Main}, compares the asymptotic worst-case squared error risks of $\widehat{T}_{m,n}$ (in the original problem with data $X_1,\ldots,X_m,Y_1,\ldots,Y_n$) and $T_m^*$ (in the oracle problem with data $f(X_1),\ldots,f(X_m)$ and $g(X_1),\ldots,g(X_m)$).  We first define a slight modification of the class $\mathcal{F}_{d,\vartheta}$, by setting
  \begin{equation}
  \label{Eq:Fstar}
    \mathcal{F}_{d,\vartheta}^* := \bigl\{(f,g) \in \mathcal{F}_{d,\vartheta} : \min(v_1,v_2) \geq 1/C\bigr\}.
  \end{equation}
  \begin{thm}
  \label{Thm:SuperOracle}
    Assume the conditions of Theorem~\ref{Thm:Main}.  Then
      \[
        \sup_{\phi \in \Phi(\xi)} \sup_{(f,g) \in \mathcal{F}_{d,\vartheta}^*} \max_{\substack{k_X \in \{k_X^{\mathrm{L}},\ldots, k_X^{\mathrm{U}}\} \\ k_Y \in \{k_Y^{\mathrm{L}},\ldots, k_Y^{\mathrm{U}}\}}} \frac{\mathbb{E}_{f,g}\bigl\{(\hat{T}_{m,n} - T)^2\bigr\}}{\mathbb{E}_{f}\bigl\{(T_{m}^* - T)^2\bigr\}} \cdot \frac{\sigma^2/m}{v_1/m + v_2/n} \rightarrow 1
      \]
as $m,n \rightarrow \infty$.
  \end{thm}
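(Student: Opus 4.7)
The plan is to observe that the product displayed in the theorem collapses, once one computes the oracle risk exactly, to the single ratio $\mathbb{E}_{f,g}\{(\hat{T}_{m,n} - T)^2\}/(v_1/m + v_2/n)$, and then to deduce that this ratio tends to $1$ uniformly by applying Theorem~\ref{Thm:Main} together with the lower bound on $v_1, v_2$ built into the class $\mathcal{F}_{d,\vartheta}^*$.

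First I would compute the oracle risk explicitly. Since $T = \int_{\mathcal{X}} f\,\phi(f,g,\cdot) = \mathbb{E}_f \phi_{X_1}$, the oracle estimator $T_m^* = m^{-1}\sum_{i=1}^m \phi_{X_i}$ is an unbiased mean of i.i.d.\ random variables with variance $\sigma^2 = \mathrm{Var}_f \phi_{X_1}$, so
\[
\mathbb{E}_f\bigl\{(T_m^* - T)^2\bigr\} = \mathrm{Var}_f(T_m^*) = \sigma^2/m
\]
exactly (not just asymptotically). Consequently, the expression inside the supremum factors as
\[
\frac{\mathbb{E}_{f,g}\bigl\{(\hat{T}_{m,n} - T)^2\bigr\}}{\sigma^2/m}\cdot\frac{\sigma^2/m}{v_1/m + v_2/n} = \frac{\mathbb{E}_{f,g}\bigl\{(\hat{T}_{m,n} - T)^2\bigr\}}{v_1/m + v_2/n},
\]
and the task reduces to showing that this last ratio converges to $1$ uniformly over $\phi \in \Phi(\xi)$, $(f,g) \in \mathcal{F}_{d,\vartheta}^*$ and the allowed range of $(k_X, k_Y)$.

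Next I would invoke Theorem~\ref{Thm:Main} to write $\mathbb{E}_{f,g}\{(\hat{T}_{m,n} - T)^2\} = v_1/m + v_2/n + r_{m,n}$, where the remainder satisfies $\sup |r_{m,n}|/(1/m + 1/n) \to 0$ with supremum taken uniformly over $\phi$, over $(f,g) \in \mathcal{F}_{d,\vartheta} \supseteq \mathcal{F}_{d,\vartheta}^*$, and over the admissible $k_X, k_Y$. The additional restriction to $\mathcal{F}_{d,\vartheta}^*$ enforces $\min(v_1, v_2) \geq C^{-1}$, giving $v_1/m + v_2/n \geq C^{-1}(1/m + 1/n)$, whence
\[
\biggl|\frac{\mathbb{E}_{f,g}\bigl\{(\hat{T}_{m,n} - T)^2\bigr\}}{v_1/m + v_2/n} - 1\biggr| = \frac{|r_{m,n}|}{v_1/m + v_2/n} \leq C\cdot\frac{|r_{m,n}|}{1/m + 1/n} \to 0
\]
uniformly, which is precisely the stated limit.

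There is no substantive obstacle here: granted Theorem~\ref{Thm:Main}, the only content is the exact, finite-sample identity $\mathbb{E}_f\{(T_m^* - T)^2\} = \sigma^2/m$ for the oracle, and the use of $\min(v_1, v_2) \geq C^{-1}$ to guarantee that the denominator does not decay faster than $1/m + 1/n$. Indeed, the enlargement from $\mathcal{F}_{d,\vartheta}$ to $\mathcal{F}_{d,\vartheta}^*$ is tailored precisely so that the uniform remainder in Theorem~\ref{Thm:Main} is dominated by the leading-order variance term.
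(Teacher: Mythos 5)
Your proposal is correct and coincides with the paper's own (unwritten) argument: the paper presents this result as an immediate consequence of Theorem~\ref{Thm:Main}, and the content of that immediacy is exactly what you supply, namely the exact identity $\mathbb{E}_f\{(T_m^*-T)^2\}=\sigma^2/m$ for the unbiased oracle average and the bound $v_1/m+v_2/n\geq C^{-1}(1/m+1/n)$ coming from the definition of $\mathcal{F}_{d,\vartheta}^*$, which lets the uniform $o(1/m+1/n)$ remainder of Theorem~\ref{Thm:Main} be absorbed.
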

  To understand the implications of this theorem, consider the case where $n$ is at least of the same order as $m$, so that $A := \limsup_{n \rightarrow \infty} m/n \in [0,\infty)$.  If $\sigma^2/(v_1 + Av_2) > 1$, then the worst-case risk of $\hat{T}_{m,n}$ is asymptotically better than that of $T_m^*$, and we have an illustration of the super-oracle phenomenon.  The one-sample functional~\eqref{Eq:OneSampleFunctional} corresponds to $A = 0$, and the arguments above reveal that for the R\'enyi-type functional $\int_{\mathbb{R}^d} f(x)^\kappa \, dx$ with $\kappa \in (1/2,1)$, the efficient variance in the original problem is strictly smaller than that in the oracle problem since $\sigma^2 \equiv \sigma^2(f) = \mathrm{Var}_f \bigl(f(X_1)^{\kappa-1}\bigr)$ and $v_1 = \kappa^2 \sigma^2$ (note that $\sup_{f \in \mathcal{F}_{d,\vartheta}^*} \sigma^2(f) < \infty$ whenever $\lambda_1 > 2 - 2\kappa$).  In general, the phenomenon occurs if and only if
  \[
    2 \mathrm{Cov}_f\bigl(\phi_{X_1},(f\phi_{10})_{X_1}\bigr) < - \mathrm{Var}_f(f\phi_{10})_{X_1} - Av_2.
    \]
One of the surprising aspects of the super-oracle phenomenon is the fact that the estimator $\hat{T}_{m,n}$ is constructed so as to mimic $T_m^*$, by estimating $f(X_1),\ldots,f(X_m)$ and $g(X_1),\ldots,g(X_m)$, but can in some cases outperform $T_m^*$ itself.  
%Similar remarks apply to the one-sample functional~\eqref{Eq:OneSampleFunctional}, which can be considered as the special case where $g$ is known completely.
  
    \section{A local asymptotic minimax lower bound}
    \label{Sec:LAM}

    Before we can state our local asymptotic minimax result we require some further assumptions on the function $\phi$.  %Extending our previous notation, for a smooth function $\phi:\mathcal{Z} \rightarrow \mathbb{R}$, $\mathbf{z} = (u,v) \in \mathcal{Z}$ and $\ell_1,\ell_2 \in \mathbb{N}$, we write $\phi_{\ell_1\ell_2}(\mathbf{z}) := \frac{\partial^{\ell_1+\ell_2} \phi}{\partial u^{\ell_1} \partial v^{\ell_2}}$.  
For $\xi = (\kappa_1,\kappa_2,\beta^*,L) \in \Xi$ let $\tilde{\Phi}(\xi)$ denote the subset of $\Phi(\xi)$ consisting of those $\phi$ for which
\begin{enumerate}[(i)]
\item for all $\mathbf{z}=(u,v) \in \mathcal{Z}$ and $\ell_1 \in [\beta^*]$ we have
\[
	\max_{\ell_2 \in [\beta^*-\ell_1]} \frac{ u^{\ell_1} v^{\ell_2} | \phi_{\ell_1 \ell_2} ( \mathbf{z} ) | }{| \phi(\mathbf{z}) + u \phi_{10} ( \mathbf{z})| \vee 1} \bigvee \max_{ \ell_2 \in [\beta^*-\ell_1] \setminus \{0\}} \frac{ u^{\ell_1+1} v^{\ell_2-1} | \phi_{\ell_1 \ell_2} ( \mathbf{z} ) | }{ \bigl(u | \phi_{01}(\mathbf{z})| \bigr) \vee 1}  \leq L;
\]
	\item for all $\boldsymbol{\epsilon}=(\epsilon_1,\epsilon_2) \in (-1/L,1/L)^2$, $\mathbf{z}=(u,v) \in \mathcal{Z}$, and $\ell_1, \ell_2 \in \mathbb{N}_0$ with $\ell_1+\ell_2 \leq \beta^*-1$, we have
\begin{align*}
  &\frac{u^{\ell_1} v^{\ell_2} \bigl|\phi_{\ell_1 \ell_2}( \mathbf{z} \! + \! \boldsymbol{\epsilon}) \! -\! \phi_{\ell_1 \ell_2}( \mathbf{z}) \bigr|}{ | \phi(\mathbf{z}) + u \phi_{10} ( \mathbf{z})| \vee 1 } \leq L \Bigl( \Bigl|\frac{\epsilon_1}{u_1}\Bigr|^{(\beta^*-\ell_1) \wedge 1} + \Bigl|\frac{\epsilon_2}{u_2}\Bigr|^{(\beta^*-\ell_2) \wedge 1} \Bigr); \\
  &\frac{u^{\ell_1+1} v^{\ell_2-1} \bigl|\phi_{\ell_1 \ell_2}( \mathbf{z} \!+\! \boldsymbol{\epsilon}) \!-\! \phi_{\ell_1 \ell_2}( \mathbf{z}) \bigr|}{\bigl( u | \phi_{01}(\mathbf{z})| \bigr) \vee 1 } \leq L \Bigl( \Bigl|\frac{\epsilon_1}{u_1}\Bigr|^{(\beta^*-\ell_1) \wedge 1} \! \! + \Bigl|\frac{\epsilon_2}{u_2}\Bigr|^{(\beta^*-\ell_2) \wedge 1} \Bigr) \ \ \text{when $\ell_2 \geq 1$}.
\end{align*}
\end{enumerate}
To understand these conditions it is instructive to consider the case of $\varphi-$divergences, for which $\phi(u,v)=\varphi(v/u)$ for some function $\varphi$. Here, (i) reduces to requiring that 
\[
	\sup_{w>0} \biggl\{\max_{\ell \in [\beta^*]} \frac{ w^\ell | \varphi^{(\ell)} (w) | }{| \varphi(w) -w\varphi'(w)| \vee 1}, \max_{ \ell \in [\beta^*] \setminus \{0\}} \frac{ w^{\ell-1} | \varphi^{(\ell)}(w) | }{| \varphi'(w) | \vee 1} \biggr\} < \infty,
\]
and a similar reduction holds for (ii). This is satisfied for the Kullback--Leibler divergence and all R\'enyi divergences.  Moreover, when $\phi(u,v) = v$, we have $\phi \in \tilde{\Phi}(0,0,\beta^*,1+1/\beta^*)$ for every $\beta^* > 0$.

Now fix $(f,g) \in \mathcal{F}_d^2$ and $\phi: \mathcal{Z} \rightarrow \mathbb{R}$ and define the functions
\begin{align*}
	h_1(x) &:= \phi_x + (f\phi_{10})_x - \mathbb{E} \{\phi_{X_1} + (f\phi_{10})_{X_1} \} \\
	h_2(x) &:= (f\phi_{01})_x - \mathbb{E}\{ (f \phi_{01})_{Y_1} \}.
\end{align*}
This enables us to define, for each $t=(t_1,t_2) \in \mathbb{R}^2$, the densities
\[
	f_{t_1}(x) := c_1(t_1) K\bigl(t_1h_1(x)\bigr) f(x)  \quad \text{and} \quad g_{t_2}(x) := c_2(t_2) K\bigl(t_2 h_2(x)\bigr) g(x),
\]
where $K(t) := 1/2 + 1/(1+e^{-4t})$ and $c_1( \cdot),c_2(\cdot)$ are normalising constants.  Our choice of $K$ is made so that $K(0)=K'(0)=1$, that $K$ is smooth, and that $K$ is bounded above and below by positive constants. Now, for each $t=(t_1,t_2) \in \mathbb{R}^2$ we define the sequence of probability measures $(P_{n,t})$ on $\mathbb{R}^{(m+n) \times d}$ so that $P_{n,t}$ has density $f_{m^{-1/2} t_1}^{ \otimes m} \otimes g_{n^{-1/2} t_2}^{ \otimes n}$ (here we think of $m$ as a function of $n$).  It turns out that the family $\{P_{n,t}:t \in \mathbb{R}^2\}$ constitutes a least favourable parametric sub-model for this estimation problem.  For an arbitrary probability measure $P$ on $\mathbb{R}^{(m+n)\times d}$, we write $\mathbb{E}_P$ to denote expectation over $(X_1,\ldots,X_m,Y_1,\ldots,Y_n)^T \sim P$. %and say that $(T_{m,n})$ is an \emph{estimator sequence} if $T_{m,n}: \mathbb{R}^{(m+n)\times d} \rightarrow \mathbb{R}$ is a measurable function for each $m,n \in \mathbb{N}$.

We can now state our local asymptotic minimax lower bound, and the consequent optimality property of our estimators $\hat{T}_{m,n}$.
\begin{thm}
\label{Thm:LAM}
Fix $d \in \mathbb{N}$, $\vartheta=(\alpha,\beta,\lambda_1,\lambda_2,C) \in \Theta$ and $\xi = (\kappa_1,\kappa_2,\beta^*,L) \in \Xi$. Let $m = m_n$ be any sequence of positive integers such that $m \rightarrow \infty$ and $m / n \rightarrow A$ for some $A \in [0,\infty]$, let $(f,g) \in \mathcal{F}_{d,\vartheta}$, let $\phi \in \tilde{\Phi}(\xi)$ and let $\mathcal{I}$ denote the set of finite subsets of $\mathbb{R}^2$.
\begin{enumerate}[(i)]
\item For any estimator sequence $(T_{m,n})$, we have that
\[
	\sup_{I \in \mathcal{I}} \liminf_{n \rightarrow \infty} \max_{t = (t_1,t_2) \in I} n\mathbb{E}_{P_{n,t}} \Bigl[ \bigl\{T_{m,n} - T(f_{m^{-1/2}t_1},g_{n^{-1/2}t_2})\bigr\}^2\Bigr] \geq \frac{1}{A}v_1(f,g) + v_2(f,g) .
\]
\item There exists $t_0=t_0(d,\vartheta,\xi) \in (0,1]$ such that, for any $t_1,t_2 \in (-t_0,t_0)$, we have $(f_{t_1},g_{t_2}) \in \mathcal{F}_{d,\tilde{\vartheta}}$, where $\tilde{\vartheta}=(\alpha,\tilde{\beta}, \lambda_1,\lambda_2, C/t_0)$ and $\tilde{\beta}:=\min\{\beta, (1 \wedge \beta)( \beta^* -1)\}$.  In particular, when the conditions of Theorem~\ref{Thm:Main} hold and $\tilde{\beta} = \beta$, the estimators $\hat{T}_{m,n}$ in~\eqref{Eq:WeightedEstimator} satisfy
\[
\sup_{I \in \mathcal{I}} \limsup_{n \rightarrow \infty} \max_{t = (t_1,t_2) \in I} n\mathbb{E}_{P_{n,t}} \Bigl[ \bigl\{\hat{T}_{m,n} - T(f_{m^{-1/2}t_1},g_{n^{-1/2}t_2})\bigr\}^2\Bigr] = \frac{1}{A}v_1(f,g) + v_2(f,g).
\]
\end{enumerate}
\end{thm}
%To understand the main implications of this result, consider a setting where the function~$\phi$ in the definition of the functional~\eqref{Eq:Functional} belongs to $ \tilde{\Phi}(\xi)$, and assume further that $\beta_1^*$ and~$\beta_2^*$ are large enough that $\tilde{\beta} = \beta$.  
Recall, for example, that for both the Kullback--Leibler divergence and all R\'enyi-type divergences, we can take $\beta^*$ large enough that $\tilde{\beta} = \beta$.  In these and other cases for which the conditions hold, then, the local asymptotic minimax bounds in Theorem~\ref{Thm:LAM} justify the claim that suitably chosen versions of our weighted nearest neighbour estimator~\eqref{Eq:WeightedEstimator} are efficient over these classes of densities and functionals.  

We conclude with a few extensions of Theorem~\ref{Thm:LAM}.  The condition $\tilde{\beta} = \beta$ can be weakened to $\tilde{\beta} > d/2$ (or in fact $\tilde{\beta} > d/4$ when $d \in \{1,2,3\}$), at the expense of slightly stronger conditions on the tuning parameters in the definition of $\hat{T}_{m,n}$.  Theorem~\ref{Thm:LAM}(i) implies a (non-local) minimax lower bound over the classes $\mathcal{F}_{d,\vartheta}^* \subseteq \mathcal{F}_{d,\vartheta}$ from~\eqref{Eq:Fstar}, and this matches the upper bound in Theorem~\ref{Thm:Main} over $\mathcal{F}_{d,\vartheta}$.  Theorem~\ref{Thm:LAM}(i) may also be extended to broader classes of loss functions, namely those that have closed, convex, symmetric sub-level sets; see \citet[][Theorem~3.11.5]{vanderVaartWellner1996} for details.   Finally, Theorem~\ref{Thm:Normality} allows us to extend Theorem~\ref{Thm:Main}, and consequently Theorem~\ref{Thm:LAM}(ii), to $L_q$-losses with $q \in (0,2)$.  The combination of these results implies that our estimators are asymptotically optimal in a local asymptotic minimax sense for these $L_q$ losses too; we omit formal statements for brevity.

\medskip

\noindent \textbf{Acknowledgements}: The authors are very grateful to the anonymous reviewers for their constructive comments, which helped to improve the paper. 

\begin{funding}
The first author was supported by Engineering and Physical Sciences Reseach Council (EPSRC) New Investigator Award EP/W016117/1.

 The second author was supported in part by EPSRC Programme grant EP/N031938/1, EPSRC Fellowship EP/P031447/1 and European Research Council Advanced grant 101019498.
\end{funding}

\begin{supplement}
\stitle{Supplementary material for `Efficient functional estimation and the super-oracle phenomenon'}
\sdescription{Proofs of results from the main text and auxiliary results.}
\end{supplement}

\newpage

\title{Supplementary material to `Efficient functional estimation and the super-oracle phenomenon'}

This is the supplementary material to \citet{BS2019Main}, hereafter referred to as the main text.

\medskip

Throughout our proofs we will use the notation
\[
	u_{x,s}:= \frac{k_X}{mV_d h_{x,f}^{-1}(s)^d} \quad \text{and} \quad v_{x,t}:= \frac{k_Y}{nV_d h_{x,g}^{-1}(t)^d}.
      \]
      for $x \in \mathcal{X}$ and $s,t \in (0,1)$.  Moreover, since many of our error terms will depend on $k_X$, $k_Y$, $f$, $g$ and $\phi$ (as well as $q$, in Theorem~\ref{Thm:ConfidenceIntervals}), we adopt the convention, without further comment, that all of these error bounds hold uniformly over the relevant sets as claimed in the statements of the results.  In addition, when we write $a \lesssim b$, we mean that there exists $C > 0$, depending only on the parameters $d, \vartheta$ and $\xi$ of the problem, such that $a \leq Cb$.  It will be convenient throughout to assume that $m,n \geq 3$.

\subsection{Proof  of Proposition~\ref{Prop:Beta}}

\begin{proof}[Proof of Proposition~\ref{Prop:Beta}]
  First, we have that $\mu_\alpha(f) \leq 1$ and $\|f\|_\infty \leq C_{d,a,b}$, and it remains to bound the function $M_{f,\beta}( \cdot)$ for each $\beta > 0$.  Writing $g(r) := C_{d,a,b}r^{a-1}(1-r)^{b-1}\mathbbm{1}_{\{r \leq 1\}}$, so that $f(x)=g(\|x\|)$ we may see by induction that
\[
	\sup_{r \in (0,2/3)} r^{-(a-t-1)} |g^{(t)}(r)| < \infty \quad \text{and} \quad \sup_{r \in (1/3,1)} r^{-(b-t-1)} |g^{(t)}(r)| < \infty
\]
for any $t \in \mathbb{N}$. Moreover, for any $t \in \mathbb{N}$ and multi-index $\boldsymbol{\alpha} = (\alpha_1,\ldots,\alpha_d) \in \mathbb{N}_0^d$ with $|\boldsymbol{\alpha}| = t$, we have that
\[
	\sup_{x \in B_0(2/3)} \|x\|^{t-1} \bigl|\partial^{\boldsymbol{\alpha}} \|x\| \bigr| < \infty \quad \text{and} \quad \sup_{x \in B_0(1) \setminus B_0(1/3)} \bigl|\partial^{\boldsymbol{\alpha}} \|x\| \bigr| < \infty.
\]
Using these facts we have that 
\[
	\bigl| \partial^{\boldsymbol{\alpha}}f(x)\bigr| \lesssim \frac{f(x)}{\|x\|^t (1-\|x\|)^t}
\]
for any $t \in \mathbb{N}$.  Now, writing $\underline{\beta} := \lceil \beta \rceil - 1$ and fixing $\boldsymbol{\alpha} = (\alpha_1,\ldots,\alpha_d) \in \mathbb{N}_0^d$ with $|\boldsymbol{\alpha}| = \underline{\beta}$, if $y,z \in B_x\bigl( \|x\|(1-\|x\|)/8\bigr)$ then we have for any some $w$ on the line segment between $x$ and $y$ that
\begin{align*}
	\bigl|\partial^{\boldsymbol{\alpha}} f(z) - \partial^{\boldsymbol{\alpha}} f(y)\bigr| &\leq d^{1/2} \|z-y\|\vertiii{f^{(\underline{\beta}+1)}(w)} \\
	& \lesssim \|z-y\| \| w \|^{a-1-(\underline{\beta}+1)} (1-\|w\|)^{b-1-(\underline{\beta}+1)} \\
	& \lesssim f(y) \|z-y\|^{\beta-\underline{\beta}} \frac{\|z-y\|^{\underline{\beta}+1-\beta}}{ \|x\|^{\underline{\beta}+1}(1-\|x\|)^{\underline{\beta}+1}} \lesssim \frac{ f(y) \|z-y\|^{\beta-\underline{\beta}} }{ \|x\|^\beta (1-\|x\|)^\beta}.
\end{align*}
It follows that $M_{f,\beta}(x) \lesssim 1/\{\|x\|(1-\|x\|)\}$. Therefore, for any $\lambda \in \bigl(0,b/(b+d-1)\bigr)$, we have
\begin{align*}
	\int_{B_0(1)} f(x) \biggl\{ & \frac{M_{f,\beta}(x)^d}{f(x)} \biggr\}^\lambda \,dx \\
	&\lesssim \int_{B_0(1)} \|x\|^{a-1}(1-\|x\|)^{b-1} \biggl\{ \frac{1}{\|x\|^{a+d-1}(1-\|x\|)^{b+d-1}} \biggr\}^\lambda \,dx \\
	& = dV_d \int_0^1 r^{a+d-2 - \lambda(a+d-1)} (1-r)^{b-1 - \lambda(b+d-1)} \,dr < \infty,
\end{align*}
as claimed.
\end{proof}

\subsection{Proof of Proposition~\ref{Thm:SimpleBias} on asymptotic bias}

The following general result on the bias of the naive estimator $\widetilde{T}_{m,n}$ yields Proposition~\ref{Thm:SimpleBias} as an immediate consequence.
\begin{prop}
\label{Thm:GeneralBias}
Fix $d \in \mathbb{N}$, $\vartheta = (\alpha,\beta,\lambda_1,\lambda_2,C) \in \Theta$ and $\xi = (\kappa_1,\kappa_2,\beta^*,L) \in \Xi$. Let $k_X^{\mathrm{L}} \leq k_X^{\mathrm{U}},k_Y^{\mathrm{L}} \leq k_Y^{\mathrm{U}}$ be deterministic sequences of positive integers such that $k_X^{\mathrm{L}} / \log m \rightarrow \infty$, $k_Y^{\mathrm{L}}/ \log n \rightarrow \infty$, $k_X^{\mathrm{U}} = O(m^{1-\epsilon})$ and $k_Y^{\mathrm{U}} = O(n^{1-\epsilon})$ for some $\epsilon>0$. Suppose that $\zeta<1$. Then for each $i_1,i_2 \in \bigl[\lceil d/2 \rceil -1\bigr]$ and $j_1, j_2 \in \mathbb{N}_0$ with $j_1+j_2 \leq \lceil (\beta^*-1)/2 \rceil$, we can find $\lambda_{i_1i_2j_1j_2} \equiv \lambda_{i_1i_2j_1j_2}(d,f,g,\phi)$, with the properties that $\lambda_{0,0,0,0}=T(f,g)$,
\[
   \sup_{\phi \in \Phi(\xi)} \sup_{(f,g) \in \mathcal{F}_{d,\vartheta}}  |\lambda_{i_1i_2j_1j_2}| < \infty,
\]
and that, for every $\epsilon > 0$,
\begin{align*}
	\sup_{\phi \in \Phi(\xi)}  \sup_{(f,g) \in \mathcal{F}_{d,\vartheta}}  & \Biggl| \mathbb{E}_{f,g}(\widetilde{T}_{m,n}) - \! \sum_{i_1,i_2=0}^{\lceil d/2 \rceil-1} \sum_{j_1,j_2=0}^\infty \! \mathbbm{1}_{\{j_1+j_2 \leq \lceil (\beta^*-1)/2 \rceil\}} \frac{\lambda_{i_1i_2j_1j_2}}{k_X^{j_1} k_Y^{j_2}} \Bigl( \frac{k_X}{m} \Bigr)^{\frac{2i_1}{d}} \Bigl( \frac{k_Y}{n} \Bigr)^{\frac{2i_2}{d}} \Biggr| \\
	&\hspace{20pt} = O \biggl( \max \biggl\{ k_X^{-\beta^*/2 }, \Bigl( \frac{k_X}{m} \Bigr)^{\frac{2 \wedge \beta}{d} \beta^*}, \Bigl( \frac{k_X}{m} \Bigr)^{\beta/d}, \Bigl( \frac{k_X}{m} \Bigr)^{ \lambda_1(1-\zeta) -\epsilon}, k_Y^{-\beta^*/2 },  \\
  &\hspace{90pt}\Bigl( \frac{k_Y}{n} \Bigr)^{\frac{2 \wedge \beta}{d} \beta^*} , \Bigl( \frac{k_Y}{n} \Bigr)^{\beta/d}, \Bigl( \frac{k_Y}{n} \Bigr)^{ \lambda_2(1-\zeta) -\epsilon}, 1/m, 1/n \biggr\} \biggr),
\end{align*}
as $m,n \rightarrow \infty$, uniformly for $k_X \in \{k_X^{\mathrm{L}},\ldots, k_X^{\mathrm{U}}\}$ and $k_Y \in \{k_Y^{\mathrm{L}}, \ldots, k_Y^{\mathrm{U}}\}$.
\end{prop}
\begin{proof}
Define
\[
	a_{m,X}^\pm:= 0 \vee \frac{k_X}{m} \biggl(1 \pm \frac{3\log^{1/2}m}{k_X^{1/2}}\biggr) \wedge 1, \quad a_{n,Y}^\pm:= 0 \vee \frac{k_Y}{n} \biggl( 1 \pm \frac{3\log^{1/2}n}{k_Y^{1/2}}\biggr) \wedge 1,
\]
let $\mathcal{I}_{m,X} := [a_{m,X}^-,a_{m,X}^+]$, $\mathcal{I}_{n,Y} := [a_{n,Y}^-,a_{n,Y}^+]$, and set
\[
%\label{Eq:MathcalXBias}
	 \mathcal{X}_{m,n}:=\biggl\{x \in \mathcal{X} : \frac{f(x)}{M_{\beta}(x)^{d}} \geq \frac{k_X \log m}{m}, \frac{g(x)}{M_\beta(x)^{d}} \geq \frac{k_Y \log n}{n} \biggr\}.
\]
To begin our bias calculation, we recall the definitions of $\hat{f}_{(k_X),i}$ and $\hat{g}_{(k_Y),i}$ from~\eqref{Eq:fhatghat}.  Observe that, conditionally on $X_1$, we have $h_{X_1,f}(\|X_j-X_1\|) \sim U[0,1]$ for $j \in \{2,\ldots,n\}$, and it follows that
\[
  \bigl(\hat{f}_{(k_X),1},\hat{g}_{(k_Y),1}\bigr)\bigm| X_1 \stackrel{d}{=} \biggl(\frac{k_X}{mV_dh_{X_1,f}^{-1}(B_1)^d},\frac{k_Y}{nV_dh_{X_1,g}^{-1}(B_2)^d}\biggr) \biggm| X_1,
\]
where $B_1 \sim \mathrm{Beta}(k_X,m-k_X)$ and $B_2 \sim \mathrm{Beta}(k_Y,n+1-k_Y)$ are independent.  Moreover, we may write, for example,
\begin{equation}
\label{Eq:uxsdecomp}
	\frac{u_{x,s}}{f(x)}-1 = \frac{k_X}{ms}-1 + \frac{s}{V_df(x)h_{x,f}^{-1}(s)^d}-1 + \Bigl(  \frac{k_X}{ms}-1 \Bigr) \Bigl(  \frac{s}{V_df(x)h_{x,f}^{-1}(s)^d}-1 \Bigr),
\end{equation}
and use Lemma~\ref{Lemma:hxinvbounds} to expand $V_df(x)h_{x,f}^{-1}(s)^d/s$ in powers of $s^{2/d}$.  Since the $\mathrm{Beta}(k,n-k)$ distribution concentrates around its mean at rate $k^{-1/2}$ in an approximately symmetric way, we will also see later that for every $a \in \mathbb{R}$, we have an asymptotic expansion of the form
\[
	\Bigl(\frac{n}{k} \Bigr)^a \int_0^1 s^a \Bigl( \frac{k}{ns} -1 \Bigr)^j \mathrm{B}_{k,n-k}(s) \,ds = c_1 k^{-\lceil j /2 \rceil} + c_2 k^{- \lceil j/2 \rceil -1 } + \ldots + O(1/n),
\]
provided that $k=k_n \rightarrow \infty$ and $k/n \rightarrow 0$ as $n \rightarrow \infty$. These facts mean that for remainder terms $R_1,\ldots,R_4$ to be bounded below and functions $c_{i_1i_2j_1j_2}(x)$ to be specified later we may write
\begin{align}
\label{Eq:BiasExpansion}
	\mathbb{E} &\widetilde{T}_{m,n} = \int_\mathcal{X} f(x) \int_0^1 \int_0^1 \phi ( u_{x,s}, v_{x,t}) \mathrm{B}_{k_X,m-k_X}(s) \mathrm{B}_{k_Y,n+1-k_Y}(t) \,ds \,dt \,dx \nonumber \\
	&  = \int_{\mathcal{X}_{m,n}}  f(x)   \int_{\mathcal{I}_{m,X}}  \int_{\mathcal{I}_{n,Y}}  \phi ( u_{x,s}, v_{x,t} )  \mathrm{B}_{k_X,m-k_X}(s) \mathrm{B}_{k_Y,n+1-k_Y}(t) \,ds \,dt \,dx + R_1 \nonumber \\
	& = \sum_{\ell_1,\ell_2=0}^\infty \frac{\mathbbm{1}_{\{\ell_1+\ell_2 \leq \beta^*-1\}}}{\ell_1! \ell_2!} \int_{\mathcal{X}_{m,n}} \! \!  f(x)     \int_{\mathcal{I}_{m,X}} \int_{\mathcal{I}_{n,Y}} \biggl( \frac{u_{x,s}}{f(x)}-1 \biggr)^{\ell_1} \biggl( \frac{v_{x,t}}{g(x)} -1 \biggr)^{\ell_2} f(x)^{\ell_1}g(x)^{\ell_2} \nonumber \\
	& \hspace{25pt} \times  \phi_{\ell_1\ell_2}\bigl(f(x),g(x)\bigr) \mathrm{B}_{k_X,m-k_X}(s) \mathrm{B}_{k_Y,n+1-k_Y}(t) \,ds \,dt \,dx + R_1 + R_2 \\
	&  = \sum_{i_1,i_2=0}^{\lceil d/2 \rceil -1} \sum_{j_1,j_2=0}^\infty \mathbbm{1}_{\{j_1+j_2 \leq \beta^*-1\}} \int_{\mathcal{X}_{m,n}} \! f(x) c_{i_1i_2j_1j_2}(x) \int_{\mathcal{I}_{m,X}} \! \int_{\mathcal{I}_{n,Y}}  \Bigl( \frac{k_X}{ms} -1 \Bigr)^{j_1} \Bigl( \frac{k_Y}{nt} -1 \Bigr)^{j_2}\nonumber  \\
	& \hspace{50pt} \times s^{\frac{2i_1}{d}} t^{\frac{2i_2}{d}} \mathrm{B}_{k_X,m-k_X}(s) \mathrm{B}_{k_Y,n+1-k_Y}(t) \,ds \,dt \,dx + R_1 + R_2 + R_3 \nonumber \\
	& = \sum_{i_1,i_2=0}^{\lceil d/2 \rceil -1} \sum_{j_1,j_2=0}^\infty \mathbbm{1}_{\{j_1+j_2 \leq \lceil (\beta^*-1)/2 \rceil\}} \frac{\lambda_{i_1i_2j_1j_2}}{k_X^{j_1} k_Y^{j_2}} \Bigl( \frac{k_X}{m} \Bigr)^{\frac{2i_1}{d}} \Bigl( \frac{k_Y}{n} \Bigr)^{\frac{2i_2}{d}} + R_1+R_2+R_3+R_4.
\end{align}
It now remains to bound each of the remainder terms.

\bigskip

\emph{To bound $R_1$:} Since we are assuming that $\zeta<1$, we may apply Lemma~\ref{Lemma:Acomplement} to see that
\begin{align}
  \label{Eq:R11}
  \int_\mathcal{X} f(x) \int_0^1 \int_0^1 &\bigl( 1 - \mathbbm{1}_{\{s \in \mathcal{I}_{m,X} \}}\mathbbm{1}_{\{t \in \mathcal{I}_{n,Y} \}} \bigr) \phi ( u_{x,s}, v_{x,t}) \nonumber \\
  &\times \mathrm{B}_{k_X,m-k_X}(s) \mathrm{B}_{k_Y,n+1-k_Y}(t) \,ds \,dt \,dx = o(m^{-4}+n^{-4}).
\end{align}
\sloppy{When $s \in \mathcal{I}_{m,X}$ and $k_X^\mathrm{L} \geq 36 \log m$, we have by Lemma~\ref{Lemma:hxinvbounds2} that $u_{x,s} \leq \frac{C k_X}{m a_{m,X}^-} \leq 2C$, and, similarly, when $t \in \mathcal{I}_{n,Y}$ and $k_Y^\mathrm{L} \geq 36 \log n$, we have $v_{x,t} \leq 2C$. Thus, when $s \in \mathcal{I}_{m,X}, t \in \mathcal{I}_{n,Y}$ and $\min( k_X^\mathrm{L} / \log m , k_Y^\mathrm{L} / \log n) \geq 36$, we may use the fact that $|u_{x,s}^{\ell_1} v_{x,t}^{\ell_2} \phi_{\ell_1,\ell_2}(u_{x,s},v_{x,t}) | \leq L (2C)^{2L+|\kappa_1|+|\kappa_2|} u_{x,s}^{\kappa_1} v_{x,t}^{\kappa_2}$ for all $\ell_1,\ell_2 \in \mathbb{N}_0$ such that $\ell_1+\ell_2 \leq \beta^*-1$.}

In the following we consider the decomposition $\mathcal{X}_{m,n}^c = \mathcal{X}_{m,f}^c \cup \mathcal{X}_{n,g}^c$, where $\mathcal{X}_{m,f} := \{x : f(x)M_\beta(x)^{-d} \geq k_X \log m /m\}$ and $\mathcal{X}_{n,g} := \{x: g(x)M_\beta(x)^{-d} \geq k_Y \log n /n\}$. Using Lemma~\ref{Lemma:GeneralisedHolder} and Lemma~\ref{Lemma:hxinvbounds2} we have that
\begin{align}
\label{Eq:ftailerror}
	\int_{\mathcal{X}_{m,f}^c} & f(x)  \int_{\mathcal{I}_{m,X}} \int_{\mathcal{I}_{n,Y}} \phi ( u_{x,s}, v_{x,t} )  \mathrm{B}_{k_X,m-k_X}(s) \mathrm{B}_{k_Y,n+1-k_Y}(t) \,ds \,dt \,dx \nonumber \\
	& \lesssim \int_{\mathcal{X}_{m,f}^c} f(x)^{1-\kappa_1^-} g(x)^{-\kappa_2^-} M_\beta(x)^{d (\kappa_1^-+\kappa_2^-)} (1+\|x\|)^{d (\kappa_1^-+\kappa_2^-)} \,dx\nonumber \\
	& \leq \inf_{ a >0} \Bigl( \frac{k_X \log m}{m} \Bigr)^a \int_\mathcal{X} f(x) \frac{M_\beta(x)^{d(a+\kappa_1^-+\kappa_2^-)}}{f(x)^{a+\kappa_1^-}g(x)^{\kappa_2^-}} (1+\|x\|)^{d (\kappa_1^-+\kappa_2^-)} \, dx \nonumber \\
	& = O \biggl( \biggl( \frac{k_X}{m} \biggr)^{\lambda_1(1-\zeta)- \epsilon} \biggr)
\end{align}
for every $\epsilon>0$. With a similar bound over $\mathcal{X}_{n,g}^c$ we conclude that
\begin{align}
  \label{Eq:R12}
	\int_{\mathcal{X}_{m,n}^c} f(x)  \int_{\mathcal{I}_{m,X}} \int_{\mathcal{I}_{n,Y}}& \phi ( u_{x,s}, v_{x,t})  \mathrm{B}_{k_X,m-k_X}(s) \mathrm{B}_{k_Y,n+1-k_Y}(t) \,ds \,dt \,dx \nonumber \\
	&= O \biggl( \max \biggl\{ \biggl( \frac{k_X}{m} \biggr)^{\lambda_1(1-\zeta)- \epsilon}, \biggl( \frac{k_Y}{n} \biggr)^{\lambda_2(1-\zeta)- \epsilon} \biggr\} \biggr)
\end{align}
for every $\epsilon>0$.  From~\eqref{Eq:R11} and~\eqref{Eq:R12}, we deduce that
\begin{equation}
\label{Eq:R1Bound}
  R_1 = O \biggl( \max \biggl\{ \biggl( \frac{k_X}{m} \biggr)^{\lambda_1(1-\zeta)- \epsilon}, \biggl( \frac{k_Y}{n} \biggr)^{\lambda_2(1-\zeta)- \epsilon},\frac{1}{m^4},\frac{1}{n^4} \biggr\} \biggr).
\end{equation}

\bigskip

\emph{To bound $R_2$:} We first observe that, by~\eqref{Eq:uxsdecomp} and Lemma~\ref{Lemma:hxinvbounds}, we have that
\begin{align}
\label{Eq:epsilonn}
	\epsilon_{m,n}:= \sup_{x \in \mathcal{X}_{m,f}} \sup_{s \in \mathcal{I}_{m,X}} \Bigl| \frac{u_{x,s}}{f(x)} -1 \Bigr| \bigvee \sup_{x \in \mathcal{X}_{n,g}} \sup_{t \in \mathcal{I}_{n,Y}} \Bigl| \frac{v_{x,t}}{g(x)} -1 \Bigr| =o(1).
\end{align}
Now, for $t \in [0,1]$ we have that $h(t):=t-\log(1+t) \geq t^2/4$. Thus, letting $B \sim \mathrm{Beta}(k,n-k)$, whenever $\frac{3 \alpha^{1/2} \log^{1/2}n}{k^{1/2}} \leq 1$ and $ \frac{k^{1/2}+ 3\alpha^{1/2} \log^{1/2} n}{n^{1/2}} \leq 2^{1/2}-1$ we may integrate the Beta tail bound in Lemma~\ref{Lemma:BetaTailBounds} to see that
\begin{align}
\label{Eq:CentralMoments}
	\int_0^1 \Bigl| \frac{ns}{k} -&1 \Bigr|^\alpha  \mathrm{B}_{k,n-k}(s) \,ds = \alpha \int_0^{n/k} y^{\alpha-1} \mathbb{P} \Bigl( \Bigl| B - \frac{k}{n} \Bigr| \geq \frac{ky}{n} \Bigr) \,dy \nonumber \\
	& \leq 2 \alpha k^{-\alpha/2} \int_0^{k^{-1/2}n} u^{\alpha-1} \Bigl\{ \exp \Bigl( - k h \Bigl( \frac{n^{1/2}k^{-1/2}u}{n^{1/2}+k^{1/2}+u} \Bigr) \Bigr) \nonumber \\
	& \hspace{140pt} + \exp \Bigl( - n h \Bigl( \frac{u}{n^{1/2}+k^{1/2}+u} \Bigr) \Bigr) \Bigr\} \,du \nonumber \\
	& \leq 4 \alpha k^{-\alpha/2} \int_0^{3 \alpha^{1/2} \log^{1/2} n} u^{\alpha-1} e^{-u^2/8} \,du + \frac{4 n^{\alpha}}{k^{\alpha}} \exp \Bigl( - \frac{9 \alpha \log n}{8} \Bigr) \nonumber \\
	& \leq \frac{2^{3(\alpha-1)/2} \alpha \Gamma(\alpha/2)}{k^{\alpha/2}} + \frac{4}{k^{\alpha}}.
\end{align}
Next, by Lemma~\ref{Lemma:GeneralisedHolder}, we have for any $\tau \geq 0$ that
\begin{align}
\label{Eq:fbodyerror}
	\Bigl( \frac{k_X}{m} \Bigr)^\tau \int_{\mathcal{X}_{m,n}} & f(x)^{1-\kappa_1^-}g(x)^{-\kappa_2^-} \Bigl\{ \frac{M_\beta(x)^d}{f(x)} \Bigr\}^\tau \,dx \nonumber \\
	& \leq \inf_{a>0} \Bigl( \frac{k_X}{m} \Bigr)^{\tau-a} \int_\mathcal{X} f(x) \Bigl\{ \frac{M_\beta(x)^d}{f(x)} \Bigr\}^{\tau+\kappa_1^- -a} \Bigl\{ \frac{M_\beta(x)^d}{g(x)} \Bigr\}^{\kappa_2^-} \,dx \nonumber \\
	& = O \biggl( \max \biggl\{ \Bigl( \frac{k_X}{m} \Bigr)^\tau, \Bigl( \frac{k_X}{m} \Bigr)^{\lambda_1(1-\zeta) - \epsilon} \biggr\} \biggr)
\end{align}
for all $\epsilon>0$. Analogously, 
\begin{equation}
\label{Eq:gbodyerror}
	\int_{\mathcal{X}_{m,n}} f(x)^{1-\kappa_1^-}g(x)^{-\kappa_2^-} \Bigl\{ \frac{M_\beta(x)^d}{g(x)} \Bigr\}^\tau \,dx = O \biggl( \max \biggl\{ 1, \Bigl( \frac{k_Y}{n} \Bigr)^{\lambda_2(1-\zeta) - \tau- \epsilon} \biggr\} \biggr)
\end{equation}
for any $\tau \geq 0$ and $\epsilon>0$. Now, since $\phi \in \Phi$ and by~\eqref{Eq:uxsdecomp}, \eqref{Eq:epsilonn}, \eqref{Eq:CentralMoments}, \eqref{Eq:fbodyerror}, \eqref{Eq:gbodyerror} and Lemmas~\ref{Prop:FunctionalClasses}(ii) and~\ref{Lemma:hxinvbounds} we have, when $m,n$ are sufficiently large that $\epsilon_{m,n}<1/2$,
\begin{align}
\label{Eq:R2Bound}
	&|R_2| \lesssim  L  \int_{\mathcal{X}_{m,n}} f(x)^{1+\kappa_1} g(x)^{\kappa_2}  \int_{\mathcal{I}_{m,X}} \int_{\mathcal{I}_{n,Y}} \biggl\{ \biggl| \frac{u_{x,s}}{f(x)} -1 \biggr|^{\beta^*} + \biggl| \frac{v_{x,t}}{g(x)} -1 \biggr|^{\beta^*} \biggr\} \nonumber \\
	& \hspace{150pt} \times \mathrm{B}_{k_X,m-k_X}(s) \mathrm{B}_{k_Y,n+1-k_Y}(t)  \,ds \,dt \,dx \nonumber \\
	& \lesssim \int_{\mathcal{X}_{m,n}} f(x)^{1+\kappa_1}g(x)^{\kappa_2} \int_{\mathcal{I}_{m,X}} \int_{\mathcal{I}_{n,Y}} \mathrm{B}_{k_X,m-k_X}(s) \mathrm{B}_{k_Y,n+1-k_Y}(t)  \biggl[ \Bigl| \frac{k_X}{ms} -1 \Bigr|^{\beta^*} \nonumber \\
	& \hspace{50pt} +  \Bigl| \frac{k_Y}{nt} -1 \Bigr|^{\beta^*}  + \Bigl\{ \frac{sM_\beta(x)^d}{f(x)} \Bigr\}^{\frac{2 \wedge \beta}{d} \beta^*}+ \Bigl\{ \frac{tM_\beta(x)^d}{g(x)} \Bigr\}^{\frac{2 \wedge \beta}{d} \beta^*} \biggr] \,ds \,dt \,dx \nonumber \\
	&= O \biggl( \max \biggl\{ k_X^{-\beta^*/2},  k_Y^{-\beta^*/2}, \Bigl( \frac{k_X}{m} \Bigr)^{\frac{2\wedge \beta}{d}\beta^*} ,  \Bigl( \frac{k_X}{m} \Bigr)^{\lambda_1(1-\zeta)- \epsilon},  \Bigl( \frac{k_Y}{n} \Bigr)^{\frac{2 \wedge \beta}{d} \beta^*} \Bigl( \frac{k_Y}{n} \Bigr)^{\lambda_2(1-\zeta)- \epsilon} \biggr\} \biggr).
\end{align}

\bigskip

\emph{To bound $R_3$:} By \eqref{Eq:uxsdecomp} and Lemma~\ref{Lemma:hxinvbounds}, when $\ell_1>0$ we have expansions of the form
\[
	\biggl| \biggl( \frac{u_{x,s}}{f(x)} -1 \biggr)^{\ell_1} - \sum_{i=0}^{\lceil d/2 \rceil -1} \sum_{j=0}^{\ell_1} b_{i,j}(x) s^{2i/d} \Bigl( \frac{k}{ms} - 1 \Bigr)^j \biggr| \lesssim \Bigl\{ \frac{sM_\beta(x)^d}{f(x)} \Bigr\}^{ \frac{\beta}{d} \wedge 1},
\]
with $|b_{i,j}(x)| \lesssim \{M_\beta(x)^d/f(x)\}^{2i/d}$ and $b_{0,0}=0$. A similar expansion can also be written for $(v_{x,t}/g(x)-1)^{\ell_2}$.  Using these two expansions it can be seen that $c_{i_1i_2j_1j_2}$ can be chosen in~\eqref{Eq:BiasExpansion} with $|c_{i_1i_2j_1j_2}(x)| \lesssim f(x)^{\kappa_1}g(x)^{\kappa_2} \{M_\beta(x)^d/f(x)\}^{2i_1/d} \{M_\beta(x)^d/g(x)\}^{2i_2/d}$, with $c_{0,0,0,0}(x)=\phi(f(x),g(x))$,  and, using~\eqref{Eq:fbodyerror} and~\eqref{Eq:gbodyerror}, with
\begin{align*}
	|R_3| &\lesssim \int_{\mathcal{X}_{m,n}} f(x)^{1+\kappa_1}g(x)^{\kappa_2} \biggl\{ \Bigl( \frac{k_XM_\beta(x)^d}{mf(x)} \Bigr)^{ \frac{\beta}{d} \wedge 1} + \Bigl( \frac{k_Y M_\beta(x)^d}{ng(x)} \Bigr)^{ \frac{\beta}{d} \wedge 1} \biggr\} \,dx \\
	&=O \biggl( \max \biggl\{ \Bigl( \frac{k_X}{m} \Bigr)^{\frac{\beta}{d} \wedge 1 } ,  \Bigl( \frac{k_X}{m} \Bigr)^{ \lambda_1 (1-\zeta)- \epsilon}, \Bigl( \frac{k_Y}{n} \Bigr)^{ \frac{\beta}{d} \wedge 1 }, \Bigl( \frac{k_Y}{n} \Bigr)^{\lambda_2(1-\zeta)- \epsilon} \biggr\} \biggr).
\end{align*}

\bigskip

\emph{To bound $R_4$:} Whenever $a \in \mathbb{R}$ is fixed, we have an asymptotic series of the form
\begin{equation}
\label{Eq:BetaMomentExpansion}
	\frac{\Gamma(m+a)}{k_X^a\Gamma(m)} \int_0^1 s^a \mathrm{B}_{k_X,m-k_X}(s) \,ds = \frac{\Gamma(k_X+a)}{\Gamma(k_X) k_X^a}= 1 + c_1/k_X + c_2/ k_X^{2} + \ldots.
\end{equation}
On the other hand, arguing similarly to~\eqref{Eq:CentralMoments}, for fixed $j \in \mathbb{N}$ we have the bound
\begin{align}
\label{Eq:CentralBound}
	\Bigl( \frac{m}{k_X} \Bigr)^a & \int_0^1 s^a  \Bigl| \frac{k_X}{ms} -1 \Bigr|^j \mathrm{B}_{k_X,m-k_X}(s) \,ds \nonumber \\
	%& \leq 2^{j-1} \Bigl( \frac{m}{k_X} \Bigr)^a \int_0^1 s^{a-j} \Bigl\{ \Bigl| s - \frac{k_X+a-j}{m+a-j}  \Bigr|^j + \Bigl| \frac{k_X+a-j}{m+a-j} - \frac{k_X}{m} \Bigr|^j \Bigr\}  \mathrm{B}_{k_X,m-k_X}(s) \,ds \nonumber \\
	& \leq \frac{2^{j-1}m^a \Gamma(k_X + a - j) \Gamma(m)}{k_X^a \Gamma(k_X) \Gamma(m+a-j)} \biggl\{  \Bigl| \frac{k_X+a-j}{m+a-j} - \frac{k_X}{m} \Bigr|^j  \nonumber \\
	& \hspace{100pt} +\int_0^1 \Bigl| s - \frac{k_X+a-j}{m+a-j} \Bigr|^j \mathrm{B}_{k_X+a-j,m-k_X}(s) \,ds \biggr\} \nonumber \\
	& = O(k_X^{-j/2}).
\end{align}
Moreover, by Lemma~\ref{Lemma:BetaTailBounds}, letting $B \sim \mathrm{Beta}(k_X+a-j,m-k_X)$ we have that
\begin{align}
\label{Eq:TailMoment}
	&\Bigl( \frac{m}{k_X} \Bigr)^a  \int_{[0,1] \setminus \mathcal{I}_{m,X}} s^a \Bigl| \frac{k_X}{ms} -1 \Bigr|^j \mathrm{B}_{k_X,m-k_X}(s) \,ds \nonumber \\
	& \lesssim \int_{[0,1] \setminus \mathcal{I}_{m,X}} \Bigl| \frac{ms}{k_X} -1 \Bigr|^j \mathrm{B}_{k_X+a-j,m-k_X}(s) \,ds \nonumber \\
	&  \leq \mathbb{P} \biggl( \Bigl| \frac{mB}{k_X} -1 \Bigr| \geq \frac{3 \log^{1/2} m}{k_X^{1/2}} \biggr) + \Bigl( \frac{m}{k_X} \Bigr)^j \mathbb{P} \biggl(  \Bigl| \frac{mB}{k_X} -1 \Bigr| \geq 1 \biggr) = o(m^{-4}).
\end{align}
With the similar expression in terms of $k_Y$ and $n$, we now conclude from~\eqref{Eq:BetaMomentExpansion}, \eqref{Eq:CentralBound} and~\eqref{Eq:TailMoment} that we have an asymptotic expansion of the form
\begin{align}
  \label{Eq:Series}
	& \int_{\mathcal{I}_{m,X}} \int_{\mathcal{I}_{n,Y}}  s^\frac{2i_1}{d} t^\frac{2i_2}{d} \Bigl( \frac{k_X}{ms} -1 \Bigr)^{j_1} \Bigl( \frac{k_Y}{nt} -1 \Bigr)^{j_2} \mathrm{B}_{k_X,m-k_X}(s) \mathrm{B}_{k_Y,n+1-k_Y}(t) \,ds \,dt \nonumber \\
	& = \Bigl( \frac{k_X}{m} \Bigr)^\frac{2i_1}{d} \Bigl( \frac{k_Y}{n} \Bigr)^{\frac{2i_2}{d}}\biggl\{ \sum_{r=\lceil j_1/2 \rceil}^\infty \! \! c_rk_X^{-r} + O(1/m) \biggr\} \biggl\{ \sum_{r=\lceil j_2/2 \rceil}^\infty \! \! d_rk_Y^{-r} + O(1/n) \biggr\}.
\end{align}
Now for fixed $i_1,i_2 \in [\lceil d/2 \rceil - 1]$ with $\frac{\kappa_1^-+2i_1/d}{\lambda_1}+ \frac{\kappa_2^- +  2i_2/d}{\lambda_2}  \geq 1$, we have by Lemma~\ref{Lemma:GeneralisedHolder} that
\begin{align}
  \label{Eq:cxbound}
	&\int_{\mathcal{X}_{m,n}} f(x) |c_{i_1i_2j_1j_2}(x)| \,dx  \lesssim \int_{\mathcal{X}_{m,n}}f(x) \frac{ M_\beta(x)^{2i_1+2i_2}}{f(x)^{\kappa_1^-+2i_1/d} g(x)^{\kappa_2^-+2i_2/d}} \,dx \nonumber \\
	& \leq \min \biggl\{ \inf_{a >0} \Bigl( \frac{k_X }{m} \Bigr)^{-a} \int_{\mathcal{X}}f(x) \Bigl\{ \frac{M_\beta(x)^d}{f(x)} \Bigr\}^{\kappa_1^- + \frac{2i_1}{d}-a}  \Bigl\{ \frac{M_\beta(x)^d}{g(x)} \Bigr\}^{\kappa_2^-+\frac{2i_2}{d}} \,dx , \nonumber \\
	& \hspace{50pt}  \inf_{a >0} \Bigl( \frac{k_Y }{n} \Bigr)^{-a} \int_{\mathcal{X}}f(x) \Bigl\{ \frac{M_\beta(x)^d}{f(x)} \Bigr\}^{\kappa_1^- + \frac{2i_1}{d}}  \Bigl\{ \frac{M_\beta(x)^d}{g(x)} \Bigr\}^{\kappa_2^-+\frac{2i_2}{d}-a} \,dx  \biggr\} \nonumber \\
	& =  O \biggl( \min \biggl\{ \Bigl( \frac{k_X}{m} \Bigr)^{\lambda_1(1-\zeta - \frac{2i_2}{d \lambda_2}) -2i_1/d-\epsilon}, \Bigl( \frac{k_Y}{n} \Bigr)^{\lambda_2(1-\zeta - \frac{2i_1}{d \lambda_1}) -2i_2/d -\epsilon} \biggr\} \biggr) \nonumber \\
	&=   O \biggl( \Bigl( \frac{k_X}{m} \Bigr)^{-\frac{2i_1}{d}} \Bigl( \frac{k_Y}{n} \Bigr)^{-\frac{2i_2}{d}}\max \biggl\{ \Bigl( \frac{k_X}{m} \Bigr)^{\lambda_1(1-\zeta) - \epsilon} , \Bigl( \frac{k_Y}{n} \Bigr)^{\lambda_2(1-\zeta)-\epsilon} \biggr\} \biggr),
\end{align}
for all $\epsilon>0$, where the final inequality can be established by considering the cases $( \frac{k_X}{m} )^{\lambda_1} \geq (\frac{k_Y}{n} )^{\lambda_2}$ and $( \frac{k_X}{m} )^{\lambda_1} < (\frac{k_Y}{n} )^{\lambda_2}$ separately. For such $i_1,i_2$ we set $\lambda_{i_1i_2j_1j_2}=0$ for all $j_1,j_2$. When, instead, $ \frac{\kappa_1^-+2i_1/d}{\lambda_1}+ \frac{\kappa_2^- + 2i_2/d}{\lambda_2} < 1$, we again consider these two cases separately, use the decomposition $\mathcal{X}_{m,n}^c =  (\mathcal{X}_{m,f}^c \cap \mathcal{X}_{n,g}^c) \cup (\mathcal{X}_{m,f}^c \cap \mathcal{X}_{n,g}) \cup ( \mathcal{X}_{m,f} \cap \mathcal{X}_{n,g}^c)$ and apply Lemma~\ref{Lemma:GeneralisedHolder} to write
\begin{align}
  \label{Eq:cxbound2}
	& \int_{\mathcal{X}_{m,n}^c} f(x) |c_{i_1i_2j_1j_2}(x)| \,dx  \lesssim \int_{\mathcal{X}_{m,f}^c \cap \mathcal{X}_{n,g}^c} f(x)\frac{ M_\beta(x)^{2i_1+2i_2}}{f(x)^{\kappa_1^-+2i_1/d} g(x)^{\kappa_2^-+2i_2/d}} \,dx  \,dx \nonumber \\
	& \hspace{50pt} + O \biggl( \Bigl( \frac{k_X}{m} \Bigr)^{-\frac{2i_1}{d}} \Bigl( \frac{k_Y}{n} \Bigr)^{-\frac{2i_2}{d}}   \biggl\{  \Bigl( \frac{k_X}{m} \Bigr)^{\lambda_1(1-\zeta) - \epsilon} \vee \Bigl( \frac{k_Y}{n} \Bigr)^{\lambda_2(1-\zeta)-\epsilon} \biggr\} \biggr) \nonumber \\ 
	& \leq \min \biggl\{ \inf_{a>0} \Bigl( \frac{k_X \log m}{m} \Bigr)^a \int_\mathcal{X} f(x) \Bigl\{ \frac{M_\beta (x)^d}{f(x)} \Bigr\}^{\kappa_1^- +\frac{2i_1}{d}+a}  \Bigl\{ \frac{M_\beta (x)^d}{g(x)} \Bigr\}^{\kappa_2^- + \frac{2i_2}{d}} \, dx , \nonumber \\
	& \hspace{45pt}  \inf_{a>0} \Bigl( \frac{k_Y \log n}{n} \Bigr)^a \int_\mathcal{X} f(x) \Bigl\{ \frac{M_\beta (x)^d}{f(x)} \Bigr\}^{\kappa_1^- +\frac{2i_1}{d}}  \Bigl\{ \frac{M_\beta (x)^d}{g(x)} \Bigr\}^{\kappa_2^- + \frac{2i_2}{d}+a} \,dx \biggr\} \nonumber \\
	& \hspace{45pt} + O \biggl(\Bigl( \frac{k_X}{m} \Bigr)^{-\frac{2i_1}{d}} \Bigl( \frac{k_Y}{n} \Bigr)^{-\frac{2i_2}{d}}  \biggl\{  \Bigl( \frac{k_X}{m} \Bigr)^{\lambda_1(1-\zeta) - \epsilon} \vee \Bigl( \frac{k_Y}{n} \Bigr)^{\lambda_2(1-\zeta)-\epsilon} \biggr\} \biggr) \nonumber \\
	&= O \biggl( \Bigl( \frac{k_X}{m} \Bigr)^{-\frac{2i_1}{d}} \Bigl( \frac{k_Y}{n} \Bigr)^{-\frac{2i_2}{d}} \biggl\{  \Bigl( \frac{k_X}{m} \Bigr)^{\lambda_1(1-\zeta) - \epsilon} \vee \Bigl( \frac{k_Y}{n} \Bigr)^{\lambda_2(1-\zeta)-\epsilon} \biggr\} \biggr)
\end{align}
for all $\epsilon>0$. It follows from~\eqref{Eq:Series},~\eqref{Eq:cxbound} and~\eqref{Eq:cxbound2} that
\begin{align*}
	R_4 = O \biggl( \max \biggl\{ k_X^{-\lceil (\beta^*-1)/2 \rceil -1}, k_Y^{-\lceil (\beta^*-1)/2 \rceil -1}, &\Bigl( \frac{k_X}{m} \Bigr)^{\lambda_1(1-\zeta)- \epsilon},  \Bigl( \frac{k_Y}{n} \Bigr)^{\lambda_2(1-\zeta)- \epsilon} \! , \frac{1}{m}, \frac{1}{n} \biggr\} \biggr),
\end{align*}
and this concludes the proof.
\end{proof}

\subsection{Proof of Proposition~\ref{Prop:ImprovedBias} on improved bias bounds}
\label{Sec:ImprovedBias}

\begin{proof}[Proof of Proposition~\ref{Prop:ImprovedBias}]

From~\eqref{Eq:BiasExpansion} in the proof of Proposition~\ref{Thm:GeneralBias}, we may write
\begin{align*}
    \mathbb{E} &\widetilde{T}_{m,n} \\
    &=  \sum_{\ell_1,\ell_2=0}^\infty  \frac{\mathbbm{1}_{\{\ell_1+\ell_2 \leq \beta^*-1\}}}{\ell_1! \ell_2!} \int_{\mathcal{X}_{m,n}} \! \!  f(x)     \int_{\mathcal{I}_{m,X}} \int_{\mathcal{I}_{n,Y}} \biggl( \frac{u_{x,s}}{f(x)}-1 \biggr)^{\ell_1} \biggl( \frac{v_{x,t}}{g(x)} -1 \biggr)^{\ell_2} f(x)^{\ell_1}g(x)^{\ell_2} \nonumber \\
	& \hspace{50pt} \times  \phi_{\ell_1\ell_2}\bigl(f(x),g(x)\bigr) \mathrm{B}_{k_X,m-k_X}(s) \mathrm{B}_{k_Y,n+1-k_Y}(t) \,ds \,dt \,dx + R_1 + R_2 \nonumber
	%\int_{\mathcal{X}_{m,n}} f(x) \int_{\mathcal{I}_{m,X}} \int_{\mathcal{I}_{n,Y}} \mathrm{B}_{k_X,m-k_X}(s) \mathrm{B}_{k_Y,n+1-k_Y}(t) \phi(u_{x,s},v_{x,t}) \,ds \,dt \,dx  + R_1 \\
    %& = \int_{\mathcal{I}_{m,X}} \int_{\mathcal{I}_{n,Y}} \mathrm{B}_{k_X,m-k_X}(s) \mathrm{B}_{k_Y,n+1-k_Y}(t) \int_{\mathcal{X}_{m,n}} f(x) \sum_{\ell_1=0}^{\underline{\beta}_1^*}\sum_{\ell_2=0}^{\underline{\beta}_2^*} \frac{f(x)^{\ell_1} g(x)^{\ell_2} \phi_{\ell_1 \ell_2}(f(x),g(x))}{\ell_1! \ell_2!} \\
    %& \hspace{100pt} \times (u_{x,s}/f(x)-1)^{\ell_1}(v_{x,t}/g(x)-1)^{\ell_2} \,dx \,ds \,dt + \tilde{R}_1 + \tilde{R}_2
\end{align*}
where $R_1$ and $R_2$ satisfy the bounds~\eqref{Eq:R1Bound} and~\eqref{Eq:R2Bound} respectively.
%\begin{align*}
%    |R_1|+|\tilde{R}_2| = O\biggl( \max\biggl\{ \Bigl( \frac{k_X}{m} \Bigr)^{\lambda_1(1-\zeta)-\epsilon}, \Bigl( \frac{k_Y}{n} \Bigr)^{\lambda_2(1-\zeta)-\epsilon}&, m^{-4},n^{-4}, \Bigl( \frac{k_X}{m} \Bigr)^{\beta \beta_1^*/d}, \\
%    &\Bigl( \frac{k_Y}{n} \Bigr)^{\beta \beta_2^*/d}, k_X^{-\beta_1^*/2}, k_Y^{-\beta_2^*/2} \biggr\} \biggr),
%\end{align*}
We may expand $(u_{x,s}/f(x)-1)^{\ell_1}$  using~\eqref{Eq:uxsdecomp}, and also expand and $(v_{x,t}/g(x)-1)^{\ell_2}$ analogously. Any term including $s/\{V_d f(x)h_{x,f}^{-1}(s)^d\}-1$ and $t/\{V_d g(x)h_{x,g}^{-1}(t)^d\}-1$ to a combined power greater than one can be bounded by
\[
O \biggl( \max \biggl\{ \Bigl( \frac{k_X}{m} \Bigr)^{2\beta/d}, \Bigl( \frac{k_X}{m} \Bigr)^{ \lambda_1(1-\zeta) -\epsilon}, \Bigl( \frac{k_Y}{n} \Bigr)^{2\beta/d}, \Bigl( \frac{k_Y}{n} \Bigr)^{ \lambda_2(1-\zeta) -\epsilon} \biggr\} \biggr)
\]
by Lemma~\ref{Lemma:hxinvbounds}, so can absorbed into the error term in~\eqref{Eq:ImprovedBiasError}. The key difference with the proof of Proposition~\ref{Thm:GeneralBias} is that for $s \in \mathcal{I}_{m,X}$ we now write
\begin{align}
\label{Eq:ImprovedDecomposition}
    &\int_{\mathcal{X}_{m,n}} f(x)^2  \phi_{10}\bigl(f(x),g(x)\bigr) \bigl\{ s - V_d f(x)h_{x,f}^{-1}(s)^d \bigr\} \, dx \nonumber \\
    &=\int_{\mathcal{X}_{m,n}} \int_{\mathcal{X}_{m,n}} f(x)^2 \phi_{10}\bigl(f(x),g(x)\bigr) \mathbbm{1}_{\{\|x-y\| \leq h_{x,f}^{-1}(s)\}} \{f(y)-f(x)\} \,dy\,dx + \tilde{R}_1(s),
    \end{align}
where
\[
\int_{\mathcal{I}_{m,X}} \frac{1}{s} \mathrm{B}_{k_X,m-k_X}(s) |\tilde{R}_1(s)| \, ds = O \biggl( \Bigl( \frac{k_X}{m} \Bigr)^{ \lambda_1(1-\zeta) -\epsilon}  \biggr).
\]
Moreover, by Fubini's theorem,
\begin{align}
\label{Eq:ImprovedDecomposition2}
&\int_{\mathcal{X}_{m,n}} \int_{\mathcal{X}_{m,n}} f(x)^2 \phi_{10}\bigl(f(x),g(x)\bigr) \mathbbm{1}_{\{\|x-y\| \leq h_{x,f}^{-1}(s)\}} \{f(y)-f(x)\} \,dy\,dx \nonumber \\
    & = \frac{1}{2}\int_{\mathcal{X}_{m,n}} \int_{\mathcal{X}_{m,n}} \biggl[ f(x)^2 \phi_{10}\bigl(f(x),g(x)\bigr) \mathbbm{1}_{\{\|x-y\| \leq h_{x,f}^{-1}(s)\}} \{f(y)-f(x)\} \nonumber \\
    &\hspace{100pt} + f(y)^2 \phi_{10}\bigl(f(y),g(y)\bigr) \mathbbm{1}_{\{\|x-y\| \leq h_{y,f}^{-1}(s)\}} \{f(x)-f(y)\} \biggr] \,dy \,dx \nonumber \\
    & = \frac{1}{2}\int_{\mathcal{X}_{m,n}} \int_{\mathcal{X}_{m,n}} \biggl[ \mathbbm{1}_{\{\|x-y\| \leq h_{x,f}^{-1}(s)\}} \{f(y)-f(x)\} \bigl\{ f(x)^2 \phi_{10}\bigl(f(x),g(x)\bigr) \nonumber \\
    &\hspace{270pt}- f(y)^2 \phi_{10}\bigl(f(y),g(y)\bigr) \bigr\} \nonumber \\
    & \hspace{20pt} + \{f(y)-f(x)\} f(y)^2 \phi_{10}\bigl(f(y),g(y)\bigr) \bigl(  \mathbbm{1}_{\{\|x-y\| \leq h_{y,f}^{-1}(s)\}} - \mathbbm{1}_{\{\|x-y\| \leq h_{x,f}^{-1}(s)\}} \bigr) \biggr] \,dy \,dx.
\end{align}
Using Lemmas~\ref{Prop:FunctionalClasses}(i),~\ref{Lemma:15over7} and~\ref{Lemma:hxinvbounds}, and arguing as around~\eqref{Eq:MConstant2}, for $s\in\mathcal{I}_{m,X}$, $x,y \in \mathcal{X}_{m,n}$ with $\|x-y\| \leq h_{x,f}^{-1}(s)$ and $m,n$ sufficiently large, we have
\begin{align}
\label{Eq:ImprovedTerm1}
     &\biggl|\int_{\mathcal{X}_{m,n}} \int_{\mathcal{X}_{m,n}} \mathbbm{1}_{\{\|x-y\| \leq h_{x,f}^{-1}(s)\}} \{f(y)-f(x)\} \bigl\{ f(x)^2 \phi_{10}\bigl(f(x),g(x)\bigr) \nonumber \\
     &\hspace{250pt}- f(y)^2 \phi_{10}\bigl(f(y),g(y)\bigr) \bigr\} \,dy \,dx \biggr| \nonumber \\
     & \lesssim \int_{\mathcal{X}_{m,n}} \! \int_{\mathcal{X}_{m,n}} \! \! \! \! \mathbbm{1}_{\{\|x-y\| \leq h_{x,f}^{-1}(s)\}} f(x)^{1+\kappa_1}g(x)^{\kappa_2} |f(y)\!-\!f(x)| \biggl\{ \Bigl|\frac{f(y)}{f(x)}\!-\!1\Bigr| \!+\! \Bigl|\frac{g(y)}{g(x)}\!-\!1\Bigr|\biggr\} \,dy\,dx \nonumber \\
     & \lesssim \int_{\mathcal{X}_{m,n}} \! \int_{\mathcal{X}_{m,n}} \! \! \! \! \mathbbm{1}_{\{\|x-y\| \leq h_{x,f}^{-1}(s)\}} f(x)^{2+\kappa_1} g(x)^{\kappa_2} \{h_{x,f}^{-1}(s) M_\beta(x)\}^{2\beta} \,dy \,dx \nonumber \\
     & \lesssim \int_{\mathcal{X}_{m,n}} f(x)^{2+\kappa_1} g(x)^{\kappa_2} h_{x,f}^{-1}(s)^d \{h_{x,f}^{-1}(s) M_\beta(x)\}^{2\beta} \,dx \nonumber \\
     & \lesssim s \int_{\mathcal{X}_{m,n}}f(x)^{1+\kappa_1} g(x)^{\kappa_2} \biggl\{ \frac{s M_\beta(x)^d}{f(x)} \biggr\}^{2\beta/d} \,dx.
\end{align}
Now, similarly, by Lemma~\ref{Lemma:hxinvbounds} we have
\[
    \max \biggl\{ \biggl| \frac{V_d f(x) h_{x,f}^{-1}(s)^d}{s} - 1 \biggr|, \biggl| \frac{V_d f(y) h_{y,f}^{-1}(s)^d}{s} - 1 \biggr| \biggr\} \lesssim \biggl\{ \frac{s M_\beta(x)^d}{f(x)} \biggr\}^{\beta/d}.
\]
It follows that there exist $C,C' > 0$, depending only on $\vartheta$, such that
\begin{align*}
    V_d f(x) h_{y,f}^{-1}(s)^d &\geq \frac{f(x)}{f(y)}\biggl[ s - C s\biggl\{ \frac{sM_\beta(x)^d}{f(x)} \biggr\}^{\beta/d} \biggr] \\
    & \geq \frac{f(x)}{f(y)}\biggl[ V_df(x) h_{x,f}^{-1}(s)^d - 2Cs \biggl\{\frac{s M_\beta(x)^d}{f(x)} \biggr\}^{\beta/d} \biggr] \\
    & \geq V_df(x) h_{x,f}^{-1}(s)^d - 2C's \biggl\{\frac{s M_\beta(x)^d}{f(x)} \biggr\}^{\beta/d},
\end{align*}
where the final bound is from Lemma~\ref{Lemma:15over7}.  Hence,
\begin{align}
\label{Eq:ImprovedTerm2}
    &\biggl| \int_{\mathcal{X}_{m,n}} \! \int_{\mathcal{X}_{m,n}} \! \! \! \{f(y)\!-\!f(x)\} f(y)^2 \phi_{10}\bigl(f(y),g(y)\bigr) \bigl(  \mathbbm{1}_{\{\|x-y\| \leq h_{y,f}^{-1}(s)\}} \!-\! \mathbbm{1}_{\{\|x-y\| \leq h_{x,f}^{-1}(s)\}} \bigr) \,dy \,dx \biggr| \nonumber \\
    & \lesssim \int_{\mathcal{X}_{m,n}} \int_{\mathcal{X}_{m,n}} f(x)^{1+\kappa_1}g(x)^{\kappa_2} |f(y)-f(x)| \mathbbm{1}_{\{ h_{y,f}^{-1}(s) \leq \|x-y\| \leq h_{x,f}^{-1}(s)\}} \,dy \,dx \nonumber \\
    %& \leq \int_{\mathcal{X}_{m,n}} \int_{\mathcal{X}_{m,n}} f(x)^{1+\kappa_1}g(x)^{\kappa_2} |f(y)-f(x)| \mathbbm{1}_{\{ V_df(x) h_{x,f}^{-1}(s)^d - 2C's \{s M_\beta(x)^d/f(x)\}^{\beta/d} \leq V_d f(x) \|x-y\|^d \leq V_df(x) h_{x,f}^{-1}(s)^d \}} \,dy \,dx \nonumber \\
    & \lesssim \int_{\mathcal{X}_{m,n}} f(x)^{2+\kappa_1}g(x)^{\kappa_2} M_\beta(x)^{\beta} h_{x,f}^{-1}(s)^\beta \frac{s}{f(x)} \biggl\{ \frac{s M_\beta(x)^d}{f(x)} \biggr\}^{\beta/d} \,dx \nonumber \\
    & \lesssim s \int_{\mathcal{X}_{m,n}} f(x)^{1+\kappa_1} g(x)^{\kappa_2}\biggl\{ \frac{s M_\beta(x)^d}{f(x)} \biggr\}^{2 \beta/d} \,dx.
\end{align}
It now follows from~\eqref{Eq:ImprovedDecomposition},~\eqref{Eq:ImprovedDecomposition2},~\eqref{Eq:ImprovedTerm1} and~\eqref{Eq:ImprovedTerm2} that
\begin{align*}
    &\biggl| \int_{\mathcal{I}_{m,X}} \int_{\mathcal{I}_{n,Y}}  \mathrm{B}_{k_X,m-k_X}(s) \mathrm{B}_{k_Y,n+1-k_Y}(t) \int_{\mathcal{X}_{m,n}} f(x)^2 \phi_{10}\bigl(f(x),g(x)\bigr) \nonumber \\
    &\hspace{250pt}\biggl\{ \frac{V_df(x)h_{x,f}^{-1}(s)^d}{s} - 1 \biggr\} \,dx \,dt \,ds \biggr| \\
    & \lesssim \int_{\mathcal{I}_{m,X}} \int_{\mathcal{I}_{n,Y}}  \! \! \mathrm{B}_{k_X,m-k_X}(s) \mathrm{B}_{k_Y,n+1-k_Y}(t) \int_{\mathcal{X}_{m,n}} f(x)^{1+\kappa_1} g(x)^{\kappa_2} \biggl\{ \frac{s M_\beta(x)^d}{f(x)} \biggr\}^{2 \beta/d} \,dx \,dt \,ds \nonumber \\
    &\hspace{310pt}+ \biggl( \frac{k_X}{m} \biggr)^{\lambda_1(1-\zeta)-\epsilon}\\
    & = O \biggl( \max \biggl\{ \biggl( \frac{k_X}{m} \biggr)^{2\beta/d}, \biggl( \frac{k_X}{m} \biggr)^{\lambda_1(1-\zeta)-\epsilon} \biggr\} \biggr).
\end{align*}
The other terms in the expansion can be dealt with in the same way, and we conclude that
\begin{align*}
    &\mathbb{E} \widetilde{T}_{m,n} =\int_{\mathcal{I}_{m,X}} \int_{\mathcal{I}_{n,Y}} \mathrm{B}_{k_X,m-k_X}(s) \mathrm{B}_{k_Y,n+1-k_Y}(t) \\
    &\times \int_\mathcal{X} \sum_{\ell_1=0}^{\beta^*-1} \sum_{\ell_2=0}^{\beta^*-1-\ell_1} \frac{f(x)^{1+\ell_1} g(x)^{\ell_2} \phi_{\ell_1 \ell_2}\bigl(f(x),g(x)\bigr)}{\ell_1! \ell_2!}\biggl( \frac{k_X}{ms} - 1 \biggr)^{\ell_1} \biggl( \frac{k_Y}{nt} - 1 \biggr)^{\ell_2} \,dx \,dt \,ds \\
     & + O\biggl( \max\biggl\{ \Bigl( \frac{k_X}{m} \Bigr)^{\lambda_1(1-\zeta)-\epsilon}, \Bigl( \frac{k_Y}{n} \Bigr)^{\lambda_2(1-\zeta)-\epsilon}, \Bigl( \frac{k_X}{m} \Bigr)^{2\beta/d}, \Bigl( \frac{k_Y}{n} \Bigr)^{2\beta/d}, k_X^{-\beta_1^*/2}, k_Y^{-\beta_2^*/2} \biggr\} \biggr).
\end{align*}
The result therefore follows from~\eqref{Eq:Series}.
\end{proof}

\subsection{Proof of Proposition~\ref{Prop:Variance} on asymptotic variance}
  
\begin{proof}[Proof of Proposition~\ref{Prop:Variance}]
  % For simplicity we will only consider the case of the unweighted estimator here, as the extension to the weighted case is relatively straightforward.
We initially consider the unweighted estimator $\widetilde{T}_{m,n}$, deferring the extension to the weighted estimator $\widehat{T}_{m,n}^{w_X,w_Y}$ to the end of the proof.  We start by writing
\begin{align}
\label{Eq:VarDecomp}
	\mathrm{Var} (\widetilde{T}_{m,n})  = \frac{1}{m} \mathrm{Var} \ \phi & \bigl( \hat{f}_{(k_X),1} ,\hat{g}_{(k_Y),1} \bigr) \nonumber \\
	& + \Bigl( 1- \frac{1}{m} \Bigr) \mathrm{Cov}\Bigl( \phi \bigl(\hat{f}_{(k_X),1},\hat{g}_{(k_Y),1} \bigr) , \phi \bigl(\hat{f}_{(k_X),2},\hat{g}_{(k_Y),2} \bigr) \Bigr).
\end{align}
Taking $\mathcal{X}_{m,n}, \mathcal{I}_{m,X}$ and $\mathcal{I}_{n,Y}$ as defined in the proof of Proposition~\ref{Thm:GeneralBias}, and letting $S_1$, $S_2$ and $S_3$ be error terms, we now write
\begin{align*}
	&\mathbb{E} \bigl\{ \phi \bigl( \hat{f}_{(k_X),1} ,\hat{g}_{(k_Y),1} \bigr)^2 \bigr\} \nonumber \\
	&=\int_{\mathcal{X}_{m,n}} \! \! f(x)\int_{\mathcal{I}_{m,X}} \int_{\mathcal{I}_{n,Y}} \! \! \phi ( u_{x,s} , v_{x,t} )^2 \mathrm{B}_{k_X,m-k_X}(s) \mathrm{B}_{k_Y,n+1-k_Y}(t) \,ds \,dt \,dx + S_1  \nonumber \\
	&=\int_{\mathcal{X}_{m,n}} \! \! f(x) \int_{\mathcal{I}_{m,X}} \int_{\mathcal{I}_{n,Y}} \! \! \phi \Bigl( \frac{k_X f(x)}{m s} , \frac{k_Y g(x)}{n t} \Bigr)^2 \nonumber \\
	& \hspace{130pt} \times  \mathrm{B}_{k_X,m-k_X}(s) \mathrm{B}_{k_Y,n+1-k_Y}(t) \,ds \,dt \,dx + S_1 + S_2\nonumber \\
	&= \mathbb{E}\bigl\{(\phi_{X_1})^2\bigr\} + \sum_{j=1}^3 S_j.
\end{align*}
We show in Section~\ref{Sec:RemainderTerms} that
\begin{align*}
%\label{Eq:Sbound}
	&\sum_{j=1}^3 S_j = O \biggl( \max \biggl\{ \Bigl( \frac{k_X}{m} \Bigr)^{\lambda_1(1-2\zeta)- \epsilon}, \Bigl( \frac{k_Y}{n} \Bigr)^{\lambda_2(1-2\zeta)- \epsilon} ,\Bigl(\frac{k_X}{m} \Bigr)^{\frac{2 \wedge \beta}{d}}, \Bigl(\frac{k_Y}{n} \Bigr)^{\frac{2 \wedge \beta}{d}},  k_X^{-\frac{1}{2}} , k_Y^{-\frac{1}{2}} \biggr\} \biggr)
\end{align*}
for every $\epsilon > 0$.  Using Proposition~\ref{Thm:GeneralBias} we can now see that
\begin{equation}
  \label{Eq:Diagonal}
	\biggl| \frac{1}{m} \mathrm{Var} \ \phi \bigl( \hat{f}_{(k_X),1} ,\hat{g}_{(k_Y),1}  \bigr) - \frac{\mathrm{Var}(\phi_{X_1})}{m} \biggr| = o \biggl( \frac{1}{m} \biggr).
\end{equation}

We now turn to the second term in~\eqref{Eq:VarDecomp}. Let $F_{m,n,x,y}: [0,1]^4 \rightarrow [0,1]$ denote the conditional distribution function of
\begin{align*}
	&\bigl( h_{x,f}(\rho_{(k_X),1,X}), h_{y,f}(\rho_{(k_X),2,X}), h_{x,g}(\rho_{(k_Y),1,Y}), h_{y,g}(\rho_{(k_Y),2,Y}) \bigl) | X_1=x, X_2=y.
\end{align*}
Moreover, for $s_1,s_2,t_1,t_2 \in [0,1]$ such that $s_1+s_2 \leq 1$ and $t_1+t_2 \leq 1$ define
\begin{align*}
  G_{m}^{(1)}(s_1,s_2) &:= \int_0^{s_1} \int_0^{s_2} \mathrm{B}_{k_X,k_X,m-2k_X-1}(u_1,u_2) \,du_1 \,du_2 \\
  G_{n}^{(2)}(t_1,t_2) &:= \int_0^{t_1} \int_0^{t_2} \mathrm{B}_{k_Y,k_Y,n-2k_Y+1}(v_1,v_2) \,dv_1 \,dv_2 \\
	G_{m,n}(s_1,s_2,t_1,t_2)&:=G_{m}^{(1)}(s_1,s_2) G_{n}^{(2)}(t_1,t_2),
%	& := \int_0^{s_1} \int_0^{s_2} \int_0^{t_1} \int_0^{t_2} \mathrm{B}_{k_X,k_X,m-2k_X-1}(u_1,u_2) \mathrm{B}_{k_Y,k_Y,n-2k_Y+1}(v_1,v_2) \,du_1 \,du_2 \,dv_1 \,dv_2,
\end{align*}
so that we have $F_{m,n,x,y}(s_1,s_2,t_1,t_2)=G_{m,n}(s_1,s_2,t_1,t_2)$ for $s_1,s_2,t_1,t_2,x$ and $y$ such that $\|x-y\| > \max\bigl(h_{x,f}^{-1}(s_1)+h_{y,f}^{-1}(s_2), h_{x,g}^{-1}(t_1)+h_{y,g}^{-1}(t_2)\bigr)$. We will also use the shorthand $h(s_1,s_2,t_1,t_2) := \phi ( u_{x,s_1} , v_{x,t_1} ) \phi ( u_{y,s_2} , v_{y,t_2} )$ and
\begin{align*}
%	&dG(s_1,s_2,t_1,t_2) \\
%	&:=dG_{n,x,y}(s_1,s_2,t_1,t_2) - \mathrm{B}_{k_X,m-k_X}(s_1) \mathrm{B}_{k_X,m-k_X}(s_2)\mathrm{B}_{k_Y,n-k_Y+1}(t_1) \mathrm{B}_{k_Y,n-k_Y+1}(t_2)  \,ds_1 \,ds_2 \,dt_1 \,dt_2 \\
%	&= \{ dG_{n,x,y}^{(1)}(s_1,s_2) - \mathrm{B}_{k_X,m-k_X}(s_1) \mathrm{B}_{k_X,m-k_X}(s_2) \,ds_1 \,ds_2 \} dG_{n,x,y}^{(2)}(t_1,t_2) \\
%	&\hspace{25pt} + \mathrm{B}_{k_X,m-k_X}(s_1) \mathrm{B}_{k_X,m-k_X}(s_2) \,ds_1 \,ds_2 \{dG_{n,x,y}^{(2)}(t_1,t_2) - \mathrm{B}_{k_Y,n-k_Y+1}(t_1) \mathrm{B}_{k_Y,n-k_Y+1}(t_2) \,dt_1 \,dt_2 \} \\
%	&=: dG^{(1)}(s_1,s_2) dG_{n,x,y}^{(2)}(t_1,t_2) + \mathrm{B}_{k_X,m-k_X}(s_1) \mathrm{B}_{k_X,m-k_X}(s_2) \,ds_1 \,ds_2 dG^{(2)}(t_1,t_2).
    &H_{m}^{(1)}(s_1,s_2) := G_{m}^{(1)}(s_1,s_2) - \int_0^{s_1}\int_0^{s_2} \mathrm{B}_{k_X,m-k_X}(u_1) \mathrm{B}_{k_X,m-k_X}(u_2) \, du_1 \, du_2 \\
  &H_{n}^{(2)}(t_1,t_2) := G_{n}^{(2)}(t_1,t_2) - \int_0^{t_1}\int_0^{t_2} \mathrm{B}_{k_Y,n+1-k_Y}(v_1) \mathrm{B}_{k_Y,n+1-k_Y}(v_2) \, dv_1 \, dv_2 \\
  &H_{m,n}(s_1,s_2,t_1,t_2):= H_{m}^{(1)}(s_1,s_2)G_{n}^{(2)}(t_1,t_2) +  G_{m}^{(1)}(s_1,s_2)H_{n}^{(2)}(t_1,t_2)  \\
	& \hspace{220pt} -  H_{m}^{(1)}(s_1,s_2)H_{n}^{(2)}(t_1,t_2).
\end{align*}
With this newly-defined notation, we now have
\begin{align}
\label{Eq:CovDecomp}
	\mathrm{Cov}\Bigl( \phi &\bigl( \hat{f}_{(k_X),1} ,\hat{g}_{(k_Y),1} \bigr) , \phi \bigl( \hat{f}_{(k_X),2} ,\hat{g}_{(k_Y),2}  \bigr) \Bigr)\nonumber  \\
	&= \int_{\mathcal{X} \times \mathcal{X}} \!\!\! f(x)f(y) \int_{[0,1]^4} h(s_1,s_2,t_1,t_2) \bigl\{dF_{m,n,x,y}(s_1,s_2,t_1,t_2)\nonumber \\
	& \hspace{100pt} - d(H_m^{(1)}-G_m^{(1)})(s_1,s_2) d(H_n^{(2)}-G_n^{(2)})(t_1,t_2) \bigr\} \,dx \,dy \nonumber \\
	&= \int_{\mathcal{X} \times \mathcal{X}} \!\!\!f(x)f(y) \int_{\mathcal{I}_{m,X}^2} \int_{\mathcal{I}_{n,Y}^2} h(s_1,s_2,t_1,t_2) \bigl\{ d(F_{m,n,x,y} - G_{m,n})(s_1,s_2,t_1,t_2)\nonumber  \\
	& \hspace{100pt} + dH_{m,n}(s_1,s_2,t_1,t_2) \bigr\} \,dx \,dy + o(m^{-2}+n^{-2}) ,
\end{align}
where the bound on the final term follows from the fact that $\zeta<1/2$, Lemma~\ref{Lemma:Acomplement} and Cauchy--Schwarz. We first study the second term in this expansion. The intuition behind the following expansion is that, when $X_1$ and $X_2$ do not share nearest neighbours, the dependence between $(\hat{f}_{(k_X),1}, \hat{g}_{(k_Y),1})$ and $(\hat{f}_{(k_X),2}, \hat{g}_{(k_Y),2})$ is relatively weak, and we may expand the functions $\phi, h_{x,f}^{-1},h_{x,g}^{-1}$ as in the proof of Proposition~\ref{Thm:GeneralBias} and approximate integrals. We therefore make use of the shorthand
\begin{align*}
	h^{(1)}(s_1,s_2,t_1,t_2)&:=\Bigl\{ \phi \bigl(f(x), v_{x,t_1} \bigr) + \Bigl( \frac{k_X}{ms_1}-1 \Bigr) f(x) \phi_{10}\bigl( f(x), v_{x,t_1} \bigr) \Bigr\} \\
	& \hspace{80pt} \times \Bigl\{ \phi \bigl(f(y), v_{y,t_2} \bigr) + \Bigl( \frac{k_X}{ms_2}-1 \Bigr) f(y) \phi_{10}\bigl( f(y), v_{y,t_2} \bigr) \Bigr\} \\
	h^{(2)}(s_1,s_2,t_1,t_2)&:=\Bigl\{ \phi \bigl( u_{x,s_1}, g(x) \bigr) + \Bigl( \frac{k_Y}{nt_1}-1 \Bigr) g(x) \phi_{01}\bigl( u_{x,s_1}, g(x) \bigr) \Bigr\} \\
	&\hspace{80pt} \times \Bigl\{ \phi \bigl(u_{y,s_2}, g(y) \bigr) + \Bigl( \frac{k_Y}{nt_2}-1 \Bigr) g(y) \phi_{01}\bigl( u_{y,t_2},g(y) \bigr) \Bigr\}
\end{align*}
for linearised versions of $h$.  We also write, for example,
\[
  (h \, dH_m^{(1)} \, dG_{n}^{(2)})(s_1,s_2,t_1,t_2) := h(s_1,s_2,t_1,t_2) \, dH_{m}^{(1)}(s_1,s_2) \, dG_{n}^{(1)}(t_1,t_2).
\]
Writing $T_1$, $T_2$ and $T_3$ for error terms, we therefore have
\begin{align}
  \label{Eq:Ts}
	&\int_{\mathcal{X}^2} f(x)f(y)\int_{\mathcal{I}_{m,X}^2} \int_{\mathcal{I}_{n,Y}^2} (h \, dH_{m,n})(s_1,s_2,t_1,t_2)  \,dx \,dy \nonumber \\
	&= \int_{\mathcal{X}_{m,f}^2} f(x)f(y)  \int_{\mathcal{I}_{m,X}^2} \int_{\mathcal{I}_{n,Y}^2} (h \, dH_m^{(1)} \, dG_{n}^{(2)})(s_1,s_2,t_1,t_2) \,dx \,dy + T_1 \nonumber \\
	& \hspace{50pt} +  \int_{\mathcal{X}_{n,g}^2} f(x)f(y) \int_{\mathcal{I}_{m,X}^2} \int_{\mathcal{I}_{n,Y}^2} (h \, d(G_m^{(1)}-H_m^{(1)}) \, dH_n^{(2)})(s_1,s_2,t_1,t_2)  \,dx \,dy   \nonumber \\
	& = \int_{\mathcal{X}_{m,f}^2} f(x)f(y)  \int_{\mathcal{I}_{m,X}^2} \int_{\mathcal{I}_{n,Y}^2} (h^{(1)}\, dH_m^{(1)} \, dG_{n}^{(2)})(s_1,s_2,t_1,t_2) \,dx \,dy + T_1 + T_2 \nonumber \\
	& \hspace{50pt} + \int_{\mathcal{X}_{n,g}^2} f(x)f(y) \int_{\mathcal{I}_{m,X}^2} \int_{\mathcal{I}_{n,Y}^2} (h^{(2)} d(G_m^{(1)}-H_m^{(1)}) dH_n^{(2)})(s_1,s_2,t_1,t_2) \,dx \,dy \nonumber \\
	& = -\frac{1}{m}  \int_{\mathcal{X}_{m,f}^2} f(x)f(y) \int_{\mathcal{I}_{n,Y}^2} \Bigl\{ 2f(x) \phi_{10} \bigl(f(x), v_{x,t_1} \bigr) \phi \bigl(f(y),v_{y,t_2} \bigr) \nonumber \\
	& \hspace{90pt} + f(x) \phi_{10} \bigl(f(x), v_{x,t_1} \bigr)f(y) \phi_{10} \bigl(f(y), v_{y,t_2} \bigr) \Bigr\} dG_{n}^{(2)}(t_1,t_2) \,dx \,dy \nonumber \\
	& \hspace{50pt} - \frac{1}{n} \int_{\mathcal{X}_{n,g}^2} f(x)f(y)  \int_{\mathcal{I}_{m,X}^2} g(x) \phi_{01} \bigl(u_{x,s_1},g(x) \bigr) g(y) \phi_{01} \bigl( u_{y,s_2}, g(y) \bigr) \nonumber \\
	& \hspace{90pt} \times d(G_m^{(1)}-H_m^{(1)})(s_1,s_2) \,dx \,dy +T_1+T_2+T_3  \nonumber \\
	&= -\frac{2}{m} \mathbb{E}\bigl\{(f \phi_{10})_{X_1}\bigr\} \mathbb{E}(\phi_{X_1}) - \frac{1}{m} \bigl\{\mathbb{E}(f \phi_{10})_{X_1}\bigr\}^2 - \frac{1}{n} \bigl\{\mathbb{E}(g \phi_{01})_{X_1}\bigr\}^2  \nonumber \\
	& \hspace{200pt} + T_1 + T_2 +T_3 + o(1/m+1/n),
\end{align}
where the bound on the final term follows from~\eqref{Eq:epsilonn}, Lemma~\ref{Prop:FunctionalClasses}(i), Lemma~\ref{Lemma:BetaTailBounds} and tail bounds similar to~\eqref{Eq:ftailerror}. We show in Section~\ref{Sec:RemainderTerms} that
\begin{align}
\label{Eq:TBound}
	\sum_{j=1}^3 T_j = O \biggl(& \max \biggl\{ \Bigl( \frac{k_X}{m} \Bigr)^{1+\lambda_1(1- \zeta)-\epsilon} , \Bigl( \frac{k_Y}{n} \Bigr)^{1+\lambda_2(1-\zeta)-\epsilon} , \frac{k_X^{\frac{1}{2}+\frac{2 \wedge \beta}{d}}}{m^{1+\frac{2 \wedge \beta}{d}}},  \frac{k_Y^{\frac{1}{2} +\frac{2 \wedge \beta}{d}}}{n^{1+\frac{2 \wedge \beta}{d}}},  \nonumber \\
	&\Bigl( \frac{k_X}{m} \Bigr)^{1+\frac{2(2 \wedge \beta)}{d}},  \Bigl( \frac{k_Y}{n} \Bigr)^{1+\frac{2(2 \wedge \beta)}{d}}, \frac{\log m}{m k_X^{\frac{1}{2}}} , \frac{\log n}{n k_Y^{\frac{1}{2}}} \biggr\} \biggr) + o(1/m+1/n).
\end{align}

We now consider the contribution of the first term in~\eqref{Eq:CovDecomp}. In Section~\ref{Sec:RemainderTerms}, we show that
\begin{align}
  \label{Eq:U0}
	U_0 &:= \int_{\mathcal{X} \times \mathcal{X}_{m,n}^c}  f(x)f(y) \int_{\mathcal{I}_{m,X}^2} \int_{\mathcal{I}_{n,Y}^2} h(s_1,s_2,t_1,t_2) \nonumber \\
	& \hspace{150pt} \times d(F_{m,n,x,y}-G_{m,n})(s_1,s_2,t_1,t_2) \, dx \,dy \nonumber \\
	&\phantom{:}= O \biggl( \max \biggl\{ \Bigl(\frac{k_X}{m} \Bigr)^{2\lambda_1(1-\zeta)- \epsilon}, \Bigl(\frac{k_Y}{n} \Bigr)^{2\lambda_2(1-\zeta) - \epsilon} \biggr\} \biggr),
\end{align}
so that we may restrict attention to $x \in \mathcal{X}_{m,n}$, in which case $F_{m,n,x,y}-G_{m,n}$ is only non-zero when $x$ and $y$ are close and we may approximate $f(y) \approx f(x)$ and $g(y) \approx g(x)$.  Let $p_\cap^{(1)}:=\int_{B_x(h_{x,f}^{-1}(s_1) )\cap B_y(h_{y,f}^{-1}(s_2))} f(w) \,dw$ and $p_\cap^{(2)}:=\int_{B_x(h_{x,g}^{-1}(t_1) )\cap B_y(h_{y,g}^{-1}(t_2))} g(w) \,dw$, and let
\begin{align}
  \label{Eq:Multivector}
	(N_1^{(1)},N_2^{(1)},N_3^{(1)},N_4^{(1)}) &\sim \! \mathrm{Multi} \bigl(m \!- \!2; s_1-p_\cap^{(1)}; s_2 - p_\cap^{(1)}, p_\cap^{(1)},1-s_1-s_2 + p_\cap^{(1)} ) \\
	(N_1^{(2)},N_2^{(2)},N_3^{(2)},N_4^{(2)}) &\sim \! \mathrm{Multi} \bigl(n; t_1-p_\cap^{(2)}; t_2 - p_\cap^{(2)},p_\cap^{(2)}, 1-t_1-t_2 + p_\cap^{(2)} ). \nonumber
\end{align}
Now set
\begin{align}
  \label{Eq:MultinomialExpression}
  F_{m,x,y}^{(1)}(s_1,s_2) &:= \mathbb{P}(N_1^{(1)}+N_3^{(1)} \geq k_X - \mathbbm{1}_{\{\|x-y\| \leq h_{x,f}^{-1}(s_1)\}} , \nonumber \\
	& \hspace{75pt} N_2^{(1)}+N_3^{(1)} \geq k_X - \mathbbm{1}_{\{\|x-y\| \leq h_{y,f}^{-1}(s_2)\}} ) \nonumber \\
  F_{n,x,y}^{(2)}(t_1,t_2) &:= \mathbb{P}(N_1^{(2)}+N_3^{(2)} \geq k_Y , N_2^{(2)}+N_3^{(2)} \geq k_Y  ),
 \end{align}
so that $F_{m,n,x,y}(s_1,s_2,t_1,t_2) = F_{m,x,y}^{(1)}(s_1,s_2)F_{n,x,y}^{(2)}(t_1,t_2)$.
%where
%\begin{align*}
%	(N_1^{(1)},N_2^{(1)},N_3^{(1)},N_4^{(1)}) &\sim \mathrm{Multi} \bigl(m-2; s_1-p_\cap^{(1)}; s_2 - p_\cap^{(1)},1-s_1-s_2 + p_\cap^{(1)} ) \\
%	(N_1^{(2)},N_2^{(2)},N_3^{(2)},N_4^{(2)}) &\sim \mathrm{Multi} \bigl(n; t_1-p_\cap^{(2)}; t_2 - p_\cap^{(2)},1-t_1-t_2 + p_\cap^{(2)} )
%\end{align*}
%with $p_\cap^{(1)}=\int_{B_x(h_{x,f}^{-1}(s_1) )\cap B_y(h_{y,X}^{-1}(s_2))} f(w) \,dw$ and $p_\cap^{(2)}=\int_{B_x(h_{x,g}^{-1}(t_1) )\cap B_y(h_{y,Y}^{-1}(t_2))} g(w) \,dw$.
We use the decomposition
\begin{align}
\label{Eq:IBPDecomp}
	& F_{m,n,x,y}-G_{m,n} = F_{m,x,y}^{(1)}F_{n,x,y}^{(2)}-G_{m}^{(1)}G_{n}^{(2)} \nonumber \\
	& \hspace{10pt} = (F_{m,x,y}^{(1)} \!-\!G_{m}^{(1)})(F_{n,x,y}^{(2)}\!-\!G_{n}^{(2)}) + (F_{m,x,y}^{(1)}\!-\!G_{m}^{(1)})G_{n}^{(2)} + G_{m}^{(1)}(F_{n,x,y}^{(2)}\!-\!G_{n}^{(2)}),
\end{align}
so that each term is of product form and involves at least one of the marginal errors.  We will see that the first term is asymptotically negligible, while the second and third terms can be studied through the normal approximation given in Lemma~\ref{Lemma:NormalApproximation}. For a general distribution function $F$, for $a_{-} \leq a_{+}$ and for a smooth $h:[a_{-},a_{+}]^2 \rightarrow \mathbb{R}$ with first partial derivatives $h_{10}$, $h_{10}$ and mixed second partial derivative $h_{11}$, we will use the formula
\begin{align}
\label{Eq:IBPformula}
& \int_{[a_{-},a_{+}]^2} (h \,dF)(u,v) - \int_{a_{-}}^{a_{+}} \int_{a_{-}}^{a_{+}} (h_{11}F(u,v)) \,du \,dv  \nonumber \\
&= \! \int_{a_{-}}^{a_{+}} \! \bigl[ (h_{10}F)(u,a_{-})-(h_{10}F)(u,a_{+}) \bigr] du +  (hF)(a_{-},a_{-}) -(hF)(a_{+},a_{-}) \nonumber \\
	& \hspace{20pt} + \int_{a_{-}}^{a_{+}}\! \bigl[ (h_{01}F)(a_{-},v)-(h_{01}F)(a_{+},v) \bigr] dv+(hF)(a_{+},a_{+}) -(hF)(a_{-},a_{+}).
\end{align}
We now deal with each of the three terms on the right-hand side of~\eqref{Eq:IBPDecomp} in turn, starting with $F=F^{(1)}F^{(2)}=(F_{m,x,y}^{(1)}-G_{m}^{(1)})(F_{n,x,y}^{(2)}-G_{n}^{(2)})$. For remainder terms $U_1$, $U_2$ and $U_3$ to be bounded later, we write
\begin{align}
\label{Eq:IBP1}
	&\int_{\mathcal{X} \times \mathcal{X}_{m,n}} \!\!\!\! f(x)f(y) \int_{\mathcal{I}_{m,X}^2} \int_{\mathcal{I}_{n,Y}^2} (h \, dF)(s_1,s_2,t_1,t_2) \, dx \,dy \nonumber \\
	& = \int_{\mathcal{X} \times \mathcal{X}_{m,n}} \!\!\!\! f(x)f(y) \int_{\mathcal{I}_{m,X}^2} \int_{\mathcal{I}_{n,Y}^2} \bigl(h_{0011} \, (dF^{(1)}) \, F^{(2)}\bigr)(s_1,s_2,t_1,t_2) \,dt_1 \,dt_2 \,dx \,dy  +U_1 \nonumber \\
	& = \int_{\mathcal{X} \times \mathcal{X}_{m,n}} \!\!\!\! f(x)f(y) \int_{\mathcal{I}_{n,Y}^2} F^{(2)}(t_1,t_2) \biggl\{\int_{\mathcal{I}_{m,X}^2} (h_{1111} F^{(1)})(s_1,s_2,t_1,t_2)  \, ds_1 \,ds_2\nonumber \\
	&\hspace{10pt} - \int_{\mathcal{I}_{m,X}} (h_{1011}F^{(1)})(s_1,a_{m,X}^+,t_1,t_2) \, ds_1  \nonumber \\
	& \hspace{50pt} - \int_{\mathcal{I}_{m,X}} (h_{0111}F^{(1)})(a_{m,X}^+,s_2,t_1,t_2) \, ds_2 \biggr\} \,dt_1 \,dt_2 \,dx \,dy +U_1+U_2 \nonumber\\
	&= \sum_{j=1}^3 U_j. %= O \biggl( \frac{1}{mk_Y} \biggr).
	%&= \frac{k_Xk_Y}{mn^2V_d} \int_{\mathcal{X}_{m,n}} f(x) \int_{\mathbb{R}^d} \int_{\mathbb{R}^2} (\Phi_{\Sigma_2}-\Phi \Phi)(v_1,v_2) \biggl\{ \frac{n^{2}}{k_X k_Y^2} f(x)^2g(x)^2 (\phi_{11})_x^2 \int_{\mathbb{R}^2} ( \Phi_{\Sigma_1} - \Phi \Phi )(u_1,u_2) \,du_1 \,du_2  \nonumber\\
	%& \hspace{200pt} + \frac{2 n^2}{k_Xk_Y^2} f(x) g(x)^2 (\phi_{01})_x(\phi_{11})_x \mathbbm{1}_{\{\|z\| \leq 1\}} \biggr\} \,dv_1 \,dv_2 \,dz \,dx +\sum_{j=1}^3 U_j \nonumber\\
	%&= \frac{1}{m k_Y V_d} \int_{\mathcal{X}_{m,n}} f(x)^2 g(x)^2 (\phi_{11})_x \int_{\mathbb{R}^d} \alpha_{(\frac{k_X n g(x)}{k_Y m f(x)})^{1/d}z} \biggl\{ f(x) (\phi_{11})_x \alpha_z + 2 (\phi_{01})_x \mathbbm{1}_{\{\|z\| \leq 1\}} \biggr\} \,dz \,dx +\sum_{j=1}^3 U_j \nonumber  \\
\end{align}
We show in Section~\ref{Sec:RemainderTerms} that
\begin{equation}
\label{Eq:UBounds1}
	\sum_{j=1}^3 U_j = O \biggl( \max \biggl\{ \frac{1}{m^{2}}, \frac{1}{n^{2}}, \frac{ \log^2 m }{mk_X}, \frac{\log^2 n}{nk_Y} \biggr\} \biggr). %O \biggl( \max \biggl\{ \Bigl(\frac{k_X}{m} \Bigr)^{\frac{2\alpha \zeta }{(\alpha+d)(1+\rho_1d)} - \epsilon}, \Bigl(\frac{k_Y}{n} \Bigr)^{\frac{2\zeta }{\gamma(1+\rho_2d)} - \epsilon}, \frac{1}{mk_Y} \biggr\} \biggr).
\end{equation}
We next consider $F=F^{(1)}F^{(2)}=(F_{m,x,y}^{(1)}-G_{m}^{(1)})G_{n}^{(2)}$, and recall from Lemma~\ref{Lemma:NormalApproximation} that $\alpha_z =  \mu_d \bigl( B_0(1) \cap B_z(1) \bigr)/V_d$, that
\[
	\Sigma= \begin{pmatrix} 1 & \alpha_z \\ \alpha_z & 1 \end{pmatrix},
\]
and the definitions of the normal distribution functions $\Phi_{I_2}$ and $\Phi_{\Sigma}$.  Then, for remainder terms $U_4,U_5,U_6$ to be bounded below, we use the change of variables $y=x+(\frac{k_X}{mV_d f(x)})^{1/d}z$ and the approximation $\frac{\partial}{\partial s} \phi(u_{x,s},v_{x,t}) \approx f(x) \phi_{10}(f(x),g(x))/s$  to write
\begin{align}
\label{Eq:IBP2}
	&\int_{\mathcal{X} \times \mathcal{X}_{m,n}} f(x)f(y) \int_{\mathcal{I}_{m,X}^2} \int_{\mathcal{I}_{n,Y}^2} (h \, dF)(s_1,s_2,t_1,t_2) \, dx \,dy \nonumber \\
	& \hspace{40pt} = \int_{\mathcal{X} \times \mathcal{X}_{m,n}} f(x)f(y) \int_{\mathcal{I}_{n,Y}^2} \biggl\{ \int_{\mathcal{I}_{m,X}^2} (h_{1100} \, F^{(1)})(s_1,s_2,t_1,t_2) \,ds_1 \,ds_2 \nonumber \\
	& \hspace{100pt} -  \int_{\mathcal{I}_{m,X}}  (h_{1000}F^{(1)})(s_1,a_{m,X}^+,t_1,t_2) \, ds_1 \nonumber \\
	& \hspace{100pt} -  \int_{\mathcal{I}_{m,X}}  (h_{0100}F^{(1)})(a_{m,X}^+,s_2,t_1,t_2) \, ds_2 \biggr\} dG_n^{(2)}(t_1,t_2) \,dx \,dy + U_4 \nonumber \\
	& \hspace{40pt}= \frac{1}{m V_d} \int_{\mathcal{X}_{m,n}} f(x) \int_{\mathbb{R}^d}\biggl\{ (f \phi_{10})_x^2 \int_{\mathbb{R}^2} (\Phi_{\Sigma}-\Phi_{I_2})(u_1,u_2) \,du_1 \,du_2  \nonumber \\
	& \hspace{200pt} + 2 (f \phi_{10})_x \phi_x \mathbbm{1}_{\{\|z\| \leq 1 \}} \biggr\} \,dz \,dx + U_4 + U_5 \nonumber \\
	&\hspace{40pt}= \frac{1}{m} \mathbb{E}\bigl\{ (f \phi_{10})_{X_1}^2 \bigr\} + \frac{2}{m} \mathbb{E}\bigl\{ (f\phi_{10})_{X_1} \phi_{X_1}\bigr\} + \sum_{j=4}^6 U_j.
\end{align}
We show in Section~\ref{Sec:RemainderTerms} that
\begin{align}
\label{Eq:UBounds2}
	\sum_{j=4}^6 U_j = O \biggl( \frac{1}{m} \max \biggl\{ \frac{\log^{\frac{5}{2}} m}{k_X^{1/2}}, \frac{\log^{\frac{1}{2}} n}{k_Y^{1/2}}, \log^2m &\Bigl(\frac{k_X}{m} \Bigr)^{\frac{1 \wedge \beta}{d}}, \Bigl( \frac{k_X}{m} \Bigr)^{ \lambda_1( 1-2\zeta)- \epsilon}, \nonumber \\
	& \Bigl( \frac{k_Y}{n} \Bigr)^{ \lambda_2( 1-2\zeta)- \epsilon}, \Bigl(\frac{k_Y}{n} \Bigr)^{\frac{2 \wedge \beta}{d}}\biggr\} \biggr)
\end{align}
for every $\epsilon>0$. The final term in \eqref{Eq:IBPDecomp} can be approximated by writing $F=F^{(1)}F^{(2)}=G_{m}^{(1)}(F_{n,x,y}^{(2)}-G_{n}^{(2)})$, using the changes of variables $y=x+(\frac{k_Y}{nV_dg(x)})^{1/d}z, t_i=(k_Y+k_Yv_i)/n$ for $i=1,2$ and using the approximation $\frac{\partial}{\partial t} \phi(u_{x,s},v_{x,t}) \approx g(x) \phi_{01}(f(x),g(x))/t$ to write
\begin{align}
\label{Eq:IBP3}
	&\int_{\mathcal{X} \times \mathcal{X}_{m,n}} f(x)f(y) \int_{\mathcal{I}_{m,X}^2} \int_{\mathcal{I}_{n,Y}^2} (h \, dF)(s_1,s_2,t_1,t_2) \, dx \,dy \nonumber \\
	& = \int_{\mathcal{X} \times \mathcal{X}_{m,n}} \!\!\!\! f(x) f(y) \!\! \int_{\mathcal{I}_{m,X}^2}\! \int_{\mathcal{I}_{n,Y}^2}  \!\!\!\!(h_{0011} \, dG_{m}^{(1)} F^{(2)})(s_1,s_2,t_1,t_2) \,dt_1 \,dt_2 \,dx \,dy + U_7 \nonumber \\
	& = \frac{1}{nV_d} \int_{\mathcal{X}_{m,n}} g(x) (f \phi_{01})_x^2 \int_{\mathbb{R}^d}\int_{\mathbb{R}^2} (\Phi_{\Sigma}-\Phi_{I_2})(v_1,v_2) \,dv_1 \,dv_2 \,dz \,dx + U_7 + U_8 \nonumber \\
	& = \frac{1}{n} \int_{\mathcal{X}_{m,n}} g(x) (f \phi_{01})_x^2 \,dx + U_7 +U_8.
\end{align}
%We now bound
%\[
%	\frac{1}{n} \int_{\mathcal{X}_{m,n}} g(x) (f \phi_{01})_x^2 \,dx \lesssim \frac{1}{n} \int_{\mathcal{X}_{m,n}} f(x) g(x)^{-1-2\kappa_2} \,dx.
%\]
%When $\gamma > 1+2\kappa_2$ the integral converges and $\frac{1}{n} \int_{\mathcal{X}_{m,n}^c} g(x) (f \phi_{01})_x^2 \,dx = o(n^{-1})$ uniformly over $(f,g)$. However, when $\gamma \leq 1 + 2 \kappa_2$ we instead have that
%\[
%	\frac{1}{n} \int_{\mathcal{X}_{m,n}} f(x)g(x)^{-1-2\kappa_2} \,dx = o \biggl( \frac{1}{n} \biggl( \frac{m}{k_X} \biggr)^{\frac{\alpha}{(\alpha+d)(1+\rho_1d)}(\frac{1+2\kappa_2}{\gamma}-1)+\epsilon} \biggr)
%\]
%for all $\epsilon>0$. If we assume that 
%\begin{equation}
%\label{Eq:nlowerbound2}
%	n \gtrsim m (m/k_X)^\tau \quad \text{ with }  \quad \tau > \frac{\alpha}{(\alpha+d)(1+\rho_1d)}\biggl( \frac{1+2\kappa_2}{\gamma}-1 \biggr)
%\end{equation}
%then we have that $\frac{1}{n} \int_{\mathcal{X}_{m,n}} f(x)g(x)^{-1-2\kappa_2} \,dx = o(m^{-1})$.
Let $\epsilon_0=\epsilon_0(\lambda_1,\lambda_2,\kappa_1,\kappa_2,C) \in (0,\lambda_1 \wedge \lambda_2)$ be sufficiently small that
\[
	\frac{2+2\kappa_1 - \epsilon/(\lambda_1 \wedge \lambda_2)}{1-\epsilon_0/(\lambda_1 \wedge \lambda_2)} > 2 + 2\kappa_1 - 1/C \quad \text{and} \quad \frac{2 \kappa_2 -1}{1-\epsilon_0/(\lambda_1 \wedge \lambda_2)} > 2\kappa_2 -1 -1/C.
\]
Then, by H\"older's inequality, we have that
\begin{align}
\label{Eq:Holderv2}
	\sup_{(f,g) \in \mathcal{F}_{d,\vartheta}} & \int_\mathcal{X} f(x)^{2+2\kappa_1} g(x)^{2 \kappa_2-1}  \biggl[ \Bigl\{ \frac{M_\beta(x)^d}{f(x)} \Bigr\}^{\epsilon_0} + \Bigl\{ \frac{M_\beta(x)^d}{g(x)} \Bigr\}^{\epsilon_0} \biggr] \,dx \nonumber \\
	& \leq 2 \sup_{(f,g) \in \mathcal{F}_{d,\vartheta}}  \max_{i=1,2} C^{\epsilon_0/\lambda_i} \biggl[ \int_\mathcal{X} f(x)^\frac{2 + 2\kappa_1 - \epsilon_0/\lambda_i}{1-\epsilon_0/\lambda_i} g(x)^\frac{2 \kappa_2 -1}{1-\epsilon_0/\lambda_i} \,dx \biggr]^{1-\epsilon_0/\lambda_i} < \infty.
\end{align}
It follows that
\begin{align}
\label{Eq:UseIntCondition}
	&\int_{\mathcal{X}_{m,n}^c} g(x) (f \phi_{01})_x^2 \,dx \lesssim \int_{\mathcal{X}_{m,n}^c}  f(x)^{2+2 \kappa_1} g(x)^{-1+2\kappa_2} \,dx \nonumber \\
	&\leq \int_{\mathcal{X}} f(x)^{2 + 2\kappa_1} g(x)^{2 \kappa_2- 1} \biggl[\Bigl\{ \frac{ k_X \log mM_{\beta}(x)^d}{m f(x)} \Bigr\}^{\epsilon_0} + \Bigl\{ \frac{ k_Y \log n M_{\beta}(x)^d}{n g(x)} \Bigr\}^{\epsilon_0}\biggr] \,dx \nonumber \\
	& = O \biggl( \max \biggl\{ \Bigl( \frac{k_X \log m}{m} \Bigr)^{\epsilon_0},  \Bigl( \frac{k_Y \log n}{n} \Bigr)^{\epsilon_0} \biggr\} \biggr)
\end{align}
We show in Section~\ref{Sec:RemainderTerms} that
\begin{align}
  \label{Eq:UBounds3}
	U_7+U_8 = O \biggl( \frac{1}{n} \max\biggl\{&\frac{\log^{5/2}n}{k_Y^{1/2}} , \log^2 n \Bigl( \frac{k_Y}{n} \Bigr)^{(1 \wedge \beta)/d}, \Bigl( \frac{k_Y}{n} \Bigr)^{\epsilon_0/2},   \nonumber \\
	& \hspace{50pt} \frac{\log^{1/2}m}{k_X^{1/2}}, \Bigl( \frac{k_X}{m} \Bigr)^{(2 \wedge \beta)/d},  \Bigl( \frac{k_X}{m} \Bigr)^{\epsilon_0/2}  \biggr\} \biggr).
\end{align}
It now follows from \eqref{Eq:VarDecomp}, \eqref{Eq:Diagonal}, \eqref{Eq:CovDecomp}, \eqref{Eq:Ts}, \eqref{Eq:TBound}, \eqref{Eq:U0}, \eqref{Eq:IBP1}, \eqref{Eq:UBounds1}, \eqref{Eq:IBP2}, \eqref{Eq:UBounds2}, \eqref{Eq:IBP3}, \eqref{Eq:UseIntCondition} and~\eqref{Eq:UBounds3} that
\begin{align*}
  \mathrm{Var}(\widetilde{T}_{m,n}) &= \frac{1}{m} \Bigl[ \mathrm{Var}(\phi_{X_1}) - 2 \mathbb{E}\bigl\{(f \phi_{10})_{X_1}\bigr\} \mathbb{E}(\phi_{X_1}) - \bigl\{\mathbb{E}(f \phi_{10})_{X_1}\bigr\}^2 + \mathbb{E}\bigl\{ (f \phi_{10})_{X_1}^2 \bigr\} \\
  &\hspace{17pt}+ 2 \mathbb{E}\bigl\{ (f\phi_{10} \phi)_{X_1}\bigr\} \Bigr] + \frac{1}{n}\Bigl[\mathbb{E}\bigl\{(f \phi_{01})_{Y_1}^2\bigr\} - \bigl\{\mathbb{E}(g \phi_{01})_{X_1}\bigr\}^2\Bigr] + o(1/m + 1/n) \\
	&= \frac{v_1}{m} + \frac{v_2}{n} + o(1/m + 1/n).
\end{align*}
For the general, weighted case, we rely on the decomposition
\begin{align}
  \label{Eq:WeightedVarDecomp}
  \mathrm{Var}(\widehat{T}_{m,n}) = \sum_{j_X,\ell_X=1}^{k_X} \sum_{j_Y,\ell_Y=1}^{k_Y} & w_{X,j_X}w_{X,\ell_X}w_{Y,j_Y}w_{Y,\ell_Y} \nonumber \\
  &\times\biggl\{\frac{1}{m}\mathrm{Cov}\bigl(\phi(\hat{f}_{(j_X),1},\hat{g}_{(j_Y),1}) , \phi(\hat{f}_{(\ell_X),1},\hat{g}_{(\ell_Y),1})\bigr)  \nonumber \\
  &+\Bigl(1-\frac{1}{m}\Bigr)\mathrm{Cov}\bigl(\phi(\hat{f}_{(j_X),1},\hat{g}_{(j_Y),1}) , \phi(\hat{f}_{(\ell_X),2},\hat{g}_{(\ell_Y),2})\biggr\}.
\end{align}
Now, for example, when $\ell_X > j_X$, we have
\begin{align*}
  \bigl(h_{x,f}(\rho_{(j_X),1,X}),h_{x,f}(\rho_{(\ell_X),1,X}),1 \!-\! h_{x,f}(\rho_{(\ell_X),1,X})\bigr)|X_1=x\sim \mathrm{Dir}(j_X,\ell_X-j_X,m-\ell_X),
\end{align*}
and it may therefore be deduced similarly to the arguments leading to~\eqref{Eq:Diagonal} that
\begin{align}
  \label{Eq:WeightedOn}
  \max_{\substack{j_X,\ell_X:w_{X,j_X},w_{X,\ell_X} \neq 0 \\ j_Y,\ell_Y:w_{Y,j_Y},w_{Y,\ell_Y} \neq 0}} \Bigl|\mathrm{Cov}\bigl(\phi(\hat{f}_{(j_X),1},\hat{g}_{(j_Y),1}) , \phi(\hat{f}_{(\ell_X),1},&\hat{g}_{(\ell_Y),1})\bigr)- \mathrm{Var}(\phi_{X_1})\Bigr| =o(1).
\end{align}
The second term on the right-hand side of~\eqref{Eq:WeightedVarDecomp} is handled using relatively small modifications of the arguments used to study the covariance term in~\eqref{Eq:VarDecomp}.  These modifications are required to account for the fact that the $k_X$ that appears twice in the covariance term in~\eqref{Eq:VarDecomp} is now replaced with $j_X$ and $\ell_X$ (with similar changes to $k_Y$).  Thus, for instance, the joint conditional distribution function of
\begin{align*}
\bigl( h_{x,f}(\rho_{(j_X),1,X}), h_{y,f}(\rho_{(\ell_X),2,X}), &h_{x,g}(\rho_{(j_Y),1,Y}), h_{y,g}(\rho_{(\ell_Y),2,Y}) \bigl) | X_1=x, X_2=y,
\end{align*}
is now given by
\begin{align*}
  &F_{m,n,x,y}(s_1,s_2,t_1,t_2) \\
  &= \mathbb{P}\bigl(N_1^{(1)}\!\!+\!N_3^{(1)}\! \geq\! j_X \!-\! \mathbbm{1}_{\{\|x-y\| \leq h_{x,f}^{-1}(s_1)\}} , N_2^{(1)}\!\!+\!N_3^{(1)}\! \geq\! \ell_X \!-\! \mathbbm{1}_{\{\|x-y\| \leq h_{y,f}^{-1}(s_2)\}} \bigr) \nonumber \\
	& \hspace{150pt} \times \mathbb{P}(N_1^{(2)}+N_3^{(2)} \geq j_Y , N_2^{(2)}+N_3^{(2)} \geq \ell_Y  ).
 \end{align*}
 Following the arguments through reveals that
 \begin{align}
   \label{Eq:WeightedOff}
   \max_{\substack{j_X,\ell_X:w_{X,j_X},w_{X,\ell_X} \neq 0 \\ j_Y,\ell_Y:w_{Y,j_Y},w_{Y,\ell_Y} \neq 0}} \Bigl|\mathrm{Cov}\bigl(\phi(\hat{f}_{(j_X),1},&\hat{g}_{(j_Y),1}) , \phi(\hat{f}_{(\ell_X),2},\hat{g}_{(\ell_Y),2})\bigr)- \frac{v_1 - \mathrm{Var}(\phi_{X_1})}{m} - \frac{v_2}{n}\Bigr| \nonumber \\
	& \hspace{150pt} = o(1/m + 1/n).
 \end{align}
 Finally, then, we can deduce from~\eqref{Eq:WeightedVarDecomp},~\eqref{Eq:WeightedOn} and~\eqref{Eq:WeightedOff}, and using our hypotheses on $\|w_X\|_1$ and $\|w_Y\|_1$, that
 \[
   \mathrm{Var}(\widehat{T}_{m,n}) - \frac{v_1}{m} - \frac{v_2}{n} = o\biggl(\Bigl(\frac{1}{m} + \frac{1}{n}\Bigr)\|w_X\|_1^2\|w_Y\|_1^2\biggr) =  o\Bigl(\frac{1}{m} + \frac{1}{n}\Bigr),
 \]
 as required.                                                                     
   %\leq \sum_{j_X,\ell_X=1}^{k_X} \sum_{j_Y,\ell_Y=1}^{k_Y} |w_{X,j_X}w_{X,\ell_X}w_{Y,j_Y}w_{Y,\ell_Y}| \times \max_{\substack{j_X,\ell_X:w_{X,j_X},w_{X,\ell_X} \neq 0 \\ j_Y,\ell_Y:w_{Y,j_Y},w_{Y,\ell_Y} \neq 0}}\biggl|\frac{1}{m}\mathrm{Cov}\bigl(\phi(\hat{f}_{(j_X),1},\hat{g}_{(j_Y),1},X_1) , \phi(\hat{f}_{(\ell_X),1},\hat{g}_{(\ell_Y),1},X_1)\bigr) + \Bigl(1-\frac{1}{m}\Bigr)\mathrm{Cov}\bigl(\phi(\hat{f}_{(j_X),1},\hat{g}_{(j_Y),1},X_1) , \phi(\hat{f}_{(\ell_X),2},\hat{g}_{(\ell_Y),2},X_2)
\end{proof}

\subsection{Proofs of Theorems~\ref{Thm:Normality} and~\ref{Thm:ConfidenceIntervals} on asymptotic normality and confidence intervals}

Since the proof of Theorem~\ref{Thm:Normality} depends on Proposition~\ref{Prop:Poisson}, we prove Proposition~\ref{Prop:Poisson} first.
\begin{proof}[Proof of Proposition~\ref{Prop:Poisson}]
Where it does not cause confusion, we will supress suffices to write $k$ instead of $k_X$ or $k_Y$. For any $\ell \geq \max(k+1,i)$ we use the shorthand
\[
	\hat{f}_{(k),i,\ell} := \frac{k}{ \ell V_d \rho_{(k),i,\ell}^d},
\]
and we write $\phi_x^g(\cdot) := \phi\bigl(\cdot, g(x)\bigr)$. We will first study the difference $T_m^{(1)} - T_m^{(1),\mathrm{p}}$ by bounding its first and second conditional moments given $M$.  On the event that $|m/M-1| \leq 1/L$, when $m \geq (1+1/L)(1+k) \log(em)$, we have that
\begin{align*}
	\mathbb{E}&\{ T_m^{(1)} - T_m^{(1),\mathrm{p}} | M \} \\
	&= \mathbb{E} T_m^{(1)} - \frac{M}{m} \mathbb{E}( T_M^{(1)} | M ) + \frac{M}{m} \mathbb{E} \biggl\{ \phi_{X_1}^g \bigl( \hat{f}_{(k),1,M} \bigr) - \phi_{X_1}^g \biggl( \frac{M}{m} \hat{f}_{(k),1,M} \biggr) \biggm| M \biggr\} \\
	& \hspace{30pt} + \Bigl( \frac{M}{m} -1 \Bigr) \int_\mathcal{X} f(x) \{\phi_x + (f \phi_{10})_x\} \,dx \\
	& = ( \mathbb{E} T_m^{(1)} - T) - \frac{M}{m} \{ \mathbb{E} (T_M^{(1)} |M) - T \} +\Bigl( \frac{M}{m} -1 \Bigr) \int_\mathcal{X} f(x) (f \phi_{10})_x \,dx \\
	& \hspace{30pt} + \frac{M}{m} \int_\mathcal{X} f(x) \int_0^1 \Bigl\{ \phi_x^g \Bigl( \frac{m}{M} u_{x,s} \Bigr) - \phi_x^g \bigl( u_{x,s} \bigr) \Bigr\} \mathrm{B}_{k,M-k}(s) \,ds \,dx \\
	& = \mathbb{E} T_m^{(1)} - \mathbb{E}( T_M^{(1)} | M) + \Bigl( \frac{M}{m} -1 \Bigr) \int_\mathcal{X} f(x) (f \phi_{10})_x \,dx + o\biggl( m^{-1/2} + \Bigl| \frac{M}{m} -1 \Bigr| \biggr) \\
	& \hspace{30pt} + \frac{M}{m} \int_{\mathcal{X}_{m,f}} f(x) \int_{\mathcal{I}_{m,X}} \Bigl\{ \phi_x^g \Bigl( \frac{m}{M} u_{x,s} \Bigr) - \phi_x^g ( u_{x,s}) \Bigr\} \mathrm{B}_{k,M-k}(s) \,ds \,dx \\
	& = \mathbb{E} T_m^{(1)} - \mathbb{E}( T_M^{(1)} | M) + \Bigl( \frac{M}{m} -1 \Bigr) \int_\mathcal{X} f(x) (f \phi_{10})_x \,dx + o\biggl( m^{-1/2} + \Bigl| \frac{M}{m} -1 \Bigr| \biggr) \\
	& \hspace{30pt} + \Bigl( 1 - \frac{M}{m} \Bigr) \int_{\mathcal{X}_{m,f}} f(x) \int_{\mathcal{I}_{m,X}} u_{x,s} \phi_{10}\bigl(u_{x,s} ,g(x)\bigr) \mathrm{B}_{k,M-k}(s) \,ds \,dx \\
	& = \mathbb{E} T_m^{(1)} - \mathbb{E}( T_M^{(1)} | M) + o\biggl( m^{-1/2} + \Bigl| \frac{M}{m} -1 \Bigr| \biggr).
\end{align*}
It now follows from the one-sample ($n = \infty$) version of Proposition~\ref{Thm:GeneralBias} and the fact that, for $a>0$, we have $(k/m)^a - (k/M)^a = o(|M/m-1|)$, that on the event that $|m/M-1| \leq 1/L$ we have
\[
	\mathbb{E}\{ T_m^{(1)} - T_m^{(1),\mathrm{p}} | M \} = o\biggl( m^{-1/2} + \Bigl| \frac{M}{m} -1 \Bigr| \biggr).
\]

We now bound the conditional variance of $T_m^{(1)} - T_m^{(1),\mathrm{p}}$ on the event $A_m := \{|M/m -1| \leq 1/\log (em)\}$. We first see that, when $m \geq (k+1)\log(em)/\{1-1/\log (em)\}$, we have
\begin{align*}
	\mathrm{Var} &\biggl\{ \frac{1}{m} \sum_{i=1}^M \phi_{X_i}^g \biggl( \frac{M}{m} \hat{f}_{(k),i,M} \biggr) - \frac{1}{m} \sum_{i=1}^m \phi_{X_i}^g \biggl( \frac{M}{m} \hat{f}_{(k),i,M} \biggr) \biggm| M \biggr\} \\
	& = \frac{|M-m|}{m^2} \mathrm{Var} \biggl\{ \phi_{X_1}^g \biggl( \frac{M}{m} \hat{f}_{(k),1,M} \biggr) \biggm| M \biggr\} \\
	&\hspace{0.5cm} + \frac{|M-m|(|M-m|-1)}{m^2} \mathrm{Cov} \biggl\{ \phi_{X_1}^g \biggl( \frac{M}{m} \hat{f}_{(k),1,M} \biggr), \phi_{X_2}^g \biggl( \frac{M}{m} \hat{f}_{(k),2,M} \biggr) \biggm| M\biggr\} \\
	& = \frac{(M-m)^2}{m^2} \mathrm{Cov}( \phi_{X_1}, \phi_{X_2} ) + O\biggl( \frac{|M-m|}{m^2} \biggr) + o \biggl( \frac{(M-m)^2}{m^2} \biggr) \\
	& =O\biggl( \frac{|M-m|}{m^2} \biggr) + o \biggl( \frac{(M-m)^2}{m^2} \biggr).
\end{align*}
To bound the conditional variance of $T_m^{(1)} - T_m^{(1),\mathrm{p}}$, it now suffices to bound $\mathrm{Var}(D_m | M)$, where
\[
	D_m:= \frac{1}{m} \sum_{i=1}^m \biggl\{ \phi_{X_i}^g \biggl( \frac{k}{mV_d \rho_{(k),i,m}^d} \biggr) - \phi_{X_i}^g \biggl( \frac{k}{mV_d \rho_{(k),i,M}^d} \biggr) \biggr\}.
\]
To proceed, we will now use the Efron--Stein inequality; see, for example, \citet[][Theorem~3.1]{BGM2013}. Given $M$, the random variable $D_m=D_m(X_1,\ldots,X_M)$ is a function of the independent random variables $X_1,\ldots,X_M$; letting $X_1',\ldots,X_M'$ denote an independent copy of these random variables, for $j=1,\ldots,M$, write $D_m^{(j)}:=D_m(X_1,\ldots,X_{j-1},X_j',X_{j+1},\ldots,X_M)$ for the random variable calculated by replacing $X_j$ in $D_m$ by $X_j'$. Similarly define $\rho_{(k),i,\ell}^{(j)}$. The Efron--Stein inequality gives that
\[
	\mathrm{Var}(D_m |M) \leq \frac{1}{2} \sum_{j=1}^M \mathbb{E}\bigl\{ (D_m - D_m^{(j)})^2 |M \bigr\}.
\]
For now, we will work on the event $\{M>m\}$. Observe that for $i=1,\ldots ,m$ and $j=m+1,\ldots,M$ we have $\rho_{(k),i,M}^{(j)} = \rho_{(k),i,M}$ unless either $X_j$ is one of the $k$ nearest neighbours of $X_i$ in the sample $X_1,\ldots,X_M$ or $X_j'$ is one of the $k$ nearest neighbours of $X_i$ in the sample $X_1,\ldots,X_{j-1}, X_j', X_{j+1}, \ldots, X_M$. For $j=m+1,\ldots,M$ we have, by arguments similar to those in the proof of Proposition~\ref{Prop:Variance}, using the fact that $\mathbb{P}(\|X_j-X_1\| \leq \rho_{(k),1,M} | M) = k/(M-1)$ and splitting up into the cases $X_1 \in \mathcal{X}_{m,f}$ and $X_1 \not\in \mathcal{X}_{m,f}$, that
\begin{align*}
	&\mathbb{E}\{ (D_m - D_m^{(j)})^2 |M \} \\
	&\leq 4 \mathbb{E} \biggl[ \biggl\{ \frac{1}{m} \sum_{i=1}^m \mathbbm{1}_{\{\|X_j -X_i\| \leq \rho_{(k),i,M}\}} \biggl( \phi_{X_i}^g \biggl( \frac{k}{mV_d \rho_{(k),i,M}^d} \biggr) - \phi_{X_i} \biggr) \biggr\}^2 \biggm| M \biggr] \\
	& = 4(1-1/m) \mathbb{E} \biggl[ \mathbbm{1}_{\{\|X_j-X_1\| \leq \rho_{(k),1,M} \}} \mathbbm{1}_{\{\|X_j-X_2\| \leq \rho_{(k),2,M} \}} \biggl\{ \phi_{X_1}^g \biggl( \frac{k}{mV_d \rho_{(k),1,M}^d} \biggr) - \phi_{X_1} \biggr\} \\
	& \hspace{150pt} \times \biggl\{ \phi_{X_2}^g \biggl( \frac{k}{mV_d \rho_{(k),2,M}^d} \biggr) - \phi_{X_2} \biggr\} \biggm| M \biggr] \\
	& \hspace{50pt} + \frac{4}{m} \mathbb{E} \biggl[ \mathbbm{1}_{\{\|X_j-X_1\| \leq \rho_{(k),1,M} \}} \biggl\{ \phi_{X_1}^g \biggl( \frac{k}{mV_d \rho_{(k),1,M}^d} \biggr) - \phi_{X_1} \biggr\}^2 \biggm| M \biggr] \\
	& \lesssim \frac{k}{m} \biggl( \frac{k}{m} + \frac{1}{m} \biggr) \int_{\mathcal{X}_{m,f}} f(x)^{1+2 \kappa_1} g(x)^{2 \kappa_2} \max \biggl[ \frac{(M-m)^2}{m^2} ,  \frac{\log m}{k} , \biggl\{ \frac{k M_\beta(x)^d}{m f(x) } \biggr\}^\frac{2(2 \wedge \beta)}{d}\biggr] \,dx  \\ 
	& \hspace{50pt} + \frac{k}{m} \biggl\{ \Bigl( \frac{k}{m} \Bigr)^{2 \lambda_1(1-\zeta) - \epsilon} + \frac{1}{m}\Bigl( \frac{k}{m} \Bigr)^{ \lambda_1(1-\zeta) - \epsilon} \biggr\} + o(m^{-2}) \\
	& = O \biggl( \frac{k}{m} \max\biggl\{ \frac{k(M-m)^2}{m^3},  \frac{\log m}{m},  \Bigl( \frac{k}{m} \Bigr)^{1+ \frac{2(2\wedge \beta)}{d}}, \Bigl( \frac{k}{m} \Bigr)^{2\lambda_1(1-\zeta) - \epsilon} \biggr\} \biggr) \\
	&= o \biggl( \max\biggl\{ \frac{(M-m)^2}{m^{5/2}}, \frac{1}{m^{3/2}} \biggr\} \biggr). 
\end{align*}
Now for $j=1,\ldots,m$ we have
\begin{align*}
	\mathbb{E}\{ & (D_m - D_m^{(j)})^2 |M \} = \mathbb{E}\{ (D_m - D_m^{(1)})^2 |M \} \\
        %	& = \mathbb{E} \biggl[ \biggl\{ \frac{1}{m} \sum_{i=1}^m \biggl( \phi_{X_i}^g \biggl( \frac{k}{mV_d \rho_{(k),i,m}^d} \biggr) - \phi_{X_i}^g \biggl( \frac{k}{mV_d (\rho_{(k),i,m}^{(1)})^d} \biggr)  \\
%	& \hspace{100pt}- \phi_{X_i}^g \biggl( \frac{k}{mV_d \rho_{(k),i,M}^d} \biggr)  + \phi_{X_i}^g \biggl( \frac{k}{mV_d (\rho_{(k),i,M}^{(1)})^d} \biggr) \biggr) \biggr\}^2 \biggm| M \biggr] \\
	& \leq 2 \mathbb{E} \biggl[ \biggl\{ \frac{1}{m} \sum_{i=2}^m \biggl( \phi_{X_i}^g \biggl( \frac{k}{mV_d \rho_{(k),i,m}^d} \biggr) - \phi_{X_i}^g \biggl( \frac{k}{mV_d (\rho_{(k),i,m}^{(1)})^d} \biggr)  \\
	& \hspace{50pt}- \phi_{X_i}^g \biggl( \frac{k}{mV_d \rho_{(k),i,M}^d} \biggr)  + \phi_{X_i}^g \biggl( \frac{k}{mV_d (\rho_{(k),i,M}^{(1)})^d} \biggr) \biggr) \biggr\}^2 \biggm| M \biggr] + o(m^{-2}).
\end{align*}
Write $\rho_i^{(-1)}$ for the $k$th nearest neighbour distance of $X_i$ in the sample $X_2,X_3,\ldots,X_M$. The $i$th term in the above sum is equal to zero unless $\{X_{m+1}, \ldots, X_M\} \cap B_{X_i}(\rho_i^{(-1)}) \neq \emptyset$ and either $X_1 \in B_{X_i}(\rho_i^{(-1)})$ or $X_1' \in B_{X_i}(\rho_i^{(-1)})$. Thus, by similar arguments to those used in the proof of Proposition~\ref{Prop:Variance}, splitting up into the cases $X_2 \in \mathcal{X}_{m,f}$ and $X_2 \not\in \mathcal{X}_{m,f}$, we have
\begin{align*}
	&\mathbb{E}\{ (D_m - D_m^{(j)})^2 |M \} \\
	& \lesssim \biggl| \mathbb{E} \biggl[ \mathbbm{1}_{\{\|X_2-X_1\| \leq \rho_2^{(-1)}, \|X_3-X_1\| \leq \rho_3^{(-1)} \}} \biggl\{ \phi_{X_2}^g \bigl( \hat{f}_{(k),2,m} \bigr) - \phi_{X_2}^g \biggl( \frac{M}{m} \hat{f}_{(k),2,M} \biggr) \biggr\} \\
	& \hspace{150pt} \times \biggl\{ \phi_{X_3}^g \bigl( \hat{f}_{(k),3,m} \bigr) - \phi_{X_3}^g \biggl( \frac{M}{m} \hat{f}_{(k),3,M} \biggr) \biggr\} \biggm| M \biggr]  \biggr| \\
	&  \hspace{30pt} +  \frac{1}{m} \mathbb{E} \biggl[ \mathbbm{1}_{\{\|X_1 - X_2 \| \leq \rho_2^{(-1)}\}} \biggl\{ \phi_{X_2}^g \bigl( \hat{f}_{(k),2,m} \bigr) - \phi_{X_2}^g \biggl( \frac{M}{m} \hat{f}_{(k),2,M} \biggr) \biggr\}^2 \biggm| M \biggr] + o(m^{-2}) \\
	& \lesssim |M-m| \biggl\{ \Bigl( \frac{k}{m} \Bigr)^3 + \frac{1}{m} \Bigl( \frac{k}{m} \Bigr)^2 \biggr\} \int_{\mathcal{X}_{m,f}} f(x)^{1+2\kappa_1} g(x)^{2 \kappa_2} \\
	& \hspace{150pt} \times \max \biggl\{\frac{(M-m)^2}{m^2},  \frac{\log m}{k}, \biggl( \frac{k M_\beta(x)^d}{m f(x)} \biggr)^{\frac{2(2 \wedge \beta)}{d}} \biggr\}  \,dx \\
	& \hspace{50pt} + |M-m| \Bigl( \frac{k}{m} \Bigr)^2 \biggl\{ \Bigl( \frac{k}{m} \Bigr)^{2 \lambda_1(1-\zeta) - \epsilon} + \frac{1}{m} \Bigl( \frac{k}{m} \Bigr)^{ \lambda_1(1-\zeta) - \epsilon} \biggr\} +o(m^{-2}) \\
	& \lesssim |M-m| \Bigl( \frac{k}{m} \Bigr)^2 \max\biggl\{ \frac{k (M-m)^2}{m^3}, \frac{\log m}{m}, \Bigl( \frac{k}{m} \Bigr)^{1+\frac{2(2\wedge\beta)}{d}}, \Bigl( \frac{k}{m} \bigr)^{2 \lambda_1(1-\zeta) -\epsilon} \biggr\} + o(m^{-2}) \\
	& = o \biggl( \max\biggl\{ \frac{1}{m^2}, \frac{|M-m|^3}{m^{7/2}} \biggr\} \biggr).
	%& \lesssim |M-m| \mathbb{E} \biggl[ \mathbbm{1}_{\{X_1 \in \mathcal{X}_{m,f}, \|X_1-X_{m+1}\|^d \leq \frac{2^{d+2}k}{mV_d f(X_1)} \}} \frac{k^2}{m^2} f(X_1)^{2\kappa_1} g(X_1)^{2 \kappa_2} \\
	%& \hspace{50pt} \times \max \biggl\{\frac{|M-m|}{m},  \frac{\log m}{k}, \biggl( \frac{k M_\beta(X_1)^d}{m f(X_1)} \biggr)^{\frac{2(2 \wedge \beta)}{d}} \biggr\} \biggr] + o \biggl( \frac{|M-m|}{m^{5/2}} + m^{-2} \biggr) \\
	%& \lesssim \frac{|M-m| k^3}{m^3} \max\biggl\{\frac{|M-m|}{m}, \frac{\log m}{k}, \Bigl( \frac{k}{m} \Bigr)^{\lambda_1(1-\zeta) - \epsilon}, \Bigl( \frac{k}{m} \Bigr)^{\frac{2(2\wedge\beta)}{d}} \biggr\} + o \biggl( \frac{|M-m|}{m^{5/2}} + m^{-2} \biggr) \\
	%& = o \biggl( \frac{(M-m)^2}{m^3} + \frac{|M-m|}{m^{5/2}} + m^{-2} \biggr).
\end{align*}
It now follows by the Efron--Stein inequality that, on the event $A_m$, we have
\[
	\mathrm{Var} \{ T_m^{(1)} - T_m^{(1),\mathrm{p}}  |M \} = o \biggl( \max\biggl\{ \frac{|M-m|^3}{m^{5/2}}, \frac{1}{m} \biggr\} \biggr).
\]

We now bound the contribution from the event $A_m^c$. We will use the fact that for $x \geq 0$ we have
\[
	\mathbb{P} \Bigl( \Bigl| \frac{M}{m} -1 \Bigr| \geq x \Bigr) \leq 2 \exp \Bigl( - \frac{m x^2}{2(1+x)} \Bigr).
\]
It follows from this that
\[
	\mathbb{P} (A_m^c) \leq 2 \exp \Bigl( - \frac{m}{4 \log^2(em)} \Bigr). 
\]
Moreover, we have for any $a \geq 1$ that 
\begin{align*}
	\mathbb{E} \Bigl[ \Bigl( \frac{M}{m} \Bigr)^a \mathbbm{1}_{A_m^c} \Bigr] &\leq \int_0^\infty \mathbb{P} \biggl( \Bigl| \frac{M}{m} -1 \Bigr| \geq \max\Bigl\{ \frac{1}{\log(em)}, x^{1/a}-1 \Bigr\} \biggr) \,dx \\
	& \leq 2^a \mathbb{P} (A_m^c) + 2a \int_2^\infty y^{a-1} e^{ - my/8} \,dy \\
	& \leq 2^a \mathbb{P} (A_m^c) + 2a(a-1) \log(16/a) \int_2^\infty e^{-(m-1/2)y/8 } \, dy\\
	& \leq 2^{a+1} \exp \Bigl( - \frac{m}{4 \log^2(em)} \Bigr) + \frac{2a(a-1) \log(16/a)}{m-1/2} e^{-\frac{2m-1}{8}}.
\end{align*}
It now follows using Lemma~\ref{Lemma:hxinvbounds2} that, when $\log(em) > 2d \kappa_1^{-}/\alpha$, we have
\begin{align}
\label{Eq:RarePoisson}
	&\mathbb{E} \{ (T_m^{(1)} - T_m^{(1),\mathrm{p}})^2 \mathbbm{1}_{A_m^c} \} \nonumber \\
	& \leq 3 \mathbb{P}(A_m^c) \mathbb{E} \{(T_m^{(1)})^2\} +3 \mathbb{E} \biggl( \Bigl| \frac{M}{m} -1 \Bigr| \mathbbm{1}_{A_m^c} \biggr) \biggl| \int_\mathcal{X} f(x) \{\phi_x + (f \phi_{10})_x \} \,dx \biggr| \nonumber \\
	& \hspace{100pt} + 3 \mathbb{E} \biggl\{ \frac{M}{m}\phi_{X_1}^g \biggl( \frac{k}{mV_d \rho_{(k),1,M}^d} \biggr)^2 \mathbbm{1}_{A_m^c \cap \{M \geq (k+1) \log (em) \} } \biggr\} \nonumber  \\
	& \lesssim \mathbb{E} \biggl[  \frac{M}{m} \mathbbm{1}_{A_m^c \cap \{M \geq (k+1) \log(em) \}} \int_\mathcal{X} f(x)g(x)^{2\kappa_2} \int_0^1 \biggl\{ \frac{k}{mV_d h_{x,f}^{-1}(s)^d} \biggr\}^{2\kappa_1} \mathrm{B}_{k,M-k}(s) \,ds \,dx \biggr]  \nonumber \\
	& \hspace{150pt}  \mathbb{P}(A_m^c) + \mathbb{E} \biggl( \frac{M}{m} \mathbbm{1}_{A_m^c} \biggr)\nonumber \\
	& \lesssim \mathbb{E} \biggl[ \frac{M}{m} \mathbbm{1}_{A_m^c} \int_\mathcal{X} f(x)g(x)^{2 \kappa_2} \max \Bigl\{ 1 + \|x\|^{2d \kappa_1^-}, \Bigl( \frac{M}{m} \Bigr)^{2 \kappa_1^+} \Bigr\} \,dx \biggr]  + \mathbb{P}(A_m^c) + \mathbb{E} \biggl( \frac{M}{m} \mathbbm{1}_{A_m^c} \biggr) \nonumber \\
	& \lesssim \mathbb{P}(A_m^c) + \mathbb{E} \biggl\{ \biggl(\frac{M}{m}\biggl)^{1+2\kappa_1^+} \mathbbm{1}_{A_m^c} \biggr\} = o(1/m).
\end{align}
Hence,
\begin{align*}
  \mathbb{E}\{ (T_m^{(1)} - T_m^{(1), \mathrm{p}})^2\} &= \mathbb{E} \bigl[ \mathrm{Var}\bigl(T_m^{(1)} - T_m^{(1), \mathrm{p}}\mathbbm{1}_{A_m} \bigm|M\bigr)\mathbbm{1}_{A_m} \\
  &\hspace{1cm}+ \bigl\{ \mathbb{E} \bigl(T_m^{(1)} - T_m^{(1), \mathrm{p}} \bigm|M\bigr) \bigr\}^2\mathbbm{1}_{A_m} \bigr] + \mathbb{E} \bigl\{ (T_m^{(1)} - T_m^{(1),\mathrm{p}})^2 \mathbbm{1}_{A_m^c} \bigr\} \\
	& = o \biggl( \mathbb{E} \max \biggl\{\frac{|M-m|^3}{m^{5/2}} , \frac{1}{m}  \biggr\} \biggr) = o(1/m),
\end{align*}
as required.

We now turn our attention to $T_n^{(2)} - T_n^{(2),\mathrm{p}}$, for which similar arguments apply. We write $\phi_x^f(\cdot) := \phi( f(x), \cdot)$. We have, on the event $|n/N-1| \leq 1/L$ and when $n \geq (1+1/L)k\log(en)$, that
\begin{align*}
	\mathbb{E}& \bigl\{ T_n^{(2)} - T_n^{(2),\mathrm{p}} | N \bigr\} = \mathbb{E} \phi_{X_1}^f \biggl( \frac{k}{n V_d \rho_{(k),1,n}^d} \biggr) - \mathbb{E}\biggl\{ \phi_{X_1}^f \biggl( \frac{k}{n V_d \rho_{(k),1,N}^d} \biggr) \biggm| N \biggr\} \\
	& \hspace{150pt} + \Bigl( \frac{N}{n} -1 \Bigr) \int_\mathcal{X} f(x) (g \phi_{01})_x \,dx \\
	& = \mathbb{E} \biggl\{ \phi_{X_1}^f \biggl( \frac{k}{N V_d \rho_{(k),1,N}^d} \biggr) - \phi_{X_1}^f \biggl( \frac{k}{n V_d \rho_{(k),1,N}^d} \biggr) \biggm| N \biggr\}  \\
	& \hspace{100pt} + \Bigl( \frac{N}{n} -1 \Bigr) \int_\mathcal{X} f(x) (g \phi_{01})_x \,dx + o\biggl(n^{-1/2} + \Bigl| \frac{N}{n} -1 \Bigr|\biggr)\\
	& = o \biggl( n^{-1/2} + \Bigl| \frac{N}{n} -1 \Bigr| \biggr).
\end{align*}
To bound the conditional variance of $T_n^{(2)} - T_n^{(2),\mathrm{p}}$ on the event that $|N/n-1| \leq 1/\log(en)$, we again appeal to the Efron--Stein inequality.  Similar to before, for $\ell \geq k$ and $x \in \mathcal{X}$, we define
\[
  \hat{g}_{(k),\ell}(x) := \frac{k}{\ell V_d \rho_{(k),\ell}^d(x)}.
\]
We redefine
\[
	D_n := \int_\mathcal{X} f(x) \biggl\{ \phi_x^f \bigl(\hat{g}_{(k),n}(x)\bigr) - \phi_x^f \biggl( \frac{N}{n}\hat{g}_{(k),N}(x)\biggr) \biggr\} \,dx.
\]
Similarly to above, letting $Y_1',Y_2',\ldots$ be independent copies of $Y_1,Y_2,\ldots$, for $j \in [N]$ write $D_n^{(j)}$ for the value of $D_n$ when it is computed on $Y_1,\ldots,Y_{j-1},Y_j',Y_{j+1},\ldots,Y_N$ instead of $Y_1,\ldots,Y_N$.  On the event $\{N > n\}$, for $j=n+1,\ldots,N$, splitting up into the cases $X_1 \in \mathcal{X}_{n,g}$ and $X_1 \not\in \mathcal{X}_{n,g}$, we have
\begin{align*}
	\mathbb{E}\bigl\{ (D_n - D_n^{(j)})^2 |& N \bigr\} \leq 4 \mathbb{E} \biggl[ \biggl\{ \int_{\{x: \|Y_j-x\| \leq \rho_{(k),N}(x) \}} f(x) \phi_x^f \biggl( \frac{N}{n}\hat{g}_{(k),N}(x) \biggr) \,dx \biggr\}^2 \biggm| N \biggr] \\
	& = 4 \mathbb{E} \biggl[ \mathbbm{1}_{\{ \|Y_j - X_1\| \leq \rho_{(k),N}(X_1)\}}  \phi_{X_1}^f \biggl( \frac{N}{n}\hat{g}_{(k),N}(X_1)\biggr)\\
	& \hspace{80pt} \times \mathbbm{1}_{\{ \|Y_j - X_2\| \leq \rho_{(k),N}(X_2)\}} \phi_{X_2}^f \biggl( \frac{N}{n}\hat{g}_{(k),N}(X_2)\biggr)  \biggm| N \biggr] \\
	& \lesssim \Bigl( \frac{k}{n} \Bigr)^2 \int_{\mathcal{X}_{n,g}} f(x)^{2+2\kappa_1} g(x)^{2 \kappa_2 -1} \,dx  + \Bigl(\frac{k}{n} \Bigr)^{1+2 \lambda_2(1-\zeta) -\epsilon}= o \bigl(n^{-3/2} \bigr).
\end{align*}
On the other hand, for $j\in [n]$ and on the same event $\{N > n\}$, we have
\begin{align*}
	\mathbb{E}&\bigl\{ (D_n - D_n^{(j)})^2 | N \bigr\} \\
	& \leq 4 \mathbb{E} \biggl[ \biggl\{ \int_{\{x: \|Y_j-x\| \leq \rho_{(k),N}(x) \}} f(x) \biggl(\phi_x^f \bigl( \hat{g}_{(k),n}(x)\bigr) - \phi_x^f \biggl( \frac{N}{n}\hat{g}_{(k),N}(x)\biggr) \biggr) \,dx \biggr\}^2 \biggm| N \biggr] \\
	& \lesssim (N-n) \Bigl( \frac{k}{n} \Bigr)^3 \int_{\mathcal{X}_{n,g}} f(x)^{2+2 \kappa_1} g(x)^{2 \kappa_2-1} \,dx + (N-n) \Bigl( \frac{k}{n} \Bigr)^{2+ 2 \lambda_2(1-\zeta) - \epsilon} \\
	& = o \biggl( \frac{|N-n|}{n^{5/2}} \biggr).
\end{align*}
On the event $\{N < n\}$, the same final bound holds, and it follows by the Efron--Stein inequality that, on the event that $|N/n-1| \leq 1/\log(en)$, we have
\[
	\mathrm{Var}\bigl( T_n^{(2)} - T_n^{(2),\mathrm{p}} \bigm| N\bigr) = o \biggl( \frac{|N-n|}{n^{3/2}} \biggr).
\]
Now, similarly to~\eqref{Eq:RarePoisson}, redefining $A_n:=\{|N/n-1| \leq 1/\log(en)\}$ we have
\begin{align*}
	&\mathbb{E}\bigl\{ (T_n^{(2)} - T_n^{(2),\mathrm{p}})^2 \mathbbm{1}_{A_n^c} \bigr\} \lesssim \mathbb{P}(A_n^c) + \mathbb{E} \biggl\{ \Bigl(\frac{N}{n} \Bigr)^{1+ 2 \kappa_2^+}  \mathbbm{1}_{A_n^c} \biggr\} = o(1/n),
\end{align*}
and the result follows.
\end{proof}
Our second preparatory result provides a convenient partition of (minor modifications of) $\mathcal{X}_{m,f}$ and $\mathcal{X}_{n,g}$ so that, under the Poisson sampling scheme, the $k$-nearest neighbour distances of points in distant pieces are roughly independent.  
\begin{prop}
\label{Prop:Partition}
Let $f \in \mathcal{F}_d$ be $\underline{\beta}:=(\lceil \beta \rceil -1)$-times differentiable. Then there exists $n_0=n_0(d,\beta)$ such that, for all $n\geq n_0$ and $k \in[3,n/\log n)$, we can find a partition $\{C_j : j \in 1,\ldots,V_n\}$ of $\mathcal{X}_{n}:=\{x: f(x)/ M_{f,\beta}(x)^{d} \geq (k/n) \log^2 n \}$ and points $\{x_j : j =1,\ldots,V_n\}$ in $\widetilde{\mathcal{X}}_{n}:= \{x: f(x)/ M_{f,\beta}(x)^{d} \geq (k/n) \log^{7/4} n \}$ satisfying the following properties for each $j=1,\ldots,V_n$:
\begin{enumerate}[(i)]
	\item we have $C_j \subseteq B_{x_j} \Bigl( 3 \bigl( \frac{k \log n}{n V_d f(x_j)} \bigr)^{1/d} \Bigr)$;
	\item we have $\Bigl|\Bigl\{ j'=1,\ldots,V_n : \! \mathrm{dist}(C_j,C_{j'}) \leq 4  \bigl( \frac{k}{nV_df(x_j)} \bigr)^{1/d} \Bigr\} \Bigr| \leq 2^{2+4d} \log n $.
\end{enumerate}
\end{prop}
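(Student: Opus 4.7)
The plan is to build the partition by a greedy packing at the local length scale $r(x) := (k/(nV_df(x)))^{1/d}$, with target cell radius $R(x) := r(x)\log^{1/d} n$. Rearranging the defining inequalities yields $M_{f,\beta}(x)^{-1} \gtrsim \log^{2/d}n\cdot r(x)$ on $\mathcal{X}_n$ and $M_{f,\beta}(x)^{-1} \gtrsim \log^{7/(4d)}n \cdot r(x)$ on $\widetilde{\mathcal{X}}_n$. Applying Lemma~\ref{Lemma:15over7} at these scales then delivers $f(y) \in [f(x)/2,2f(x)]$, and hence $r(y) \asymp r(x)$, for all $y \in B_x(CR(x))$ and any fixed $C>0$, once $n$ is large enough. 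This approximate constancy is the engine for everything that follows.

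Next, I would take $\{x_j : j = 1,\ldots,V_n\}$ to be a maximal collection of points in $\mathcal{X}_n$ for which the balls $B_{x_j}(r(x_j))$ are pairwise disjoint; the collection is countable by separability and finite after a routine truncation using $\|f\|_\infty \leq C$ and $\mu_\alpha(f) \leq C$. By maximality, every $y \in \mathcal{X}_n$ admits some $j$ with $\|y - x_j\| < r(y) + r(x_j)$, and a short non-circular bootstrap (if $r(y) \gg r(x_j)$ then $x_j \in B_y(CR(y))$ would force $f(x_j) \asymp f(y)$, a contradiction) upgrades this to $r(y) \asymp r(x_j)$ and hence to $\|y - x_j\| \leq 3R(x_j)$. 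I then define $C_j$ by assigning each $y \in \mathcal{X}_n$ to the smallest-indexed $j$ with $y \in B_{x_j}(3R(x_j))$. Since $\mathcal{X}_n \subseteq \widetilde{\mathcal{X}}_n$, both property (i) and the required location of the centres are immediate.

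To verify (ii), suppose $\mathrm{dist}(C_j,C_{j'}) \leq 4r(x_j)$ and pick $y \in C_{j'}$ witnessing this bound. Then $y \in B_{x_j}(3R(x_j) + 4r(x_j)) \subseteq B_{x_j}(4R(x_j))$, so the comparability step at $x_j$ gives $f(y) \asymp f(x_j)$; combining with $y \in B_{x_{j'}}(3R(x_{j'}))$ and comparability at $x_{j'}$ propagates this to $f(x_{j'}) \asymp f(x_j)$, hence $R(x_{j'}) \asymp R(x_j)$ and
\[
 \|x_j - x_{j'}\| \leq 3R(x_j) + 4r(x_j) + 3R(x_{j'}) \leq K R(x_j)
\]
for a dimensional constant $K$. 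The balls $B_{x_{j'}}(r(x_{j'})/2)$ from the packing are pairwise disjoint, each of volume $\gtrsim V_d r(x_j)^d$ by the same comparability, and all contained in $B_{x_j}((K+1)R(x_j))$; a volume count then bounds the number of qualifying $j'$ by $\{2(K+1)\}^d\log n$, which can be arranged to be at most $2^{2+4d}\log n$ by tracking the absolute constants in Lemma~\ref{Lemma:15over7} throughout.

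The main obstacle is threading the three logarithmic scales — $\log^2 n$ on $\mathcal{X}_n$, $\log^{7/4}n$ on $\widetilde{\mathcal{X}}_n$, and the cell radius $\log^{1/d}n$. The slack $\log^{7/4}n \gg \log n$ is precisely what allows Lemma~\ref{Lemma:15over7} to deliver approximate constancy of $f$ across balls of radius $R(x)$ both at the base point and at neighbouring centres, and it is this propagation that powers the bootstrap comparing $r(x_{j'})$ with $r(x_j)$ for neighbouring cells. Once this is in place, the remaining arguments are a routine volume-packing exercise.
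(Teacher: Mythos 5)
Your construction is correct, but it takes a genuinely different route from the paper's. The paper generates the centres as a Poisson process on $\widetilde{\mathcal{X}}_n$ with intensity $nf(\cdot)/k$ and takes the cells to be the associated Voronoi cells of $\mathcal{X}_n$; properties (i) and (ii) are then verified on an explicit event of probability at least $1-2/k$ (this is where the hypothesis $k\geq 3$ enters), and existence follows because that probability is positive, with a deterministic packing $z_1,\ldots,z_N$ of disjoint balls of $f$-mass $k/n$ appearing only as an auxiliary device for union bounds. You instead make the packing itself the construction: a maximal family $\{x_j\}$ with the balls $B_{x_j}(r(x_j))$, $r(x)=\{k/(nV_df(x))\}^{1/d}$, pairwise disjoint, cells obtained by assigning each point of $\mathcal{X}_n$ to the smallest-indexed centre within $3r(x_j)\log^{1/d}n$, and (ii) proved by Lebesgue volume counting of the disjoint packing balls inside a ball about $x_j$ of radius a bounded multiple of $r(x_j)\log^{1/d}n$. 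Both arguments run on Lemma~\ref{Lemma:15over7}, which makes $f$, and hence $r(\cdot)$, constant up to a factor $1+O(\log^{-(1\wedge\beta)/d}n)$ over every ball involved; this is also why your constant lands comfortably below $2^{2+4d}$, the count being of order $(6+o(1))^d(1+o(1))\log n$, uniformly in $k$ and $f$, so $n_0$ depends only on $(d,\beta)$ as required. Your route buys simplicity: it is deterministic, dispenses with the Poisson/Bennett concentration step and with the hypothesis $k\geq 3$, and in fact never uses the intermediate set $\widetilde{\mathcal{X}}_n$ at all, since your centres lie in $\mathcal{X}_n\subseteq\widetilde{\mathcal{X}}_n$; the $\log^{7/4}n$ scale you emphasise in your closing paragraph does no work in your argument. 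One small repair: finiteness of the maximal packing should not be justified via $\|f\|_\infty\leq C$ and $\mu_\alpha(f)\leq C$, which are not hypotheses of this proposition ($f\in\mathcal{F}_d$ and differentiability only). Instead, note that by Lemma~\ref{Lemma:15over7} each packing ball has $f$-measure at least $k/(2n)$ once $n\geq n_0(d,\beta)$, so disjointness forces any packing to have at most $2n/k$ elements, and you may then simply choose a packing of maximal (finite) cardinality, whose maximality is exactly what your coverage step uses.
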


\begin{proof}[Proof of Proposition~\ref{Prop:Partition}]
  Let $\{x_j : j =1,\ldots,V_n \}$ be a Poisson process on $\widetilde{\mathcal{X}}_{n}$ with intensity function $nf(\cdot)/k$, and let $P$ denote the corresponding Poisson random measure.  Writing $\sargmin(S)$ for the smallest element of an ordered set $\argmin(S)$, we may partition $\mathcal{X}_{n}$ into the associated (random) Voronoi cells $\{C_j : j = 1,\ldots,V_n\}$, where $C_j:=\{ x \in \mathcal{X}_{n} : \sargmin_{j'=1,\ldots,V_n} \|x-x_{j'}\| = j\}$. We proceed by showing that, for $n$ and $k$ sufficiently large, there is an event of positive probability on which $\{C_j: j =1,\ldots, V_n\}$ and $\{x_j : j =1,\ldots, V_n\}$ satisfy (i) and (ii), and we therefore deduce the existence of such a partition. First, let $z_1,\ldots,z_N \in \mathcal{X}_{n}$ be such that 
\[
	\|z_i-z_j\| \geq h_{z_i,f}^{-1}(k/n) + h_{z_j,f}^{-1}(k/n) =: r(z_i,z_j)
\]
for all $i \neq j$, and such that $\sup_{x \in \mathcal{X}_{n}} \min_{j=1,\ldots,N} \|x-z_j\|/r(x,z_j) < 1$. (We can construct this set inductively: first, choose $z_1 \in \mathcal{X}_n$ arbitrarily.  If the second condition is not satisfied once $z_1,\ldots,z_N$ have been defined, then there exists $x \in \mathcal{X}_{n}$ such that $\|x-z_j\| \geq r(x,z_j)$ for all $j=1,\ldots, N$ and we can set $z_{N+1}:=x$.) For all $i \neq j$, the intersection $B_{z_i}\bigl(h_{z_i,f}^{-1}(k/n) \bigr) \cap B_{z_j}\bigl( h_{z_j,f}^{-1}(k/n)\bigr)$ has Lebesgue measure zero and thus
\[
	1 \geq \sum_{j=1}^N h_{z_j,f} \Bigl( h_{z_j,f}^{-1}(k/n) \Bigr)= \frac{N k}{n}.
\]
In particular, $N \leq n/k$. 

We now show that if $x \in \mathcal{X}_n$ is such that $\|x-z\| < r(x,z)$ for some $z \in \{z_1,\ldots,z_N\} \subseteq \mathcal{X}_n$ then $f(x) \approx f(z)$. Suppose initially that $r_2 := \{M_{f,\beta}(z)^d \log n\}^{-1/d} \leq \|x-z\| < r(x,z)$. Then, writing $r_1:=\|x-z\| -r_2/2$, writing $\bar{x}$ for the point on the line segment between $x$ and $z$ such that $\|\bar{x}-z\|=r_2$ and writing $I(s):=\int_0^s \mathrm{B}_{(d+1)/2,1/2}(t) \,dt$, we have by Lemma~\ref{Lemma:15over7} that, for $n \geq n_0(d,\beta)$ sufficiently large,
\begin{align*}
	& \int_{B_x(r_1)} f(w) \,dw \geq \int_{B_x(r_1)\cap B_z(r_2)} f(w) \,dw \geq \frac{f(z)}{2} \mu_d \bigl( B_x(r_1) \cap B_z(r_2) \bigr) \\
	& \hspace{14pt} \geq \frac{f(z)}{2} \mu_d \bigl( B_{\bar{x}}(r_2/2) \cap B_z(r_2) \bigr) = \frac{V_df(z)}{2} \Bigl\{ \Bigl( \frac{r_2}{2} \Bigr)^d I(15/16) + r_2^d I(15/64) \Bigr\} \\
	& \hspace{14pt} \geq \frac{V_d}{2^{d+1}} I(15/16) \frac{k \log n}{n} \geq	\frac{k}{n}.
\end{align*}
It follows, by Lemma~\ref{Lemma:hxinvbounds} and the fact that $z \in \mathcal{X}_n$, that there exists $n_1 = n_1(d,\beta) \geq n_0$, such that for $n \geq n_1$, 
\begin{align*}
	\|x-z\| \leq r_1 + h_{z,f}^{-1}(k/n) \leq r_1 + 2 \Bigl( \frac{k}{nV_d f(z)} \Bigr)^{1/d} \leq r_1 + \frac{r_2}{4} = \|x-z\| - \frac{r_2}{4},
\end{align*}
which is a contradiction. Thus, for $n \geq n_1$ we have that $\|x-z\| \leq r_2$. In particular, by Lemma~\ref{Lemma:15over7}, for $x,z \in \mathcal{X}_n$ with $\|x-z\| < r(x,z)$, and for $n \geq n_1$, we have that
\begin{equation}
\label{Eq:PackingApprox}
	\Bigl| \frac{f(x)}{f(z)} -1 \Bigr| \leq \frac{2}{ \log^{(1 \wedge \beta)/d}n}.
\end{equation}
To establish (i), first we define the event
\[
  \Omega_0 := \bigcap_{j =1}^N \biggl\{P \Bigl\{B_{z_j} \Bigl( h_{z_j,f}^{-1}(k \log n/n)  \Bigr) \Bigr\} \geq 1\biggr\}.
\]
By Lemmas~\ref{Lemma:hxinvbounds} and~\ref{Lemma:15over7} and very similar arguments to those leading up to~\eqref{Eq:Xntilde}, there exists $n_2=n_2(d,\beta) \geq n_1$ such that $B_{z_j} \bigl( h_{z_j,f}^{-1}(k \log n /n) \bigr) \subseteq \widetilde{\mathcal{X}}_n$ for all $n \geq n_2$ and $j =1, \ldots, V_n$. Then, for $n \geq n_2$ we have that 
\[
%\label{Eq:NonEmptyBalls}
	\mathbb{P}(\Omega_0^c) \leq N \exp \Bigl( - \frac{n}{k} \frac{k \log n}{n} \Bigr) \leq \frac{1}{k}.
\]
Let $j \in \{1,\ldots,V_n\}$ be given, and suppose that $x \in C_j$. Let $z$ be in our covering set such that $ \|x-z\| < r(x,z)$ and, on the event $\Omega_0$, let $j' \in \{1,\ldots,V_n\}$ be such that $\|x_{j'} - z\| \leq h_{z,f}^{-1}(k \log n /n)$.  By~\eqref{Eq:PackingApprox}, Lemma~\ref{Lemma:hxinvbounds} and Lemma~\ref{Lemma:15over7}, there exists $n_3=n_3(d,\beta) \geq n_2$ such that, for $n \geq n_3$, we have that $h_{z,f}^{-1}(k \log n /n) \leq \frac{3}{2} (\frac{k \log n}{n V_d f(z)} )^{1/d}$ and hence that
\begin{align*}
	\|x_{j '}-x \| \leq \|x_{j'}-z\| + \|z-x\| &< h_{z,f}^{-1}(k \log n/n) + h_{z,f}^{-1}(k/n) + h_{x,f}^{-1}(k/n) \\
	& \leq 2 \Bigl( \frac{k \log n}{n V_d f(x_{j'})} \Bigr)^{1/d}.
\end{align*}
If $j'=j$ then we are done, so suppose instead that $\|x-x_j \| \leq \|x-x_{j'}\|$. Then
\[
	\|x_j - x_{j'}\| \leq 2 \|x-x_{j'}\| \leq 4 \Bigl( \frac{k \log n}{n V_d f(x_{j'})} \Bigr)^{1/d}
\]
so we can use Lemma~\ref{Lemma:15over7} to argue that $f(x_j) \approx f(x_{j'})$. In particular, there exists $n_4=n_4(d,\beta) \geq n_3$ such that, for $n \geq n_4$ we have that
\[
	\|x-x_j\| \leq \|x-x_{j'}\| \leq 2 \Bigl( \frac{k \log n}{n V_d f(x_{j'})} \Bigr)^{1/d} \leq 3 \Bigl( \frac{k \log n}{n V_d f(x_{j})} \Bigr)^{1/d}.
\]
So, for $n \geq n_4$, we have that (i) holds on $\Omega_0$.

Now, by Lemma~\ref{Lemma:15over7}, there exists $n_5=n_5(d,\beta) \geq n_4$ such that, for $n \geq n_5$ we have that $\frac{n}{k}h_{z_j,f}\bigl(16 (\frac{k \log n}{nV_d f(z_j)})^{1/d}\bigr) \leq 2^{1+4d}\log n$ for all $j \in \{1,\ldots,N\}$, and hence, by Bennett's inequality, that the event
\[
\Omega_1 := \bigcap_{j =1}^N \biggl\{P \Bigl\{ B_{z_j} \Bigl(16 \Bigl(\frac{k \log n}{n V_d f(z_j)} \Bigr)^{1/d} \Bigr) \Bigr\} \leq 2^{2+4d} \log n \biggr\}
\]
satisfies
\[
%\label{Eq:NonFullBalls}
	\mathbb{P}(\Omega_1^c) \leq N \exp \biggl( - \frac{(2^{2+4d} \log n - 2^{1+4d} \log n )^2}{2^{3+4d} \log n } \biggr) \leq \frac{n}{k} \exp( - 2^{4d-1} \log n) \leq \frac{1}{k}.
\]
Now, on $\Omega_0$, if $\mathrm{dist}(C_j,C_{j'})  \leq 4  \bigl( \frac{k}{nV_df(x_j)} \bigr)^{1/d}$ then we must have
\begin{align}
\label{Eq:NearCells}
	\|x_j-x_{j'}\| \leq 4\Bigl( \frac{k }{n V_d f(x_j)} \Bigr)^{1/d} + 3\Bigl( \frac{k \log n}{n V_d f(x_j)} \Bigr)^{1/d} +3 \Bigl( \frac{k \log n}{n V_d f(x_{j'})} \Bigr)^{1/d}.
\end{align}
Using Lemma~\ref{Lemma:hxinvbounds}, there exists $n_6 = n_6(d,\beta) \geq n_5$ such that $\|x_j-x_{j'}\| \leq 6h_{x_j,f}^{-1}(k \log n /n) + 6h_{x_{j'},f}^{-1}(k \log n /n)$ for $n \geq n_6$ and hence, by a very similar argument to that leading up to~\eqref{Eq:PackingApprox}, we have that $|f(x_{j'})/f(x_j)-1| \leq 2 \log^{-(1 \wedge \beta)/(2d)} n$ for $n \geq n_6$. Thus, writing $z_j^*$ for an element of our covering set with $\|x_j-z_j^*\| < r(x_j,z_j^*)$, there exists $n_7=n_7(d,\beta) \geq n_6$ such that, on $\Omega_0 \cap \Omega_1$, for all $n \geq n_7$ we have that
\begin{align*}
	\Bigl| \Bigl\{ j' \in V_n : \mathrm{dist}(C_j,C_{j'}) \leq 4 & \Bigl( \frac{k}{nV_df(x_j)} \Bigr)^{1/d} \Bigr\} \Bigr| \\
	&\leq \Bigl| \Bigl\{ j' \in V_n : \|x_j-x_{j'} \| \leq 8  \Bigl( \frac{k \log n}{nV_df(x_j)} \Bigr)^{1/d} \Bigr\} \Bigr| \\
	& \leq \Bigl| \Bigl\{ j' \in V_n : \|z_j^*-x_{j'} \| \leq 16  \Bigl( \frac{ k\log n}{nV_df(x_j)} \Bigr)^{1/d} \Bigr\} \Bigr| \\
	& \leq 2^{2+4d} \log n
\end{align*}
for all $j \in V_n$. This establishes that, for $n \geq n_7$, with probability at least $1-2/k$ we have that both (i) and (ii) hold. Thus, since $k \geq 3$, there is a positive probability of both (i) and (ii) holding simultaneously and we can deduce the existence of the required partition.
\end{proof}

\begin{proof}[Proof of Theorem~\ref{Thm:Normality}]
We start by linearising our unweighted estimator. Consider
\begin{align*}
	E_{m,n} &:= \frac{1}{m} \sum_{i=1}^m \Bigl\{ \phi \bigl( \hat{f}_{(k_X),i} ,\hat{g}_{(k_Y),i}  \bigr) - \phi \bigl( \hat{f}_{(k_X),i}, g(X_i) \bigr) - \phi \bigl(f(X_i), \hat{g}_{(k_Y),i} \bigr) + \phi_{X_i} \Bigr\} \\
	&\phantom{:}=\frac{1}{m} \sum_{i=1}^m \phi^* \bigl( \hat{f}_{(k_X),i} ,\hat{g}_{(k_Y),i}  \bigr)
\end{align*}
\sloppy{with $\phi^*(u,v) := \phi(u,v) - \phi\bigl(u,g(x)\bigr) - \phi\bigl(f(x),v\bigr) + \phi\bigl(f(x),g(x) \bigr)$. This is of the same form as the estimators we have already considered, and we have $\phi^*(f(x),g(x)) \equiv \phi_{10}^*(f(x),g(x)) \equiv \phi_{01}^*(f(x),g(x)) \equiv 0$. Therefore, by very similar arguments to those used in the proof of Proposition~\ref{Prop:Variance}, we have that $\mathrm{Var}(E_{m,n}) = o(1/m + 1/n)$. Further, we have that}
\begin{align*}
	\mathbb{E} \biggl[ \mathrm{Var} \biggl( \frac{1}{m} \sum_{i=1}^m \bigl\{\phi \bigl(f(X_i), \hat{g}_{(k_Y),i} \bigr) &- \phi_{X_i} \bigr\} \biggm| Y_1, \ldots, Y_n \biggr) \biggr] \\
	& = \frac{1}{m} \mathbb{E} \Bigl[ \mathrm{Var} \Bigl( \phi \bigl(f(X_1), \hat{g}_{(k_Y),1} \bigr) - \phi_{X_1} \Bigm| Y_1, \ldots, Y_n \Bigr) \Bigr] \\
	& \leq \frac{1}{m} \mathbb{E} \Bigl[ \bigl\{ \phi \bigl(f(X_1), \hat{g}_{(k_Y),1} \bigr) - \phi_{X_1} \bigr\}^2 \Bigr] = o(1/m).
\end{align*}
Recalling the definitions of $T_m^{(1)}$ and $T_n^{(2)}$ in~\eqref{Eq:SemiOracle}, we therefore have that
\begin{align*}
	\mathrm{Var}&(  \hat{T}_{m,n} - T_m^{(1)} - T_n^{(2)} ) \leq 2 \mathrm{Var} \biggl( T_n^{(2)} \!-\! \frac{1}{m}\sum_{i=1}^m \bigl\{\phi \bigl(f(X_i), \hat{g}_{(k_Y),i} \bigr) \!-\! \phi_{X_i} \bigr\} \biggr) + 2 \mathrm{Var}(E_{m,n}) \\
	& = 2 \mathrm{Var} \biggl( \mathbb{E} \biggl\{ T_n^{(2)} \!-\! \frac{1}{m}\sum_{i=1}^m \bigl\{\phi \bigl(f(X_i), \hat{g}_{(k_Y),i} \bigr) - \phi_{X_i} \bigr\} \biggm| Y_1,\ldots,Y_n \biggr\} \biggr) + o(1/m + 1/n) \\
	& = o(1/m + 1/n).
\end{align*}
It now follows immediately from Proposition~\ref{Prop:Poisson} that $\mathrm{Var}( \hat{T}_{m,n} - T_m^{(1),\mathrm{p}} - T_n^{(2),\mathrm{p}} ) = o(1/m+1/n)$. Noting that $T_m^{(1),\mathrm{p}}$ depends only on $M,X_1,\ldots,X_m$ and $T_n^{(2),\mathrm{p}}$ depends only on $N,Y_1,\ldots,Y_n$ (so they are independent), we now proceed to establish the asymptotic normality of these two random variables separately, and then the result will follow.

We start with $T_m^{(1),\mathrm{p}}$, and adopt the notation of Proposition~\ref{Prop:Poisson}. Define the events $A_{i,m}:= \{ h_{X_i,f}( \rho_{(k),i,M}) \in \mathcal{I}_{m,X} \}$ for $i=1,\ldots,M$, similarly to in~\eqref{Eq:SetA}, and define
\[
	\mathcal{X}_{m,f}:= \biggl\{x:f(x)M_\beta(x)^{-d} \geq \frac{k_X \log^2 m}{m}\biggr\}.
\]
By separately considering the event that $|M/m-1| \leq 1/k_X$ and its complement we may use similar arguments to those in Proposition~\ref{Prop:Poisson} and Lemma~\ref{Lemma:Acomplement} to see that $\mathbb{P}(A_{1,m}^c) = o(m^{-4})$, and moreover that
\begin{align*}
	\mathbb{E} \biggl[ \mathbbm{1}_{A_{1,m}^c} \phi_{X_1}^g \biggl( \frac{M}{m} \hat{f}_{(k_X),1,M} \biggr)^2 \biggr] = o( m^{-4}).
\end{align*}
Further,
\begin{align*}
	\mathbb{E} & \biggl[ \biggl\{ \frac{1}{m} \sum_{i=1}^M \mathbbm{1}_{A_{i,m}} \mathbbm{1}_{\{X_i \not\in \mathcal{X}_{m,f} \}} \phi_{X_i}^g \biggl( \frac{M}{m} \hat{f}_{(k_X),i,M} \biggr) \biggr\}^2 \biggr] \\
	& =\frac{1}{m^2} \mathbb{E} \biggl[ M(M-1) \mathbbm{1}_{A_{1,m} \cap A_{2,m}} \mathbbm{1}_{\{X_1,X_2 \not\in \mathcal{X}_{m,f} \}} \phi_{X_1}^g \biggl( \frac{M}{m} \hat{f}_{(k_X),1,M} \biggr) \phi_{X_2}^g \biggl( \frac{M}{m} \hat{f}_{(k_X),2,M}\biggr) \biggr] \\
	& \hspace{15pt} +\frac{1}{m^2} \biggl[ M \mathbbm{1}_{A_{1,m}} \mathbbm{1}_{\{X_1 \not\in \mathcal{X}_{m,f} \}} \phi_{X_1}^g \biggl( \frac{M}{m} \hat{f}_{(k_X),1,M} \biggr) \biggr] = o(1/m).
\end{align*}
Writing
\[
	\widetilde{T}_m^{(1),\mathrm{p} }:= \frac{1}{m} \sum_{i=1}^M \mathbbm{1}_{A_{i,m}} \mathbbm{1}_{\{X_i \in \mathcal{X}_{m,f} \}}  \biggl\{ \phi_{X_i}^g \biggl( \frac{M}{m} \hat{f}_{(k_X),i,M} \biggr) - \int_\mathcal{X} f(x) \{ \phi_x + (f \phi_{10})_x \}\,dx \biggr\},
\]
we may now see that $\mathrm{Var}( T_m^{(1),\mathrm{p}} - \widetilde{T}_m^{(1),\mathrm{p}}) = o(1/m)$. Letting $\{C_j : j \in 1,\ldots,V_m\}$ denote a partition of $\mathcal{X}_{m,f}$ as in the statement of Proposition~\ref{Prop:Partition}, and writing $\mathcal{X}_{m,f}^{(j)}:= C_j \cap \mathcal{X}_{m,f}$, for $j=1,\ldots,V_m$ define
\begin{align*}
	W_j:= \frac{1}{m} \sum_{i=1}^M \mathbbm{1}_{A_{i,m}} \mathbbm{1}_{\{X_i \in \mathcal{X}_{m,f}^{(j)} \}}  \biggl\{ \phi_{X_i}^g \biggl( \frac{M}{m} \hat{f}_{(k_X),i,M} \biggr) - \int_\mathcal{X} f(x) \{ \phi_x + (f \phi_{10})_x \}\,dx \biggr\}
	%&\widetilde{T}_m^{(1)} -\mathbb{E} \widetilde{T}_m^{(1)} = \frac{1}{m} \sum_{i=1}^m  \mathbbm{1}_{A_i^X} \mathbbm{1}_{\{ X_i \in \mathcal{X}_{m,f}\}} \phi \bigl( \hat{f}_{(k_X),i}, g(X_i), X_i \bigr) - \mathbb{E} \widetilde{T}_m^{(1)} \\
	%& = \frac{1}{m} \sum_{i=1}^m \sum_{j \in V_m} \mathbbm{1}_{A_i^X} \mathbbm{1}_{\{ X_i \in \mathcal{X}_{m,f}^{(j)}\}} \phi \bigl(\hat{f}_{(k_X),i}, g(X_i), X_i \bigr) - \mathbb{E} \widetilde{T}_m^{(1)} =: \sum_{j \in V_m} Y_j .
\end{align*}
so that $\widetilde{T}_m^{(1),\mathrm{p}} = \sum_{j =1}^{V_m} W_j$.  For $j,j' = 1,\ldots,V_m$, write $j \sim j'$ if $W_j$ and $W_{j'}$ are dependent. Because we are working on the events $A_{i,m}$, the random variable $W_j$ is only a function of those $X_i$ that lie within distance $\sup_{x \in \mathcal{X}_{m,f}^{(j)}} h_{x,f}^{-1} (a_{m,X}^+)$ of the set $C_j$. Hence, by the independence properties of Poisson processes, we can only have $j \sim j'$ if
\[
	\mathrm{dist}(C_j,C_{j'})  \leq \sup_{x \in \mathcal{X}_{m,f}^{(j)} } h_{x,f}^{-1}(a_{m,X}^+) +  \sup_{x' \in \mathcal{X}_{m,f}^{(j')} } h_{x',f}^{-1}(a_{m,X}^+) .
\]
Hence, by Lemma~\ref{Lemma:15over7} and property (i) of the partition and arguing as after~\eqref{Eq:NearCells}, there exists $m_0 = m_0(d,\vartheta)$ such that for $m \geq m_0$, we can only have $j \sim j'$ if
\begin{align*}
	&\mathrm{dist}(C_j,C_{j'}) \leq \sup_{x \in \mathcal{X}_{m,f}^{(j)} } \biggl( \frac{3k_X}{2mV_d f(x)} \biggr)^{1/d} \!+\! \sup_{x' \in \mathcal{X}_{m,f}^{(j')}} \biggl( \frac{3k_X}{2mV_d f(x')} \biggr)^{1/d} \leq 4 \biggl( \frac{k_X}{mV_d f(x_j)} \biggr)^{1/d},
\end{align*}
where $\{x_j : j =1,\ldots,V_m\}$ are the points associated to our partition given in Proposition~\ref{Prop:Partition}. By property (ii) of our partition, then, for each $j =1,\ldots,V_m$, we have $|\{j':j' \sim j\}| \leq 2^{2+4d} \log m$.  For $j = 1,\ldots,V_m$ and $p \in \mathbb{N}$, we write $L_j^{(p)}$ for the number of connected subsets of $\{1,\ldots,V_m\}$ (with edge relations defined by $\sim$) of cardinality at most $p$ containing $j$.  Then%  using the notation of \citet{Baldi1989}, we can therefore bound
\[
	L_j^{(p)} \leq 2^{(p-1)(2+4d)}\log^{p-1} m
\]
for $p=3,4$. %Since $\gamma>\gamma_1^* > 4 \kappa_2$, we have by H\"older's inequality that for $p=3,4$, any $a>0$ and any $\epsilon \in (0, \gamma-p \kappa_2)$,
%\begin{align*}
%	\int_\mathcal{X}f(x)^{1+a-p \kappa_1} &g(x)^{-p \kappa_2} \,dx \\
%	& \hspace{-10pt} \leq \biggl\{ \int_\mathcal{X} \frac{f(x)^2}{g(x)^{\gamma-\epsilon}} \,dx \biggr\}^{\frac{p \kappa_2}{\gamma-\epsilon}} \biggl\{ \int_\mathcal{X} f(x)^{ \frac{1+a-p\kappa_1 - 2p\kappa_2/(\gamma-\epsilon)}{1-p\kappa_2/(\gamma-\epsilon)}} \,dx \biggr\}^{1- \frac{p \kappa_2}{\gamma-\epsilon}}.
%\end{align*}
%Thus, whenever $a > 1 + p\kappa_1 - \frac{2 \alpha+d}{\alpha+d}(1-p\kappa_2/\gamma)$ we have by Lemma~\ref{Lemma:GeneralisedHolder} that
%\begin{equation}
%\label{Eq:V1Holder}
%	\sup_{(f,g) \in \widetilde{\mathcal{F}}_{d,\vartheta}} \int_\mathcal{X} f(x)^{1+a-p \kappa_1} g(x)^{-p \kappa_2} \,dx < \infty.
%\end{equation}
%Similarly, whenever $a > 1 +3 \kappa_1 - \frac{2 \alpha+d}{\alpha+d} ( 1- 3 \kappa_2/\gamma)$ we have that 
%\[
%	\sup_{(f,g) \in \tilde{\mathcal{F}}_{d,\vartheta}} \int_\mathcal{X} f(x)^{1+a-3 \kappa_1} g(x)^{-3 \kappa_2} \,dx < \infty.
%\]
Now, by Lemma~\ref{Lemma:15over7} and property (i) of our partition, for any $j =1,\ldots,V_m$ we have
\begin{equation}
\label{Eq:ConstantCj}
	\sup_{x \in C_j} \max \biggl\{ \biggl| \frac{f(x)}{f(x_j)} -1 \biggr|, \biggl| \frac{g(x)}{g(x_j)} -1 \biggr| \biggr\}  \leq \frac{2 \times 3^{1 \wedge \beta}}{(V_d \log^{3/4} m)^{(1 \wedge \beta)/d}}.
\end{equation}
Moreover, by very similar methods to those used in the proof of Proposition~\ref{Prop:Variance}, we may see that
\begin{equation}
\label{Eq:TmVar}
	\mathrm{Var}(\widetilde{T}_m^{(1), \mathrm{p}}) = \mathrm{Var} (T_m^{(1)}) + o(1/m) = \frac{v_1}{m} + o(1/m).
\end{equation}
Hence, using~\eqref{Eq:ConstantCj},~\eqref{Eq:TmVar} and the facts that $v_1 \geq 1/C $ and $p_{m,f,(j)}:=\mathbb{P}(X_1 \in \mathcal{X}_{m,f}^{(j)}) \lesssim 9(k_X/m) \log m$, we have that for $p=3,4$,
\begin{align*}
	&\frac{1}{\mathrm{Var}^{p/2}  \bigl(\widetilde{T}_m^{(1)}\bigr)} \sum_{j=1}^{V_m} L_j^{(p)}  \mathbb{E}\{|W_j - \mathbb{E} W_j|^p\} \\
	&\hspace{20pt} \lesssim m^{-p/2} \log^{p-1} m \sum_{j=1}^{V_m} \mathbb{E} \biggl\{ \biggl[ \sum_{i=1}^m \mathbbm{1}_{A_i^X} \mathbbm{1}_{\{X_i \in \mathcal{X}_{m,f}^{(j)}\}}  \biggl\{ \biggl|\phi_{X_i}^g \biggl( \frac{M}{m} \hat{f}_{(k_X),i,M} \biggr)\biggr| + 1 \biggr\} \biggr]^p \biggr\} \\
	& \hspace{20pt}\lesssim m^{-p/2} \log^{p-1} m \sum_{j=1}^{V_m} f(x_j)^{p\kappa_1} g(x_j)^{p \kappa_2} \bigl\{ m^p p_{m,f,(j)}^p + m p_{m,f,(j)} \bigr\} \\
	& \hspace{20pt} \lesssim \frac{k_X^{p-1} \log^{2p-2} m}{m^{p/2-1}} \int_\mathcal{X} f(x)^{1+p\kappa_1} g(x)^{p\kappa_2} \,dx \rightarrow 0.
\end{align*}
It now follows from Theorem~1 of \citet{Baldi1989} that
\[
%\label{Eq:T1Normal}
	d_\mathrm{K} \biggl(\mathcal{L}\Bigl( \frac{m^{1/2}\{\widetilde{T}_m^{(1)} - \mathbb{E} \widetilde{T}_m^{(1)} \}}{v_1^{1/2}}\Bigr), N(0,1) \biggr) \rightarrow 0.
\]

We now take a similar approach to establish the asymptotic normality of $\widetilde{T}_n^{(2)}$. Letting $\{C_j : j = 1,\ldots,V_n\}$ denote a partition of $\mathcal{X}_{n,g}$ as in the statement of Proposition~\ref{Prop:Partition}, we may write $\rho_{(k_Y),Y}(x):= \|Y_{(k_Y)}(x)-x\|, A_n:=\bigl\{ x: h_{x,g}(\rho_{(k_Y),N}(x)) \in \mathcal{I}_{n,Y} \bigr\}, \mathcal{X}_{n,g}^{(j)}:= C_j \cap \mathcal{X}_{n,g}$, and
\begin{align*}
	W_j:=  \int_{\mathcal{X}_{n,g}^{(j)} \cap A^Y}  f(x)\phi_x^f \Bigl( \frac{k_Y}{n V_d \rho_{(k_Y),N}(x)^d} \Bigr)\,dx - \frac{|\{ i : Y_i \in \mathcal{X}_{n,g}^{(j)}\}|}{n} \int_\mathcal{X} f(x) (g \phi_{01})_x \,dx.
\end{align*}
Writing $\widetilde{T}_n^{(2),\mathrm{p}} := \sum_{j=1}^{V_n} W_j$ and arguing as above, we can see that $\mathrm{Var}( T_n^{(2),\mathrm{p}} - \widetilde{T}_n^{(2),\mathrm{p}} ) = o(1/n)$. By properties (i) and (ii) of our partition we again have that $L_j^{(p)} \lesssim \log^{p-1} n$, as above. %Moreover, using Lemma~\ref{Lemma:GeneralisedHolder} and the fact that $\gamma>\gamma_2^*$ we have for $p=3,4$ that
Recall the definition of the conditional distribution function $F_{n,x,y}^{(2)}$ from the proof of Proposition~\ref{Prop:Variance}.  By similar but simpler arguments to those used in Proposition~\ref{Prop:Variance}, we have that
\begin{align}
\label{Eq:TnVar}
	&\mathrm{Var} (\widetilde{T}_n^{(2),\mathrm{p}}) = \mathrm{Var}(T_n^{(2)}) + o(1/n) = \mathrm{Var} \biggl( \int_{\mathcal{X}} f(x) \phi_x^f \Bigl(\frac{k_Y}{n V_d \rho_{(k_Y),n}(x)^d} \Bigr)  \,dx \biggr) + o(1/n) \nonumber \\
	&= \int_{\mathcal{X}} f(x) f(y) \int_{\mathcal{I}_{n,Y}} \int_{\mathcal{I}_{n,Y}} \phi \bigl( f(x), v_{x,t_1} \bigr)\phi \bigl(f(y), v_{y,t_2} \bigr)  \nonumber \\
	& \hspace{35pt} \times \bigl\{ dF_{n,x,y}^{(2)}(t_1,t_2) - \mathrm{B}_{k_Y,n+1-k_Y}(t_1) \mathrm{B}_{k_Y,n+1-k_Y}(t_2) \,dt_1 \,dt_2 \bigr\} \,dx \,dy +o(1/n) \nonumber \\
	&=\frac{v_2}{n} +o (1/n).
\end{align}
Now, using an analogous statement to that in~\eqref{Eq:ConstantCj}, using~\eqref{Eq:TnVar} and the facts that $\mathbb{P}(Y_1 \in \mathcal{X}_{n,g}^{(j)}) \lesssim (k_Y/ n)\log n$ and that $v_2 \geq 1/C$, we have for $p=3,4$ that
\begin{align*}
	\frac{1}{\mathrm{Var}^{p/2} \bigl(\widetilde{T}_n^{(2)}\bigr)}&  \sum_{j=1}^{V_n} L_j^{(p)} \mathbb{E}\{|W_j - \mathbb{E} W_j|^p\}  \\
	& \lesssim \frac{k_Y^{p-1}\log^{2p-2}n}{n^{p/2-1}} \biggl\{ \int_{\mathcal{X}_{n,g}} f(x)^{p+p\kappa_1} g(x)^{-(p-1)+p \kappa_2} \,dx + 1 \biggr\} \rightarrow 0.
	%& \leq k_Y^{p/2} \log^{2p-2}n \Bigl( \frac{k_Y}{n} \Bigr)^{\frac{p}{2}-1-a} \int_\mathcal{X} f(x)^{p-p\kappa_1} g(x)^{-(p-1) + a - p\kappa_2} \,dx =o(1)
\end{align*}
%since $ \gamma > \gamma_2^*$ and $k_Y^\mathrm{U} n^{-(\tau_2^*-\epsilon)} \rightarrow 0$ for some $\epsilon>0$.
%and similarly that
%\[
%	\mathbb{E}\{|Y_j - \mathbb{E} Y_j|^3\} \lesssim \mathbb{P}( Y_1 \in \mathcal{X}_{n,g}^{(j)})^2 \int_{\mathcal{X}_{n,g}^{(j)}} f(x)^{3-3\kappa_1} g(x)^{-2-3 \kappa_2}\biggl\{ \frac{\log^{3/2}n}{k_Y^{3/2}} + \Bigl( \frac{k_Y M_\beta(x)^d}{ng(x)} \Bigr)^\frac{3(2 \wedge \beta)}{d} \biggr\} \,dx.
%\]
%, and
%\[
%	\sup_{(f,g) \in \tilde{\mathcal{F}}_{d, \vartheta}} \int_\mathcal{X} f(x)^{3-3\kappa_1} g(x)^{-2+a-3\kappa_2} \,dx < \infty
%\]
%when $a > 2+ 3 \kappa_2 - \frac{4 \alpha+3d-(1+3\kappa_1)(\alpha+d)}{2 \alpha+d} \gamma$. 
%Since $\mathrm{Var} \hat{T}_n^{(2)} \gtrsim v_2/n$ and $k_Y^\mathrm{U} m^{-(\tau_2^*-\epsilon)} \rightarrow 0$ for some $\epsilon>0$ it now follows that
%\begin{align*}
%	\frac{1}{( \mathrm{Var} \widetilde{T}_n^{(2)}) ^{2}} &\sum_{j \in V_n} L_j^{(4)} \mathbb{E}\{(Y_j - \mathbb{E} Y_j)^4\} \\
%	& \lesssim \log^6n (\log^2 n + k_Y^2)  \inf_{a>0} \Bigl( \frac{k_Y}{n} \Bigr)^{1-a} \int_\mathcal{X} f(x)^{4-4\kappa_1}g(x)^{-3+a-4\kappa_2} \,dx \rightarrow 0,
%\end{align*}
%and similarly that
%\begin{align*}
%	\frac{1}{( \mathrm{Var} \widetilde{T}_n^{(2)}) ^{3/2}} &\sum_{j \in V_n} L_j^{(3)} \mathbb{E}\{|Y_j - \mathbb{E} Y_j|^3\} \\
%	& \lesssim \log^4n (\log^{3/2} n + k_Y^{3/2})  \inf_{a>0} \Bigl( \frac{k_Y}{n} \Bigr)^{1/2-a} \int_\mathcal{X} f(x)^{3-3\kappa_1}g(x)^{-2+a-3\kappa_2} \,dx \rightarrow 0.
%\end{align*}
By Theorem~1 of \citet{Baldi1989} we now have that
\[
%\label{Eq:T2Normal}
	d_\mathrm{K} \biggl( \mathcal{L}\Bigl(\frac{n^{1/2}\{\widetilde{T}_n^{(2)} - \mathbb{E} \widetilde{T}_n^{(2)} \}}{v_2^{1/2}}\Bigr), N(0,1)\biggr) \rightarrow 0.
      \]

      For our weighted estimator $\widehat{T}_{m,n}$, we can define weighted analogues $\widehat{T}_{m}^{(1)}$ and $\widehat{T}_{n}^{(2)}$ of $\widetilde{T}_{m}^{(1)}$ and $\widetilde{T}_{n}^{(2)}$ and deduce that 
      \begin{equation}
       \label{Eq:HatDecomp} 
        \widehat{T}_{m,n} - \mathbb{E}(\widehat{T}_{m,n}) = \widehat{T}_{m,n}^{(1)} - \mathbb{E}(\widehat{T}_{m,n}^{(1)}) + \widehat{T}_{m,n}^{(2)} - \mathbb{E}(\widehat{T}_{m,n}^{(2)}) + o_p(m^{-1/2} + n^{-1/2}),
      \end{equation}
      where
      \begin{align}
        \label{Eq:HatNormality}
        d_\mathrm{K} \biggl( \mathcal{L}\Bigl(\frac{m^{1/2}\{\widehat{T}_m^{(1)} - \mathbb{E} \widehat{T}_m^{(1)} \}}{v_1^{1/2}}\Bigr)&, N(0,1)\biggr) \nonumber \\
	& \hspace{-10pt} + d_\mathrm{K} \biggl( \mathcal{L}\Bigl(\frac{n^{1/2}\{\widehat{T}_n^{(2)} - \mathbb{E} \widehat{T}_n^{(2)} \}}{v_2^{1/2}}\Bigr), N(0,1)\biggr) =o(1).
      \end{align}
      
If $W,X,Y,Z$ are independent random variables it can be seen by simple conditioning arguments that
\begin{equation}
  \label{Eq:dKsums}
	d_\mathrm{K}\bigl( \mathcal{L}(W+X), \mathcal{L}(Y+Z) \bigr) \leq d_\mathrm{K}\bigl( \mathcal{L}(W), \mathcal{L}(Y)\bigr) + d_\mathrm{K}\bigl(\mathcal{L}(X), \mathcal{L}(Z) \bigr).
\end{equation}
Thus, by~\eqref{Eq:HatDecomp},~\eqref{Eq:HatNormality},~\eqref{Eq:dKsums} and Corollary~\ref{Cor:WeightedBias}, we may write
\[
	\widehat{Z}_{m,n}:= \frac{\widehat{T}_{m,n} - T}{\{v_1/m +  v_2/n\}^{1/2}} = Z_{m,n}^* + W_{m,n},
\]
where $d_\mathrm{K}\bigl( \mathcal{L}(Z_{m,n}^*), N(0,1)\bigr) \rightarrow 0$ and $W_{m,n} = o_p(1)$. Thus, for any $\epsilon >0$,
\begin{align*}
	d_\mathrm{K}&\bigl( \widehat{Z}_{m,n}, N(0,1)\bigr) \leq \sup_{x \in \mathbb{R}} \bigl| \mathbb{P} \bigl( \widehat{Z}_{m,n} \leq x, |W_{m,n}| \leq \epsilon \bigr) - \Phi(x) \bigr| + \mathbb{P}( |W_{m,n}| > \epsilon) \\
	& \leq \sup_{x \in \mathbb{R}} \max \bigl\{ \mathbb{P}(Z_{m,n}^* \leq x+\epsilon) - \Phi(x), \Phi(x) - \mathbb{P}(Z_{m,n}^* \leq x-\epsilon) \bigr\} + 2 \mathbb{P}( |W_{m,n}| > \epsilon) \\
	& \leq d_\mathrm{K}\bigl( Z_{m,n}^*, N(0,1)\bigr) + \frac{\epsilon}{(2\pi)^{1/2}} + 2 \mathbb{P}( |W_{m,n}| > \epsilon),
\end{align*}
so the result follows.  
\end{proof}

\begin{proof}[Proof of Theorem~\ref{Thm:ConfidenceIntervals}]

The main task is to establish the consistency of $\hat{V}_{m,n}^{(1)}$ and $\hat{V}_{m,n}^{(2)}$.  For the first of these, we start by noting that
\begin{align}
  \label{Eq:deltaintegrability}
	\mathbb{E} \Bigl[ \bigl\{ \phi_{X_1} +(f \phi_{10})_{X_1} \bigr\}^{4} \Bigr] &\leq 16 L^4 C^{8L+4(|\kappa_1|+|\kappa_2|)} \int_{\mathcal{X}} f(x)^{1+4\kappa_1}g(x)^{4\kappa_2} \,dx \nonumber \\
	&\leq 16 L^4 C^{1+8L+4(|\kappa_1|+|\kappa_2|)}.
\end{align}
Using this and Lemmas~\ref{Prop:FunctionalClasses}(i),~\ref{Lemma:BetaTailBounds} and~\ref{Lemma:GeneralisedHolder}, and writing $\tilde{\phi}(u,v):=\{\phi(u,v)+u\phi_{10}(u,v)\}^2$ and $b_{m,n} := \log m \wedge \log n$, we have that
\begin{align*}
	&\biggl| \mathbb{E} \hat{V}_{m,n}^{(1),1} - \int_\mathcal{X} f(x) \{ \phi_x + (f \phi_{10})_x \}^2 \,dx \biggr| \\
	%& =  \biggl| \int_\mathcal{X} f(x) \int_0^1 \int_0^1 \Bigl[ \min \bigl\{ \tilde{\phi}(u_{x,s},v_{x,t},x) , b_{m,n}\bigr\} - \tilde{\phi}_x \Bigr]  \mathrm{B}_{k_X,m-k_X}(s) \mathrm{B}_{k_Y,n+1-k_Y}(t) \,ds \,dt \,dx \biggr| \\
	& \!= \biggl| \int_\mathcal{X} f(x) \int_0^1 \int_0^1 \Bigl[ \min \bigl\{ \tilde{\phi}(u_{x,s},v_{x,t}) , b_{m,n}\bigr\} - \min \bigl\{ \tilde{\phi}_x , b_{m,n}\bigr\} \Bigr]  \\
	& \hspace{50pt} \times  \mathrm{B}_{k_X,m-k_X}(s) \mathrm{B}_{k_Y,n+1-k_Y}(t) \,ds \,dt \,dx \biggr| + O \biggl( \frac{1}{ \log m \wedge \log n} \biggr) \\
	& \!\leq \! \int_{\mathcal{X}_{m,n}} \!\!\!\! f(x) \!\! \int_{\mathcal{I}_{m,X}} \! \int_{\mathcal{I}_{n,Y}} \! \! \! \bigl| \tilde{\phi}(u_{x,s},v_{x,t}) \!-\!  \tilde{\phi}_x \bigr|  \mathrm{B}_{k_X,m-k_X}(s) \mathrm{B}_{k_Y,n+1-k_Y}(t) \,ds \,dt \,dx  \\
	& \hspace{10pt}+ O \biggl( b_{m,n}\max \biggl\{\Bigl( \frac{k_X \log m}{m} \Bigr)^{\lambda_1}, \Bigl( \frac{k_Y \log n}{n} \Bigr)^{\lambda_2}, \frac{1}{m^4} , \frac{1}{n^4}, \frac{1}{b_{m,n}^2} \biggr\} \biggr) \\
	& \!\lesssim \! \int_{\mathcal{X}_{m,n}} \!\!\!\! f(x)^{1+2\kappa_1} g(x)^{2\kappa_2} \! \biggl\{ \frac{\log^{\frac{1}{2}} m}{k_X^{1/2}}\! +\! \frac{\log^{\frac{1}{2}} n}{k_Y^{1/2}} \! + \!\Bigl( \frac{k_X M_\beta(x)^d}{m f(x)} \Bigr)^{\frac{2 \wedge \beta}{d}} \!\!\!\!+\! \Bigl( \frac{k_Y M_\beta(x)^d}{n g(x)} \Bigr)^{\frac{2 \wedge \beta}{d}} \! \biggr\} \,dx  \\
	& \hspace{10pt}+ O \biggl( b_{m,n} \max \biggl\{\Bigl( \frac{k_X \log m}{m} \Bigr)^{\lambda_1}, \Bigl( \frac{k_Y \log n}{n} \Bigr)^{\lambda_2}, \frac{1}{m^4} , \frac{1}{n^4}, \frac{1}{b_{m,n}^2}\biggr\} \biggr) = o(1).
\end{align*}
Now, for $i=1,\ldots,m$, write $\xi_i:= \min\{ \tilde{\phi}(\hat{f}_{(k_X),i}, \hat{g}_{(k_Y),i} ), b_{m,n}\}$, $\xi_i^*:= \min \{ \tilde{\phi}_{X_i}, b_{m,n}\}$ and
\[
	\widetilde{\mathcal{X}}_{m,n}:= \biggl\{ x: \frac{f(x)}{M_\beta(x)^d} \geq \frac{k_X^{1/2}}{m^{1/2}}, \frac{g(x)}{M_\beta(x)^d} \geq \frac{k_Y^{1/2}}{n^{1/2}} \biggr\}.
\]
We now have that
\begin{align*}
	&\mathbb{E}\{ (\xi_1 - \xi_1^*)^2 \} \leq \mathbb{E}\bigl\{ \mathbbm{1}_{A_1^X \cap A_1^Y} \mathbbm{1}_{\{X_1 \in \widetilde{\mathcal{X}}_{m,n}\}}( \xi_1 - \xi_1^*)^2\bigr\} \\
	& \hspace{150pt} + O \biggl( b_{m,n}^2 \max \biggl\{ \frac{1}{m^4}, \frac{1}{n^4}, \Bigl( \frac{k_X}{m} \Bigr)^{\lambda_1/2},\Bigl( \frac{k_Y}{n} \Bigr)^{\lambda_2/2} \biggr\} \biggr) \\
	& \!\lesssim \! \int_{\mathcal{X}_{m,n}} \!\!\!\! f(x)^{1+2\kappa_1} g(x)^{2\kappa_2} \! \biggl\{ \frac{\log m}{k_X}\! +\! \frac{\log n}{k_Y} \! + \!\Bigl( \frac{k_X \! M_\beta(x)^d}{m f(x)} \Bigr)^{\frac{2(2 \wedge \beta)}{d}} \!\!\!\!+\! \Bigl( \frac{k_Y \! M_\beta(x)^d}{n g(x)} \Bigr)^{\frac{2(2 \wedge \beta)}{d}} \! \biggr\} \! \,dx  \\
	& \hspace{50pt} + O \biggl( b_{m,n}^2 \max \biggl\{ \frac{1}{m^4}, \frac{1}{n^4}, \Bigl( \frac{k_X}{m} \Bigr)^{\lambda_1/2},\Bigl( \frac{k_Y}{n} \Bigr)^{\lambda_2/2} \biggr\} \biggr) \\
	& = O \biggl( b_{m,n}^2 \max \biggl\{  \frac{\log m}{k_X}, \frac{\log n}{k_Y} , \Bigl( \frac{k_X}{m} \Bigr)^{\frac{2 \wedge \beta}{d}} , \Bigl( \frac{k_Y}{n} \Bigr)^{\frac{2 \wedge \beta}{d}}, \Bigl( \frac{k_X}{m} \Bigr)^{\frac{\lambda_1}{2}},\Bigl( \frac{k_Y}{n} \Bigr)^{\frac{\lambda_2}{2}}  \biggr\} \biggr).
\end{align*}
It therefore follows by Cauchy--Schwarz that
\begin{align*}
	&\mathrm{Var}(\hat{V}_{m,n}^{(1),1}) = \frac{1}{m}\mathrm{Var}(\xi_1)  + 2\Bigl(1 - \frac{1}{m} \Bigr) \mathrm{Cov}( \xi_1 - \xi_1^*,\xi_2^*) + \Bigl(1 - \frac{1}{m} \Bigr) \mathrm{Cov}( \xi_1 - \xi_1^*, \xi_2 - \xi_2^*) \\%= \frac{1}{m}\mathrm{Var}(\xi_1) + 2\Bigl(1 - \frac{1}{m} \Bigr) \mathrm{Cov}( \xi_1 - \xi_1^*, \xi_2) \\
	& \leq \frac{b_{m,n}^2}{m} + 2b_{m,n}[ \mathbb{E}\{ (\xi_1 - \xi_1^*)^2 \}]^{1/2} + \mathbb{E}\bigl\{(\xi_1-\xi_1^*)^2\bigr\} =o(1)
	%& \leq 4b_{m,n}\Bigl[ \mathbb{E}\bigl\{ \mathbbm{1}_{A_1^X \cap A_1^Y} \mathbbm{1}_{\{X_1 \in \widetilde{\mathcal{X}}_{m,n}\}}( \xi_1 - \xi_1^*)^2\bigr\} \Bigr]^{1/2} +  \mathbb{E}\bigl\{ \mathbbm{1}_{A_1^X \cap A_1^Y} \mathbbm{1}_{\{X_1 \in \widetilde{\mathcal{X}}_{m,n}\}}( \xi_1 - \xi_1^*)^2\bigr\}  \\
	%&\hspace{150pt}+ O \biggl(b_{m,n}^2  \max \biggl\{ \frac{1}{m}, \frac{1}{n^2}, \Bigl( \frac{k_X}{m} \Bigr)^{\lambda_1/4},\Bigl( \frac{k_Y}{n} \Bigr)^{\lambda_2/4} \biggr\} \biggr) \\
	%& \lesssim b_{m,n} \biggl[ \int_{\widetilde{\mathcal{X}}_{m,n}} \frac{f(x)^{1-2\kappa_1}}{g(x)^{2\kappa_2}} \Bigl\{ \frac{\log m}{k_X} + \frac{\log n}{k_Y} + \Bigl( \frac{k_X M_\beta(x)^d}{m f(x)} \Bigr)^{\frac{2(2 \wedge \beta)}{d}} + \Bigl( \frac{k_Y M_\beta(x)^d}{n g(x)} \Bigr)^{\frac{2(2 \wedge \beta)}{d}} \Bigr\} \,dx \biggr]^{1/2} \\
	%&\hspace{150pt}+ O \biggl( b_{m,n}^2\max \biggl\{ \frac{1}{m}, \frac{1}{n^2}, \Bigl( \frac{k_X}{m} \Bigr)^{\lambda_1/4}, \Bigl( \frac{k_Y}{n} \Bigr)^{\lambda_2/4} \biggr\} \biggr) \\
	%& =O \biggl( b_{m,n}^2 \max \biggl\{ \frac{\log^{1/2} m}{k_X^{1/2}},  \frac{\log^{1/2} n}{k_Y^{1/2}}, \Bigl( \frac{k_X}{m} \Bigr)^{\frac{2 \wedge \beta}{2d}}, \Bigl( \frac{k_Y}{n} \Bigr)^{\frac{2 \wedge \beta}{2d}}, \Bigl( \frac{k_X}{m} \Bigr)^{\lambda_1/4},\Bigl( \frac{k_Y}{n} \Bigr)^{\lambda_2/4} \biggr\} \biggr) = o(1).
\end{align*}
By very similar arguments to those employed in the proof of Proposition~\ref{Thm:GeneralBias} we have that $\mathbb{E}(\hat{V}_{m,n}^{(1),2}) - \int_{\mathcal{X}} f(x) \{ \phi_x + (f \phi_{10})_x\} \,dx = o(1)$. By Proposition~\ref{Prop:Variance} we have that $\mathrm{Var}(\widetilde{T}_{m,n}) = o(1)$.  Since $\zeta < 1/2$, the summands in $\hat{V}_{m,n}^{(1),2}-\widetilde{T}_{m,n}$ are square integrable and, writing $\xi_i:=\hat{f}_{(k_X),i} \phi_{10} \bigl( \hat{f}_{(k_X),i}, \hat{g}_{(k_Y),i} \bigr)$ and $\xi_i^*:= (f \phi_{10})_{X_i}$, we have by Cauchy--Schwarz again that
\begin{align*}
	\mathrm{Var} \biggl( \frac{1}{m} \sum_{i=1}^m \xi_i \biggr) %\frac{1}{m}\mathrm{Var}(\xi_1) + 2 \Bigl( 1- \frac{1}{m} \Bigr) \mathrm{Cov}( \xi_1 - \xi_1^*, \xi_2) \\
	& \leq \frac{1}{m}\mathrm{Var}(\xi_1) + 2\mathrm{Var}^{1/2}(\xi_2)\mathrm{Var}^{1/2}(\xi_1-\xi_1^*) + \mathrm{Var}(\xi_1-\xi_1^*) = o(1).
\end{align*}
Combining our bounds on expectations and variances we have now established that, for any $\epsilon>0$,
\begin{equation}
\label{Eq:ConsistentVariance1}
	\sup_{\phi \in \Phi(\xi)} \sup_{(f,g) \in \widetilde{\mathcal{F}}_{d,\vartheta}}  \max_{\substack{k_X \in \{k_X^{\mathrm{L}},\ldots, k_X^{\mathrm{U}}\} \\ k_Y \in \{k_Y^{\mathrm{L}},\ldots, k_Y^{\mathrm{U}}\}}} \mathbb{P}( | \hat{V}_{m,n}^{(1)} - v_1| \geq \epsilon) \rightarrow 0.
\end{equation}

Now, we have by Cauchy--Schwarz and Lemma~\ref{Lemma:GeneralisedHolder} that
\begin{align*}
	&\sup_{(f,g) \in \widetilde{\mathcal{F}}_{d,\vartheta}} \int_\mathcal{X} f(x) \{ f(x)^{1+2\kappa_1} g(x)^{-1+2\kappa_2} \}^{3/2} \,dx \\
	& \leq \sup_{(f,g) \in \widetilde{\mathcal{F}}_{d,\vartheta}} \biggl\{ \int_\mathcal{X} g(x)^{1+4(\kappa_2-1)} f(x)^{4(1+\kappa_1)} \,dx \biggr\}^{1/2} \biggl\{ \int_\mathcal{X} f(x)^{1+2\kappa_1} g(x)^{2 \kappa_2} \,dx \biggr\}^{1/2} < \infty.
\end{align*}
Hence, by analogous calculations to those carried out earlier in this proof, we have for any $\epsilon>0$ that
\begin{equation}
\label{Eq:ConsistentVariance2}
	\sup_{\phi \in \Phi(\xi)} \sup_{(f,g) \in \widetilde{\mathcal{F}}_{d,\vartheta}}  \max_{\substack{k_X \in \{k_X^{\mathrm{L}},\ldots, k_X^{\mathrm{U}}\} \\ k_Y \in \{k_Y^{\mathrm{L}},\ldots, k_Y^{\mathrm{U}}\}}} \mathbb{P}( | \hat{V}_{m,n}^{(2)} - v_2| \geq \epsilon) \rightarrow 0.
\end{equation}

To conclude the proof, given $\epsilon>0$, we will consider the event $B_\epsilon:= \bigl\{ \max \bigl(|\hat{V}_{m,n}^{(1)}/ v_1 -1 | , |\hat{V}_{m,n}^{(2)}/ v_2 -1  | \bigr) \leq \epsilon \bigr\}$, and define the shorthand
\[
  \hat{Z} := \frac{\hat{T}_{m,n}-T}{\{\hat{V}_{m,n}^{(1)}/m + \hat{V}_{m,n}^{(2)}/n \}^{1/2} } \quad \text{and} \quad Z^*:= \frac{\hat{T}_{m,n}-T}{\{ v_1/m + v_2/n \}^{1/2} }.
\]
For all $\epsilon \in (0,1/2)$ we have that
\begin{align}
  \label{Eq:FinaldKbound}
	&d_\mathrm{K} \bigl( \mathcal{L}( \hat{Z}), N(0,1) \bigr) \leq \sup_{z \in \mathbb{R}}  \bigl| \mathbb{P}( \hat{Z} \leq z) - \mathbb{P}( Z^* \leq z) \bigr| +  d_\mathrm{K} \bigl( \mathcal{L}( Z^* ), N(0,1) \bigr) \nonumber \\
	& \leq \sup_{z \in \mathbb{R}} \Bigl\{\bigl| \mathbb{P}( Z^* \leq (1+\epsilon)z) - \mathbb{P}( Z^* \leq z)\bigr| \vee \bigl| \mathbb{P}( Z^* \leq z) - \mathbb{P}( Z^* \leq (1-\epsilon)z) \bigr| \Bigr\} \nonumber \\
	& \hspace{50pt} + d_\mathrm{K} \bigl( \mathcal{L}( Z^* ), N(0,1) \bigr) + 2 \mathbb{P}(B_\epsilon^c) \nonumber \\
	&= \sup_{z \in \mathbb{R}} \bigl| \mathbb{P}( Z^* \leq (1+\epsilon)z) \!  -\! \mathbb{P}( Z^* \leq (1-\epsilon)z) \bigr| + d_\mathrm{K} \bigl( \mathcal{L}( Z^* ), N(0,1) \bigr) \! +\! 2 \mathbb{P}(B_\epsilon^c) \nonumber \\ 
	& \leq 2 \epsilon \sup_{z \in \mathbb{R}} \frac{|z| e^{-z^2/8}}{(2 \pi)^{1/2}} +3 d_\mathrm{K} \bigl( \mathcal{L}( Z^* ), N(0,1) \bigr) + 2 \mathbb{P}(B_\epsilon^c).
\end{align}
The first conclusion of Theorem~\ref{Thm:ConfidenceIntervals} now follows from~\eqref{Eq:ConsistentVariance1},~\eqref{Eq:ConsistentVariance2} and~\eqref{Eq:FinaldKbound}.  The second conclusion is an immediate consequence of the first.
%that
%\[
%	\sup_{\phi \in \Phi(\xi)} \sup_{(f,g) \in \widetilde{\mathcal{F}}_{d,\vartheta}} \max_{\substack{k_X \in \{k_X^{\mathrm{L}},\ldots, k_X^{\mathrm{U}}\} \\ k_Y \in \{k_Y^{\mathrm{L}},\ldots, k_Y^{\mathrm{U}}\}}} d_\mathrm{K} \bigl( \mathcal{L}( \hat{Z}), N(0,1) \bigr) \rightarrow 0,
%\]
%as required.
\end{proof}

\subsection{Proof of Proposition~\ref{Thm:SuperOracleLAM}}
\label{App:SuperOracle}

\begin{proof}[Proof of Proposition~\ref{Thm:SuperOracleLAM}] Since $f$ vanishes at infinity, there exists $x_0 >0$ such that $h(x) \geq 0 $ for all $x \leq x_0$ and $h(x) \leq 0$ for all $x \geq x_0$. Further, as $x \rightarrow \infty$, we have by Karamata's theorem \citep[][Proposition~1.5.10]{BGT1989} that
\begin{align}
\label{Eq:Karamata1}
	h(x) &\sim -P'(x) \int_0^x e^{(1-\kappa) P(y)} \,dy = - \frac{P'(x)}{1-\kappa} \int_{e^{(1-\kappa)P(0)}}^{e^{(1-\kappa)P(x)}} \frac{1}{P' \bigl( P^{-1} \bigl( \frac{\log u}{1- \kappa} \bigr) \bigr)} \,du \nonumber \\
	& \sim -\frac{P'(x)}{1-\kappa} \frac{e^{(1-\kappa) P(x)}}{P' (x)} = -\frac{f(x)^{-(1-\kappa)}}{1-\kappa}.
\end{align}
\sloppy{In particular, since $h$ is continuous, we can now see that $\sup_{ x \geq 0} h(x) < \infty$ and $\inf_{x \geq 0} f(x)h(x) > - \infty$. Hence, for $t\geq0$ sufficiently small, the function $f_t : [0,\infty) \rightarrow \mathbb{R}$ defined by}
\[
	f_t(x) := \{1-th(x) \} f(x),
\]
is bounded and takes values in $[0,\infty)$. Moreover, by Fubini's theorem,
\begin{align*}
	\int_0^\infty f(x) h(x) \,dx &= \int_0^\infty f'(x) \int_0^x \bigl\{ \psi \bigl( f(y) \bigr) - H(f) \bigr\} \,dy \,dx \\
	&= - \int_0^\infty f(y) \bigl\{\psi \bigl( f(y) \bigr) - H(f) \bigr\} \, dy = 0,
\end{align*}
so there exists $t_0 > 0$, depending only on $\kappa$ and $f$, such that $f_t$ is a density function for $t \in [0,t_0]$.

Observe that the function $h$ defined in~\eqref{Eq:h} solves the differential equation
\begin{equation}
\label{Eq:DE}
	\frac{d}{d x} \biggl( h(x) \frac{f(x)}{f'(x)} \biggr) = \psi \bigl( f(x) \bigr) - H(f) =: g(x).
\end{equation}
We now derive, for $t \in [0,t_0]$, the density function of the non-negative random variable $f_t(X_1)$ when $X_1$ has density function $f_t$ on $[0,\infty)$. As $x \rightarrow 0$, we have that
\begin{align}
\label{Eq:SmallxDeriv}
	f_t'(x) &= f'(x) - t f''(x) \int_0^x g(y) \,dy -t f'(x) g(x) \sim f'(x) - tx g(0) f''(x) - tf'(x) g(0) \nonumber \\
	&= f'(x) \biggl[ 1 - t g(0) \frac{ x \{P''(x) - P'(x)^2 \} + P'(x) }{P'(x)} \biggr].
\end{align}
We can also see that as $x \rightarrow \infty$ we have
\begin{align}
\label{Eq:Karamata2}
	f''(x)& \int_0^x g(y) \,dy + f'(x) g(x) \sim \frac{f''(x) f(x)^{-(1-\kappa)} }{(1-\kappa) P'(x)} + f'(x) f(x)^{-(1-\kappa)} \nonumber \\
	&= f(x)^\kappa \biggl\{ \frac{P'(x)^2 - P''(x)}{(1-\kappa) P'(x)} - P'(x) \biggr\} \sim f(x)^\kappa \frac{\kappa}{1-\kappa} P'(x),
\end{align}
using the fact that $P''(x) \ll P'(x)^2$ as $x \rightarrow \infty$ for strictly increasing polynomials $P$.  Finally, we note that $\sup_{x \in [a,b]} f'(x) <0$ for every $0<a<b<\infty$.  This, together with~\eqref{Eq:SmallxDeriv} and ~\eqref{Eq:Karamata2}, means that by reducing $t_0 = t_0(\kappa,f) > 0$ if necessary, we may assume that $f_t$ is strictly decreasing on $[0,\infty)$ for $t \in [0,t_0]$.  Thus, for $t \in [0,t_0]$, we can define the inverse function $f_t^{-1}$, and since $f_t(0) = f(0)$, see that when $X_1 \sim f_t$, the density of $f_t(X_1)$ at $z \in (0,f(0) )$ is given by
\[
	\lim_{\delta \rightarrow 0} \frac{1}{\delta} \int_0^\infty f_t(x) \bigl( \mathbbm{1}_{\{ f_t(x) \leq z + \delta\}} - \mathbbm{1}_{\{ f_t(x) \leq z \}} \bigr) \,dx = \frac{z}{- f_t'( f_t^{-1}(z))} =:p_t(z).
\]

Our goal now is to show that the family $\{p_t : t \in [0,t_0] \}$ is differentiable in quadratic mean at $t=0$, with score function $g \circ f^{-1}$. For a fixed $z \in (0,f(0))$, let $x=f^{-1}(z)$ and $x_t=f_t^{-1}(z)$. Then we have
\[
	0 = f_t(x_t) - f(x) = f(x_t) - f(x) - t f(x_t) h(x_t)  = (x_t -x) f'(x) - tz h(x) + o(t)
\]
as $t \searrow 0$, and hence $ \partial x_t/ \partial t |_{t=0} = z h(x) / f'(x)$. It now follows from~\eqref{Eq:DE} that
\begin{align*}
	\frac{\partial}{\partial t} & \Bigm|_{t=0} p_t(z) = \frac{\partial}{\partial t} \Bigm|_{t=0} \biggl(  \frac{z}{-f_t'(x_t)} \biggr) = z \frac{ \frac{\partial}{\partial t} |_{t=0} f_t'(x_t) }{f'(x)^2} \\
	&= -\frac{z}{f'(x)} \biggl\{ h(x) + \frac{h'(x) f(x)}{f'(x)} - \frac{h(x) f''(x) f(x)}{f'(x)^2} \biggr\} = p_0(z) g(x) \\
	&= p_0(z) \{ \psi(z) - H(f)\}.
\end{align*}
To prove differentiability in quadratic mean at $t=0$ with score function $g \circ f^{-1}$, i.e.~that
\begin{equation}
\label{Eq:DQM}
	\int_0^{f(0)} \biggl[ \frac{p_t(z)^{1/2} - p_0(z)^{1/2}}{t} - \frac{1}{2} \{ \psi(z) - H(f) \} p_0(z)^{1/2} \biggr]^2 \, dz \rightarrow 0
\end{equation}
as $t \searrow 0$, it now suffices by the dominated convergence theorem to show that $ t^{-2} \{ p_t(z)^{1/2} - p_0(z)^{1/2}\}^2$ can be bounded by an integrable function of $z$ for $t \in [0,t_0]$. Define $b_t:=(3t / (1-\kappa))^{1/(1-\kappa)}$ and $a_t:= f^{-1}( b_t )$. Now, by~\eqref{Eq:Karamata1},~\eqref{Eq:SmallxDeriv} and~\eqref{Eq:Karamata2}, it follows that there exists $C' = C'(\kappa,f) >0$ such that for all $x \leq a_t$ and $t \in [0,t_0]$, we have
\[
	\max\biggl\{ \biggl| \frac{f_t(x)}{f(x)} -1 \biggr| , \biggl| \frac{f_t'(x)}{f'(x)} -1 \biggr| \biggr\} \leq  t \max \biggl\{ C', \frac{3f(x)^{-(1-\kappa)}}{2(1-\kappa)} \biggr\} \leq \frac{1}{2}.
\]
Write $\epsilon_{t,z}:= t \max\{ C', \frac{3 z^{-(1-\kappa)}}{2^\kappa(1-\kappa)} \}$ so that for $z > 2b_t$ we have $\epsilon_{t,z} \leq 1/2$ and
\[
	f_t \biggl( f^{-1} \biggl( \frac{z}{1+\epsilon_{t,z}} \biggr) \biggr) \leq \frac{z}{1+ \epsilon_{t,z} } \biggl[ 1 + t \max \biggl\{ C', \frac{3(z/(1+\epsilon_{t,z}))^{-(1-\kappa)}}{2(1-\kappa)} \biggr\} \biggr] \leq z.
\]
We can similarly establish that $ f_t\bigl( f^{-1} ( z/(1-\epsilon_{t,z}))\bigr) \geq z$. Now, there exists $x_0 \in (0,\infty)$, depending only on $f$, such that $f''(x)=\{P'(x)^2 - P''(x)\} f(x) \geq 0$ for all $x \geq x_0$. We can therefore see that, by the convexity of $P$, for $z > 2b_t$ sufficiently small we have
\begin{align}
\label{Eq:ConvexP}
	|f_t^{-1}(z) &- f^{-1}(z)| \leq f^{-1} \biggl( \frac{z}{1+\epsilon_{t,z}} \biggr) - f^{-1} \biggl( \frac{z}{1-\epsilon_{t,z}} \biggr) \nonumber \\
	& = P^{-1} \biggl( \log \frac{1+\epsilon_{t,z}}{z} \biggr) - P^{-1} \biggl( \log \frac{1-\epsilon_{t,z}}{z} \biggr) \leq \frac{  \log \frac{1+\epsilon_{t,z}}{z} -  \log \frac{1-\epsilon_{t,z}}{z}}{P'\bigl(P^{-1}( \log \frac{1-\epsilon_{t,z}}{z})\bigr)} \nonumber \\
	& = \frac{ \frac{z}{1-\epsilon_{t,z}} \log \frac{1+\epsilon_{t,z}}{1-\epsilon_{t,z}} }{ -f'\bigl( f^{-1}(z/(1-\epsilon_{t,z}))\bigr)} \leq \frac{ \frac{z}{1-\epsilon_{t,z}} \log \frac{1+\epsilon_{t,z}}{1-\epsilon_{t,z}} }{ -f'( f^{-1}(z))} \lesssim_{\kappa,f} \frac{t z^\kappa}{-f'(f^{-1}(z))} \nonumber \\
	& = \frac{t z^{-(1-\kappa)} f^{-1}(z)}{ f^{-1}(z) P'( f^{-1}(z))},  %\ll_{\kappa,f} t z^{-(1-\kappa)} f^{-1}(z).
\end{align}
and $f^{-1}(z)P'\bigl(f^{-1}(z)\bigr) \rightarrow \infty$ as $z \searrow 0$. The derivative $P'(x)$ is bounded away from zero for $x$ bounded away from zero, so for $z$ bounded away from $f(0)$ and $0$, we can also see that $|f_t^{-1}(z)/f^{-1}(z)-1| \lesssim_{\kappa,f} t$. As $x \rightarrow 0$, 
\[
	\frac{f_t(x)}{f(x)}  = 1 + tP'(x) \int_0^x g(y) \,dt = 1 - txP'(x) |g(0)| \{1+o_{\kappa,f}(1)\},
\]
uniformly for $t \in [0,t_0]$.  Thus, similarly to in~\eqref{Eq:ConvexP} and by a Taylor expansion, we can see that for $z$ close to $f(0)$ we have that
\[
	| f_t^{-1}(z) - f^{-1}(z)| \lesssim_{\kappa,f} \frac{t f^{-1}(z) P'(f^{-1}(z))}{- f'(f^{-1}(z))} = \frac{t f^{-1}(z)}{z} \lesssim_{\kappa,f} t f^{-1}(z).
\]
Hence, combining this fact with~\eqref{Eq:ConvexP}, uniformly over all $z \in \bigl(2b_t,f(0)\bigr)$, we now have that
\[
	\biggl| \frac{f'(x_t)}{f'(x)} -1 \biggr| = \biggl| \frac{P'(x_t)}{P'(x)} \frac{1}{1-t h (x_t)} -1 \biggr| \lesssim_{\kappa,f} \biggl| \frac{x_t}{x} -1 \biggr| + t z^{-(1-\kappa)} \lesssim_{\kappa,f} tz^{-(1-\kappa)}.
\]
We deduce that there exists $c = c(\kappa,f) \in (0, \frac{1-\kappa}{3 \times 2^{1-\kappa}})$ such that for $t \in [0,t_0]$, when $ t z^{-(1-\kappa)} \leq c$ we have $z > 2b_t$ and
\begin{equation}
\label{Eq:largez}
	\biggl|  \frac{p_t(z)}{p_0(z)} -1 \biggr| \leq  \biggl| \frac{f'(f^{-1}(z))}{f'(f_t^{-1}(z))} \frac{f'(f_t^{-1}(z))}{f_t'(f_t^{-1}(z))}  -1 \biggr| \leq \frac{ t z^{-(1-\kappa)}}{2c} \leq \frac{1}{2} .
\end{equation}
Now, after reducing $t_0 = t_0(\kappa,f) > 0$ if necessary, for $t \in [0,t_0]$ and $t f(x)^{-(1-\kappa)} > c$, we have by~\eqref{Eq:Karamata1} that
\[
	f_t(x) \leq f(x) + \frac{2t f(x)^\kappa}{1-\kappa} \leq t f(x)^\kappa \biggl( \frac{1}{c} + \frac{2}{1-\kappa} \biggr).
\]
Thus, when $t z^{-(1-\kappa)}>c$, we have $x_t=f_t^{-1}(z) \leq f^{-1} ( ( \frac{z}{t(1/c + 2/(1-\kappa))})^{1/\kappa})$. Moreover, for $z$ bounded away from $f(0)$, we have that $p_0(z) = 1/P'(f^{-1}(z))$ is bounded.  Hence, when $t\in [0,t_0]$ and $tz^{-(1-\kappa)} > c$, using~\eqref{Eq:Karamata2} we can see that
\begin{align}
\label{Eq:smallz}
	p_t(z) = \frac{z}{-f_t'(f_t^{-1}(z))} \leq \frac{z}{ t \bigl\{ f''(x_t) \int_0^{x_t} g(y) \,dy + f'(x_t) g(x_t) \bigr\}} &\leq \frac{2(1-\kappa)z}{ \kappa t f(x_t)^\kappa P'(x_t)} \nonumber \\
	&\leq \frac{4 + \frac{2(1-\kappa)}{c}}{\kappa P'(x_t)},
\end{align}
so $p_t$ is also bounded uniformly for $t \in [0,t_0]$ and $tz^{-(1-\kappa)} > c$. It now follows from~\eqref{Eq:largez} and~\eqref{Eq:smallz} that for $t \in [0,t_0]$,
\begin{align*}
	\frac{ \{p_t(z)^{1/2} - p_0(z)^{1/2} \}^2}{t^2} &\lesssim_{\kappa,f} \mathbbm{1}_{\{ t\leq cz^{1-\kappa}\}} p_0(z) z^{2\kappa -2} + \mathbbm{1}_{\{ t > cz^{1-\kappa}\}} t^{-2} \\
	&\leq p_0(z) z^{2\kappa-2} + c^{-2} z^{2\kappa -2}.
\end{align*}
Since $\kappa > 1/2$, we have
\[
	\int_0^{f(0)} z^{2\kappa-2} p_0(z) \,dz = \int_0^\infty f(x)^{2\kappa -1} \,dx < \infty,
\]
and also the second term is integrable. Finally, then, the differentiability in quadratic mean property~\eqref{Eq:DQM} follows from the dominated convergence theorem.

To complete the proof of the first part of Proposition~\ref{Thm:SuperOracleLAM}, it suffices to study the differentiability properties of the functional $H$ along our path $\{f_t:t \in [0,t_0]\}$.  To this end, integrating by parts and using~\eqref{Eq:Karamata1}, we may see that
\begin{align*}
	\frac{d}{dt} \Bigm|_{t=0} &H(f_t) = \frac{d}{dt} \Bigm|_{t=0} \int_0^\infty f(x)^{\kappa} \{1-th(x)\}^\kappa \,dx = - \kappa \int_0^\infty f(x)^\kappa h(x) \,dx \\
	& = -\kappa \int_0^\infty f(x)^{\kappa -1 } f'(x) \int_0^x g(y) \,dy \,dx = - \int_0^\infty \frac{d}{dx}\{ f(x)^\kappa \} \int_0^x g(y) \,dy \,dx \\
	&= \int_0^\infty f(x)^\kappa g(x) \,dx = \int_0^\infty \bigl\{f(x)^{-(1-\kappa)} - H(f)\bigr\} g(x) f(x) \, dx \\
	&= \int_0^{f(0)} \bigl\{z^{-(1-\kappa)} - H(f)\bigr\} g\bigl(f^{-1}(z)\bigr) p_0(z) \, dz. 
	%f(x)^{2\kappa -1} \,dx - H(f)^2 = \mathrm{Var}_f \, \psi \bigl( f(X_1) \bigr).
\end{align*}
We therefore conclude that the efficient influence function is given by $z \mapsto \bigl\{z^{-(1-\kappa)} - H(f)\bigr\}$, and our result now follows from \citet[][Theorem~25.21]{vanderVaart1998}.

We now turn to the second claim of Proposition~\ref{Thm:SuperOracleLAM}. First, it is clear that $\|f_t\|_\infty = f(0) < \infty $ for all $t \in [0,t_0]$. As shown by~\eqref{Eq:Karamata1}, we have that $f_t(x) \lesssim f(x)^\kappa$ uniformly for $x \in [0,\infty)$ and $t\in [0,t_0]$, and it follows that, for any $\alpha>0$, we have $\sup_{t \in [0,t_0]} \int_0^\infty x^\alpha f_t(x) \,dx < \infty$. For the smoothness condition, in the interests of brevity, we will restrict attention here to $\beta \in (0,1]$; the arguments extend naturally to any $\beta>0$. For $\beta \in (0,1]$ we claim that $\sup_{t \in [0,t_0]} M_{f_t,\beta}(x) \lesssim_{\kappa,f} \max \{1/x, P'(x) \}$, so that we have
\[
	\sup_{t \in [0,t_0]} \int_0^\infty f_t(x) \biggl\{ \frac{M_{f,\beta}(x)}{f_t(x)} \biggr\}^\lambda \,dx  \lesssim_{\kappa,f} \int_0^\infty f(x)^{(1-\lambda)\kappa} \max\{ x^{-\lambda}, P'(x)^\lambda \} \,dx  < \infty
\]
for any $\lambda \in (0,1)$. To establish this claim, we have $\inf_{t \in [0,t_0]} \inf_{x \in [0,1]} f_t(x) > 0$, and so it follows from the smoothness of $f$ and $h$ that for $t \in [0,t_0]$,
\begin{align*}
	\sup_{x \in (0,1]} \sup_{y,z \in [0,2x],y \neq z} & \frac{|f_t(z) - f_t(y)|}{|z-y|^\beta f_t(x)} \\
	&\lesssim \sup_{x \in (0,1]} \sup_{y,z \in [0,2x], y \neq z} \frac{|f(z)-f(y)| + t_0 | f(z)h(z) - f(y)h(y)|}{|z-y|^\beta} < \infty.
\end{align*}
It follows that for $x \in (0,1]$ we have $M_{f,\beta}(x) \lesssim_{\kappa,f} 1/x$. Writing $ \mathrm{deg}(P)$ for the degree of the strictly increasing polynomial $P$, we have that 
\[
	0 < \inf_{x \in [1,\infty) } \frac{P'(x)}{x^{\mathrm{deg}(P)-1}} \leq \sup_{x \in [1,\infty)} \frac{P'(x)}{x^{\mathrm{deg}(P)-1}}  < \infty.
\]
Now for $x \geq 1$ and $y$ such that $|y-x| \leq x \wedge \{1/P'(x)\}$ we have that
\begin{align}
\label{Eq:P'cont}
	|P(y)-P(x)| &\lesssim_f |y-x| \max(x,y)^{\mathrm{deg}(P)-1} \lesssim_f |y-x| P'(x) \leq 1 \nonumber \\
	|P'(y) - P'(x)| & \lesssim_f |y-x| \max(x,y)^{\mathrm{deg}(P) \vee 2 - 2 } \lesssim_f |y-x| P'(x).
\end{align}
It therefore follows that
\begin{align*}
	\sup_{x \in [1,\infty)} \sup_{\substack{y,z \in B_x(x \wedge \{1/P'(x)\}) \\ y \neq z}}& \frac{|f(z)-f(y)|}{f(x) \{ P'(x) |z-y| \}^\beta} \\
	&= \sup_{x \in [1,\infty)} \sup_{\substack{y,z \in B_x(x \wedge \{1/P'(x)\}) \\ y \neq z}} \frac{e^{P(x) - P(y)}  \bigl| e^{P(y)-P(z)} -1 \bigr|}{\{ P'(x) |z-y| \}^\beta} < \infty.
\end{align*}
We conclude from~\eqref{Eq:Karamata1} both that $\sup_{x \in [1,\infty)} f(x)^{1-\kappa} P'(x) | \int_0^x g(y) \,dy | < \infty$ and that $\inf_{x \in [1,\infty)} f_t(x) / \{ f(x) + t f(x)^\kappa\} > 0$. Using~\eqref{Eq:P'cont} we can now see that for $x \in [1,\infty)$ and $y,z \in B_x( x \wedge \{1/P'(x)\})$ we have for $t \in [0,t_0]$ that
\begin{align*}
	& \frac{|f_t(z)-f_t(y)|}{f_t(x)} \lesssim_{\kappa,f} \frac{|f(z)-f(y)|}{ f(x)} + \frac{| f(z)h(z) - f(y)h(y)|}{f(x)^\kappa} \\
	& \lesssim_{\kappa,f} \{P'(x) |z-y| \}^\beta \\
	&\hspace{1cm}+ \frac{ |f'(z) \int_y^z g(u) \,du | + |\int_0^y g(u) \,du| \{ f(z)|P'(z) - P'(y)| + P'(y)| f(z)-f(y)| \} }{f(x)^\kappa} \\
	& \lesssim_{\kappa,f} \{P'(x) |z-y| \}^\beta + |z-y| |f'(x)| /f(x) + |z-y| + |f(z)/f(y)-1| \\
	&\lesssim_{\kappa,f} \{P'(x) |z-y|\}^\beta.
\end{align*}
This verifies our claim and the result therefore follows.
\end{proof}

\subsection{Proof of Theorem~\ref{Thm:LAM} on the local asymptotic minimax lower bound}

\begin{proof}[Proof of Theorem~\ref{Thm:LAM}]
(i) We check the conditions of, and apply, Theorem~3.11.5 of \citet{vanderVaartWellner1996}, and therefore borrow some of their terminology. Define the Hilbert space $H:= \mathbb{R}^2$ with inner product $ \langle (t_1,t_2),(t_1',t_2') \rangle_H := t_1 t_1' v_1(f,g) + t_2 t_2' v_2(f,g)$. We first claim that our sequence of experiments is asymptotically normal. That is to say, for independent normal random variables $Z_1 \sim N(0,v_1)$ and $Z_2 \sim N(0,v_2)$, if we define the iso-Gaussian process $\{\Delta_t =t_1Z_1 + t_2 Z_2 : t=(t_1,t_2) \in H\}$ we claim that
\[
	\log \frac{dP_{n,t}}{dP_{n,0}} = \Delta_{n,t} - \frac{1}{2} \|t\|_H^2
\]
with $\Delta_{n,t} \overset{d}{\rightarrow} \Delta_{t}$ for each fixed $t \in H$. %For $\delta>0$ sufficiently small that $\int_\mathcal{X} f(x) |h_1(x)|^{2+\delta} \,dx <\infty$ (cf.~\eqref{Eq:deltaintegrability}),
Since $\int_\mathcal{X} f(x) h_1(x)^2 < \infty$, and since $K(0) = K'(0) = K''(0) = 1$, we have by the dominated convergence theorem that
\begin{align*}
	\Bigl| 1&/c_1(t_1) - 1 - \frac{t_1^2}{2} v_1 \Bigr| \\
	& =\biggl| \int_{\mathcal{X}} f(x) \Bigl\{ K(t_1h_1(x)) - 1 - t_1h_1(x) - \frac{t_1^2}{2} h_1(x)^2 \Bigr\} \,dx \biggr| \\
	& \leq \frac{1}{6}\sup_{w \in [-1,1]} |K'''(w)| \int_{|t_1h_1(x)| \leq 1} f(x) |t_1h_1(x)|^3 \,dx \\
	& \hspace{25pt} + \Bigl\{ 2 \sup_{w \in \mathbb{R}} |K(w)| + 1 + \frac{1}{2}\Bigr\} \int_{|t_1h_1(x)|>1} f(x) \{t_1h_1(x)\}^2 \,dx = o(t_1^2)
\end{align*}
as $t_1 \rightarrow 0$, with a similar calculation holding for $1/c_2(t_2)$ since $\int g h_2^2 < \infty$. Therefore, for each fixed $t=(t_1,t_2) \in H$ we have
\begin{align*}
	&\log \frac{dP_{n,t}}{dP_{n,0}} = \sum_{i=1}^m \log \frac{f_{m^{-1/2}t_1}(X_i)}{f(X_i)} + \sum_{j=1}^n \log \frac{g_{n^{-1/2} t_2}(Y_j)}{g(Y_j)} \\
	&= \sum_{i=1}^m \log K \Bigl( \frac{t_1 h_1(X_i)}{m^{1/2}} \Bigr)  \!+\! m \log c_1(m^{-1/2}t_1) +\! \sum_{j=1}^n \log K \Bigl( \frac{t_2 h_2(Y_j)}{n^{1/2}} \Bigr) + n \log c_2(n^{-1/2}t_2) \\
%	& = \frac{t_1}{m^{1/2}} \sum_{i=1}^m h_1(X_i) - \frac{t_1^2}{2} v_1 + \frac{t_2}{n^{1/2}} \sum_{j=1}^n h_2(Y_j) - \frac{t_2^2}{2} v_2 + o_p(1) \\
	& = \frac{t_1}{m^{1/2}} \sum_{i=1}^m h_1(X_i) + \frac{t_2}{n^{1/2}} \sum_{j=1}^n h_2(Y_j) - \frac{1}{2} \|t\|_H^2 + o_p(1) \overset{d}{\rightarrow} \Delta_t - \frac{1}{2} \|t\|_H^2,
\end{align*}
as claimed.

\sloppy{We will now show that the sequence of parameters defined by $\kappa_n(t) := T(f_{m^{-1/2}t_1}, g_{n^{-1/2} t_2})$ is \emph{regular}, in that there exists a continuous linear map $\dot{\kappa} : H \rightarrow \mathbb{R}$ and a sequence $(r_n)$ of real numbers such that}
\[
	r_n \{\kappa_n(t) - \kappa_n(0) \} \rightarrow \dot{\kappa} (t) 
\]
for each $t \in H$. Indeed, for any fixed $t=(t_1,t_2) \in H$ we have
\begin{align*}
&\kappa_n(t) - \kappa_n(0) = \int_\mathcal{X} \Bigl\{ f_{m^{-1/2} t_1} (x) \phi \bigl( f_{m^{-1/2} t_1} (x), g_{n^{-1/2} t_2} (x) \bigr) - f(x) \phi_x \Bigr\} \,dx \\
  &= \int_\mathcal{X}  f(x) \biggl\{ K \Bigl( \frac{t_1 h_1(x)}{m^{1/2}} \Bigr) \phi \biggl( K \Bigl( \frac{t_1 h_1(x)}{m^{1/2}} \Bigr) f(x), K \Bigl( \frac{t_2 h_2(x)}{n^{1/2}} \Bigr) g(x) \biggr)  - \phi_x \biggr\} \,dx \\
  &\hspace{7.5cm}+o(m^{-1/2}+n^{-1/2}) \\
	&= \int_\mathcal{X} f(x)  \biggl[ \frac{t_1 h_1(x)}{m^{1/2}} \bigl\{ \phi_x +(f \phi_{10})_x \bigr\} +  \frac{t_2 h_2(x)}{n^{1/2}} (g\phi_{01})_x  \biggr] \,dx +o(m^{-1/2}+n^{-1/2}) \\
	&= \frac{t_1v_1}{m^{1/2}} + \frac{t_2v_2}{n^{1/2}} + o(m^{-1/2}+n^{-1/2}).
\end{align*}
We may therefore take
\[
	r_n = (v_1/m + v_2/n)^{-1/2} \quad \text{and} \quad \dot{\kappa} (t_1,t_2)= \frac{ t_1 v_1 + A^{1/2}t_2v_2}{(v_1 + A v_2)^{1/2}}
\]
to conclude that our sequence of parameters $\kappa_n$ is regular. 

The adjoint $\dot{\kappa}^*: \mathbb{R} \rightarrow H$ of $\dot{\kappa}$ is given by
\[
	\dot{\kappa}^*(b^*) = \Bigl( \frac{b^*}{(v_1+Av_2)^{1/2}}, \frac{A^{1/2}b^*}{(v_1+Av_2)^{1/2}} \Bigr)
\]
as this satisfies $ \langle \dot{\kappa}^* (b^*), t \rangle_H = b^* \dot{\kappa}(t)$ for all $b^* \in \mathbb{R}$ and $t \in H$. Since $\|\dot{\kappa}^*(b^*)\|_H^2 = (b^*)^2$ for all $b^* \in \mathbb{R}$, we may therefore take $G \sim N(0,1)$ and apply Theorem~3.11.5 of \citet{vanderVaartWellner1996} to deduce that for any estimator sequence $T_{m,n}$,
\[
	\sup_{I \in \mathcal{I}} \liminf_{n \rightarrow \infty} \max_{t \in I} \mathbb{E}_{P_{n,t}} \biggl\{ \frac{(T_{m,n} - T)^2}{v_1/m + v_2/n} \biggr\} \geq \mathbb{E}( G^2) = 1.
\]
% where the supremum is taken over all finite subsets $I$ of $H$.
This concludes the proof of (i).

(ii) Since $k: \mathbb{R} \rightarrow [1/2,3/2]$ we have that $f(x)/3 \leq f_t(x) \leq 3f(x)$ and $g(x)/3 \leq g_t(x) \leq 3g(x)$ for all $t \in \mathbb{R}$ and $x \in \mathbb{R}^d$ and, to establish the result, it remains to show that $\max\{M_{f_t,\tilde{\beta}}(x), M_{g_t,\tilde{\beta}}(x)\} \lesssim M_{\beta}(x)$ for $t \leq 1$, say. For ease of presentation, we first prove this in the case $\beta \in (0,1]$. When $x \in \mathcal{X}$ and $y,z \in B_x\bigl(1/M_\beta(x)\bigr)$, we have that
\begin{align}
  \label{Eq:ft1}
	\frac{|f_t(z)-f_t(y)|}{f_t(x)} &= \frac{|K(th_1(z))f(z) - K(th_1(y))f(y)|}{K(th_1(x))f(x)} \nonumber \\
	& \leq \frac{3}{f(x)}|f(z)-f(y)| + \frac{2 f(y)}{f(x)} | K(th_1(z)) - K(th_1(y))| \nonumber \\
	& \leq 3 \{M_\beta(x) \|z-y\|\}^\beta + 4  | K(th_1(z)) - K(th_1(y))|.
\end{align}
Additionally,
\begin{align}
  \label{Eq:ft2}
	&|h_1(z)-h_1(y)| \leq | \phi_z - \phi_y| + f(z) | (\phi_{10})_z - (\phi_{10})_y| + |(\phi_{10})_y| |f(z)-f(y)| \nonumber \\
	& \leq L \bigl( 1 \vee |\phi_y +(f \phi_{10})_y| \bigr) \Bigl( 1 + \frac{f(z)}{f(y)} \Bigr) \biggl\{ 3 \Bigl| \frac{f(z)}{f(y)} -1 \Bigr|^{(\beta^*-1) \wedge 1} + \Bigl| \frac{g(z)}{g(y)} -1 \Bigr|^{\beta^* \wedge 1} \biggr\} \nonumber \\
	& \lesssim (1 + |h_1(y)|) \{ M_\beta(x) \|z-y\| \}^{\tilde{\beta}}.
\end{align}
In particular, there exists $c = c(d,\vartheta,\xi)$ such that, whenever $\|z-y\| M_\beta(x) \leq c$, we have $|h_1(z)-h_1(y)| \leq \max(1,|h_1(y)| \wedge |h_1(z)|)/2$. Writing $L_{t,y,z}$ for the line segment between $th_1(y)$ and $th_1(z)$, and using the fact that $\sup_{w \in \mathbb{R}} (1+|w|)|K'(w)| < \infty$, we now have for $z,y$ such that $\|z-y\|M_\beta(x) \leq c$ that
\begin{align}
\label{Eq:LineSegment}
	| K(th_1(z)) &- K(th_1(y))|  \leq t |h_1(z)-h_1(y)| \sup_{w \in L_{t,y,z}} |K'(w)| \nonumber \\
	& \lesssim \frac{(1+t|h_1(y)| \wedge |h_1(z)|)}{1+ \inf_{w \in L_{t,y,z}} |w|} \{M_\beta(x) \|z-y\|\}^{\tilde{\beta}} \lesssim \{M_\beta(x) \|z-y\|\}^{\tilde{\beta}}.
\end{align}
From~\eqref{Eq:ft1},~\eqref{Eq:ft2} and~\eqref{Eq:LineSegment}, we deduce that $M_{f_t,\tilde{\beta}}(x) \lesssim M_\beta(x)$.  Moreover, when $y,z \in B_x\bigl(1/M_\beta(x)\bigr)$, we have that
\begin{align*}
	&|h_2(z) - h_2(y)| \leq f(z) | (\phi_{01})_z - (\phi_{01})_y| + |(\phi_{01})_y| |f(z)-f(y)| \\
	& \leq \frac{f(z)}{f(y)} \bigl( 1 \vee f(y) |(\phi_{01})_y| \bigr)\biggl\{ \Bigl| \frac{f(z)}{f(y)} -1 \Bigr|^{\beta^* \wedge 1} \!\!\!\!\! + \Bigl| \frac{g(z)}{g(y)} -1 \Bigr|^{(\beta^*-1) \wedge 1} \biggr\} +|(\phi_{01})_y| |f(z)-f(y)| \\
	& \lesssim ( 1+ |h_2(y)|) \{M_\beta(x) \|z-y\|\}^{\tilde{\beta}}.
\end{align*}
It now follows by very similar arguments to those in~\eqref{Eq:LineSegment} that $M_{g_t,\tilde{\beta}}(x) \lesssim M_\beta(x)$.

We now extend these arguments to cover the $\beta >1 $ case. For a multi-index $\boldsymbol{\alpha} \in \mathbb{N}_0^d$ with $|\boldsymbol{\alpha}| \leq \tilde{\underline{\beta}}:= \lceil \tilde{\beta} \rceil -1$, we have that $\partial^{\boldsymbol{\alpha}} \{ K(t h_1(x)) \}$ can be written as a finite sum of terms of the form
\begin{equation}
\label{Eq:Sum1}
	t^r ( \partial^{\boldsymbol{\alpha}^{(1)}} h_1) \ldots ( \partial^{\boldsymbol{\alpha}^{(r)}} h_1)(x) K^{(r)}\bigl(th_1(x)\bigr)
\end{equation}
where $r \in \mathbb{N}_0$ satisfies $r \leq | \boldsymbol{\alpha}|$, and the multi-indices $\boldsymbol{\alpha}^{(1)},\ldots,\boldsymbol{\alpha}^{(r)} \in \mathbb{N}_0^d$ satisfy $|\boldsymbol{\alpha}^{(1)}| + \ldots + | \boldsymbol{\alpha}^{(r)}| = | \boldsymbol{\alpha}|$. Moreover, for any $j=1,\ldots,r$, we have that $\partial^{\boldsymbol{\alpha}^{(j)}} h_1$ is a finite sum of terms of the form
\begin{equation}
\label{Eq:Sum2}
	( \partial^{\boldsymbol{\beta}^{(1)}} f) \ldots ( \partial^{\boldsymbol{\beta}^{(\ell_1)}} f)( \partial^{\boldsymbol{\gamma}^{(1)}} g) \ldots ( \partial^{\boldsymbol{\gamma}^{(\ell_2)}} g)(x) \phi_{\ell_1 \ell_2} (f(x),g(x)),
\end{equation}
where $\ell_1,\ell_2 \in \mathbb{N}_0$ satisfy $\ell_1+\ell_2\leq | \boldsymbol{\alpha}^{(j)}|+1$, and where moreover the multi-indices $\boldsymbol{\beta}^{(1)},\ldots,\boldsymbol{\beta}^{(\ell_1)}, \boldsymbol{\gamma}^{(1)},\ldots,\boldsymbol{\gamma}^{(\ell_2)} \in \mathbb{N}_0^d$ satisfy $|\boldsymbol{\beta}^{(1)}| + \ldots + | \boldsymbol{\beta}^{(\ell_1)}| + |\boldsymbol{\gamma}^{(1)}| + \ldots + | \boldsymbol{\gamma}^{(\ell_2)}|  = | \boldsymbol{\alpha}^{(j)}|$. Using the fact that $\sup_{w \in \mathbb{R}} (1+|w|^r)|K^{(r)}(w)| < \infty$ for any $r \in \mathbb{N}$ and assumption (i) in the definition of $\tilde{\Phi}$, we therefore have the bounds
\[
	|\partial^{\boldsymbol{\alpha}^{(j)}} h_1(x)| \lesssim M_\beta(x)^{|\boldsymbol{\alpha}^{(j)}|} (1+|h_1(x)|) \! \quad \text{ and } \quad \! |\partial^{\boldsymbol{\alpha}} \{ K(t h_1(x)) \} | \lesssim M_\beta(x)^{|\boldsymbol{\alpha}|}.
\]
It follows that, for any multi-index $\boldsymbol{\alpha}$ with $|\boldsymbol{\alpha}| \leq \tilde{\underline{\beta}}$ we have that
\[
	\Bigl| \frac{\partial^{\boldsymbol{\alpha}} f_t(x)}{f_t (x)} \Bigr| \lesssim M_\beta(x)^{|\boldsymbol{\alpha}|}.
\]
Since, for any multi-index $\boldsymbol{\alpha}$ with $|\boldsymbol{\alpha}| \leq \tilde{\underline{\beta}}$, we have that $\partial^{\boldsymbol{\alpha}} h_2$ is a finite sum of terms of the form
\[
	( \partial^{\boldsymbol{\beta}^{(1)}} f) \ldots ( \partial^{\boldsymbol{\beta}^{(\ell_1+1)}} f)( \partial^{\boldsymbol{\gamma}^{(1)}} g) \ldots ( \partial^{\boldsymbol{\gamma}^{(\ell_2-1)}} g)(x) \phi_{\ell_1 \ell_2} (f(x),g(x)),
\]
we deduce by similar arguments that $| \partial^{\boldsymbol{\alpha}} g_t(x) | \lesssim g_t (x) M_\beta(x)^{|\boldsymbol{\alpha}|}$ for any multi-index $\boldsymbol{\alpha}$ with $|\boldsymbol{\alpha}| \leq \tilde{\underline{\beta}}$. Now we have for any $\ell_1, \ell_2 \in \mathbb{N}_0$ with $\ell_1+\ell_2 \leq \beta^*-1$ and $y,z \in B_x\bigl(1/M_\beta(x) \bigr)$ that
\begin{align*}
	&| \phi_{\ell_1\ell_2}( f(z), g(z)) - \phi_{\ell_1\ell_2}(f(y),g(y)) | \\
	& \lesssim f(y)^{-\ell_1}g(y)^{-\ell_2}( 1 \vee |h_1(y)|) \biggl\{ \Bigl| \frac{f(z)}{f(y)} -1 \Bigr|^{(\beta^*-\ell_1) \wedge 1} + \Bigl| \frac{g(z)}{g(y)} -1 \Bigr|^{(\beta^*-\ell_2) \wedge 1} \biggr\} \\
	& \lesssim f(y)^{-\ell_1}g(y)^{-\ell_2}( 1 \vee |h_1(y)|) \{M_\beta(x) \|z-y\|\}^{\min\{1,\beta^*-\ell_1, \beta^*-\ell_2 \}}.
\end{align*}
It follows from this, together with the representation~\eqref{Eq:Sum2} and Lemma~\ref{Lemma:15over7} that, for any multi-index $\boldsymbol{\alpha}$ with $|\boldsymbol{\alpha}| \leq \tilde{\underline{\beta}}$, we have that
\begin{align*}
	| \partial^{\boldsymbol{\alpha}} h_1(z) - \partial^{\boldsymbol{\alpha}} h_1(y) | &\lesssim M_\beta(x)^{|\boldsymbol{\alpha}|} ( 1 \vee |h_1(y)| ) \{M_\beta(x) \|z-y\|\}^{\min\{1,\beta - \tilde{\underline{\beta}},\beta^*-1-\tilde{\underline{\beta}}\} \}} \\
	& \lesssim M_\beta(x)^{|\boldsymbol{\alpha}|} ( 1 \vee |h_1(y)| ) \{M_\beta(x) \|z-y\|\}^{\tilde{\beta}-\tilde{\underline{\beta}}}.
\end{align*}
By a similar argument to~\eqref{Eq:LineSegment}, and using~\eqref{Eq:Sum1} and the fact that $\sup_{w \in \mathbb{R}} (1+|w|^r)|K^{(r)}(w)| < \infty$ for any $r \in \mathbb{N}$, we can now see that, for any multi-index $\boldsymbol{\alpha}$ with $|\boldsymbol{\alpha}| \leq \tilde{\underline{\beta}}$,
\[
	\bigl| \partial^{\boldsymbol{\alpha}}\bigl\{K(t h_1(z)) \bigr\} - \partial^{\boldsymbol{\alpha}} \bigl\{K(t h_1(y)) \bigr\} \bigr| \lesssim M_\beta(x)^{|\boldsymbol{\alpha}|}\{M_\beta(x) \|z-y\|\}^{\tilde{\beta}-\tilde{\underline{\beta}}}.
\]
Using Lemma~\ref{Lemma:15over7} it then follows that, for any multi-index $\boldsymbol{\alpha}$ with $|\boldsymbol{\alpha}| = \tilde{\underline{\beta}}$, we have
\[
	| \partial^{\boldsymbol{\alpha}} f_t(z) - \partial^{\boldsymbol{\alpha}} f_t(y) | \lesssim f_t(x) M_\beta(x)^{\tilde{\underline{\beta}}} \{M_\beta(x) \|z-y\|\}^{\tilde{\beta}-\tilde{\underline{\beta}}} %= f_t(x) M_\beta(x)^{\tilde{\beta}} \|z-y\|^{\tilde{\beta}-\tilde{\underline{\beta}}},
\]
and so $M_{f_t,\tilde{\beta}}(x) \lesssim M_\beta(x)$, as required.  Similarly, $M_{g_t,\tilde{\beta}}(x) \lesssim M_\beta(x)$, and this completes the proof of the first statement in Theorem~\ref{Thm:LAM}(ii).

It remains to prove the local asymptotic minimax result for $\hat{T}_{m,n}$ under the conditions of Theorem~\ref{Thm:Main}, together with $\tilde{\beta} = \beta$.  Observe that 
\begin{align*}
\sup_{I \in \mathcal{I}} & \limsup_{n \rightarrow \infty} \max_{t = (t_1,t_2) \in I} n\mathbb{E}_{P_{n,t}} \Bigl[ \bigl\{\hat{T}_{m,n} - T(f_{m^{-1/2}t_1},g_{n^{-1/2}t_2})\bigr\}^2\Bigr]  \\
&\leq \limsup_{n \rightarrow \infty} \sup_{(\tilde{f},\tilde{g}) \in \mathcal{F}_{d,\tilde{\vartheta}}} \biggl\{n\mathbb{E}_{\tilde{f},\tilde{g}} \bigl[ \bigl\{\hat{T}_{m,n} - T(\tilde{f},\tilde{g})\bigr\}^2\bigr] - \frac{n}{m}v_1(\tilde{f},\tilde{g}) - v_2(\tilde{f},\tilde{g})\biggr\} \\
&\hspace{1cm}+ \sup_{I \in \mathcal{I}} \limsup_{n \rightarrow \infty} \max_{t = (t_1,t_2) \in I} \biggl\{\frac{n}{m}v_1(f_{m^{-1/2}t_1},g_{n^{-1/2}t_2}) + v_2(f_{m^{-1/2}t_1},g_{n^{-1/2}t_2})\biggr\} \\
&\leq \frac{1}{A} v_1(f,g) + v_2(f,g),
\end{align*}
where, in the second inequality, we have applied Theorem~\ref{Thm:Main} to the first term, and used the continuity properties of $v_1$ and $v_2$ for the second term.  The fact that the inequalities in this display are attained follows from Theorem~\ref{Thm:LAM}(i), and this completes the proof.
\end{proof}

\subsection{Auxiliary lemmas}

\begin{lemma}
\label{Prop:FunctionalClasses}
Suppose that $\phi \in \Phi(\xi)$ for some $\xi = (\kappa_1,\kappa_2,\beta^*,L) \in \Xi$. Then
\begin{enumerate}
    \item[(i)] For all $\boldsymbol{\epsilon} = (\epsilon_1, \epsilon_2) \in (-1/2,1/2)^2$ and $\mathbf{z} = (u,v) \in (0,
\infty)^2$ we have
%condition (ii) of $\Phi_2(\epsilon_0,1,\kappa_1,\kappa_2,K_1,K_2,L^*)$, for any $L^* \geq 2 \max\{ (1-\epsilon_0)^{-\kappa_1}, 2^{K_1}\} \max\{ (1-\epsilon_0)^{-\kappa_2}, 2^{K_2}\} L$.
\begin{align*}
	\max\bigl\{ u| \phi_{10}(\mathbf{z} + \boldsymbol{\epsilon} \circ \mathbf{z})-\phi_{10}(\mathbf{z})| , & v| \phi_{01}(\mathbf{z} + \boldsymbol{\epsilon} \circ \mathbf{z})-\phi_{01}(\mathbf{z})| \bigr\} \\
	& \leq 2^{1+|\kappa_1|+|\kappa_2|+2L} L  \| \boldsymbol{\epsilon} \| u_\wedge^{\kappa_1} u_\vee^L v_\wedge^{\kappa_2} v_\vee^L.
\end{align*}
    \item[(ii)] For all $\boldsymbol{\epsilon} = (\epsilon_1, \epsilon_2) \in (-1/2,1/2)^2$ and $\mathbf{z} = (u,v) \in (0,
\infty)^2$ we have
\begin{align*}
    \biggl| \phi(\mathbf{z} + \boldsymbol{\epsilon} \circ \mathbf{z}) - \sum_{\ell_1,\ell_2 =0}^\infty \mathbbm{1}_{\{\ell_1+\ell_2 \leq \beta^*-1\}} &\frac{(u \epsilon_1)^{\ell_1}(v\epsilon_2)^{\ell_2}}{\ell_1! \ell_2!} \phi_{\ell_1\ell_2}(\boldsymbol{z}) \biggr| \\
    &\leq 2^{1+|\kappa_1|+|\kappa_2|+2L} L u_\wedge^{\kappa_1} u_\vee^L v_\wedge^{\kappa_2} v_\vee^L (|\epsilon_1| \vee |\epsilon_2|)^{\beta^*}.
\end{align*}
\end{enumerate}
\end{lemma}
\begin{proof}[Proof of Lemma~\ref{Prop:FunctionalClasses}]
  By the definition of the class $\Phi$, for each $\mathbf{z} \in \mathcal{Z}$, the Hessian matrix
  \[
    H(\mathbf{z}) := \begin{pmatrix} u^2 \phi_{20}(\mathbf{z}) & uv \phi_{11}(\mathbf{z}) \\ uv \phi_{11}(\mathbf{z}) & v^2 \phi_{02}(\mathbf{z}) \end{pmatrix}
  \]
  satisfies $\|H(\mathbf{z}) \|_\mathrm{op} \leq 2L u_\wedge^{\kappa_1} u_\vee^L v_\wedge^{\kappa_2} v_\vee^L$.  Now, fixing $\mathbf{z} \in \mathcal{Z}$, the function $g:[0,1] \rightarrow \mathbb{R}$ given by $g(t):=u\phi_{10}(\mathbf{z}+t\boldsymbol{\epsilon} \circ \mathbf{z} )$ is differentiable with $g'(t)=\{H(\mathbf{z}+t\boldsymbol{\epsilon} \circ \mathbf{z}) (\epsilon_1,\epsilon_2)^T\}_1$. Thus, by the mean value theorem,
\begin{align*}
	&u| \phi_{10}(\mathbf{z} + \boldsymbol{\epsilon} \circ \mathbf{z})-\phi_{10}(\mathbf{z})| = |g(1)-g(0)| \\
	%& \leq 2L (2C)^{2L} \max\{ (1-L^{-1})^{-\kappa_1^-}, (1+L^{-1})^{\kappa_1^+}\} \max\{ (1- L^{-1})^{-\kappa_2^-}, (1+L^{-1})^{\kappa_2^+}\} \| \boldsymbol{\epsilon} \| u^{\kappa_1}v^{\kappa_2} \\
	& \leq 2L (1+1/2)^{2L} \max\{ (1/2)^{-\kappa_1^- - \kappa_2^-}, (1+1/2)^{\kappa_1^+ + \kappa_2^+} \}\| \boldsymbol{\epsilon} \| u_\wedge^{\kappa_1} u_\vee^L v_\wedge^{\kappa_2} v_\vee^L.
\end{align*}
A similar calculation with  $\phi_{01}$ completes the proof of part (i).

To prove part (ii) we use the mean value form of the remainder in Taylor's theorem. Fixing $\boldsymbol{z} \in \mathcal{Z}$ and $\boldsymbol{\epsilon} \in (-1/2,1/2)^2$ define $h:[0,1] \rightarrow \mathbb{R}$ by $h(t) = \phi(\mathbf{z}+t\boldsymbol{\epsilon} \circ \mathbf{z} )$. Then we have
\begin{align*}
    \biggl|& \phi(\mathbf{z} + \boldsymbol{\epsilon} \circ \mathbf{z}) - \sum_{\ell_1,\ell_2 =0}^\infty \mathbbm{1}_{\{\ell_1+\ell_2 \leq \beta^*-1\}} \frac{(u \epsilon_1)^{\ell_1}(v\epsilon_2)^{\ell_2}}{\ell_1! \ell_2!} \phi_{\ell_1\ell_2}(\boldsymbol{z}) \biggr| = \biggl| h(1) - \sum_{b=0}^{\beta^*-1} \frac{1}{b!} h^{(b)}(0) \biggr| \\
    & \leq \sup_{t \in [0,1]} \frac{1}{\beta^*!} |h^{(\beta^*)}(t)| = \sup_{t \in [0,1]} \biggl| \sum_{\ell=0}^{\beta^*} \frac{(u \epsilon_1)^\ell(v\epsilon_2)^{\beta^*-\ell}}{\ell! (\beta^*-\ell)!} \phi_{\ell,\beta^*-\ell}(\mathbf{z}+t\boldsymbol{\epsilon} \circ \mathbf{z} ) \biggr| \\
    & \leq L (|\epsilon_1| \vee |\epsilon_2|)^{\beta^*} 2^{|\kappa_1|+|\kappa_2|+2L}  u_\wedge^{\kappa_1} u_\vee^L v_\wedge^{\kappa_2} v_\vee^L \sum_{\ell=0}^{\beta^*} \frac{1}{\ell! (\beta^*-\ell)!} \\
    & \leq 2^{1+|\kappa_1|+|\kappa_2|+2L} L u_\wedge^{\kappa_1} u_\vee^L v_\wedge^{\kappa_2} v_\vee^L(|\epsilon_1| \vee |\epsilon_2|)^{\beta^*},
\end{align*}
as claimed.
\end{proof}

\begin{lemma}
\label{Lemma:hxinvbounds}
Fix $f \in \mathcal{F}_d$ and $\beta \in (0,\infty)$, and let $\mathcal{S}_n \subseteq (0,1), \mathcal{X}_{n} \subseteq \mathbb{R}^d$ be such that
\[
	a_n:= \sup_{s \in \mathcal{S}_n} \sup_{x \in \mathcal{X}_{n}} \frac{s M_{f,\beta}(x)^d}{V_df(x)} \rightarrow 0.
\]
Then there exist $n_*=n_*(d, \beta, (a_n)) \in \mathbb{N}$, coefficients $b_\ell(x)$ and  $A=A(d, \beta, (a_n)) \in (0,\infty)$ such that, for all $n \geq n_*, s \in \mathcal{S}_n$ and $x \in \mathcal{X}_{n}$, we have
\[
	\biggl| V_d f(x) h_{x,f}^{-1}(s)^d - \sum_{\ell=0}^{\lceil \beta/2 \rceil -1} b_\ell(x) s^{1+2\ell/d} \biggr| \leq As \Bigl\{ \frac{s M_{f,\beta}(x)^d}{f(x)} \Bigr\}^{\beta/d}.
\]
Moreover, $b_0(x)=1$ and $|b_\ell(x)| \leq A \{M_{f,\beta}(x)^d/f(x)\}^{2\ell/d}$.
\end{lemma}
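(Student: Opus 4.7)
The plan is first to derive a Taylor-type expansion of $h_{x,f}(r)$ in even powers of $r$, and then to invert that expansion formally to obtain an expansion of $V_d f(x) h_{x,f}^{-1}(s)^d$ in powers of $s^{2/d}$. Writing $M := M_{f,\beta}(x)$ and $\underline{\beta} := \lceil \beta \rceil - 1$, the definition of $M_{f,\beta}(x)$ ensures that on $B_0\bigl((2d^{1/2}M)^{-1}\bigr)$, Taylor's theorem applied to $f(x+u)$ to order $\underline{\beta}$ yields derivative bounds $|\partial^{\boldsymbol{\alpha}}f(x)| \leq f(x) M^{|\boldsymbol{\alpha}|}$ for $|\boldsymbol{\alpha}| \leq \underline{\beta}$, together with a H\"older-type remainder bounded by a constant depending only on $(d,\beta)$ times $f(x) M^\beta \|u\|^\beta$. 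Integrating over $B_0(r)$ and using the fact that $\int_{B_0(r)} u^{\boldsymbol{\alpha}}\,du = 0$ whenever $\boldsymbol{\alpha}$ has an odd component, every odd-order Taylor term drops out, leaving
\begin{equation*}
  h_{x,f}(r) = V_d f(x) r^d + \sum_{k=1}^{\lceil \beta/2 \rceil - 1} c_k(x) r^{d+2k} + R_0(x,r),
\end{equation*}
valid for $r \leq (2d^{1/2}M)^{-1}$, with $|c_k(x)| \lesssim f(x) M^{2k}$ and $|R_0(x,r)| \lesssim f(x) M^\beta r^{d+\beta}$, where the hidden constants depend only on $d$ and $\beta$.

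Next, setting $\sigma := s/(V_d f(x))$ and $\rho := r^d$ and dividing by $V_d f(x)$ converts the previous display into $\sigma = \rho + \sum_{k=1}^{\lceil \beta/2 \rceil - 1} a_k(x) \rho^{1+2k/d} + O\bigl(M^\beta \rho^{1+\beta/d}\bigr)$, with $|a_k(x)| \lesssim M^{2k}$. Using the ansatz $\rho = \sigma\bigl(1 + \sum_{l \geq 1} \tilde{b}_l(x) \sigma^{2l/d}\bigr)$ and matching coefficients of $\sigma^{2l/d}$ recursively yields a unique formal solution in which each $\tilde{b}_l(x)$ is a polynomial in $(a_1(x),\ldots,a_l(x))$ of weighted degree $l$ (assigning weight $k$ to $a_k$), giving $|\tilde{b}_l(x)| \lesssim M^{2l}$. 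Multiplying through by $V_d f(x)$ and setting $b_l(x) := \tilde{b}_l(x)/(V_d f(x))^{2l/d}$ produces the stated expansion, with $b_0(x) = 1$ (the leading term $V_d f(x) \sigma$ being exactly $s$) and $|b_l(x)| \lesssim \{M^d/f(x)\}^{2l/d}$.

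The main obstacle is making this inversion rigorous and controlling the remainder uniformly in $x \in \mathcal{X}_n$ and $s \in \mathcal{S}_n$; this is precisely where the hypothesis $a_n \to 0$ enters. Since $\sigma M^d = sM^d/(V_d f(x)) \leq V_d^{-1} a_n$, every correction $a_k(x) \rho^{2k/d}$ is of order $(\sigma M^d)^{2k/d} = O\bigl(a_n^{2k/d}\bigr)$. For $n$ large enough that $a_n$ is suitably small, this first ensures $r \leq (2d^{1/2}M)^{-1}$ so that the Taylor step is legitimate, and second, it makes the map $\rho \mapsto \sigma - \sum_k a_k(x) \rho^{1+2k/d}$ a contraction on, say, $[\sigma/2, 2\sigma]$ with unique fixed point $\rho = h_{x,f}^{-1}(s)^d$ (treating $R_0$ as a further small perturbation via the intermediate value theorem). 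Combining the Taylor remainder, which contributes an error of order $M^\beta \sigma^{1+\beta/d}$ to $\rho$, with the truncation error of order $\sigma^{1+2\lceil\beta/2\rceil/d}$ from dropping $\tilde{b}_l$ with $l \geq \lceil\beta/2\rceil$ (no worse than the Taylor remainder, since $2\lceil\beta/2\rceil \geq \beta$), and multiplying through by $V_d f(x)$, yields an error bounded by a constant multiple of $s\{sM^d/f(x)\}^{\beta/d}$, as required.
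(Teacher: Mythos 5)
Your proposal is correct and follows essentially the same route as the paper: the identical Taylor expansion of $h_{x,f}(r)$ with even powers surviving and a H\"older remainder of order $r^{d}f(x)\{M_{f,\beta}(x)r\}^{\beta}$, the use of $a_n \rightarrow 0$ to get the a priori bound $h_{x,f}^{-1}(s)^d \lesssim s/\{V_df(x)\}$ so the expansion applies at $r = h_{x,f}^{-1}(s)$, and then inversion of the resulting series. The only difference is presentational — you formalise the inversion via a coefficient-matching ansatz and a contraction/intermediate-value argument, whereas the paper bootstraps by substituting the bound into itself — so no further comment is needed.
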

\begin{proof}[Proof of Lemma~\ref{Lemma:hxinvbounds}]
By a Taylor expansion, for $r \leq 1/M_{f,\beta}(x)$ we have that
\begin{equation}
\label{Eq:BasicTaylor}
	\biggl| h_{x,f}(r) - V_d r^d f(x) - \sum_{\ell=1}^{\lceil \beta /2 \rceil -1} r^{d+2\ell} c_\ell(x) \biggr| \lesssim_{\beta,d} r^d f(x) \{M_{f,\beta}(x) r \}^{\beta}
\end{equation}
for some coefficients $c_\ell(\cdot)$ satisfying $|c_\ell(x)| \lesssim_{\beta,d} f(x) M_{f,\beta}(x)^{2\ell}$. In particular,
\[
	\biggl| \frac{h_{x,f}(r)}{V_d r^d f(x)} -1 \biggr| \lesssim_{\beta,d} \{M_{f,\beta}(x) r\}^{2 \wedge \beta}.
\]
Thus there exists $C=C(d,\beta) > 0$ such that we have $| \frac{h_{x,f}(r)}{V_dr^d f(x)}-1| \leq 1/2$ whenever $r \leq 1/\{CM_{f,\beta}(x)\}$. Setting $r=\{ \frac{2s}{V_d f(x)}\}^{1/d}$ we have
\[
	rCM_{f,\beta}(x) = 2^{1/d} C \Bigl\{ \frac{sM_{f,\beta}(x)^d}{V_d f(x)} \Bigr\}^{1/d} \leq (2a_n)^{1/d} C \rightarrow 0.
\]
So, for $n$ large enough that $(2a_n)^{1/d}C \leq 1$, we have $h_{x,f}( \{\frac{2s}{V_df(x)} \}^{1/d}) \geq s$, so $h_{x,f}^{-1}(s) \leq \{ \frac{2s}{V_df(x)} \}^{1/d}$ for all $x \in \mathcal{X}_{n}$ and  $s \in \mathcal{S}_n$. Now, since $ M_{f,\beta}(x) h_{x,f}^{-1}(s) \leq \{ \frac{2sM_{f,\beta}(x)^d}{V_d f(x)} \}^{1/d} \leq (2a_n)^{1/d} \rightarrow 0$, we may substitute $r=h_{x,f}^{-1}(s)$ into~\eqref{Eq:BasicTaylor} to see that
\[
	\biggl| \frac{s}{V_d f(x) h_{x,f}^{-1}(s)^d} -1 - \sum_{\ell=1}^{\lceil \beta/2 \rceil - 1} \frac{b_\ell(x)}{V_df(x)} h_{x,f}^{-1}(s)^{2\ell} \biggr| \lesssim_{\beta,d,(a_n)} \Bigl\{ \frac{s M_{f,\beta}(x)^d}{f(x)} \Bigr\}^{\beta/d}.
\]
This expansion can be inverted to yield the desired result by substituting this bound into itself and expanding functions of the form $r \mapsto r^{2\ell/d}$ about $r=1$.
\end{proof}

\begin{lemma}
\label{Lemma:15over7}
Fix $f \in \mathcal{F}_d$ and $\beta \in (0,\infty)$, and suppose that $\max\{\|y-x\|,\|z-x\|\} \leq 1/ \{ (6d)^{1/(\beta-\underline{\beta})}M_{f,\beta}(x) \}$. Then, for multi-indices $t \in \mathbb{N}_0^d$ with $|t| \leq \underline{\beta}$, we have that
\[
	\bigl| (\partial^t f)(z) -(\partial^t f)(y) \bigr| \leq 2 M_{f,\beta}(x)^{\min(\beta, |t|+1)}f(x) \|z-y\|^{\min(1,\beta-|t|)}. 
\]
\end{lemma}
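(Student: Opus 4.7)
The plan is to handle the two cases $|t| = \underline{\beta}$ and $|t| < \underline{\beta}$ separately, with the second case resting on a preliminary uniform bound on all derivatives of $f$ of order up to $\underline{\beta}$ throughout the ball $B_x(r_0)$, where $r_0 := \{(8d)^{1/(\beta-\underline{\beta})} M_{f,\beta}(x)\}^{-1}$. The first observation I need is that $r_0 \leq \{2d^{1/2} M_{f,\beta}(x)\}^{-1}$, which follows because $(8d)^{1/(\beta-\underline{\beta})} \geq 8d \geq 2d^{1/2}$ for $d \geq 1$ (recall $\beta-\underline{\beta} \in (0,1]$); in particular, the Hölder-type inequality on $f^{(\underline{\beta})}$ built into the definition of $M_{f,\beta}(x)$ is valid throughout $B_x(r_0)$, and by convexity the entire line segment $[y,z]$ lies in $B_x(r_0)$ as well.

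For the auxiliary bound, I will prove by backward induction on $|s| \in \{\underline{\beta},\underline{\beta}-1,\ldots,0\}$ that $|\partial^s f(w)| \leq C f(x) M_{f,\beta}(x)^{|s|}$ for all $w \in B_x(r_0)$, with $C = 8/7$ (far less than the $8d^{1/2}$ we aim for). The base case $|s| = \underline{\beta}$ follows from $|\partial^s f(w)| \leq |\partial^s f(x)| + \|f^{(\underline{\beta})}(w) - f^{(\underline{\beta})}(x)\|$ together with the two inequalities built into the definition of $M_{f,\beta}(x)$, and the useful identity $(r_0 M_{f,\beta}(x))^{\beta-\underline{\beta}} = (8d)^{-1}$. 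For the inductive step from order $\underline{\beta}-j$ to order $\underline{\beta}-j-1$, I will apply the mean value theorem along the segment $[x,w]$ to obtain $|\partial^s f(w)| \leq |\partial^s f(x)| + \|w - x\| \sup_{\xi \in [x,w]} \|\nabla \partial^s f(\xi)\|$, bound $\|\nabla \partial^s f(\xi)\| \leq d^{1/2} C_j f(x) M_{f,\beta}(x)^{\underline{\beta}-j}$ using the inductive hypothesis on the $d$ components, and use $r_0 d^{1/2} M_{f,\beta}(x) \leq 1/(8d^{1/2}) \leq 1/8$ to close the recursion $C_{j+1} \leq 1 + C_j/8$, whose fixed point is $8/7$.

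Finally, I will conclude as follows. When $|t| = \underline{\beta}$, we have $\min(\beta,|t|+1) = \beta$ and $\min(1,\beta-|t|) = \beta-\underline{\beta}$, and the desired inequality (with constant $1 \leq 8d^{1/2}$) is simply the Hölder bound on $f^{(\underline{\beta})}$ from the definition of $M_{f,\beta}(x)$, valid because $y,z \in B_x(r_0) \subseteq B_x(\{2d^{1/2}M_{f,\beta}(x)\}^{-1})$. When $|t| < \underline{\beta}$, we have $\min(\beta,|t|+1) = |t|+1$ and $\min(1,\beta-|t|) = 1$; the mean value theorem on $[y,z] \subseteq B_x(r_0)$ gives $|\partial^t f(z) - \partial^t f(y)| \leq \|z-y\| \sup_{w \in [y,z]} \|\nabla \partial^t f(w)\|$, and the uniform bound from the previous paragraph controls each component of $\nabla \partial^t f$ (which is of order $|t|+1 \leq \underline{\beta}$), yielding $\|\nabla \partial^t f(w)\| \leq (8/7) d^{1/2} f(x) M_{f,\beta}(x)^{|t|+1}$, which is within the allowed constant $8d^{1/2}$.

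The only genuinely delicate point is to arrange the constant budget so that the backward induction closes with a uniform constant; this is precisely why the radius carries the exponent $1/(\beta-\underline{\beta})$ rather than just a $(8d)^{-1}$ factor, as the former guarantees that $(r_0 M_{f,\beta}(x))^{\beta-\underline{\beta}}$ is controlled irrespective of how small $\beta - \underline{\beta}$ may be, which is exactly what is needed to absorb the Hölder contribution in the base case of the induction.
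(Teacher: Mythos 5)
Your proposal is correct. For $|t|=\underline{\beta}$ it is exactly the H\"older part of the definition of $M_{f,\beta}(x)$, valid because $(8d)^{1/(\beta-\underline{\beta})}\geq 2d^{1/2}$, and for $|t|<\underline{\beta}$ the bookkeeping all checks out: $\min(\beta,|t|+1)=|t|+1$ and $\min(1,\beta-|t|)=1$ in that case, the base-case constant is $1+(8d)^{-1}\leq 9/8$, the recursion $C_{j+1}=1+C_j/8$ (driven by $r_0d^{1/2}M_{f,\beta}(x)\leq 1/8$, where $r_0=\{(8d)^{1/(\beta-\underline{\beta})}M_{f,\beta}(x)\}^{-1}$) stays below its fixed point $8/7$, and the resulting constant $\tfrac{8}{7}d^{1/2}$ is well within the allowed $8d^{1/2}$. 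Your route is organized differently from the paper's: the paper applies one mean value step to $|\partial^t f(z)-\partial^t f(y)|$ and then telescopes the resulting increment of $f^{(|t|+1)}$ upward through the derivative orders until the H\"older condition applies at order $\underline{\beta}$, summing the ensuing near-geometric series in $d^{1/2}\|z-y\|M_{f,\beta}(x)$; you instead first establish, by backward induction anchored at order $\underline{\beta}$, the uniform bound $|\partial^s f(w)|\leq \tfrac{8}{7}f(x)M_{f,\beta}(x)^{|s|}$ for all $w\in B_x(r_0)$ and all $|s|\leq\underline{\beta}$, and then conclude with a single mean value step at order $|t|$ along $[y,z]\subseteq B_x(r_0)$. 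Both arguments rest on the same ingredients (the derivative and H\"older clauses in the definition of $M_{f,\beta}$, the mean value theorem, and the exponent $1/(\beta-\underline{\beta})$ in the radius, which makes $\{r_0M_{f,\beta}(x)\}^{\beta-\underline{\beta}}=(8d)^{-1}$ irrespective of how small $\beta-\underline{\beta}$ is); yours buys a reusable sup-norm bound on all derivatives over the ball and a marginally cleaner constant, while the paper's telescoping produces the stated inequality in one chain without an auxiliary uniform bound and keeps the H\"older exponent visible throughout.
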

\begin{proof}
First, if $|t|=\underline{\beta}$ then we simply have that
\[
	\bigl| (\partial^t f)(z) -(\partial^t f)(y) \bigr| \leq \|f^{(\underline{\beta})}(z)-f^{(\underline{\beta})}(y)\| \leq M_{f,\beta}(x)^\beta f(x) \|z-y\|^{\beta-\underline{\beta}},
\]
and the claim holds. Henceforth assume that $|t| \leq \underline{\beta}-1$ and $\underline{\beta} \geq 1$. Writing $\vertiii{\cdot}$ here for the largest absolute entry of an array, writing $L_{yz}$ for the line segment between $y$ and $z$, and arguing inductively we have that
\begin{align*}
	\bigl| & \partial^t f(z) -\partial^t f(y) \bigr| \leq \|z-y\| \sup_{w \in L_{yz}} \bigl\| \nabla \partial^t f(w) \bigl\| \\
	& \leq \|z-y\| \|f^{(|t|+1)}(x)\| + d^{1/2} \|z-y\| \biggl\{ \vertiii{ f^{(|t|+1)}(y) - f^{(|t| +1)}(x)} \\
	& \hspace{225pt} + \sup_{w \in L_{yz} } \vertiii{ f^{(|t|+1)}(w) - f^{(|t| +1)}(y)} \biggr\} \\
	& \leq \|z-y\| f(x) \Bigl[ M_{f, \beta}(x)^{|t|+1} \\
	& \hspace{50pt} + 2d^{1/2} M_{f, \beta}(x)^{\min(\beta,|t|+2)} \bigl\{ \|y-x\|^{\min(1,\beta-|t|-1)} + \|z-y\|^{\min(1,\beta-|t|-1)} \bigr\} \Bigr] \\
	& \leq \|z-y\| f(x) \Bigl\{ M_{f, \beta}(x)^{|t|+1} +  M_{f,\beta}(x)^{\min(\beta, |t| +2) - \min(1,\beta-|t|-1)} \Bigr\} \\
	& = 2 M_{f, \beta}(x)^{|t|+1}f(x) \|z-y\|,
\end{align*}
as required.
\end{proof}
The following lemma presents a tail bound for a $\mathrm{Beta}(a,b-a)$ random variable that is convenient to apply in settings where $a > 0$ is large and $a/b$ is small.
\begin{lemma}
\label{Lemma:BetaTailBounds}
Suppose $b>a>0$ and $B \sim \mathrm{Beta}(a,b-a)$. Writing $h(t):=t-\log(1+t)$ we have that
\begin{align*}
	\mathbb{P} \Bigl( \Bigl| B - &\frac{a}{b} \Bigr| \geq \frac{a^{1/2} u}{b} \Bigr)\leq 2 \exp \biggl( -a h \Bigl( \frac{a^{-1/2}b^{1/2} u}{b^{1/2} + a^{1/2} + u} \Bigr) \biggr) \!+\! 2\exp \biggl( -b h \Bigl( \frac{u}{b^{1/2} + a^{1/2}+ u} \Bigr) \biggr)
\end{align*}
for all $u \in [0, \infty)$.
\end{lemma}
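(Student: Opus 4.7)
My plan is to reduce the Beta tail bound to two standard Chernoff-type Gamma tail bounds via the ratio representation of the Beta distribution. Let $G_1 \sim \Gamma(a,1)$ and $G_2 \sim \Gamma(b-a,1)$ be independent, so that $S := G_1 + G_2 \sim \Gamma(b,1)$ and $G_1/S$ has the $\mathrm{Beta}(a, b-a)$ law; it therefore suffices to bound $\mathbb{P}(|G_1/S - a/b| \geq a^{1/2}u/b)$. The key deterministic observation is that, for the choices
\[
s_1 := \frac{a^{1/2}b^{1/2}u}{a^{1/2}+b^{1/2}+u}, \qquad s_2 := \frac{bu}{a^{1/2}+b^{1/2}+u},
\]
one has the inclusion
\[
\{|G_1/S - a/b| \geq a^{1/2}u/b\} \;\subseteq\; \{|G_1 - a| \geq s_1\} \;\cup\; \{|S - b| \geq s_2\}.
\]
I will verify this by contrapositive: using the identity $b(G_1/S) - a = (b(G_1-a) - a(S-b))/S$ together with the triangle inequality, the two assumptions $|G_1-a| < s_1$ and $|S-b| < s_2$ imply $|bG_1/S - a| < (bs_1 + as_2)/(b-s_2)$, and a short algebraic computation shows the right-hand side equals exactly $a^{1/2}u$.

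With the inclusion in hand, the proof reduces to standard tail bounds for $|G_1 - a|$ and $|S - b|$. For $G \sim \Gamma(\alpha, 1)$ and $t > 0$, the usual Chernoff argument applied to $\mathbb{E}e^{\lambda G} = (1-\lambda)^{-\alpha}$, optimised at $\lambda = t/(\alpha+t)$, gives $\mathbb{P}(G \geq \alpha + t) \leq \exp(-\alpha h(t/\alpha))$; the companion Laplace-transform bound at negative $\lambda$ yields $\mathbb{P}(G \leq \alpha - t) \leq \exp(-\alpha h(-t/\alpha))$ for $0 < t < \alpha$. Since $h(-s) > h(s)$ for every $s > 0$ (which follows from the convexity of $h$ together with $h(0)=h'(0)=0$), one therefore has the two-sided bound $\mathbb{P}(|G - \alpha| \geq t) \leq 2\exp(-\alpha h(t/\alpha))$, valid for all $t > 0$. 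Applying this bound to $(\alpha, G, t) = (a, G_1, s_1)$ and $(b, S, s_2)$, and substituting the identities $s_1/a = a^{-1/2}b^{1/2}u/(b^{1/2}+a^{1/2}+u)$ and $s_2/b = u/(b^{1/2}+a^{1/2}+u)$, produces the two exponential terms in the statement.

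The only non-routine element is the identification of $s_1$ and $s_2$, which is essentially forced by the algebraic requirement that $(bs_1 + as_2)/(b - s_2) = a^{1/2}u$ match the threshold in the target event; the rest of the argument is a union bound over two standard Gamma Chernoff inequalities. I anticipate no substantial analytic difficulty.
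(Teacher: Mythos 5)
Your proof is correct and takes essentially the same route as the paper: both represent the $\mathrm{Beta}(a,b-a)$ variable as $\Gamma_a/(\Gamma_a+\Gamma_{b-a})$ with independent $\Gamma_a \sim \Gamma(a,1)$, $\Gamma_{b-a}\sim\Gamma(b-a,1)$, apply two-sided Chernoff bounds for Gamma tails (using $h(-s)\geq h(s)$ to unify the lower tail), and split the deviation event between the numerator and the sum $S\sim\Gamma(b,1)$ before taking a union bound. Your explicit thresholds $s_1,s_2$ are just the paper's $\epsilon$-parametrised split with $\epsilon=a^{1/2}/(a^{1/2}+b^{1/2})$ written out directly, so there is no substantive difference and no gap.
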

\begin{proof}
Our proof relies on concentration inequalities for gamma random variables, which we establish now. For $a>0$, letting $\Gamma_a \sim \Gamma(a,1)$ we have by a Chernoff bound that for $t \geq 0$,
\begin{align*}
	\mathbb{P} \Bigl( \frac{\Gamma_a-a}{a} \geq t \Bigr) \leq \inf_{ \lambda \in (0,a)} e^{-\lambda t - \lambda} \Bigl( 1- \frac{\lambda}{a} \Bigr)^{-a} = e^{-ah(t)}.
\end{align*}
Similarly, for $t \in [0,1)$ we have that
\[
	\mathbb{P} \Bigl( \frac{\Gamma_a-a}{a} \leq -t \Bigr) \leq \inf_{\lambda>0} e^{\lambda -\lambda t} \Bigl( 1+\frac{\lambda}{a} \Bigr)^{-a}  = e^{-ah(-t)} \leq e^{-ah(t)},
\]
and thus, for all $t \geq 0$, we have that $\mathbb{P}( |\Gamma_a -a| \geq at) \leq 2e^{-ah(t)}$. Now, for independent random variables $\Gamma_a \sim \Gamma(a,1)$ and $\Gamma_{b-a} \sim \Gamma(b-a,1)$ we have that $\Gamma_a/(\Gamma_a+\Gamma_{b-a}) \sim \mathrm{Beta}(a,b)$, and so for $t \geq 0$ and $\epsilon \in (0,1)$ we have that
\begin{align*}
	\mathbb{P} \Bigl( \Bigl| B - \frac{a}{b} \Bigr| \geq t \Bigr) &= \mathbb{P} \biggl( \biggl| \frac{\Gamma_a-a}{\Gamma_a+\Gamma_{b-a}} + \frac{a}{b} \Bigl( \frac{b}{\Gamma_a + \Gamma_{b-a}} -1 \Bigr) \biggr| \geq t \biggr) \\
	& \leq \mathbb{P} \biggl( \biggr| \frac{a}{b} \Bigl( \frac{b}{\Gamma_a + \Gamma_{b-a}} -1 \Bigr) \biggr| \geq \epsilon t \biggr) + \mathbb{P} \biggl( \frac{|\Gamma_a -a|}{b} \geq \frac{(1-\epsilon)t}{1+\epsilon tb/a} \biggr) \\
	& \leq \mathbb{P} \biggl( \frac{|\Gamma_a + \Gamma_{b-a} - b|}{b} \geq \frac{\epsilon t b}{a + \epsilon t b} \biggr) + \mathbb{P} \biggl( \frac{|\Gamma_a-a|}{a} \geq \frac{(1-\epsilon)tb}{a+\epsilon t b} \biggr).
\end{align*}
Choosing $\epsilon=a^{1/2}/(a^{1/2}+b^{1/2})$ and writing $t=a^{1/2}u/b$ we may now see that
\begin{align*}
	\mathbb{P} \Bigl( \Bigl| B - &\frac{a}{b} \Bigr| \geq \frac{a^{1/2}u}{b} \Bigr) \\
	&\leq \mathbb{P} \biggl( \frac{|\Gamma_a + \Gamma_{b-a} - b|}{b} \geq \frac{u}{a^{1/2} + b^{1/2} + u} \biggr) \!+ \mathbb{P} \biggl( \frac{|\Gamma_a-a|}{a} \geq \frac{a^{-1/2} b^{1/2} u}{a^{1/2} + b^{1/2} + u} \biggr) \\
	& \leq 2\exp \biggl( -b h \Bigl( \frac{u}{b^{1/2} + a^{1/2}+ u} \Bigr) \biggr) + 2 \exp \biggl( -a h \Bigl( \frac{a^{-1/2}b^{1/2} u}{b^{1/2} + a^{1/2} + u} \Bigr) \biggr),
\end{align*}
as required.
\end{proof}

\begin{lemma}
\label{Lemma:GeneralisedHolder}
	Fix $d \in \mathbb{N}$ and $\vartheta =(\alpha,\beta,\lambda_1,\lambda_2,C) \in \Theta$. Suppose that $a,b,c \in [0,\infty)$ are such that $\frac{a}{\lambda_1} + \frac{b}{\lambda_2} + \frac{c}{\alpha} \leq 1$. Then
\[
	\sup_{(f,g) \in \mathcal{F}_{d,\vartheta}} \int_\mathcal{X} f(x)  \Bigl\{ \frac{M_{\beta}(x)^d}{f(x)} \Bigr\}^a \Bigl\{ \frac{M_{\beta} (x)^d}{g(x)} \Bigr\}^b (1+\|x\|)^c \,dx < \infty.
\]
%Moreover, whenever $a\in (d/(\alpha+d),1]$ we have that
%\[
%	\sup_{f \in \mathcal{F}_d : \mu_\alpha(f) \leq C} \int_\mathcal{X} f(x)^a \,dx < \infty.
%\]
\end{lemma}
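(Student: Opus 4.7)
The plan is to prove both parts by Hölder's inequality, exploiting the various moment and integrability hypotheses built into the definitions of $\mathcal{F}_{d,\vartheta}$ and $\mathcal{F}_d$. For the first claim, the key algebraic observation is that we can decompose the integrand as a product of four factors matching exactly the four integrability conditions we have at our disposal. Specifically, I would write
\[
  f(x)\Bigl\{\tfrac{M_\beta(x)^d}{f(x)}\Bigr\}^a \Bigl\{\tfrac{M_\beta(x)^d}{g(x)}\Bigr\}^b (1+\|x\|)^c = A(x)^{a/\lambda_1}\, B(x)^{b/\lambda_2}\, C(x)^{c/\alpha}\, D(x)^{1-a/\lambda_1-b/\lambda_2-c/\alpha},
\]
where $A(x) := f(x)\{M_\beta(x)^d/f(x)\}^{\lambda_1}$, $B(x) := f(x)\{M_\beta(x)^d/g(x)\}^{\lambda_2}$, $C(x) := f(x)(1+\|x\|)^\alpha$ and $D(x) := f(x)$; the elementary calculation that the exponents of $f$ collapse to $1$ is straightforward, and the four exponents are nonnegative and sum to $1$ by the hypothesis $a/\lambda_1 + b/\lambda_2 + c/\alpha < 1$ (terms with $a=0$, $b=0$ or $c=0$ are simply dropped, and the argument goes through unchanged). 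Applying the four-way Hölder inequality then yields
\[
  \int_{\mathcal{X}} f\Bigl(\tfrac{M_\beta^d}{f}\Bigr)^a\Bigl(\tfrac{M_\beta^d}{g}\Bigr)^b(1+\|\cdot\|)^c \leq \Bigl(\int A\Bigr)^{\! a/\lambda_1} \Bigl(\int B\Bigr)^{\! b/\lambda_2}\Bigl(\int C\Bigr)^{\! c/\alpha}\Bigl(\int D\Bigr)^{\! 1-a/\lambda_1-b/\lambda_2-c/\alpha},
\]
and each of the four integrals on the right is bounded uniformly over $(f,g) \in \mathcal{F}_{d,\vartheta}$: the first two by $C$ directly from the definition of $\mathcal{F}_{d,\vartheta}$, the third by $2^\alpha(1+C)$ using $(1+\|x\|)^\alpha \leq 2^\alpha(1 + \|x\|^\alpha)$ together with $\mu_\alpha(f) \leq C$, and the fourth by $1$ since $f$ is a density.

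For the \emph{moreover} part we lose the $M_\beta$-type controls and must work from only $\mu_\alpha(f) \leq C$, so a two-factor Hölder is what is needed. The plan is to introduce a polynomial weight in order to convert the integral into something controlled by the moment bound. With exponents $1/a$ and $1/(1-a)$ (valid since $a \in (d/(\alpha+d),1] \subseteq (0,1]$, the case $a=1$ being trivial), Hölder's inequality gives
\[
  \int_{\mathcal{X}} f(x)^a \, dx = \int_{\mathcal{X}} \bigl\{f(x)(1+\|x\|)^\alpha\bigr\}^a (1+\|x\|)^{-a\alpha}\, dx \leq \Bigl(\int_{\mathbb{R}^d} f(x)(1+\|x\|)^\alpha\, dx\Bigr)^{\! a} \Bigl(\int_{\mathbb{R}^d} (1+\|x\|)^{-a\alpha/(1-a)}\, dx\Bigr)^{\! 1-a}.
\]
The first factor is at most $\{2^\alpha(1+C)\}^a$ as before, while the second factor is finite precisely when $a\alpha/(1-a) > d$, i.e.\ when $a > d/(\alpha+d)$, which is exactly our hypothesis. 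There are no substantive obstacles in either part; the only subtlety is the bookkeeping in the first part to verify that the chosen Hölder exponents are valid and that the factors of $f$ combine as claimed, which is precisely what the condition $a/\lambda_1 + b/\lambda_2 + c/\alpha < 1$ ensures.
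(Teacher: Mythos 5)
Your proof is correct and essentially the same as the paper's: both parts rest on the generalised H\"older inequality applied to the integrability conditions defining $\mathcal{F}_{d,\vartheta}$ and the moment bound $\mu_\alpha(f)\leq C$, and your second part is identical to the paper's. The only cosmetic difference is in the first part, where you use a four-factor split with exponent $c/\alpha$ on the weight $f(x)(1+\|x\|)^\alpha$ and put the leftover slack on $f$ itself, whereas the paper uses a three-factor H\"older and bounds $(1+\|x\|)^{c/(1-a/\lambda_1-b/\lambda_2)}\leq(1+\|x\|)^\alpha$ directly; both yield the same uniform bound.
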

\begin{proof}
	By the generalised H\"older inequality \citep[e.g.][Chapter~6, Exercise~31]{Folland1999}, if $X \sim f$ we have that
\begin{align*}
	\int_\mathcal{X} f(x) & \Bigl\{ \frac{M_{\beta}(x)^d}{f(x)} \Bigr\}^a \Bigl\{ \frac{M_{\beta} (x)^d}{g(x)} \Bigr\}^b (1+\|x\|)^c \,dx \\
	& = \mathbb{E} \biggl[ \Bigl\{ \frac{M_{\beta}(X)^d}{f(X)} \Bigr\}^a \Bigl\{ \frac{M_{\beta} (X)^d}{g(X)} \Bigr\}^b (1+\|X\|)^c \biggr] \\
	& \leq \mathbb{E} \biggl[ \Bigl\{ \frac{M_{\beta}(X)^d}{f(X)} \Bigr\}^{\lambda_1} \biggr]^{\frac{a}{\lambda_1}} \mathbb{E} \biggl[ \Bigl\{ \frac{M_{\beta}(X)^d}{g(X)} \Bigr\}^{\lambda_2} \biggr]^{\frac{b}{\lambda_2}} \mathbb{E} \biggl[ (1+\|X\|)^\frac{c}{1-\frac{a}{\lambda_1}-\frac{b}{\lambda_2}} \biggr]^{1-\frac{a}{\lambda_1}-\frac{b}{\lambda_2}} \\
	& \leq C^{\frac{a}{\lambda_1}+\frac{b}{\lambda_2}} \bigl[\mathbb{E} \bigl\{ (1+\|X\|)^\alpha \bigr\}\bigr]^{1-\frac{a}{\lambda_1}-\frac{b}{\lambda_2}} \leq C^{\frac{a}{\lambda_1}+\frac{b}{\lambda_2}} \{2^{\alpha} ( 1 + C)\}^{1-\frac{a}{\lambda_1}-\frac{b}{\lambda_2}},
\end{align*}
as required. %When $a=1$ the second part of the result is trivial. For $a \in (d/(\alpha+d), 1)$ we have $a/(1-a) > d/\alpha$ and we may use H\"older's inequality to see that
%\begin{align*}
%	\int_\mathcal{X} f(x)^a \,dx &\leq \biggl\{ \int_\mathcal{X} (1+\|x\|)^{\alpha} f(x) \,dx \biggr\}^a \biggr\{ \int_{\mathbb{R}^d} (1+\|x\|)^{-a \alpha/(1-a)} \,dx \biggr\}^{1-a} \\
%	& \leq \Bigl\{ (1+C) \max(1,2^{\alpha-1}) \Bigr\}^a \biggl\{ \frac{V_d \Gamma(d+1) \Gamma( \frac{a \alpha}{1-a} -d)}{\Gamma(\frac{a \alpha}{1-a} )} \biggr\}^{1-a},
%\end{align*}
%also as required.
\end{proof}

\begin{lemma}
\label{Lemma:hxinvbounds2}
Fix $f \in \mathcal{F}_d$ with $\max\bigl (\|f\|_\infty,\mu_\alpha(f) \bigr) \leq C$ and $\beta \in (0,\infty)$. Then for all $x \in \mathcal{X}$ and $s \in (0,1)$,
\begin{align*}
	\Bigl( \frac{s}{CV_d} \Bigr)^{1/d} &\leq h_{x,f}^{-1}(s) \leq \min \biggl\{ \|x\| + \Bigl( \frac{C}{1-s} \Bigr)^{1/\alpha},  \\
	& \Bigl( \frac{2s}{V_d f(x)} \Bigr)^{1/d} \biggl[ 1 + (6d)^{1/(\beta -\underline{\beta})} M_{f,\beta}(x) \Bigl\{ \|x\| + \Bigl( \frac{C}{1-s} \Bigr)^{1/\alpha} \Bigr\} \biggr] \biggr\}.
\end{align*}
\end{lemma}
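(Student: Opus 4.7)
The plan is to handle the two inequalities separately, with the lower bound being immediate from the $L^\infty$ bound on $f$ and the upper bound coming from a case split based on whether the ``ideal radius'' $r^* := \bigl(2s/(V_df(x))\bigr)^{1/d}$ lies inside the region on which $f$ is well-approximated in a relative sense by $f(x)$.

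For the lower bound, since $\|f\|_\infty \leq C$ we have $h_{x,f}(r) \leq CV_d r^d$ for every $r \geq 0$, so evaluating at $r = h_{x,f}^{-1}(s)$ yields $s \leq C V_d h_{x,f}^{-1}(s)^d$, which rearranges to the claimed lower bound. This step is routine.

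For the upper bound, set $A := (16d)^{1/(\beta-\underline{\beta})}$ and $r_0(x) := \{AM_{f,\beta}(x)\}^{-1}$. The main input is the already-noted consequence of Lemma~\ref{Lemma:15over7}, namely that $f(y) \geq f(x)/2$ for all $y \in B_x(r_0(x))$. In the first case $r^* \leq r_0(x)$, this gives $h_{x,f}(r^*) \geq (f(x)/2) V_d (r^*)^d = s$ and hence $h_{x,f}^{-1}(s) \leq r^*$, which is dominated by the advertised right-hand side because the bracketed factor is at least $1$. In the complementary case $r^* > r_0(x)$, i.e.\ $r^*AM_{f,\beta}(x) > 1$, I abandon the local argument and instead use a tail bound: Markov's inequality applied to $\mu_\alpha(f) \leq C$ gives $\mathbb{P}_{X\sim f}(\|X\| > R) \leq C/R^\alpha$, so taking $R := \bigl(C/(1-s)\bigr)^{1/\alpha}$ ensures $\mathbb{P}(\|X\|\leq R) \geq s$. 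Since $B_x(\|x\|+R) \supseteq B_0(R)$, this yields $h_{x,f}^{-1}(s) \leq \|x\| + (C/(1-s))^{1/\alpha}$, which is itself bounded by $r^*AM_{f,\beta}(x) \bigl\{\|x\| + (C/(1-s))^{1/\alpha}\bigr\}$ by the case hypothesis, matching the second summand in the bracketed factor.

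I do not anticipate any real obstacle: the only minor subtlety is the degenerate case $M_{f,\beta}(x) = \infty$, which is absorbed automatically (the right-hand side becomes infinite and there is nothing to prove), and the case $\beta \in (0,1]$, $\underline{\beta}=0$, where $A = 16d$ behaves as in the already-cited consequence of Lemma~\ref{Lemma:15over7}. Everything else is bookkeeping.
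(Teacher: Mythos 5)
Your proposal is correct and follows essentially the same route as the paper: the lower bound from $\|f\|_\infty \leq C$, the upper bound by combining the relative approximation $f \geq f(x)/2$ on $B_x(r_0(x))$ (from Lemma~\ref{Lemma:15over7}) with the Markov/triangle-inequality tail bound $h_{x,f}^{-1}(s) \leq \|x\| + (C/(1-s))^{1/\alpha}$, assembled via a case split. The only (harmless) difference is that you split on whether $r^* = (2s/(V_df(x)))^{1/d}$ exceeds $r_0(x)$ rather than on whether $h_{x,f}^{-1}(s)$ does, which if anything makes the final combination of the two cases slightly more transparent.
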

\begin{proof}
The lower bound is immediate on noting that
\[
	h_{x,f}(r) \leq CV_d r^d.
\]
For the upper bound, by Lemma~\ref{Lemma:15over7}, if $\|y-x\| \leq 1/\{(6d)^{1/(\beta -\underline{\beta})} M_{f,\beta}(x)\}$, then we have that
\[
	\Bigl| \frac{f(y)}{f(x)} - 1 \Bigr| \leq 2 M_{f,\beta}(x)^{1 \wedge \beta} \|y-x\|^{1 \wedge \beta} \leq \frac{1}{2}.
\]
Thus, whenever $r \leq 1/ \{(6d)^{1/(\beta -\underline{\beta})} M_{f,\beta}(x) \}$ we have that
\[
	\frac{1}{2} V_d r^d f(x) \leq  h_{x,f}(r) \leq \frac{3}{2} V_d r^d f(x).
\]
%Inverting this relation shows us that
%\begin{align*}
%	h_{x,f}^{-1}(s)^d &\geq \frac{2}{3 V_d f(x)} \min \Bigl( s , h_{x,f}\bigl( \{(16d)^{1/(\beta -\underline{\beta})} M_{f,\beta}(x) \}^{-1} \bigr) \Bigr) \\
%	& \geq \frac{2}{3 V_d f(x)} \min \Bigl( s , \frac{V_d f(x)}{2 (16d)^{d/(\beta -\underline{\beta})} M_{f,\beta}(x)^d} \Bigr).
%\end{align*}
Now, by the triangle and Markov's inequalities, for every $s \in (0,1)$,
\[
  \mathbb{P}\biggl(\|X_1-x\| > \|x\| + \Bigl(\frac{C}{1-s}\Bigr)^{1/\alpha}\biggr) \leq \mathbb{P}\biggl(\|X_1\| > \Bigl(\frac{C}{1-s}\Bigr)^{1/\alpha}\biggr) \leq 1-s,
\]
so that
\[
	h_{x,f}^{-1}(s) \leq \|x\| + \Bigl( \frac{C}{1-s} \Bigr)^{1/\alpha}.
\]
% Combining this with the fact that $h_{x,f}( \{ (16d)^{1/(\beta -\underline{\beta})} M_{f,\beta}(x) \}^{-1}) \geq \frac{V_d f(x)}{2(16d)^{d/(\beta -\underline{\beta})} M_{f,\beta}(x)^d}$, we have that
Hence,
\begin{align*}
	h_{x,f}^{-1}(s) &\leq \Bigl( \frac{2s}{V_d f(x)} \Bigr)^{1/d} \mathbbm{1}_{\{ h_{x,f}^{-1}(s) \leq 1/\{(6d)^{1/(\beta -\underline{\beta})} M_{f,\beta}(x)\} ) \}} \\
	&\hspace{65pt} + \Bigl\{ \|x\| + \Bigl( \frac{C}{1-s} \Bigr)^{1/\alpha} \Bigr\} \mathbbm{1}_{\{ h_{x,f}^{-1}(s) > 1/\{(6d)^{1/(\beta -\underline{\beta})} M_{f,\beta}(x)\} )\}} \\
	& \leq \Bigl( \frac{2s}{V_d f(x)} \Bigr)^{1/d} \biggl[ 1 + (6d)^{1/(\beta -\underline{\beta})} M_{f,\beta}(x) \Bigl\{ \|x\| + \Bigl( \frac{C}{1-s} \Bigr)^{1/\alpha} \Bigr\} \biggr],
\end{align*}
as required.
\end{proof}

The following lemma shows that we may restrict our main attention to the events
\begin{equation}
\label{Eq:SetA}
	A_i^X:=\bigl\{ h_{X_i,f}( \rho_{(k_X),i,X}^d ) \in \mathcal{I}_{m,X} \bigr\}, \quad A_i^Y:= \bigl\{ h_{X_i,g}( \rho_{(k_Y),i,Y}^d ) \in \mathcal{I}_{n,Y} \bigr\},
      \end{equation}
      for $i =1,\ldots,n$.
%where $a_{m,X}^\pm = \frac{k_X}{m} \pm 2 \frac{k_X^{1/2} \log^{1/2} m}{m}$ and $a_{n,Y}^\pm$ is defined similarly.
\begin{lemma}
\label{Lemma:Acomplement}
Fix $d \in \mathbb{N}$, $\vartheta \in \Theta$, $(\kappa_1,\kappa_2) \in \mathbb{R}^2$ and suppose that
\[
	\frac{\kappa_1^-}{\lambda_1} +  \frac{\kappa_2^-}{\lambda_2} + \frac{d(\kappa_1^-+\kappa_2^-)}{\alpha} \leq 1.
\]
Let $k_X^{\mathrm{L}} \leq k_X^{\mathrm{U}},k_Y^{\mathrm{L}} \leq k_Y^{\mathrm{U}}$ be deterministic sequences of positive integers such that $k_X^{\mathrm{L}} / \log m \rightarrow \infty$, $k_Y^{\mathrm{L}}/ \log n \rightarrow \infty$, $k_X^{\mathrm{U}}/m \rightarrow 0$ and $k_Y^{\mathrm{U}}/n \rightarrow 0$.  Then
\begin{align*}
	&\max_{\substack{k_X \in \{k_X^{\mathrm{L}},\ldots,k_X^{\mathrm{U}}\}\\k_Y \in \{k_Y^{\mathrm{L}},\ldots,k_Y^{\mathrm{U}}\}}} \sup_{(f,g) \in \mathcal{F}_{d, \vartheta}} \mathbb{E} \Bigl[ \max \Bigl\{ \hat{f}_{(k_X),1}^{\kappa_1}, \hat{f}_{(k_X),1}^L,  f(X_1)^{\kappa_1} \Bigr\} \\
	& \hspace{80pt} \times  \max \Bigl\{ \hat{g}_{(k_Y),1}^{\kappa_2}, \hat{g}_{(k_Y),1}^L, g(X_1)^{\kappa_2} \Bigr\} \bigl(1-\mathbbm{1}_{A_1^X} \mathbbm{1}_{A_1^Y} \bigr) \Bigr] = o(m^{-4}+n^{-4}) 
\end{align*}
      as $m,n \rightarrow \infty$.  
    \end{lemma}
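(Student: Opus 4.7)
The plan is to bound $1-\mathbbm{1}_{A_1^X}\mathbbm{1}_{A_1^Y}\le\mathbbm{1}_{(A_1^X)^c}+\mathbbm{1}_{(A_1^Y)^c}$ and handle the two resulting terms separately and symmetrically; I focus on the contribution from $\mathbbm{1}_{(A_1^X)^c}$, which should give $o(m^{-4})$, while the $\mathbbm{1}_{(A_1^Y)^c}$ contribution gives $o(n^{-4})$ by an identical argument with $(f,m,X)$ swapped for $(g,n,Y)$. Writing $M_1$ and $M_2$ for the two $\max$ factors in the statement, the key structural point is that, conditional on $X_1$, both $M_1$ and $\mathbbm{1}_{A_1^X}$ are functions of $X_2,\ldots,X_m$, while $M_2$ is a function of $Y_1,\ldots,Y_n$. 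Hence by conditional independence,
\[
\mathbb{E}\bigl[M_1M_2\mathbbm{1}_{(A_1^X)^c}\bigr]=\mathbb{E}\Bigl[\mathbb{E}\bigl\{M_1\mathbbm{1}_{(A_1^X)^c}\bigm|X_1\bigr\}\,\mathbb{E}(M_2\mid X_1)\Bigr],
\]
and I may treat the two inner factors separately.

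For $\mathbb{E}(M_2\mid X_1)$, with $B_2:=h_{X_1,g}(\rho_{(k_Y),1,Y})\mid X_1\sim\mathrm{Beta}(k_Y,n+1-k_Y)$, the upper bound of Lemma~\ref{Lemma:hxinvbounds2} applied to $g$ gives
\[
\hat g_{(k_Y),1}^{-\kappa_2}\lesssim(n/k_Y)^{\kappa_2}B_2^{\kappa_2}g(X_1)^{-\kappa_2}\Bigl\{1+M_\beta(X_1)^{d\kappa_2}\bigl(\|X_1\|^{d\kappa_2}+(1-B_2)^{-d\kappa_2/\alpha}\bigr)\Bigr\},
\]
while the lower bound of the same lemma yields $\hat g_{(k_Y),1}^L\lesssim(k_Y/n)^L B_2^{-L}$. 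Taking expectations in $B_2$ via the explicit Beta moment formulae, I use $\mathbb{E} B_2^a\asymp(k_Y/n)^a$, $\mathbb{E}(1-B_2)^{-b}=O(1)$ and $\mathbb{E} B_2^{-L}\asymp(n/k_Y)^L$ (each of which requires only that the shifted Beta parameters remain positive, which is guaranteed by $k_Y^{\mathrm L}/\log n\to\infty$) to collapse the $k_Y,n$ prefactors, producing
\[
\mathbb{E}(M_2\mid X_1)\lesssim 1+g(X_1)^{-\kappa_2}\bigl\{1+M_\beta(X_1)^{d\kappa_2}(1+\|X_1\|)^{d\kappa_2}\bigr\}.
\]

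The heart of the argument is the bound on $\mathbb{E}\bigl\{M_1\mathbbm{1}_{(A_1^X)^c}\mid X_1\bigr\}$, where I need genuine decay in $m$. Setting $B_1:=h_{X_1,f}(\rho_{(k_X),1,X})\mid X_1\sim\mathrm{Beta}(k_X,m-k_X)$ and applying Lemma~\ref{Lemma:hxinvbounds2} to $f$ reduces everything to integrals of the form $\int_{[0,1]\setminus\mathcal{I}_{m,X}}s^a(1-s)^b\mathrm{B}_{k_X,m-k_X}(s)\,ds$ for finitely many pairs $(a,b)$ with $a\in\{-L,\kappa_1,0\}$ and $b\in\{0,-d\kappa_1/\alpha\}$. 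I then exploit the reweighting identity
\[
s^a(1-s)^b\mathrm{B}_{k_X,m-k_X}(s)=\frac{\mathrm{B}(k_X+a,m-k_X+b)}{\mathrm{B}(k_X,m-k_X)}\mathrm{B}_{k_X+a,m-k_X+b}(s),
\]
which rewrites each such integral as a tail probability for a Beta random variable $B_1^{\sharp}$ whose parameters differ from $(k_X,m-k_X)$ by $O(1)$; its mean is therefore still $k_X/m+O(1/m)$, and Lemma~\ref{Lemma:BetaTailBounds} with $u=3\log^{1/2}m$, combined with $k_X^{\mathrm L}/\log m\to\infty$ so that $h(u/k_X^{1/2})=u^2/(2k_X)\bigl(1+o(1)\bigr)$, yields $\mathbb{P}(B_1^\sharp\notin\mathcal I_{m,X})\le m^{-9/2+o(1)}$. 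Crucially, the prefactor $\mathrm{B}(k_X+a,m-k_X+b)/\mathrm{B}(k_X,m-k_X)\asymp(k_X/m)^a$ precisely cancels the $(m/k_X)^a$ and $(m/k_X)^{-L}$ factors that appear when converting $\rho_{(k_X),1,X}^d$ back to $\hat f_{(k_X),1}$, giving
\[
\mathbb{E}\bigl\{M_1\mathbbm{1}_{(A_1^X)^c}\bigm|X_1\bigr\}\lesssim m^{-9/2+o(1)}\bigl\{1+f(X_1)^{-\kappa_1}\bigl(1+M_\beta(X_1)^{d\kappa_1}(1+\|X_1\|)^{d\kappa_1}\bigr)\bigr\}.
\]

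Multiplying the two bounds and integrating against $f$, each resulting integrand takes the form $f(x)\bigl(M_\beta(x)^d/f(x)\bigr)^a\bigl(M_\beta(x)^d/g(x)\bigr)^b(1+\|x\|)^c$ with $a\le\kappa_1$, $b\le\kappa_2$ and $c\le d(\kappa_1+\kappa_2)$; Lemma~\ref{Lemma:GeneralisedHolder} bounds each such integral uniformly over $\mathcal{F}_{d,\vartheta}$ exactly under the stated hypothesis $\kappa_1/\lambda_1+\kappa_2/\lambda_2+d(\kappa_1+\kappa_2)/\alpha<1$. The $\mathbbm{1}_{(A_1^X)^c}$ contribution is therefore $O(m^{-9/2+o(1)})=o(m^{-4})$, and the symmetric treatment of $\mathbbm{1}_{(A_1^Y)^c}$ gives $o(n^{-4})$. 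The hardest part will be the bookkeeping in the reweighting step: a generic H\"older split of $M_1$ from $\mathbbm{1}_{(A_1^X)^c}$ leaves residual $(m/k_X)^a$ factors that can only be absorbed into the $m^{-9/2}$ tail under a much stronger assumption like $\zeta<1/9$, so applying the Beta-parameter-shifting identity monomial by monomial in $B_1$ and $1-B_1$ is what makes the argument sharp enough to match the full hypothesis $\zeta<1$.
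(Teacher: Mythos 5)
Your proposal is correct and takes essentially the same route as the paper's proof: bound the max factors via Lemma~\ref{Lemma:hxinvbounds2}, absorb the resulting powers of $s$ and $1-s$ (and of $t$, $1-t$) by shifting the Beta parameters by $O(1)$ so that the weighted tail integrals become the tail probabilities $\Delta^{(1)}_{a,b},\Delta^{(2)}_{a,b}$, bound these by Lemma~\ref{Lemma:BetaTailBounds} to get $m^{-9/2+o(1)}$- and $n^{-9/2+o(1)}$-type decay, and control the remaining $x$-integral uniformly over $\mathcal{F}_{d,\vartheta}$ by Lemma~\ref{Lemma:GeneralisedHolder} under the hypothesis $\kappa_1/\lambda_1+\kappa_2/\lambda_2+d(\kappa_1+\kappa_2)/\alpha<1$. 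Your preliminary split $1-\mathbbm{1}_{A_1^X}\mathbbm{1}_{A_1^Y}\le\mathbbm{1}_{(A_1^X)^c}+\mathbbm{1}_{(A_1^Y)^c}$ together with conditional independence given $X_1$ is only a cosmetic reorganisation of the paper's single triple integral over $(x,s,t)$.
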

%\begin{lemma}
%\label{Lemma:Acomplement}
%Fix $d \in \mathbb{N}$, $\vartheta \in \Theta$, $(\kappa_1,\kappa_2,K_1,K_2) \in [0,\infty)^4$ and suppose that
%\[
%	\max \Bigl( \frac{K_1}{\lambda_1}, \frac{\kappa_1}{\lambda_1} + \frac{d\kappa_1}{\alpha} \Bigr) + \max \Bigl( \frac{K_2}{\lambda_2}, \frac{\kappa_2}{\lambda_2} + \frac{d\kappa_2}{\alpha} \Bigr) < 1.
%\]
%Let $k_X^{\mathrm{L}} \leq k_X^{\mathrm{U}},k_Y^{\mathrm{L}} \leq k_Y^{\mathrm{U}}$ be deterministic sequences of positive integers such that $k_X^{\mathrm{L}} / \log m \rightarrow \infty$, $k_Y^{\mathrm{L}}/ \log n \rightarrow \infty$, $k_X^{\mathrm{U}}/m \rightarrow 0$ and $k_Y^{\mathrm{U}}/n \rightarrow 0$.  Then
%\begin{align*}
%	\max_{\substack{k_X \in \{k_X^{\mathrm{L}},\ldots,k_X^{\mathrm{U}}\}\\k_Y \in \{k_Y^{\mathrm{L}},\ldots,k_Y^{\mathrm{U}}\}}} \sup_{(f,g) \in \mathcal{F}_{d, \vartheta}} \mathbb{E} \Bigl[ \max &\Bigl\{ \hat{f}_{(k_X),1}^{K_1},\hat{f}_{(k_X),1}^{-\kappa_1}, f(X_1)^{-\kappa_1} \Bigr\} \\
%	& \times  \max \Bigl\{ \hat{g}_{(k_Y),1}^{K_2},\hat{g}_{(k_Y),1}^{-\kappa_2} , g(X_1)^{-\kappa_2} \Bigr\} \bigl(1-\mathbbm{1}_{A_1^X} \mathbbm{1}_{A_1^Y} \bigr) \Bigr] = o(m^{-4}+n^{-4}) 
%\end{align*}
%      as $m,n \rightarrow \infty$.  
%    \end{lemma}
    \begin{proof}[Proof of Lemma~\ref{Lemma:Acomplement}]
Given $a > - \min(k_X,k_Y), b > -\min(m-k_X,n+1-k_Y)$ define
\begin{align*}
	\Delta_{a,b}^{(1)} := \int_{[0,1] \setminus \mathcal{I}_{m,X}} \mathrm{B}_{k_X+a,m-k_X+b}(s) \,ds, \quad  \Delta_{a,b}^{(2)} := \int_{[0,1] \setminus \mathcal{I}_{n,Y}} \mathrm{B}_{k_Y+a,n+1-k_Y+b}(t) \,dt.
\end{align*}
By Lemma~\ref{Lemma:BetaTailBounds} we have that
\[
	\max_{\substack{k_X \in \{k_X^{\mathrm{L}},\ldots,k_X^{\mathrm{U}}\}\\k_Y \in \{k_Y^{\mathrm{L}},\ldots,k_Y^{\mathrm{U}}\}}} \sup_{a,b \in [-A,A]} \max( \Delta_{a,b}^{(1)}, \Delta_{a,b}^{(2)}) = o(m^{-9(1-\epsilon)/2} + n^{-9(1-\epsilon)/2})
\]
for any fixed $A \geq 0$ and $\epsilon>0$. Now, by Lemma~\ref{Lemma:hxinvbounds2} and writing $\kappa_i^+:= \max( \kappa_i,0)$ for $i=1,2$, we have that
\begin{align*}
	&\mathbb{E} \Bigl[ \max \Bigl\{ \hat{f}_{(k_X),1}^{\kappa_1}, \hat{f}_{(k_X),1}^L, f(X_1)^{\kappa_1} \Bigr\}  \max \Bigl\{\hat{g}_{(k_Y),1}^{\kappa_2}, \hat{g}_{(k_Y),1}^L, g(X_1)^{\kappa_2} \Bigr\} \bigl(1-\mathbbm{1}_{A_1^X} \mathbbm{1}_{A_1^Y} \bigr) \Bigr]  \\
	& = \int_\mathcal{X} f(x) \int_0^1 \int_0^1 \max \bigl(u_{x,s}^{\kappa_1}, u_{x,s}^L, f(x)^{\kappa_1} \bigr) \max \bigl( v_{x,t}^{\kappa_2}, v_{x,t}^L, g(x)^{\kappa_2} \bigr) \\
	& \hspace{50pt} \times  \max \bigl( \mathbbm{1}_{\{s \not\in \mathcal{I}_{m,X}\}}, \mathbbm{1}_{\{t \not\in \mathcal{I}_{n,Y}\}} \bigr) \mathrm{B}_{k_X,m-k_X}(s) \mathrm{B}_{k_Y,n+1-k_Y}(t) \,ds \,dt \,dx \\
	& \lesssim \int_\mathcal{X} f(x) \int_0^1 \int_0^1  \max \biggl\{ \Bigl( \frac{k_X}{ms} \Bigr)^{\kappa_1^+ \vee L}, \Bigl( \frac{ms M_\beta(x)^d (1+\|x\|)^d}{k_X f(x)(1-s)^{d/\alpha}} \Bigr)^{\kappa_1^-}, f(x)^{\kappa_1} \biggr\} \\
	& \hspace{50pt} \times  \max \biggl\{ \Bigl( \frac{k_Y}{nt} \Bigr)^{\kappa_2^+ \vee L}, \Bigl( \frac{ntM_\beta(x)^d (1+\|x\|)^d}{k_Y g(x)(1-t)^{d/\alpha}} \Bigr)^{\kappa_2^-}, g(x)^{\kappa_2} \biggr\} \\
	& \hspace{50pt}\times   \max \bigl( \mathbbm{1}_{\{s \not\in \mathcal{I}_{m,X}\}}, \mathbbm{1}_{\{t \not\in \mathcal{I}_{n,Y}\}} \bigr) \mathrm{B}_{k_X,m-k_X}(s) \mathrm{B}_{k_Y,n+1-k_Y}(t) \,ds \,dt \,dx \\
	& \lesssim \max \bigl(\Delta_{-(\kappa_1^+ \vee L),0}^{(1)},\Delta_{0,0}^{(1)},\Delta_{\kappa_1^-,-d\kappa_1^-/\alpha}^{(1)},\Delta_{-(\kappa_2^+ \vee L),0}^{(2)}, \Delta_{0,0}^{(2)},\Delta_{\kappa_2^-,-d\kappa_2^-/\alpha}^{(2)} \bigr) \\
	& \hspace{80pt} \times \int_\mathcal{X} f(x) \Bigl( \frac{M_\beta(x)^d}{f(x)} \Bigr)^{\kappa_1^-}  \Bigl( \frac{M_\beta(x)^d}{g(x)} \Bigr)^{\kappa_2^-} (1+\|x\|)^{d (\kappa_1^-+ \kappa_2^-)} \,dx \\
	& \lesssim (m^{-17/4} + n^{-17/4})  \int_\mathcal{X} f(x) \Bigl( \frac{M_\beta(x)^d}{f(x)} \Bigr)^{\kappa_1^-}  \Bigl( \frac{M_\beta(x)^d}{g(x)} \Bigr)^{\kappa_2^-} (1+\|x\|)^{d (\kappa_1^-+ \kappa_2^-)} \,dx.
\end{align*}
The conclusion follows immediately on appealing to Lemma~\ref{Lemma:GeneralisedHolder}.
\end{proof}

\begin{lemma}
\label{Lemma:Pinsker}
	Let $a,b,c \in \mathbb{R}$ be any fixed constants, and let $k^{\mathrm{L}} \leq k^{\mathrm{U}}$ be deterministic sequences of positive integers such that $k^{\mathrm{L}} \rightarrow \infty$ and $k^{\mathrm{U}}/n \rightarrow 0$ as $n \rightarrow \infty$. Then%, writing $a_{n,(k)}=\frac{k}{n} \pm \frac{3k^{1/2} \log^{1/2} n}{n}$, we have that
\begin{align*}
  %	\int_{a_{n,(j)}^-}^{a_{n,(j)}^+}\int_{a_{n,(l)}^-}^{a_{n,(l)}^+} \Bigl| \mathrm{B}_{j+a,l+b,n+c-j-l}(s,t) - \mathrm{B}_{j+a,n-j}(s) \mathrm{B}_{l+b,n-l}(t) \Bigr| \,ds \,dt \leq \frac{(jl)^{1/2}}{n}\{1+o(1)\}
  \int_0^1 \! \int_0^1 \Bigl| \mathrm{B}_{j+a,\ell+b,n+c-j-\ell}(s,t) \!-\! \mathrm{B}_{j+a,n-j}(s) \mathrm{B}_{\ell+b,n-\ell}(t) \Bigr| \! \,ds \,dt \leq \! \frac{(j \ell)^{\frac{1}{2}}}{n}\{1\!+\!o(1)\}
\end{align*}
as $n \rightarrow \infty$, uniformly for $j,\ell \in \{k^{\mathrm{L}}, \ldots, k^{\mathrm{U}} \}$.
\end{lemma}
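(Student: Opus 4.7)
The plan is to apply Pinsker's inequality. Let $P$ denote the Dirichlet density $\mathrm{B}_{j+a,l+b,n+c-j-l}$ (supported on the simplex $\{s+t\leq 1\}$ in $[0,1]^2$) and let $Q$ denote the product density $\mathrm{B}_{j+a,n-j}(s)\,\mathrm{B}_{l+b,n-l}(t)$. The quantity on the left equals $2\|P-Q\|_{\mathrm{TV}}$, so by Pinsker's inequality
\[
\int_0^1\int_0^1 |P-Q|\,ds\,dt \;\leq\; \sqrt{2\,\mathrm{KL}(P\|Q)}.
\]
It therefore suffices to show that $\mathrm{KL}(P\|Q) \leq \tfrac{jl}{2n^2}\{1+o(1)\}$, uniformly for $j,l \in \{k^{\mathrm{L}},\ldots,k^{\mathrm{U}}\}$.

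I would first decompose $\mathrm{KL}(P\|Q) = I(S_1;S_2) + \mathrm{KL}(P_1\|Q_1) + \mathrm{KL}(P_2\|Q_2)$, where $P_1, P_2$ are the Beta marginals of the Dirichlet, namely $\mathrm{B}_{j+a,n+b+c-j}$ and $\mathrm{B}_{l+b,n+a+c-l}$, and $I(S_1;S_2)$ is the mutual information of $P$. Each $\mathrm{KL}(P_i \| Q_i)$ is a KL between two Betas sharing their first parameter and differing by a constant in the second; a standard second-order Taylor expansion using the Beta Fisher-information identity $\psi'(\beta) - \psi'(\alpha+\beta) \asymp \alpha/n^2$ for $\beta \asymp n$ yields $\mathrm{KL}(P_1\|Q_1) = O(j/n^2)$ and $\mathrm{KL}(P_2\|Q_2) = O(l/n^2)$. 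Since $\min(j,l) \geq k^{\mathrm{L}} \to \infty$, both contributions are $o(jl/n^2)$.

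The core task is then to show $I(S_1;S_2) = \tfrac{jl}{2n^2}\{1+o(1)\}$. Standard formulas for the entropy of a Dirichlet and of a Beta give a closed-form expression for $I(S_1;S_2)$ involving four $\log\Gamma$ terms and four $(z-1)\psi(z)$ terms, all evaluated at values of the form $n + (\text{const}) - \{0,j,l,j+l\}$. Applying Stirling's expansion $\log\Gamma(z) = (z-\tfrac12)\log z - z + \tfrac12\log(2\pi) + O(z^{-1})$ and $\psi(z) = \log z - (2z)^{-1} + O(z^{-2})$, together with $\log(1-x/n) = -\sum_{r\geq 1}(x/n)^r/r$, the contributions at orders $\log n$, $1$, and $1/n$ cancel because $k-j-l=0$ with $k:=j+l$; the $O(1/n^2)$ coefficients combine via the polynomial identities $k^2-j^2-l^2=2jl$ and $k^3-j^3-l^3=3jlk$ to leave $I(S_1;S_2) = \tfrac{jl}{2n^2} + O(jlk/n^3)$. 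Since $k/n \to 0$, this remainder is $o(jl/n^2)$, and Pinsker then gives the stated bound.

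The main obstacle is the careful bookkeeping of these cancellations, since individual terms of size $jl/n$ and $jlk/n^2$ must cancel exactly to leave only $jl/(2n^2)$. A clean closed form that makes the cancellations transparent is $(n-x-1)[\psi(n-x)-\psi(n)] = -x + \tfrac{x(x+1)}{2n} + \tfrac{x(x+1)(x+2)}{6n^2} + O(x^4/n^3)$, whose signed combination with weights $(+1,-1,-1)$ on $(k,j,l)$ pairs exactly with the analogous Stirling expansion of the normaliser $\log\tfrac{\Gamma(n-j)\Gamma(n-l)}{\Gamma(n)\Gamma(n-j-l)}$ to produce the $jl/(2n^2)$ survivor; the extension to general fixed constants $a,b,c$ introduces only $O(k/n)$ perturbations and is handled by the same calculation.
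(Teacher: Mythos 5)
Your proposal is correct and follows essentially the same route as the paper: Pinsker's inequality, the exact closed form of the Kullback--Leibler divergence between the Dirichlet and the product of Betas (log-Gamma plus digamma terms at arguments of the form $n+\mathrm{const}-\{0,j,l,j+l\}$), and Stirling/digamma expansions whose cancellations leave $\tfrac{jl}{2n^2}\{1+o(1)\}$, hence the bound $\tfrac{(jl)^{1/2}}{n}\{1+o(1)\}$. Your splitting of the KL into the Dirichlet mutual information plus two marginal Beta--Beta KL terms of order $O(j/n^2)+O(l/n^2)$ is only a cosmetic reorganisation of the paper's direct expansion of the full divergence, and your expansion identities (e.g. $(n-x-1)\{\psi(n-x)-\psi(n)\}=-x+\tfrac{x(x+1)}{2n}+\tfrac{x(x+1)(x+2)}{6n^2}+O(x^4/n^3)$) check out.
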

\begin{proof}
In the following bound we make use the standard asymptotic expansions
\begin{align*}
	\log \Gamma(z) &= z \log z - z - \frac{1}{2} \log \Bigl( \frac{z}{2 \pi} \Bigr) + \frac{1}{12z} + O \Bigl( \frac{1}{z^3} \Bigr) \\
	\Psi(z) &= \log z - \frac{1}{2z} - \frac{1}{12z^2} + O \Bigl( \frac{1}{z^4} \Bigr)
\end{align*}
as $z \rightarrow \infty$.  %For $a_1, a_2, a_3 > 0$ and $s,t \in [a_{n,(j)}^-,a_{n,(j)}^+]$, we also let
%\begin{align*}
%  \mathrm{B}_{a_1,a_2,a_3}^{(n)} &:= \int_{a_{n,(j)}^-}^{a_{n,(j)}^+} \int_{a_{n,(l)}^-}^{a_{n,(l)}^+} u^{a_1-1}v^{a_2-1}(1-u-v)^{a_3-1} \, du \, dv \\
%  \mathrm{B}_{a_1,a_2,a_3}^{(n)}(s,t) &:= s^{a_1-1}t^{a_2-1}(1-s-t)^{a_3-1}/\mathrm{B}_{a_1,a_2,a_3}^{(n)} \\
%  \mathrm{B}_{a_1,a_2}^{(n)} &:= \int_{a_{n,(j)}^-}^{a_{n,(j)}^+} \int_{a_{n,(l)}^-}^{a_{n,(l)}^+} u^{a_1-1}(1-u)^{a_2-1} \, du \\
%  \mathrm{B}_{a_1,a_2}^{(n)}(s) &:= s^{a_1-1}(1-s)^{a_2-1}/\mathrm{B}_{a_1,a_2}^{(n)}.
%\end{align*}
Using these expansions, by Lemma~\ref{Lemma:BetaTailBounds} and Pinsker's inequality we have that
\begin{align*}
	\int_0^1  \int_0^1 & \Bigl| \mathrm{B}_{j+a,\ell+b,n+c-j-\ell}(s,t) - \mathrm{B}_{j+a,n-j}(s) \mathrm{B}_{\ell+b,n-\ell}(t) \Bigr| \,ds \,dt \\
                                 % &\leq \biggl|\frac{\mathrm{B}_{j+a,l+b,n+c-j-l}^{(n)}}{\mathrm{B}_{j+a,l+b,n+c-j-l}} - 1\biggr| + \biggl|\frac{\mathrm{B}_{j+a,n-j}^{(n)}\mathrm{B}_{l+b,n-l}^{(n)}}{\mathrm{B}_{j+a,n-j}\mathrm{B}_{l+b,n-l}} - 1\biggr| \\
%  &\hspace{1cm}+ \int_{a_{n,(l)}^-}^{a_{n,(l)}^+} \Bigl| \mathrm{B}_{j+a,l+b,n+c-j-l}^{(n)}(s,t) - \mathrm{B}_{j+a,n-j}^{(n)}(s) \mathrm{B}_{l+b,n-l}^{(n)}(t) \Bigr| \,ds \,dt \\
	& \leq \biggl\{2 \int_0^1 \int_0^{1-t} \mathrm{B}_{j+a,\ell+b,n+c-j-\ell}(s,t) \log \biggl( \frac{\mathrm{B}_{j+a,\ell+b,n+c-j-\ell}(s,t)}{\mathrm{B}_{j+a,n-j}(s) \mathrm{B}_{\ell+b,n-\ell}(t)} \biggr) \,ds \,dt \biggr\}^{1/2} \\ %+ o(n^{-2}) \\
	& = 2^{\frac{1}{2}} \biggl[ \log \biggl( \frac{\Gamma(n+a+b+c) \Gamma(n-j) \Gamma(n-\ell)}{\Gamma(n+c-j-\ell) \Gamma(n+a) \Gamma(n+b)} \biggr) \!+\! (n\!-\!c\!-\!1) \Psi(n+a+b+c) \\
	& \hspace{70pt} -(n-j-1) \Psi(n+b+c-j) - (n-\ell-1) \Psi(n+a+c-\ell) \\
           & \hspace{160pt} + (n+c-j-\ell-1) \Psi(n+c-j-\ell) \biggr]^{1/2} \\
           % + o(n^{-2}) \\
	& = \frac{(j \ell)^{1/2}}{n}\{1+o(1)\}
\end{align*}
as $n \rightarrow \infty$, uniformly for $j, \ell \in \{ k^\mathrm{L},\ldots,k^\mathrm{U}\}$.
\end{proof}
The following lemma provides bounds on the normal approximation to relevant multinomial distributions.
\begin{lemma}
\label{Lemma:NormalApproximation}
Fix $f \in \mathcal{F}_d$ and $\beta \in (0,1]$, and let $k^\mathrm{L} \leq k^\mathrm{U}$ be deterministic sequences of positive integers satisfying $k^\mathrm{L}/\log n \rightarrow \infty$ and $ (k^\mathrm{U}/n) \log n \rightarrow 0$. For $k \in \{k^\mathrm{L},\ldots,k^\mathrm{U}\}$ define $\mathcal{X}_{n}:=\{x : f(x)/M_{f,\beta}(x)^{d} \geq (k/n) \log n\}$. For $j,\ell \in \mathbb{N}$ and $z \in \mathbb{R}^d$ define $y\equiv y_{x,z}^{(j)}:=x + (\frac{j}{nV_d f(x)})^{1/d} z$, $\alpha_z(r):=\mu_d (B_0(1) \cap B_z(r))/V_d$, and
\[
	\Sigma  := \begin{pmatrix} 1 & (j/\ell)^{1/2} \alpha_z((\ell/j)^{1/d}) \\ (j/\ell)^{1/2} \alpha_z((\ell/j)^{1/d}) & 1 \end{pmatrix}.
\]
For $s,t \in (0,1), j,\ell \in \mathbb{N}$ and $x,z \in \mathbb{R}^d$ let $p_\cap = \int_{B_x(h_{x,f}^{-1}(s)) \cap B_y(h_{y,f}^{-1}(t))} \! f(w) dw$, define $(N_1,N_2,N_3,N_4) \sim \mathrm{Multi}(n; s-p_\cap, t-p_\cap,p_\cap, 1-s-t+p_\cap)$, let $(M_1,M_2,M_3) \sim \mathrm{Multi}(n; s,t,1-s-t)$, and
\begin{align*}
	F(s,t)&:=F_{n,x,z}^{(j),(\ell)}(s,t):= \mathbb{P}(N_1+N_3 \geq j, N_2 + N_3 \geq \ell) \\
	G(s,t)&:=G_{n}^{(j),(\ell)}(s,t):= \mathbb{P}(M_1 \geq j, M_2 \geq \ell).
\end{align*}
Then, given $c \in (0,1)$ and writing $\Phi_V$ for the distribution function of the bivariate normal distribution with mean zero and covariance matrix $V$, there exists $A=A(d,\beta,c,(k^\mathrm{L}),(k^\mathrm{U}))$ such that
\begin{align*}
	\max \Bigl\{ \Bigl| F( s,t) - \Phi_{\Sigma}\Bigl(&\frac{ns-j}{j^{1/2}},\frac{nt-\ell}{\ell^{1/2}}\Bigr) \Bigr|,\Bigl| G(s,t) - \Phi_{I_2}\Bigl(\frac{ns-j}{j^{1/2}}, \frac{nt-\ell}{\ell^{1/2}}\Bigr) \Bigr| \Bigr\}  \\
	& \hspace{50pt}\leq A \min \biggl\{ 1 , \frac{1}{\|z\|} \biggl( \frac{\log^{1/2}n}{k^{1/2}} + \biggl( \frac{k M_{f,\beta}(x)^d}{n f(x)} \biggr)^{\beta/d} \biggr) \biggr\}
\end{align*}
for all $k \in \{k^\mathrm{L},\ldots,k^\mathrm{U}\}$, for all $j,\ell \in \mathbb{N}$ such that $ck \leq j,\ell \leq k$, for all $x \in \mathcal{X}_{n}$, for all $s,t \in (0,1)$ such that $j^{-1/2}|ns-j| \vee \ell^{-1/2}|nt-\ell| \leq 3 \log^{1/2} n$, and for all $0 < \|z\| \leq (\frac{nV_df(x)}{j})^{1/d}\{h_{x,f}^{-1}(s) + h_{y,f}^{-1}(t)\} $.
\end{lemma}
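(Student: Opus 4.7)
The plan is to combine a smoothness expansion of $p_\cap$ with a multivariate Berry--Esseen bound for the multinomial $(N_1,N_2,N_3,N_4)$, and then to extract the $\|z\|^{-1}$ decay from the support properties of $\alpha_z$.

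First, I would approximate $p_\cap$. By Lemma~\ref{Lemma:15over7}, $f(w)/f(x) = 1 + O(\{M_{f,\beta}(x)\|w-x\|\}^\beta)$ uniformly for $w$ in the intersection region, so $p_\cap$ equals $f(x)$ times the volume of that intersection, up to a multiplicative error of order $(kM_{f,\beta}(x)^d/(nf(x)))^{\beta/d}$. By Lemma~\ref{Lemma:hxinvbounds}, $h_{x,f}^{-1}(s) = (s/(V_df(x)))^{1/d}\{1 + O((sM_{f,\beta}(x)^d/f(x))^{\beta/d})\}$ and likewise for $h_{y,f}^{-1}(t)$, after which the change of variables $w = x + h_{x,f}^{-1}(s)u$ maps the intersection onto $B_0(1) \cap B_{z'}(r')$ for some $z',r'$ with $z' = z\{1+o(1)\}$ and $r' = (l/j)^{1/d}\{1+o(1)\}$ under the hypothesis $j^{-1/2}|ns-j| \vee l^{-1/2}|nt-l| \leq 3\log^{1/2}n$. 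Combining these yields
\[
	np_\cap = j\,\alpha_z\bigl((l/j)^{1/d}\bigr)\{1 + O(\varepsilon_n)\}, \qquad \varepsilon_n := \frac{\log^{1/2}n}{k^{1/2}} + \biggl(\frac{kM_{f,\beta}(x)^d}{nf(x)}\biggr)^{\beta/d}.
\]

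Second, a direct computation shows that the covariance matrix $\widetilde{\Sigma}$ of $(\widetilde{U},\widetilde{V}) := ((U-ns)/j^{1/2},(V-nt)/l^{1/2})$, with $U:=N_1+N_3$ and $V:=N_2+N_3$, satisfies $\widetilde{\Sigma}_{ii} = 1 + O(k/n)$ and $\widetilde{\Sigma}_{12} = n(p_\cap - st)/(jl)^{1/2} = \Sigma_{12} + O(\varepsilon_n\alpha_z)$. Writing $(U,V)$ as a sum of $n$ iid bounded random triples $(\mathbbm{1}_{X_i \in A_1},\mathbbm{1}_{X_i \in A_2},\mathbbm{1}_{X_i \in A_3})$ with probabilities $s-p_\cap,t-p_\cap,p_\cap$, a multivariate Berry--Esseen bound for orthant probabilities yields
\[
|F(s,t) - \Phi_{\widetilde{\Sigma}}((ns-j)/j^{1/2},(nt-l)/l^{1/2})| \lesssim k^{-1/2}.
\]
The Lipschitz dependence of $\Phi_\rho(a,b)$ on $\rho$ (valid uniformly on compact subsets of $(-1,1)$) then gives $|\Phi_{\widetilde{\Sigma}} - \Phi_\Sigma| \lesssim |\widetilde{\Sigma}_{12} - \Sigma_{12}| \lesssim \varepsilon_n\alpha_z$, so that $|F(s,t) - \Phi_\Sigma| \lesssim k^{-1/2} + \alpha_z\varepsilon_n$.

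Finally, to produce the $\|z\|^{-1}$ factor, I would exploit the fact that $\alpha_z(r) = 0$ for $\|z\| > 1 + r$. Since $ck \leq j,l \leq k$, the quantity $1 + (l/j)^{1/d}$ is bounded by a constant depending on $c$ and $d$, and the stated upper bound on $\|z\|$ is essentially $1 + (l/j)^{1/d}$ by Lemma~\ref{Lemma:hxinvbounds}. Therefore $\alpha_z((l/j)^{1/d}) \leq C\mathbbm{1}_{\{\|z\| \leq 1+(l/j)^{1/d}\}} \leq C'/\|z\|$ uniformly over the admissible range, and because $\|z\|$ is bounded above we may likewise write the Berry--Esseen term as $O(\|z\|^{-1}k^{-1/2})$. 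When the resulting bound exceeds $1$, the trivial bound $|F - \Phi_\Sigma| \leq 2$ completes the argument. The bound on $|G - \Phi_{I_2}|$ follows from an identical argument with $p_\cap = 0$: here $\widetilde{\Sigma}_{12} = -nst/(jl)^{1/2} = O(k/n)$ is absorbed into $\varepsilon_n$ and $\Sigma = I_2$.

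The main obstacle is the multivariate Berry--Esseen step in the near-degenerate regime $|\Sigma_{12}|$ close to $1$, which arises when $\|z\|$ is small and $j,l$ are comparable, because the standard Berry--Esseen constants deteriorate and the Lipschitz constant for $\rho \mapsto \Phi_\rho$ blows up; fortunately, this is precisely the regime in which the $\min\{1,\cdot\}$ truncation takes value $1$, so one only needs the non-trivial estimate for $\|z\|$ bounded away from $0$, where $\Sigma$ remains well conditioned.
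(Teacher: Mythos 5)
Your overall skeleton (multinomial orthant probability, Berry--Esseen normal approximation, comparison of covariance matrices, and a smoothness expansion of $p_\cap$) is the same as the paper's, and your first step bounding $|np_\cap/j - \alpha_z((l/j)^{1/d})|$ by $\varepsilon_n$ is essentially what the paper proves. However, there is a genuine gap at exactly the hard point: the uniformity in $\|z\|$. Your closing remark claims the near-degenerate regime can be discarded because "the $\min\{1,\cdot\}$ truncation takes value $1$" there, but the truncation equals $1$ only when $\|z\| \leq \varepsilon_n$, and $\varepsilon_n = \log^{1/2}n/k^{1/2} + (kM_{f,\beta}(x)^d/(nf(x)))^{\beta/d} \rightarrow 0$. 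The non-trivial bound $\lesssim \|z\|^{-1}\varepsilon_n$ is therefore required for all $\|z\| > \varepsilon_n$, a range in which $\|z\|$ can be arbitrarily small and (when $j=l$, say) the correlation $\Sigma_{12} = (j/l)^{1/2}\alpha_z((l/j)^{1/d})$ tends to $1$. In that regime your two key estimates both fail as stated: the Berry--Esseen error is not uniformly $O(k^{-1/2})$, because standardising by $V^{-1/2}$ inflates the third moments as $V$ degenerates (the paper's computation gives $n^{-1/2}\mathbb{E}\|Z_1\|^3 \lesssim (k\|z\|)^{-1/2}$, not $k^{-1/2}$), and the Lipschitz comparison $|\Phi_{\widetilde{\Sigma}} - \Phi_{\Sigma}| \lesssim |\widetilde{\Sigma}_{12}-\Sigma_{12}|$ is only valid with a constant that blows up like $(1-\Sigma_{12})^{-1}$, which is exactly where the $\|z\|^{-1}$ must come from. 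Your third paragraph instead tries to manufacture the $\|z\|^{-1}$ from the trivial facts that $\alpha_z$ has bounded support and $\|z\|$ is bounded above; that only converts an (unproven) uniform $O(k^{-1/2})$ bound into $O(\|z\|^{-1}k^{-1/2})$ and misidentifies the mechanism.

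What is missing, and what the paper supplies, is quantitative non-degeneracy in terms of $\|z\|$: a lower bound of the form $s+t-2p_\cap \gtrsim k(\|z\|\wedge 1)/n$ (obtained from the volume of the symmetric difference of the two balls, via a spherical-cap computation), a determinant bound $\det V \gtrsim jl/n^2$ when $\|z\| \geq 1$ (using that $(j/l)\alpha_z^2 \leq c_d < 1$ there), explicit bounds on each third-moment contribution after standardisation showing the Berry--Esseen term is $O((k\|z\|)^{-1/2})$ (which is $\lesssim \|z\|^{-1}\log^{1/2}n/k^{1/2}$ since $\|z\|$ is bounded), and a covariance-comparison step whose prefactor $\{(1 - j^{1/2}l^{-1/2}\alpha_z)^{-1} + (1 + j^{1/2}l^{-1/2}\alpha_z)^{-1}\}$ is bounded by a multiple of $\|z\|^{-1}$. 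Without these ingredients your argument does not yield the claimed bound for $\varepsilon_n < \|z\| \ll 1$, which is precisely the range the lemma is needed for in the variance proof.
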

\begin{proof}
We present here the approximation for $F(s,t)$, the approximation for $G(s,t)$ being similar but much simpler. Let $X_1,\ldots,X_n \stackrel{\mathrm{iid}}{\sim} f$ and for $i=1,\ldots,n$ and $k,j,\ell,x,s,t,z$ in the specified ranges, define $Y_i:=(\mathbbm{1}_{\{\|X_i-x\| \leq h_{x,f}^{-1}(s) \}}, \mathbbm{1}_{\{\|X_i-y\| \leq h_{y,f}^{-1}(t) \}})^T$,
\[
	V:= \mathrm{Cov}(Y_1) = \begin{pmatrix} s(1-s) & p_\cap-st \\ p_\cap-st & t(1-t) \end{pmatrix} \quad \text{and} \quad Z_i:=V^{-1/2}(Y_i - (s,t)^T).
\]
Then by the Berry--Esseen theorem of \citet{Gotze1991} we have
\begin{equation}
  \label{Eq:BerryEsseen}
	\bigl| \mathbb{P}(N_1+N_3 \geq j, N_2+N_3 \geq \ell) - \Phi_{nV} \bigl( ns-j, nt-\ell \bigr) \bigr| \lesssim n^{-1/2} \mathbb{E}(\|Z_1\|^3).
\end{equation}
In order to control the right hand side of this bound, we will require bounds on $p_\cap$. Writing $\alpha_z$ for $\alpha_z((\ell/j)^{1/d})$, we have 
\begin{align}
\label{Eq:pcapbound}
	&\Bigl| \frac{n p_\cap}{j} - \alpha_z \Bigr| \leq \frac{n}{j} \biggl| p_\cap - f(x) \mu_d \bigl( B_x(h_{x,f}^{-1}(s)) \cap B_y(h_{y,f}^{-1}(t)) \bigr) \biggr| \nonumber \\
	& \hspace{100pt} +  \biggl|\frac{n}{j} f(x) \mu_d \bigl( B_x(h_{x,f}^{-1}(s)) \cap B_y(h_{y,f}^{-1}(t)) \bigr) - \alpha_z \biggr| \nonumber  \\
	& \leq \frac{n}{j} M_{f,\beta}(x)^{\beta} f(x) \int_{B_x(h_{x,f}^{-1}(s)) \cap B_y(h_{y,f}^{-1}(t))} \|w-x\|^{\beta} \,dw \nonumber  \\
	& \hspace{20pt} + \biggl|\frac{1}{V_d} \mu_d \biggl( B_0 \biggl( \biggl( \frac{nV_df(x)h_{x,f}^{-1}(s)^d}{j} \biggr)^{\frac{1}{d}} \biggr) \cap B_z  \biggl( \biggl( \frac{nV_df(x)h_{y,f}^{-1}(t)^d}{j} \biggr)^{\frac{1}{d}} \biggr)- \alpha_z \biggr| \nonumber \\
	& \lesssim \frac{n}{k} M_{f,\beta}(x)^{\beta} f(x) h_{x,f}^{-1}(s)^{d+\beta} + \biggl| \frac{nV_df(x)h_{x,f}^{-1}(s)^d}{j} -1 \biggr| + \biggl| \frac{nV_df(x)h_{y,f}^{-1}(t)^d}{l} -1 \biggr| \nonumber \\
	& \lesssim \Bigl\{ \frac{k M_{f,\beta}(x)^d}{nf(x)} \Bigr\}^{\beta/d} + \frac{\log^{1/2}n}{k^{1/2}},
\end{align}
where the final bound follows by Lemma~\ref{Lemma:hxinvbounds} and similar arguments to those in~\eqref{Eq:MConstant1} and~\eqref{Eq:MConstant2} in the bounds on $U_0$ below. We will also need to bound $s+t-2p_\cap$ below. If $h_{y,f}^{-1}(t) \geq h_{x,f}^{-1}(s)$ then, by the mean value theorem and Lemma~\ref{Lemma:hxinvbounds},
\begin{align*}
	\mu_d \bigl( &B_y(h_{y,f}^{-1}(t)) \cap B_x(h_{x,f}^{-1}(s))^c \bigr)\geq \mu_d \bigl(B_y(h_{x,f}^{-1}(s)) \cap B_x(h_{x,f}^{-1}(s))^c \bigr) \\
	&= V_d h_{x,f}^{-1}(s)^d \int_{(1-\frac{\|x-y\|^2}{4h_{x,f}^{-1}(s)^2})_+}^1 \mathrm{B}_{\frac{d+1}{2},\frac{1}{2}}(\xi) \,d\xi \gtrsim h_{x,f}^{-1}(s)^d \frac{\|x-y\| \wedge h_{x,f}^{-1}(s)}{h_{x,f}^{-1}(s)} \gtrsim \frac{k(\|z\| \wedge 1)}{nf(x)}.
\end{align*}
A similar argument applies with $(x,s)$ and $(y,t)$ swapped and so we have
\begin{align*}
	s+t-2p_\cap &= \biggl( \int_{B_y(h_{y,f}^{-1}(t)) \cap B_x(h_{x,f}^{-1}(s))^c} + \int_{B_x(h_{x,f}^{-1}(s)) \cap B_y(h_{y,f}^{-1}(t))^c} \biggr) f(w)\,dw \\
	& \gtrsim f(x) \bigl\{ \mu_d\bigl(B_y(h_{y,f}^{-1}(t)) \cap B_x(h_{x,f}^{-1}(s))^c\bigr) + \mu_d\bigl(B_x(h_{x,f}^{-1}(s)) \cap B_y(h_{y,f}^{-1}(t))^c\bigr) \bigr\} \\
	& \gtrsim \frac{k(\|z\| \wedge 1)}{n}.
\end{align*}
We will also use a lower bound on $|V|:=\mathrm{det}(V)$ when $\|z\| \geq 1$. Note that with $e_1=(1,0,\ldots,0)^T \in \mathbb{R}^d$, when $\|z\| \geq 1$ we have that  $\alpha_z = \alpha_{\|z\|e_1} \leq \alpha_{e_1}$. If $\ell/j \geq (3/2)^d$ then 
\[
	\frac{j \alpha_z^2}{\ell} \leq \frac{j \alpha_{e_1}^2}{\ell} \leq \frac{j}{\ell} \leq \Bigl( \frac{2}{3} \Bigr)^{d/2} <1.
\]
However if $\ell/j <(3/2)^d$ then
\[
	\frac{j \alpha_z^2}{\ell} \leq \alpha^2 < V_d^{-2} \mu_d \bigl( B_0(1) \cap B_{e_1}((3/2)^{1/d}) \bigr)^2 < 1.
\]
Thus there exists $c_d \in (0,1)$ such that $j \alpha_z^2 /\ell \leq c_d$ whenever $\|z\| \geq 1$. Thus, by~\eqref{Eq:pcapbound}, we have that
\[
	|V|=st(1-s)(1-t) -(p_\cap-st)^2 \geq \frac{(1-c_d)j\ell}{n^2} \{1+o(1)\},
\]
uniformly over $\|z\| \geq 1$.  Similar to~(36),~(37) and~(38) in the supplement of \citet{BSY2018}, and splitting up into cases $\|z\|<1$ and $\|z\| \geq 1$ where necessary, we have that
\begin{align*}
	p_\cap \biggl\| V^{-1/2} \begin{pmatrix} 1-s \\ 1-t \end{pmatrix} \biggr\|^3 &\leq p_\cap \min\Bigl\{ \frac{s+t}{|V|} , \frac{1}{p_\cap-st} \Bigr\}^{3/2} \lesssim \frac{n^{1/2}}{k^{1/2}}, \\
	(1-s-t+p_\cap) \biggl\| V^{-1/2} \begin{pmatrix} s \\ t \end{pmatrix} \biggr\|^3 &= \! (1-s-t+p_\cap) \Bigl\{ \frac{st(s+t-2p_\cap)}{|V|} \Bigr\}^{\frac{3}{2}} \lesssim \frac{k^{\frac{3}{2}}}{n^{\frac{3}{2}}}.
\end{align*}
Likewise,
\begin{align*}
	&(s-p_\cap) \biggl\| V^{-1/2} \begin{pmatrix} 1-s \\ -t \end{pmatrix} \biggr\|^3 \leq (s-p_\cap)t^{3/2} |V|^{-3/2} \\
	& = (s-p_\cap)t^{\frac{3}{2}} \Bigl\{ (s+t-2p_\cap) \Bigl(p_\cap-st + \frac{(s-p_\cap)(t-p_\cap)}{s+t-2p_\cap} \Bigr) \Bigr\}^{-\frac{3}{2}} \lesssim \Bigl( \frac{n}{k\|z\|} \Bigr)^{\frac{1}{2}},
\end{align*}
with a similar bound holding for $(t-p_\cap) \biggl\| V^{-1/2} \begin{pmatrix} -s \\ 1-t \end{pmatrix} \biggr\|^3$. Thus
\[
	n^{-1/2} \mathbb{E} \|Z_3\|^3 \lesssim (k\|z\|)^{-1/2},
\]
which in combination with~\eqref{Eq:BerryEsseen} provides a bound on the difference between $F(s,t)$ and $\Phi_{nV}(ns-j,nt-\ell )$. Next, similar to the displayed equation above~(39) in the supplement of \citet{BSY2018}, we have
\begin{align*}
	&\bigl| \Phi_{nV}\bigl(ns-j,nt-\ell \bigr) - \Phi_\Sigma \bigl( j^{-1/2}(ns-j), \ell^{-1/2}(nt-\ell) \bigr) \bigr| \\
	& \leq \min \biggl\{1, 2 \biggl\| \Sigma^{-\frac{1}{2}} \begin{pmatrix} \frac{ns(1-s)}{j} -1 & \frac{n(p_\cap-st)}{j^{1/2}\ell^{1/2}}-j^{1/2}\ell^{-1/2} \alpha_z \\ \frac{n(p_\cap-st)}{j^{1/2}\ell^{1/2}}-j^{1/2}\ell^{-1/2} \alpha_z & \frac{nt(1-t)}{\ell} -1  \end{pmatrix} \Sigma^{-\frac{1}{2}} \biggr\| \biggr\} \\
	& \lesssim \{1/(1- (j/\ell)^{1/2}\alpha_z) + 1/(1+ (j/\ell)^{1/2}\alpha_z)\} \biggl\{ \frac{\log^{1/2}n}{k^{1/2}} + \frac{j^{1/2}}{\ell^{1/2}} \Bigl| \frac{np_\cap}{j} - \alpha_z \Bigr| \biggr\} \\
	& \lesssim \frac{1}{\|z\|} \biggl\{ \frac{\log^{1/2}n}{k^{1/2}} +  \Bigl( \frac{k M_{f,\beta}(x)^d}{nf(x)} \Bigr)^{\beta/d} \biggr\}
\end{align*}
as required.
\end{proof}

%\subsection{Detailed bounds on remainder terms}
%\label{Section:Proofs}

\subsection{Bounds on remainder terms in the proof of Proposition~\ref{Prop:Variance}}
\label{Sec:RemainderTerms}

\emph{To bound $S_1$:} Since $\zeta < 1/2$ we may apply Lemma~\ref{Lemma:Acomplement} to see that
\begin{align*}
S_{11}&:=\int_\mathcal{X} f(x) \int_0^1 \int_0^1 \max \bigl( \mathbbm{1}_{\{s \not\in \mathcal{I}_{m,X}\}}, \mathbbm{1}_{\{t \not\in \mathcal{I}_{n,Y} \}} \bigr) \phi ( u_{x,s}, v_{x,t} )^2 \\
	&\hspace{70pt} \times  \mathrm{B}_{k_X,m-k_X}(s) \mathrm{B}_{k_Y,n+1-k_Y}(t) \,ds \,dt \,dx  = o(m^{-4}+n^{-4}).
\end{align*}
By Lemma~\ref{Lemma:hxinvbounds2} we have that for every $\epsilon > 0$,
\begin{align*}
|S_{12}| &:=|S_1 - S_{11}| \\
&\phantom{:}= \biggl| \int_{\mathcal{X}_{m,n}^c}  \!\!\!\! f(x) \! \int_{\mathcal{I}_{m,X}} \! \int_{\mathcal{I}_{n,Y}} \phi \bigl( u_{x,s} , v_{x,t} \bigr)^2 \mathrm{B}_{k_X,m-k_X}(s) \mathrm{B}_{k_Y,n+1-k_Y}(t) \,ds \,dt \,dx \biggr| \\
	&\phantom{:} \lesssim \int_{\mathcal{X}_{m,n}^c} f(x)^{1-2\kappa_1^-} g(x)^{-2\kappa_2^-} M_\beta(x)^{2d( \kappa_1^-+\kappa_2^-)} (1+\|x\|)^{2d (\kappa_1^-+\kappa_2^-)} \,dx \\
	&\phantom{:} = O \biggl( \max \biggl\{ \Bigl( \frac{k_X}{m} \Bigr)^{\lambda_1(1-2 \zeta)- \epsilon}, \Bigl( \frac{k_Y}{n} \Bigr)^{\lambda_2(1-2 \zeta) - \epsilon} \biggr\} \biggr),
\end{align*}
where the final bound holds by Lemma~\ref{Lemma:GeneralisedHolder}, as in the bound on $R_1$.

\medskip

\emph{To bound $S_2$:} Using Lemma~\ref{Lemma:hxinvbounds} we now have that
\begin{align*}
	|S_2|&= \biggl| \int_{\mathcal{X}_{m,n}} f(x) \int_{\mathcal{I}_{m,X}} \int_{\mathcal{I}_{n,Y}} \biggl\{ \phi \bigl( u_{x,s} , v_{x,t} \bigr)^2 \\
	& \hspace{55pt} - \phi \Bigl( \frac{k_X f(x)}{m s} , \frac{k_Y g(x)}{n t} \Bigr)^2 \biggr\} \mathrm{B}_{k_X,m-k_X}(s) \mathrm{B}_{k_Y,n+1-k_Y}(t) \,ds \,dt \,dx \biggr| \\
	& \lesssim \int_{\mathcal{X}_{m,n}} f(x)^{1 + 2\kappa_1} g(x)^{2 \kappa_2} \int_{\mathcal{I}_{m,X}} \int_{\mathcal{I}_{n,Y}}   \mathrm{B}_{k_X,m-k_X}(s) \mathrm{B}_{k_Y,n+1-k_Y}(t)  \\
	& \hspace{80pt} \times \biggl\{ \biggl| \frac{s}{V_d f(x) h_{x,f}^{-1}(s)^d}-1 \biggr| + \biggl| \frac{t}{V_d g(x) h_{x,g}^{-1}(t)^d}-1 \biggr| \biggr\} \,ds \,dt \,dx \\
	& \lesssim \int_{\mathcal{X}_{m,n}} f(x)^{1+2 \kappa_1} g(x)^{2 \kappa_2} \biggl\{ \Bigl( \frac{k_X M_\beta(x)^d}{m f(x)} \Bigr)^{\frac{2 \wedge \beta}{d}} + \Bigl( \frac{k_Y M_\beta(x)^d}{n g(x)} \Bigr)^{\frac{2 \wedge \beta}{d}} \biggr\} \,dx \\
	& = O \biggl( \max \biggl\{ \Bigl( \frac{k_X}{m} \Bigr)^{\frac{2 \wedge \beta}{d}}, \Bigl( \frac{k_X}{m} \Bigr)^{\lambda_1(1-2 \zeta)- \epsilon},   \Bigl( \frac{k_Y}{n} \Bigr)^{\frac{2 \wedge \beta}{d}} , \Bigl( \frac{k_Y}{n} \Bigr)^{\lambda_2(1-2 \zeta) - \epsilon}\biggr\} \biggr)
\end{align*}
for all $\epsilon>0$, where for the final bound we use Lemma~\ref{Lemma:GeneralisedHolder} as in~\eqref{Eq:fbodyerror} and~\eqref{Eq:gbodyerror}.

\medskip

\emph{To bound $S_3$:} Using Lemma~\ref{Lemma:Acomplement} and Lemma~\ref{Lemma:GeneralisedHolder} we may write
\begin{align*}
	&|S_3| = \biggl| \int_{\mathcal{X}_{m,n}} f(x) \int_{\mathcal{I}_{m,X}} \int_{\mathcal{I}_{n,Y}} \phi \Bigl( \frac{k_X f(x)}{m s} , \frac{k_Y g(x)}{n t} \Bigr)^2 \\
	& \hspace{70pt} \times \mathrm{B}_{k_X,m-k_X}(s) \mathrm{B}_{k_Y,n+1-k_Y}(t) \,ds \,dt \,dx  - \int_\mathcal{X} f(x) \phi_x^2 \,dx \biggr| \\
	& \leq \int_{\mathcal{X}_{m,n}} f(x) \int_{\mathcal{I}_{m,X}} \int_{\mathcal{I}_{n,Y}} \biggl| \phi \Bigl(  \frac{k_X f(x)}{m s} , \frac{k_Y g(x)}{n t} \Bigr)^2 - \phi_x^2 \biggr| \\
	& \hspace{5pt} \times \mathrm{B}_{k_X,m-k_X}(s) \mathrm{B}_{k_Y,n+1-k_Y}(t) \,ds \,dt \,dx + \int_{\mathcal{X}_{m,n}^c} f(x) \phi_x^2 \,dx + o(m^{-4}+n^{-4}) \\
	& \lesssim \bigl( k_X^{-\frac{1}{2}}+ k_Y^{-\frac{1}{2}} \bigr) \int_{\mathcal{X}_{m,n}} f(x)^{1+2\kappa_1} g(x)^{2\kappa_2} \,dx + \int_{\mathcal{X}_{m,n}^c} \!f(x)^{1+2\kappa_1}g(x)^{2\kappa_2} \,dx + o(m^{-4}+n^{-4}) \\
	& = O \biggl( \max \biggl\{ k_X^{-1/2}, k_Y^{-1/2}, \Bigl( \frac{k_X}{m} \Bigr)^{\lambda_1(1-2 \zeta)- \epsilon},  \Bigl( \frac{k_Y}{n} \Bigr)^{\lambda_2(1-2 \zeta) - \epsilon}  \biggr\} \biggr),
\end{align*}
for every $\epsilon > 0$.

\medskip

\emph{To bound $T_1$:} We first consider
\[
	T_{11}:= \biggl( \int_{\mathcal{X}^2} - \int_{\mathcal{X}_{m,f}^2} \biggr) \int_{\mathcal{I}_{m,X}^2} \int_{\mathcal{I}_{n,Y}^2} (h \, dH_m^{(1)} \, dG_{n}^{(2)})(s_1,s_2,t_1,t_2) \,dx \,dy.
\]
By symmetry we may write $T_{11}=T_{111}+2T_{112}$, where
\begin{align*}
	T_{111}&:= \int_{\mathcal{X}_{m,f}^c \times \mathcal{X}_{m,f}^c} f(x)f(y) \int_{\mathcal{I}_{m,X}^2} \int_{\mathcal{I}_{n,Y}^2} (h \, dH_m^{(1)} \, dG_{n}^{(2)})(s_1,s_2,t_1,t_2) \,dx \,dy
\end{align*}
and
\begin{align*}
	T_{112}&:= \int_{\mathcal{X}_{m,f} \times \mathcal{X}_{m,f}^c} f(x)f(y) \int_{\mathcal{I}_{m,X}^2} \int_{\mathcal{I}_{n,Y}^2} (h \, dH_m^{(1)} \, dG_{n}^{(2)})(s_1,s_2,t_1,t_2) \,dx \,dy.
\end{align*}
Using Lemma~\ref{Lemma:GeneralisedHolder} and Lemma~\ref{Lemma:hxinvbounds2} as in the bounds on $S_1$, and using Lemma~\ref{Lemma:Pinsker} we have that
\begin{align*}
	|T_{111}| &\lesssim \frac{k_X}{m} \biggl\{ \int_{\mathcal{X}_{m,f}^c} f(x)^{1-\kappa_1^-} g(x)^{-\kappa_2^-} M_\beta(x)^{d (\kappa_1^-+\kappa_2^-)}  (1+\|x\|)^{d( \kappa_1^-+\kappa_2^-)} \,dx \biggr\}^2 \\
	&= O \biggl( \Bigl( \frac{k_X}{m}\Bigr)^{1+2\lambda_1(1-\zeta) - \epsilon}  \biggr)
\end{align*}
for all $\epsilon>0$. We now turn to $T_{112}$, and similarly write
\begin{align*}
	|T_{112}|&= \biggl| \int_{\mathcal{X}_{m,f} \times \mathcal{X}_{m,f}^c} f(x)f(y) \int_{\mathcal{I}_{m,X}^2} \int_{\mathcal{I}_{n,Y}^2}  (h \, dH_m^{(1)} \, dG_{n}^{(2)})(s_1,s_2,t_1,t_2) \,dx \,dy \biggr| \\
	& \lesssim \frac{k_X}{m} \int_{\mathcal{X}_{m,f} \times \mathcal{X}_{m,f}^c} \frac{f(x)^{1+\kappa_1} g(x)^{\kappa_2} f(y)}{f(y)^{\kappa_1^-}g(y)^{\kappa_2^-}}  M_\beta(y)^{d( \kappa_1^-+\kappa_2^-)} (1+\|y\|)^{d( \kappa_1^-+\kappa_2^-)} \,dy \,dx  \\
	& = O \biggl( \Bigl( \frac{k_X}{m} \Bigr)^{1+ \lambda_1(1-\zeta)- \epsilon} \biggr)
\end{align*}
for all $\epsilon > 0$.  Combining our bounds on $T_{111}$ and $T_{112}$ we have that
\[
	T_{11} = O \biggl(  \Bigl( \frac{k_X}{m} \Bigr)^{1+ \lambda_1(1-\zeta)- \epsilon} \biggr)
\]
for all $\epsilon > 0$.  We can develop analogous bounds on 
\begin{align*}
	T_{12}:= \! \biggl( \int_{\mathcal{X}^2} - \int_{\mathcal{X}_{n,g}^2} \biggr) \int_{\mathcal{I}_{m,X}^2} \int_{\mathcal{I}_{n,Y}^2} (h \, d(G_m^{(1)}-H_m^{(1)}) \, dH_n^{(2)})(s_1,s_2,t_1,t_2) \,dx \,dy
\end{align*}
to conclude that
\begin{align*}
	T_1 = T_{11}+T_{12} = O \biggl( \max \biggl\{ \Bigl( \frac{k_X}{m} \Bigr)^{1+ \lambda_1(1-\zeta)- \epsilon},  \Bigl( \frac{k_Y}{n} \Bigr)^{1+ \lambda_2(1-\zeta)- \epsilon}  \biggr\}\biggr)
\end{align*}
for all $\epsilon > 0$.

\emph{To bound $T_2$:} Here we use the notation
\begin{align*}
	L_x^f(s,t)&:= \phi \bigl(f(x), v_{x,t} \bigr) + \Bigl( \frac{k_X}{ms}-1 \Bigr) f(x) \phi_{10} \bigl(f(x), v_{x,t} \bigr) \\
	R_x^f(s,t)&:= \phi \bigl( u_{x,s}, v_{x,t}  \bigr) - L_x^f(s,t)
\end{align*}
for a linearised version of $\phi(u_{x,s},v_{x,t})$ and the linearisation error, so that we have $h^{(1)}(s_1,s_2,t_1,t_2)=L_x^f(s_1,t_1) L_y^f(s_2,t_2)$. Again we write $T_2=T_{21}+T_{22}$, with
\begin{align*}
	T_{21}&:= \int_{\mathcal{X}_{m,f}^2} f(x) \int_{\mathcal{I}_{m,X}^2} \int_{\mathcal{I}_{n,Y}^2}  (\{h - h^{(1)}\} \, dH_m^{(1)} \, dG_{n}^{(2)})(s_1,s_2,t_1,t_2) \,dx \,dy \\
	&=\int_{\mathcal{X}_{m,f}^2} f(x) \int_{\mathcal{I}_{m,X}^2} \int_{\mathcal{I}_{n,Y}^2} \bigl\{ R_x^f(s_1,t_1) R_y^f(s_2,t_2) + 2 L_x^f(s_1,t_1) R_y^f(s_2,t_2) \bigr\} \\
	& \hspace{225pt} \times dH_m^{(1)}(s_1,s_2) \, dG_n^{(2)}(t_1,t_2) \,dx \,dy \\
	&=: T_{211}+T_{212}
\end{align*}
and $T_{22}:=T_2-T_{21}$ having a similar expression.  Now
\begin{align*}
	|T_{211}| &\lesssim \frac{k_X}{m} \biggl[ \int_{\mathcal{X}_{m,f}} \frac{f(x)^{1+ \kappa_1}}{ g(x)^{\kappa_2^-}} M_\beta(x)^{d \kappa_2^-} (1+\|x\|)^{d \kappa_2^-}  \biggl\{ \biggl( \frac{k_X M_\beta(x)^d}{m f(x)} \biggr)^{\frac{2 \wedge \beta}{d}} + \frac{\log m}{k_X} \biggr\} \,dx \biggr]^2 \\
	& = O \biggl( \frac{k_X}{m} \max \biggl\{ \Bigl( \frac{k_X}{m} \Bigr)^{\frac{2(2 \wedge \beta)}{d}}, \Bigl( \frac{k_X}{m} \Bigr)^{2\lambda_1(1-\zeta) - \epsilon}, \frac{\log^2 m}{k_X^2} \biggr\} \biggr)
\end{align*}
for every $\epsilon > 0$.  When bounding $T_{212}$ we first integrate over $s_1$ using the facts that
\begin{align*}
	\int_0^1 \{ \mathrm{B}_{k_X,k_X,m-2k_X-1}(s_1,s_2) -& \mathrm{B}_{k_X,m-k_X}(s_1)  \mathrm{B}_{k_X,m-k_X}(s_2) \} \,ds_1 \\
	&= \frac{m-1}{m-k_X-1}  \mathrm{B}_{k_X,m-k_X-1}(s_2) \Bigl( s_2 - \frac{k_X}{m-1} \Bigr) \nonumber
\end{align*}
and
\begin{align*}
	& \int_0^1 \frac{k_X}{ms_1} \bigl\{ \mathrm{B}_{k_X,k_X,m-2k_X-1}(s_1,s_2) - \mathrm{B}_{k_X,m-k_X}(s_1)  \mathrm{B}_{k_X,m-k_X}(s_2) \bigr\} \,ds_1 \\
	& \hspace{7pt} = \frac{k_X(m-2)}{m(k_X-1)} \mathrm{B}_{k_X,m-k_X-2}(s_2) \biggl\{ 1 - \frac{(m-1)^2}{(m-k_X-1)(m-k_X-2)}(1-s_2)^2 \biggr\}\\
	& \hspace{7pt} =\mathrm{B}_{k_X,m-k_X-2}(s_2) \biggl\{ 2 \Bigl( \frac{k_X}{m-2} - s_2 \Bigr) + O \Bigl( \frac{k_X^2}{m^2} + \frac{1}{m} \Bigr) \biggr\},
\end{align*}
uniformly for $s_2 \in \mathcal{I}_{m,X}$. Using~\eqref{Eq:CentralMoments} and the fact that $k_X^{3/2}/m \rightarrow 0$ we can now see that
\begin{align*}
	|T_{212}|& \lesssim \frac{k_X^{1/2}}{m} \int_{\mathcal{X}_{m,f}} \frac{f(y)^{1+\kappa_1}}{ g(y)^{\kappa_2^-}} M_\beta(y)^{d \kappa_2^-} (1+\|y\|)^{d \kappa_2^-} \biggl\{ \biggl( \frac{k_X M_\beta(y)^d}{m f(y)} \biggr)^{\frac{2 \wedge \beta}{d}} + \frac{\log m}{k_X} \biggr\} \,dy \\
	&= O \biggl(\frac{k_X^{1/2}}{m} \max \biggl\{ \Bigl( \frac{k_X}{m} \Bigr)^{\frac{2 \wedge \beta}{d}}, \Bigl( \frac{k_X}{m} \Bigr)^{\lambda_1(1-\zeta)- \epsilon}, \frac{\log m}{k_X} \biggr\} \biggr)
\end{align*}
for every $\epsilon > 0$.  Combining our bounds on $T_{211}$ and $T_{212}$ we therefore have that
\[
	|T_{21}| = O \biggl( \max \biggl\{ \Bigl( \frac{k_X}{m} \Bigr)^{1+\lambda_1(1-\zeta) - \epsilon}, \frac{ \log m}{m k_X^{1/2}}, \frac{k_X^{1/2}}{m} \Bigl( \frac{k_X}{m} \Bigr)^{\frac{2 \wedge \beta}{d}}, \Bigl( \frac{k_X}{m} \Bigr)^{1+\frac{2(2 \wedge \beta)}{d}} \biggr\} \biggr)
\]
for every $\epsilon > 0$.  By analogous arguments we can show that
\[
	|T_{22}| = O \biggl( \max \biggl\{ \Bigl( \frac{k_Y}{n} \Bigr)^{1+\lambda_2(1-\zeta)- \epsilon}, \frac{ \log n}{n k_Y^{1/2}}, \frac{k_Y^{1/2}}{n} \Bigl( \frac{k_Y}{n} \Bigr)^{\frac{2 \wedge \beta}{d}}, \Bigl( \frac{k_Y}{n} \Bigr)^{1+\frac{2(2 \wedge \beta)}{d}} \biggr\} \biggr),
\]
for every $\epsilon > 0$, and this concludes the bound on $T_2$.

\emph{To bound $T_3$:} Here we integrate out $(s_1,s_2)$ in the $\mathcal{X}_{m,f}$ term and $(t_1,t_2)$ in the $\mathcal{X}_{n,g}$ term.  Now
\begin{align*}
	&\int_0^1 \int_0^{1-s_1} h^{(1)}(s_1,s_2,t_1,t_2) dG_m^{(1)}(s_1,s_2) \\
	& \hspace{100pt} - \int_0^1 \int_0^1 h^{(1)}(s_1,s_2,t_1,t_2) \mathrm{B}_{k_X,m-k_X}(s_1) \mathrm{B}_{k_X,m-k_X}(s_2) \,ds_1 \,ds_2 \\
	& =  f(x) \phi_{10}( f(x), v_{x,t_1})f(y) \phi_{10}(f(y),v_{y,t_2}) \\
	& \hspace{60pt} \times \biggl\{ \int_0^1 \int_0^{1-s_1} \Bigl( \frac{k_X}{ms_1} -1 \Bigr) \Bigl( \frac{k_X}{ms_2} -1 \Bigr) dG_m^{(1)}(s_1,s_2)  \\
	& \hspace{80pt} - \int_0^1 \int_0^1 \Bigl( \frac{k_X}{ms_1} -1 \Bigr) \Bigl( \frac{k_X}{ms_2} -1 \Bigr) \mathrm{B}_{k_X,m-k_X}(s_1) \mathrm{B}_{k_X,m-k_X}(s_2) \,ds_1 \,ds_2 \biggr\} \\
	&  \hspace{50pt} + \bigl\{f(x) \phi_{10}( f(x), \! v_{x,t_1}) \phi(f(y),\!v_{y,t_2}) \! +\! \phi( f(x), \! v_{x,t_1}) f(y) \phi_{10}(f(y),\!v_{y,t_2}) \bigr\} \\
	& \hspace{130pt} \times \int_0^1 \frac{k_X}{ms} \{ \mathrm{B}_{k_X,m-k_X-1}(s) - \mathrm{B}_{k_X,m-k_X}(s)\} \,ds \\ 
	& = - \frac{k_X}{(k_X\!-\!1)m} \biggl[ \biggl\{ \frac{k_X(3m\!-\!5)}{(k_X\!-\!1)m} - 2\biggr\} f(x) \phi_{10}( f(x), \! v_{x,t_1})f(y) \phi_{10}(f(y),\!v_{y,t_2}) \\
	&  \hspace{50pt} + \bigl\{\! f(x) \phi_{10}( f(x), \! v_{x,t_1}) \phi(f(y),\!v_{y,t_2}) \!\! +\! \phi( f(x), \! v_{x,t_1}) f(y) \phi_{10}(f(y),\!v_{y,t_2}) \! \bigr\}\! \biggr].
\end{align*}
The contribution from the $\mathcal{X}_{n,g}$ term is simpler because the marginals of the  $\mathrm{B}_{k_Y,k_Y,n-2k_Y+1}$ density are equal to $\mathrm{B}_{k_Y,n-k_Y+1}$, and we have
\begin{align*}
	&\int_0^1 \int_0^{1-t_1} h^{(2)}(s_1,s_2,t_1,t_2) dG_n^{(2)}(t_1,t_2) \\
	& \hspace{50pt} - \int_0^1 \int_0^1 h^{(2)}(s_1,s_2,t_1,t_2) \mathrm{B}_{k_Y,n-k_Y+1}(t_1) \mathrm{B}_{k_Y,n-k_Y+1}(t_2) \,dt_1 \,dt_2 \\
	& =g(x) \phi_{01}(u_{x,s_1},g(x)) g(y) \phi_{01}(u_{y,s_2},g(y)) \biggl\{ \int_0^1 \int_0^{1-t_1} \!\! \frac{k_Y^2}{n^2 t_1t_2} dG_n^{(2)}(t_1,t_2) \\ 
	& \hspace{90pt} - \int_0^1 \int_0^1 \frac{k_Y^2}{n^2 t_1t_2} \mathrm{B}_{k_Y,n-k_Y+1}(t_1) \mathrm{B}_{k_Y,n-k_Y+1}(t_2) \,dt_1 \,dt_2 \biggr\} \\
	& = -\frac{k_Y^2}{(k_Y-1)^2 n} g(x) \phi_{01}(u_{x,s_1},g(x)) g(y) \phi_{01}(u_{y,s_2},g(y)).
\end{align*}
The error $T_3$ is the error in, for example, $k_Y^2(k_Y-1)^{-2}/n \approx 1/n$, together with the contribution from $(s_1,s_2) \not\in \mathcal{I}_{m,X}^2$ and $(t_1,t_2) \not\in \mathcal{I}_{n,Y}^2$, and we can use Lemma~\ref{Lemma:BetaTailBounds} to see that
\[
	T_3= o(1/m + 1/n ).
\]

\bigskip

\emph{To bound $U_0$:} \sloppy{We write $r_{m,x,y}^{(1)} := h_{x,f}^{-1}(a_{m,X}^+)+h_{y,f}^{-1}(a_{m,X}^+)$ and $r_{n,x,y}^{(2)} := h_{x,g}^{-1}(a_{n,Y}^+)+h_{y,g}^{-1}(a_{n,Y}^+)$ as shorthand.  For $s_1,s_2 \leq a_{m,X}^+$ and $t_1,t_2 \leq a_{n,Y}^+$ we have $F_{m,n,x,y}(s_1,s_2,t_1,t_2)=G_{m,n}(s_1,s_2,t_1,t_2)$ unless we also have $\|y-x\| \leq \max\bigl\{r_{m,x,y}^{(1)},r_{n,x,y}^{(2)}\bigr\}$. Here we will present bounds in the case $\|y-x\| \leq r_{m,x,y}^{(1)}$, but the other case follows using very similar arguments. First, by using Lemma~\ref{Lemma:GeneralisedHolder} and Lemma~\ref{Lemma:hxinvbounds2}, we have that}
\begin{align}
\label{Eq:XnComplement}
	\int_{\mathcal{X}_{m,n}^c} f(x) \sup_{s \in \mathcal{I}_{m,X}, t \in \mathcal{I}_{n,Y}} & | \phi(u_{x,s}, v_{x,t} ) |  \,dx \nonumber \\
	& \lesssim \int_{\mathcal{X}_{m,n}^c} f(x)^{1-\kappa_1^-}g(x)^{-\kappa_2^-} M_\beta(x)^{d (\kappa_1^-+\kappa_2^-)} (1+\|x\|)^{d (\kappa_1^-+\kappa_2^-)} \,dx  \nonumber \\
	&= O \biggl( \max \biggl\{  \Bigl(\frac{k_X}{m} \Bigr)^{ \lambda_1(1-\zeta) - \epsilon}, \Bigl(\frac{k_Y}{n} \Bigr)^{\lambda_2(1-\zeta) - \epsilon} \biggr\} \biggr),
\end{align}
for every $\epsilon > 0$, and we proceed by showing that, since $x$ and $y$ are close, the contribution from $\mathcal{X}_{m,n}^c \times \mathcal{X}$ behaves similarly to the contribution from $\mathcal{X}_{m,n}^c \times \mathcal{X}_{m,n}^c$, which can be bounded by the square of the final bound in~\eqref{Eq:XnComplement}. It suffices to consider $(x,y) \in \mathcal{X}_{m,n}^c \times \mathcal{X}_{m,n}$, as the contribution from $\mathcal{X}_{m,n}^c \times \mathcal{X}_{m,n}^c$ is more straightforward.

By a very similar argument to that used to establish~\eqref{Eq:PackingApprox} in the proof of Proposition~\ref{Prop:Partition}, we have that $\|x-y\| \leq \{M_\beta(y)^d \log^{1/2} m\}^{-1/d}$ for $m$ sufficiently large, and hence, by Lemma~\ref{Lemma:15over7}, that
\[
	|f(x)/f(y)-1| \leq 2 \{M_\beta(y) \|y-x\|\}^{1 \wedge \beta} \leq 1/2,
\]
and in particular $f(x) \geq f(y)/2$. Thus, again using Lemma~\ref{Lemma:15over7}, we have that
\begin{equation}
\label{Eq:MConstant1}
	\max_{t=1,\ldots, \underline{\beta}} \biggl( \frac{\| f^{(t)}(x)\|}{f(x)} \biggr)^{1/t} \leq 4 M_\beta(y).
\end{equation}
for $m$ sufficiently large. In addition,
\begin{align}
\label{Eq:MConstant2}
	\sup_{w,z \in B_x( 1/\{2M_\beta(y)\})}& \frac{\|f^{(\underline{\beta})}(z)-f^{(\underline{\beta})}(w)\|}{\|z-w\|^{\beta-\underline{\beta}} f(w)} \leq \sup_{w,z \in B_y( 1/M_\beta(y))}\! \frac{\|f^{(\underline{\beta})}(z)-f^{(\underline{\beta})}(w)\|}{\|z-w\|^{\beta-\underline{\beta}} f(w)}  \leq M_\beta(y)^{\beta},
\end{align}
and so we have that $M_\beta(x) \leq 4 M_\beta(y)$. Using this fact and the previously established fact that $f(x) \geq f(y)/2$, we may apply Lemma~\ref{Lemma:hxinvbounds} to see that in fact
\[
	\|x-y\| \leq r_{m,x,y}^{(1)} \lesssim \Bigl( \frac{k_X}{m f(y)} \Bigr)^{1/d}.
\]
%In particular, we have that $\|x-y\| M_\beta(y) \lesssim \log^{-1/d} m \rightarrow 0$. So, 
Using Lemma~\ref{Lemma:15over7} we also have that $g(x) \geq g(y)/2$, and therefore that
\begin{equation}
\label{Eq:Xntilde}
	\max \biggl\{ \frac{f(y)M_\beta(y)^{-d}}{f(x)M_\beta(x)^{-d}}, \frac{g(y)M_\beta(y)^{-d}}{g(x)M_\beta(x)^{-d}} \biggr\} \leq 2^{2d+1}.
\end{equation}
Since $x \in \mathcal{X}_{m,n}^c$, we have now established that
\[
	\min \biggl\{\frac{m f(y) M_\beta(y)^{-d}}{k_X \log m}, \frac{n g(y) M_\beta(y)^{-d}}{k_Y \log n} \biggr\} \leq  2^{2d+1}.
\]
Applying the same bounds as we would for $\mathcal{X}_{m,n}^c$, as in~\eqref{Eq:XnComplement}, we can now see that
\[
	U_0 = O \biggl( \max \biggl\{  \Bigl(\frac{k_X}{m} \Bigr)^{ 2\lambda_1(1-\zeta) - \epsilon}, \Bigl(\frac{k_Y}{n} \Bigr)^{2\lambda_2(1-\zeta) - \epsilon} \biggr\} \biggr),
\]
for every $\epsilon > 0$, as claimed.

\emph{To bound $U_1$:} By Lemma~\ref{Lemma:BetaTailBounds} we have that
\begin{align}
\label{Eq:F2tails}
	\max_{t_1 \in \{a_{n,Y}^-, a_{n,Y}^+\}}  \sup_{t_2 \in [0,1]} |F_{n,x,y}^{(2)}-&G_{n}^{(2)}|(t_1,t_2)  \nonumber \\
	& \bigvee  \max_{t_2 \in \{a_{n,Y}^-, a_{n,Y}^+\}} \sup_{t_1 \in [0,1]} |F_{n,x,y}^{(2)}-G_{n}^{(2)}|(t_1,t_2)  = o(n^{-4}).
\end{align}
In order to use this to bound $U_1$, corresponding to the right-hand side of~\eqref{Eq:IBPformula}, we must first develop bounds on the derivatives of $h$. Writing $S_x(r):=\{y \in \mathbb{R}^d : \|x-y\|=r\}$ and $d \mathrm{Vol}_S$ for the associated volume element we have by Lemma~\ref{Lemma:15over7} that for $r \leq 1/\{(6d)^{1/(\beta-\underline{\beta})}M_\beta(x)\}$,
\begin{align*}
%\label{Eq:hxprime}
	\Bigl| \frac{h_{x,f}'(r)}{dV_d r^{d-1}f(x)} -1 \Bigr| &= \Bigl| \frac{1}{dV_d r^{d-1} f(x)} \int_{S_x(r)} \{f(y)-f(x)\} \, d \mathrm{Vol}_S(y) \Bigr| \lesssim \{r M_\beta(x) \}^{2 \wedge \beta},
\end{align*}
with a similar bound holding for $h_{x,g}'(r)$. Using Lemma~\ref{Lemma:hxinvbounds}, for $x \in \mathcal{X}_{m,n}$, we have that $\max\{M_\beta(x)^dh_{x,f}^{-1}(a_{m,X}^+)^d , M_\beta(x)^d h_{x,g}^{-1}(a_{n,Y}^+)^d\}  \lesssim 1/\log m \rightarrow 0$ and so we have, by Lemma~\ref{Prop:FunctionalClasses}(i), that
\begin{align}
\label{Eq:hderivatives}
	\Bigl| \frac{\partial}{\partial s} & \phi (u_{x,s}, v_{x,t}) + \frac{k_Xf(x)}{ms^2} \phi_{10} \Bigl( \frac{k_Xf(x)}{ms} , \frac{k_Yg(x)}{nt} \Bigr) \Bigr| \nonumber \\
	&= \Bigl| - \frac{k_X d \, \phi_{10} (u_{x,s}, v_{x,t} ) }{m V_d h_{x,f}^{-1}(s)^{d+1} h_{x,f}'(h_{x,f}^{-1}(s))} + \frac{k_Xf(x)}{ms^2} \phi_{10} \Bigl( \frac{k_Xf(x)}{ms} , \frac{k_Yg(x)}{nt} \Bigr)  \Bigr| \nonumber \\
	& \leq \frac{k_X d}{m V_d h_{x,f}^{-1}(s)^{d+1} h_{x,f}'(h_{x,f}^{-1}(s))} \Bigl| \phi_{10} (u_{x,s}, v_{x,t} ) - \phi_{10} \Bigl( \frac{k_Xf(x)}{ms} , \frac{k_Yg(x)}{nt} \Bigr)  \Bigr| \nonumber \\
	& \hspace{20pt} + \frac{k_Xf(x)}{ms^2} \Bigl| \frac{d V_d h_{x,f}^{-1}(s)^{d-1}f(x)/h_{x,f}'(h_{x,f}^{-1}(s))}{V_d^2 f(x)^2 h_{x,f}^{-1}(s)^{2d}/s^2} -1 \Bigr|  \Bigl|\phi_{10} \Bigl( \frac{k_Xf(x)}{ms} , \frac{k_Yg(x)}{nt} \Bigr) \Bigr|\nonumber  \\
	& \lesssim \frac{k_X}{ms^2} f(x)^{\kappa_1} g(x)^{\kappa_2} \Bigl\{ \Bigl( \frac{s M_\beta(x)^d}{f(x)} \Bigr)^{(2 \wedge \beta)/d} + \Bigl( \frac{tM_\beta(x)^d}{g(x)} \Bigr)^{(2 \wedge \beta)/d} \Bigr\} \nonumber \\
	& \lesssim (1/s) f(x)^{\kappa_1}g(x)^{\kappa_2} \Bigl\{ \Bigl( \frac{k_X M_\beta(x)^d}{mf(x)} \Bigr)^{(2 \wedge \beta)/d} + \Bigl( \frac{k_Y M_\beta(x)^d}{ng(x)} \Bigr)^{(2 \wedge \beta)/d}\Bigr\},
\end{align}
uniformly for $x \in \mathcal{X}_{m,n}$, $s \in \mathcal{I}_{m,X}$ and $t \in \mathcal{I}_{n,Y}$. In particular, we have that
\begin{equation}
  \label{Eq:hderivbounds}
	\Bigl| \frac{\partial}{\partial s} \phi \bigl( u_{x,s} , v_{x,t} \bigr)  \Bigr| \lesssim (1/s) f(x)^{\kappa_1}g(x)^{\kappa_2},
      \end{equation}
      uniformly for $x \in \mathcal{X}_{m,n}$, $s \in \mathcal{I}_{m,X}$ and $t \in \mathcal{I}_{n,Y}$.  Analogous arguments also reveal that $\frac{\partial}{\partial t} \phi(u_{x,s},v_{x,t}) \lesssim (1/t) f(x)^{\kappa_1}g(x)^{\kappa_2}$, uniformly for $x \in \mathcal{X}_{m,n}$, $s \in \mathcal{I}_{m,X}$ and $t \in \mathcal{I}_{n,Y}$.  Moreover, since $x \in \mathcal{X}_{m,n}$ and $\|y-x\| \leq \max\bigl\{ r_{m,x,y}^{(1)},r_{n,x,y}^{(2)}\bigr\}$, we may argue as we did leading up to~\eqref{Eq:Xntilde} to obtain similar bounds on $\frac{\partial}{\partial s} \phi (u_{y,s}, v_{y,t} )$ and $\frac{\partial}{\partial t} \phi (u_{y,s}, v_{y,t})$. Thus, using~\eqref{Eq:IBPformula} and~\eqref{Eq:F2tails}, we find that $U_1= o (n^{-4}).$

\emph{To bound $U_2$:} Again using Lemma~\ref{Lemma:BetaTailBounds}, we have that
\begin{align}
\label{Eq:F1tails}
	\max \Bigl\{ \sup_{s_1 \in [0,1]} |F_{m,x,y}^{(1)}-G_m^{(1)}|(s_1,a_{m,X}^-), \sup_{s_2 \in [0,1]} & |F_{m,x,y}^{(1)}-G_m^{(1)}|(a_{m,X}^-,s_2), \nonumber \\
	&|F_{m,x,y}^{(1)}-G_m^{(1)}|(a_{m,X}^+,a_{m,X}^+) \Bigr\} = o(m^{-4}).
\end{align}
By similar arguments to those used in the bound on $U_1$ we have that
\begin{align}
  \label{Eq:mixedhderivbounds}
	\biggl| \frac{\partial^2}{ \partial s \partial t} \phi(u_{x,s},v_{x,t}) \biggr| & \!= \! \frac{k_X k_Y d^2 |\phi_{11}(u_{x,s},v_{x,t})|}{mnV_d^2 h_{x,f}^{-1}(s)^{d+1} h_{x,f}' \bigl( h_{x,f}^{-1}(s) \bigr) h_{x,g}^{-1}(t)^{d+1} h_{x,g}' \bigl( h_{x,g}^{-1}(t) \bigr)}  \nonumber \\
	& \lesssim \{1/(s t)\} f(x)^{\kappa_1} g(x)^{\kappa_2},
\end{align}
uniformly for $x \in \mathcal{X}_{m,n}$, $s \in \mathcal{I}_{m,X}$ and $t \in \mathcal{I}_{n,Y}$; moreover, the same bound also holds for $\frac{\partial^2}{ \partial s \partial t}\phi(u_{y,s},v_{y,t})$. We may therefore use~\eqref{Eq:IBPformula},~\eqref{Eq:hderivatives} and~\eqref{Eq:F1tails} to conclude that $U_2=o(m^{-4})$.

\emph{To bound $U_3$:} By Lemma~\ref{Lemma:BetaTailBounds}, we have that
\begin{align}
\label{Eq:Marginals}
	F_{m,x,y}^{(1)}(s_1,a_{m,X}^+) - G_m^{(1)}&(s_1,a_{m,X}^+) = \frac{\mathrm{B}_{k_X,m-k_X}(s_1)}{m-1}  \mathbbm{1}_{\{\|x-y\| \leq h_{x,f}^{-1}(s_1) \}} + o(m^{-4}),
\end{align}
uniformly for $x \in \mathcal{X}_{m,n}$, $\|y-x\| \leq r_{m,x,y}^{(1)}$ and $s \in \mathcal{I}_{m,X}$, with an analogous statement holding for $F_{m,x,y}^{(1)}(a_{m,X}^+,s_2) - G_m^{(1)}(a_{m,X}^+,s_2)$. Now, combining this statement with our bounds on the derivatives of $h$ in~\eqref{Eq:hderivbounds} and~\eqref{Eq:mixedhderivbounds}, and applying the bounds $|F^{(1)}(s_1,s_2)| \leq \mathbbm{1}_{\{\|y-x\| \leq r_{m,x,y}^{(1)}\}}$ and $|F^{(2)}(t_1,t_2)| \leq \mathbbm{1}_{\{\|y-x\| \leq r_{n,x,y}^{(2)}\}} $, we may write
\begin{align*}
	|U_3|  & \lesssim \int_{\mathcal{X} \times \mathcal{X}_{m,n}} f(x)^{1+\kappa_1} g(x)^{\kappa_2} f(y)^{1+\kappa_1} g(y)^{\kappa_2} \mathbbm{1}_{\{\|y-x\| \leq  \min\{r_{m,x,y}^{(1)},r_{n,x,y}^{(2)}\}\}} \\
	& \hspace{100pt} \times \biggl( \frac{\log m \log n}{k_X k_Y} + \frac{\log n}{k_X k_Y} + \frac{\log^{1/2} m \log n}{m^{2} k_X^{1/2} k_Y} \biggr) \,dx \,dy \\
	& \lesssim \frac{\log m \log n}{k_X k_Y} \int_{\mathcal{X} \times \mathcal{X}_{m,n}} \! f(x)^{2+2\kappa_1} g(x)^{2 \kappa_2} \mathbbm{1}_{\{\|y-x\| \leq  \min\{ r_{m,x,y}^{(1)},r_{n,x,y}^{(2)}\} \}} \,dx \,dy \\
	& \lesssim \frac{\log m \log n}{k_X k_Y} \int_{\mathcal{X}_{m,n}} f(x)^{2+2\kappa_1}g(x)^{2\kappa_2} \min \Bigl\{ \frac{k_X}{m f(x)}, \frac{k_Y}{n g(x)} \Bigr\} \,dx.
\end{align*}
Since $\min(m,n) \geq 3$, if $m \geq n$, then $ (1/m) \log m \leq (1/n) \log n$ and therefore
\[
	|U_3| \lesssim \frac{\log m \log n}{k_X k_Y} \int_{\mathcal{X}}  \frac{k_X}{m} f(x)^{1+2 \kappa_1}{g(x)^{2\kappa_2}} \,dx \lesssim \frac{\log^2 n}{n k_Y}.
\]
Similarly, if $n \geq m$ then
\[
	|U_3| \lesssim \frac{\log m \log n}{k_X k_Y} \int_\mathcal{X}\frac{k_Y}{n} f(x)^{2+2\kappa_1} g(x)^{2\kappa_2-1} \,dx \lesssim \frac{\log^2m}{m k_X}.
\]
Putting these two statements together,
\[
	U_3=O \biggl( \max \biggl\{ \frac{\log^2m}{m k_X}, \frac{\log^2 n}{n k_Y} \biggr\} \biggr),
\]
which establishes~\eqref{Eq:UBounds1}.

\emph{To bound $U_4$:} Using~\eqref{Eq:IBPformula},~\eqref{Eq:hderivatives} and~\eqref{Eq:F1tails} we have that $U_4 = o(m^{-4})$.

\emph{To bound $U_5$:}  We first bound the contribution to $U_5$ from the discontinuous parts of $F_{m,x,y}^{(1)}$, arising due to the indicator functions in~\eqref{Eq:MultinomialExpression}.  Recalling the definition of the multinomial random vector $(N_1^{(1)},N_2^{(1)},N_3^{(1)},N_4^{(1)})$ in~\eqref{Eq:Multivector}, we have that
\begin{align*}
  0 &\leq F_{m,x,y}^{(1)}(s_1,s_2) - \mathbb{P}\bigl(N_1^{(1)} + N_3^{(1)} \geq k_X,N_2^{(1)} + N_3^{(1)} \geq k_X\bigr) \\
                            &\leq \mathbb{P}\bigl(N_1^{(1)} + N_3^{(1)} = k_X-1\bigr) + \mathbb{P}\bigl(N_2^{(1)} + N_3^{(1)} = k_X-1\bigr) \\
                            &= \binom{m-2}{k_X-1}s^{k_X-1}(1-s)^{m-k_X-1} + \binom{m-2}{k_X-1}t^{k_X-1}(1-t)^{m-k_X-1} \\
  &\leq \frac{2}{(2 \pi k_X)^{1/2}}\{1+o(1)\},
\end{align*}
uniformly for $x \in \mathcal{X}_{m,n}$, $\|y-x\| \leq r_{m,x,y}^{(1)}$ and $(s_1,s_2) \in \mathcal{I}_{m,X}^2$, and we will see is of no larger order than the error in the normal approximation for the continuous part. Now, writing $y=x+\{\frac{k_X}{mV_d f(x)}\}^{1/d}z$, define
\begin{align*}
 U_{51}:= & \int_{\mathcal{X} \times \mathcal{X}_{m,n}}  f(x)f(y) \int_{\mathcal{I}_{n,Y}^2} \, dG_n^{(2)}(t_1,t_2) \\
	& \times \biggl[\int_{\mathcal{I}_{m,X}^2} h_{1100}\biggl\{F^{(1)}(s_1,s_2) - \bigl(\Phi_{\Sigma}-\Phi_{I_2}\bigr)\Bigl(\frac{ms_1-k_X}{k_X^{1/2}}, \frac{ms_2-k_X}{k_X^{1/2}} \Bigr) \biggr\} \,ds_1 \,ds_2 \nonumber \\
	& \hspace{5pt} - \int_{\mathcal{I}_{m,X}} h_{1000} \biggl\{ F^{(1)}(s_1,a_{m,X}^+) - \frac{\mathrm{B}_{k_X,m-k_X}(s_1)}{m-1} \mathbbm{1}_{\{\|z\| \leq 1\}} \biggr\} \, ds_1 \\
	& \hspace{5pt} -  \int_{\mathcal{I}_{m,X}} h_{0100}\biggl\{ F^{(1)}(a_{m,X}^+,s_2) - \frac{\mathrm{B}_{k_X,m-k_X}(s_2)}{m-1} \mathbbm{1}_{\{\|z\| \leq 1\}} \biggr\}  \, ds_2 \biggr]  \,dx \,dy.
\end{align*}
By Lemma~\ref{Lemma:hxinvbounds} we have that
\begin{align*}
	\int_\mathcal{X} \int_{\mathcal{I}_{m,X}} & \frac{1}{ms} \mathrm{B}_{k_X,m-k_X}(s) \Bigl| \mathbbm{1}_{\{\|y-x\| \leq h_{x,f}^{-1}(s) \}} - \mathbbm{1}_{\{\|z\| \leq 1 \}} \Bigr| \,ds \,dy \\
	& \lesssim \frac{1}{k_X} \int_{\mathbb{R}^d} \mathbbm{1}_{\{ ( \frac{k_X}{mV_d f(x)})^{\frac{1}{d}} < \|y-x\| \leq h_{x,f}^{-1}(a_{m,X}^+) \}} \vee  \mathbbm{1}_{\{h_{x,f}^{-1}(a_{m,X}^-)  < \|y-x\| \leq (\frac{k_X}{mV_d f(x)})^{\frac{1}{d}}  \}}  \,dy \\
	& \leq \frac{V_d}{k_X} \bigl\{ h_{x,f}^{-1}(a_{m,X}^+)^d \! - \! h_{x,f}^{-1}(a_{m,X}^-)^d  \bigr\} \lesssim \frac{1}{mf(x)} \biggl\{ \frac{\log^{\frac{1}{2}}m}{k_X^{1/2}} \!+\! \Bigl( \frac{k_X M_\beta(x)^d}{m f(x)} \Bigr)^{\frac{2 \wedge \beta}{d}} \biggr\},
\end{align*}
uniformly for $x \in \mathcal{X}_{m,n}$. Using this bound together with Lemma~\ref{Lemma:NormalApproximation},~\eqref{Eq:hderivatives} and~\eqref{Eq:Marginals} we may say that
\begin{align*}
	|U_{51}|  &\lesssim \int_{\mathcal{X}_{m,n}} f(x)^{1+2\kappa_1} g(x)^{2\kappa_2} \biggl[ \frac{1}{m} \biggl\{ \frac{\log^{1/2}m}{k_X^{1/2}} + \biggl(\frac{k_XM_\beta(x)^d}{mf(x)} \biggr)^{(2 \wedge \beta)/d} \biggr\} \\
	& \hspace{29pt} + \frac{k_X}{m} \log^2 \Bigl( \frac{a_{m,X}^+}{a_{n,Y}^-} \Bigr) \int_{B_0(2)} \!\!\!\! \min\biggl\{ 1, \frac{1}{\|z\|} \biggl( \frac{\log^{\frac{1}{2}}m}{k_X^{1/2}} + \biggl(\frac{k_X M_\beta(x)^d}{mf(x)} \biggr)^{\frac{1 \wedge \beta}{d}} \biggr) \biggr\} \,dz \biggr]  dx \\
	&=O \biggl( \frac{1}{m} \max\biggl\{ \frac{\log^{5/2}m}{k_X^{1/2}} , \log^2m \Bigl( \frac{k_X}{m} \Bigr)^{(1 \wedge \beta)/d}, \Bigl( \frac{k_X}{m} \Bigr)^{ \lambda_1(1-2\zeta) - \epsilon} \biggr\} \biggr),
\end{align*}
for every $\epsilon > 0$.  In bounding $U_5$ it therefore remains to approximate the derivatives of $h$ using~\eqref{Eq:hderivatives} and to bound the contribution from the tails of the $t_1,t_2,s_1,s_2$ integrals. By Lemma~\ref{Lemma:BetaTailBounds} and standard normal tail bounds the error from these tail contributions is $o(m^{-4})$, and so, using~\eqref{Eq:hderivatives},
\begin{align*}
	&|U_{52}|:=|U_5-U_{51}| \lesssim \frac{1}{m} \int_{\mathcal{X}_{m,n}} f(x)^{1+2\kappa_1} g(x)^{2\kappa_2} \biggl\{ \frac{\log^{1/2}m}{k_X^{1/2}} + \frac{\log^{1/2}n}{k_Y^{1/2}} \\
	& \hspace{140pt} + \biggl( \frac{k_XM_\beta(x)^d}{mf(x)} \biggr)^{\frac{2 \wedge \beta}{d}} +\biggl( \frac{k_Y M_\beta(x)^d}{ng(x)} \biggr)^{\frac{2 \wedge \beta}{d}}  \biggr\} \,dx \\
	& =O \biggl( \frac{1}{m} \max\biggl\{ \frac{\log^{1/2}m}{k_X^{1/2}}, \frac{\log^{1/2}n}{k_Y^{1/2}} ,\Bigl( \frac{k_X}{m} \Bigr)^{\frac{2 \wedge \beta}{d}}, \Bigl( \frac{k_Y}{n} \Bigr)^{\frac{2 \wedge \beta}{d}}, \\
	& \hspace{180pt} \Bigl( \frac{k_X}{m} \Bigr)^{ \lambda_1(1-2\zeta) - \epsilon},  \Bigl( \frac{k_Y}{n} \Bigr)^{ \lambda_2(1-2\zeta) - \epsilon} \biggr\} \biggr),
\end{align*}
for every $\epsilon > 0$. 

\emph{To bound $U_6$:} Using Lemma~\ref{Lemma:GeneralisedHolder} we have that
\begin{align*}
	|U_6| \lesssim \frac{1}{m}  \int_{\mathcal{X}_{m,n}^c} &f(x)^{1+2\kappa_1}g(x)^{2\kappa_2} \,dx =\! O \biggl( \frac{1}{m} \max\Bigl\{\Bigl( \frac{k_X}{m} \Bigr)^{ \lambda_1(1-2\zeta) - \epsilon},\Bigl( \frac{k_Y}{n} \Bigr)^{ \lambda_2(1-2\zeta) - \epsilon}\Bigr\}\biggr),
\end{align*}
for every $\epsilon > 0$.  This establishes~\eqref{Eq:UBounds2}.

\emph{To bound $U_7$:} Analogously to our bounds on $U_1$, we may use~\eqref{Eq:IBPformula},~\eqref{Eq:F2tails} and~\eqref{Eq:hderivatives} to show that $U_7=o(n^{-4})$.

\emph{To bound $U_8$:} Using Lemma~\ref{Lemma:NormalApproximation},~\eqref{Eq:IBPformula},~\eqref{Eq:Holderv2},~\eqref{Eq:F2tails} and~\eqref{Eq:hderivatives}, and the change of variables $y=x+(\frac{k_Y}{nV_d g(x)})^{1/d} z$, we have that
\begin{align*}
	|U_{81}| &:= \biggl| \int_{\mathcal{X} \times \mathcal{X}_{m,n}} f(x) f(y) \int_{\mathcal{I}_{m,X}^2} \int_{\mathcal{I}_{n,Y}^2} h_{0011}(s_1,s_2,t_1,t_2) \,dG_m^{(1)}(s_1,s_2) \\
	& \hspace{45pt} \times \Bigl\{ F^{(2)}(t_1,t_2) - (\Phi_{\Sigma}-\Phi_{I_2}) \Bigl( \frac{nt_1-k_Y}{k_Y^{1/2}}, \frac{nt_2-k_Y}{k_Y^{1/2}} \Bigr) \Bigr\} \,dt_1 \,dt_2  \,dx \,dy \biggr| \\
	& \lesssim \frac{k_Y}{n}\log^2 \Bigl( \frac{a_{n,Y}^+}{a_{n,Y}^-} \Bigr)\int_{\mathcal{X}_{m,n}} f(x)^{2+2 \kappa_1}g(x)^{2\kappa_2-1}  \\
	& \hspace{50pt} \times \int_{B_0(2)}  \min \biggl\{ 1,\frac{1}{\|z\|} \biggl( \frac{\log^{1/2}n}{k_Y^{1/2}} + \biggl( \frac{k_Y M_\beta(x)^d}{ng(x)} \biggr)^{(1 \wedge \beta)/d} \biggr) \biggr\} \,dz \,dx \\
	& \lesssim \frac{\log^2 n}{n} \int_{\mathcal{X}_{m,n}} f(x)^{2+2\kappa_1}g(x)^{2\kappa_2-1} \biggl\{\frac{\log^{1/2}n}{k_Y^{1/2}} + \biggl( \frac{k_Y M_\beta(x)^d}{ng(x)} \biggr)^{(1 \wedge \beta)/d}\biggr\} \,dx \\
	& = O \biggl( \frac{\log^2n}{n}\max\biggl\{\frac{\log^{1/2}n}{k_Y^{1/2}} , \Bigl( \frac{k_Y}{n} \Bigr)^{(1 \wedge \beta)/d}, \Bigl( \frac{k_Y}{n} \Bigr)^{ \epsilon_0 }  \biggr\}\biggr),
\end{align*}
As with $U_5$ we now define $U_{82}:=U_8-U_{81}$ and note that to bound $U_{82}$ we need to control the tails of $s_1,s_2,t_1,t_2$ integrals and our approximations to the derivatives of $h$. By~\eqref{Eq:Holderv2},~\eqref{Eq:hderivatives} and Lemma~\ref{Lemma:BetaTailBounds} we have that
\begin{align*}
	&|U_{82}| \lesssim \frac{1}{n} \int_{\mathcal{X}_{m,n}} f(x)^{2+2\kappa_1} g(x)^{2\kappa_2-1} \biggl\{ \frac{\log^{1/2}m}{k_X^{1/2}} + \frac{\log^{1/2}n}{k_Y^{1/2}} + \biggl( \frac{k_X M_\beta(x)^d}{mf(x)} \biggr)^{(2 \wedge \beta)/d} \\
	& \hspace{165pt}  +\biggl( \frac{k_YM_\beta(x)^d }{ng(x)} \biggr)^{(2 \wedge \beta)/d} + m^{-2} \biggr\} \,dx \\
	& \!=\! O \biggl(\frac{1}{n} \max \biggl\{\frac{\log^{1/2}n}{k_Y^{1/2} }, \frac{\log^{1/2}m}{k_X^{1/2}} ,\Bigl( \frac{k_X}{m} \Bigr)^{(2 \wedge \beta)/d}, \Bigl( \frac{k_Y}{n} \Bigr)^{(2 \wedge \beta)/d}, \Bigl( \frac{k_X}{m} \Bigr)^{ \epsilon_0} , \Bigl( \frac{k_Y}{n} \Bigr)^{ \epsilon_0}, m^{-2} \biggr\} \biggr),
\end{align*}
This establishes~\eqref{Eq:UBounds3}, and therefore concludes the proof.

\end{document}